\def\blfootnote{\xdef\@thefnmark{}\@footnotetext}
\theoremstyle{plain}% default
\newtheorem{lem}{Lemma}[section]
\newtheorem{teo}[lem]{Theorem}
\newtheorem{propo}[lem]{Proposition}
\theoremstyle{definition}
\newtheorem{exa}[lem]{Example}
\newtheorem{rema}[lem]{Remark}
\newtheorem{defi}[lem]{Definition}
\newtheorem{cons}[lem]{Construction}
\DeclareMathAlphabet{\mathbbe}{U}{bbold}{m}{n}
\newcommand{\simplexcategory}{\mathbbe{\Delta}}
\newcommand{\actto}{\rightarrow\Mapsfromchar}
\tikzset{  act /.tip = >|}
\tikzset{pullback/.style={minimum size=1.2ex,path picture={
\draw[opacity=1,black,-,#1] (-0.5ex,-0.5ex) -- (0.5ex,-0.5ex) -- (0.5ex,0.5ex);%
}}}
\newcommand{\drpullback}{\arrow[phantom]{dr}[very near start,description]{\lrcorner}}
\newcommand{\dlpullback}{\arrow[phantom]{dl}[very near start,description]{\llcorner}}
\newcommand{\drpushout}{\arrow[phantom]{dr}[very near end,description]{\ulcorner}}
\newcommand{\FB}{\operatorname{FB}}
\newcommand{\CEM}{\operatorname{CEM}}
\newcommand{\BCK}{\operatorname{BCK}}
\newcommand{\Map}{\operatorname{map}}
\newcommand{\op}{\operatorname{op}}
\newcommand{\lt}{\operatorname{lt}}
\newcommand{\identity}{\operatorname{id}}
\newcommand{\Dec}{\operatorname{Dec}}
\newcommand{\HDec}{\operatorname{HDec}}
\newcommand{\Fib}{\operatorname{Fib}}
\newcommand{\ter}{\operatorname{t}}
\newcommand{\tps}{\operatorname{tps}}
\newcommand{\Fun}{\operatorname{Fun}}
\newcommand{\comp}{\operatorname{comp}}
\newcommand{\posy}{\operatorname{postf}}
\newcommand{\nerve}{\operatorname{N}}
\newcommand{\fatnerve}{\operatorname{\mathbf{N}}}
\newcommand{\frow}{\pi_{\operatorname{firstrow}}}
\newcommand{\GD}{\int^{\mathbf{K}}}
\newcommand{\GDD}{\int^{\mathbf{D}}}
\newcommand{\RAdCon}{\mathfrak{r}}
\newcommand{\invp}{\operatorname{inv}}
\newcommand{\underlyingset}[1]{\underline{#1}}%añadido por alex
\newcommand{\posetify}[1]{\widetilde{#1}}%añadido por alex
\newcommand{\collapse}[1]{\overline{#1}}%añadido por alex
\newcommand{\kat}[1]{\mathbf{#1}}
\newcommand{\Set}{\kat{Set}}
\newcommand{\FinSet}{\kat{FinSet}}
\newcommand{\Cat}{\kat{Cat}}
\newcommand{\Grpd}{\kat{Grpd}}
\newcommand{\Dcmp}{\kat{Dcmp}}
\renewcommand{\tableofcontents}{%
   \begin{center}
\begin{minipage}{150mm}
   \begin{center}
                \bf{\contentsname}
        \end{center}
   \footnotesize
  
                \@starttoc{toc}
       
\end{minipage}
        \end{center}
        \addvspace{3em \@plus\p@}
}
\newtheorem{taller}[lem]{$\!\!$}
\newenvironment{blanko}[1]%
{\begin{taller}{\normalfont\bfseries  #1}\normalfont}%
{\end{taller}}
\newenvironment{blanko*}[1]{\begin{list}{\bf {#1} }%
{\setlength{\labelsep}{0mm}\setlength{\leftmargin}{0mm}%
\setlength{\labelwidth}{0mm}\setlength{\listparindent}{\parindent}%
\setlength{\parsep}{\parskip}\setlength{\partopsep}{0mm}}%
\item%
}{\end{list}}
\begin{document}

\title{\Large\textbf{DIRECTED HEREDITARY SPECIES AND  DECOMPOSITION SPACES}} 
\author{\normalsize{ALEX CEBRIAN} \; and  \; \normalsize{WILSON FORERO} } 
\date{}
\maketitle

\begin{abstract}
We introduce the notion of \emph{directed hereditary species} and show that they have associated monoidal decomposition spaces, comodule bialgebras, and operadic categories. The notion subsumes Schmitt's hereditary species,  Gálvez--Kock--Tonks directed restrictions species, and a directed version of Carlier's construction of monoidal decomposition spaces and comodule bialgebras. In addition to all the examples of Schmitt, Gálvez--Kock--Tonks and Carlier, the new construction covers also the Fauvet--Foissy--Manchon comodule bialgebra of finite topological spaces, the Calaque--Ebrahimi-Fard--Manchon comodule bialgebra of rooted trees, and the Faà di Bruno comodule bialgebra of linear trees.
\end{abstract}

\tableofcontents

\section*{Introduction}
\label{sec:intro}

%\subsection*{Background}

G\'alvez, Kock, and Tonks \cite{GTK1, GTK2, GTK3} introduced decomposition spaces as a
far-reaching generalization of posets for the purpose of defining incidence
algebras and M\"obius inversion, covering the classical theory for posets by Rota~\cite{Rota1987}, for monoids (Cartier--Foata~\cite{Cartier}), M\"obius categories (Leroux~\cite{Lero}, \cite{Content-Lemay-Leroux}), as well as various constructions with operads (\cite{vanderLaan:math-ph/0311013},
\cite{vanderLaan-Moerdijk:hep-th/0210226}, \cite{Chapoton-Livernet:0707.3725}, \cite{Vallette:0405312}).
Decomposition spaces are certain
simplicial $\infty$-groupoids, and the theory becomes homotopical in
nature. Independently Dyckerhoff and Kapranov
\cite{DK} had arrived at the equivalent notion
of 2-Segal space (see Feller et
al. \cite{unital} for the last piece in
the equivalence), from the viewpoint of representation theory, homological
algebra, and K-theory, where Hall algebras and Waldhausen's $S$-construction
are the main motivating examples.

It was suggested by G\'alvez, Kock, and Tonks~\cite{GKT:comb}
that virtually all combinatorial co- bi-
and Hopf algebra should be incidence algebras of decomposition
spaces, whereas many are not directly incidence algebras of
posets. This idea is difficult to
state as a precise theorem, but several recent papers have
contributed with classes of examples vindicating the principle.

A series of combinatorial co- bi- and Hopf algebra which are
not directly incidence algebras of posets were given in the
seminal paper of Schmitt~\cite{schmitt_1993}, where he identified large
significant families of combinatorial coalgebras coming
from extra structures on combinatorial species in the sense of
Joyal~\cite{Joyal:1981} (see Aguiar--Mahajan~\cite{Aguiar-Mahajan} for
further treatment). The two main such structures are restriction
species, as exemplified by the chromatic Hopf
algebra of graphs~\cite{Schmitt:incidence} (see also \cite{Foissy:1611.04303})
and hereditary species, as for example the Fa\`a di Bruno Hopf algebra
(see also \cite{KockWeber}). Restriction species are
presheaves on the category of finite sets and injections. Hereditary
species are presheaves on the category of finite sets and partial
surjections.

The constructions of these two families
of examples have been assimilated into decomposition-space theory:
G\'alvez, Kock, and Tonks~\cite{GKT:restr}
showed how Schmitt's coalgebra of a restriction species is a
special case of the general incidence-coalgebra construction of
decomposition spaces, and generalised to {\em directed
restriction species} (presheaves on the category of finite
posets and convex inclusions); this generalization includes for example the
Butcher--Connes--Kreimer Hopf algebra from numerical
analysis~\cite{Butcher:1972} and renormalisation
theory~\cite{Kreimer:9707029}, \cite{Connes_1998}.
Shortly after, Carlier~\cite{carlier2019hereditary} showed that
also Schmitt's construction of bialgebras from hereditary species
 is a special case of the incidence bialgebra of 
monoidal decomposition spaces. He went on to establish that
these are {\em comodule bialgebras}, an intricate
interaction between two bialgebra structures which is of
importance in certain areas of analysis (see
Manchon~\cite{Manchon:Abelsymposium}, and also \cite{Foissy:1702.05344} and
\cite{Kock_2021}). Carlier  also discovered that hereditary species provide a new
class of examples of  the operadic categories of
Batanin and Markl~\cite{BMpart2, BM, BMpart1}:
while there is a clear operadic flavour in Schmitt's hereditary species, they are not
always operads. They turn out to be operadic categories. 

%\noindent {\textbf{NEW CONTRIBUTIONS}} \\

In the quest to develop a directed version of the theory of Carlier~\cite{carlier2019hereditary} that covers new examples, we found two different routes called: \emph{connected directed hereditary species} and \emph{non-connected directed hereditary species}. We focus our attention in develop the connected case and show that it induces monoidal decomposition spaces, comodule bialgebras, and operadic categories. 

Although the non-connected variant is also an example of monoidal decomposition spaces, comodule bialgebras, and operadic categories, it is only designed to generalise all of the Carlier results on Schmitt hereditary species, but we do not have any interesting example other than the discrete. The connected variant is more interesting, as it covers two important examples, that hitherto were isolated constructions of (comodule) bialgebras, and which are now subsumed as
examples of the general theory of connected directed hereditary species. These two examples are the
Fauvet--Foissy--Manchon (comodule) bialgebra of finite
topologies~\cite{AIF_2017__67_3_911_0} and the
Calaque--Ebrahimi-Fard--Manchon (comodule) bialgebra of
trees~\cite{CALAQUE2011282}.

The bialgebra of finite topologies is essentially the base case in our
setting, namely corresponding to the connected directed hereditary
species of posets. Modulo the difference between posets and preorders,
this turns out to be the same construction, in view of an equivalence we
establish between admissible maps into a poset $T$
(in the sense of \cite{AIF_2017__67_3_911_0}) and
contractions out of $T$ (\ref{subsec:admissible maps and contractions}).

The construction in Fauvet--Foissy--Manchon~\cite{AIF_2017__67_3_911_0} was
inspired by \'Ecalle's mould calculus in dynamical systems
(see~\cite{Cresson}, \cite{Ecalle:I}), and more precisely by the more
elaborate notion of {\em ormould}~\cite{Ecalle-Vallet}. The construction is subtle, and they write in
fact that it would have been difficult to guess the comultiplication
formula without the inspiration from \'Ecalle's work. From the present
viewpoint of connected directed hereditary species, it comes out very naturally from
general principles. (The slight difference between preorders and posets
might in fact be in the latter's favour: it seems that posets are
closer to the ormoulds of \'Ecalle~\cite{Ecalle-Vallet}.
Fauvet--Foissy--Manchon~\cite{AIF_2017__67_3_911_0} had to
introduce the notion of {\em quasi-ormould}.)

The Fauvet--Foissy--Manchon bialgebra constitutes a comodule
bialgebra in conjunction with the bialgebra of
finite topologies of Foissy--Malvenuto--Patras~\cite{Foissy-Malvenuto-Patras:1403.7488}.
Again, the subtle algebraic conditions to be verified are
now a direct consequence of the general theory.

The other important example, the Calaque--Ebrahimi-Fard-Manchon comodule
bialgebra~\cite{CALAQUE2011282}, is the
connected directed hereditary species of trees (\ref{subsec: runningexamplepartI}). It originates in numerical analysis:
its `restriction part' is the Butcher--Connes--Kreimer Hopf algebra
corresponding to composition of B-series~\cite{Butcher:1972}. The
`hereditary part' corresponds to the {\em substition}, a second operation
on B-series discovered by
Chartier--Hairer--Vilmart~\cite{Chartier-Hairer-Vilmart:FCM2010}. Again,
the comodule-bialgebra condition is now a formal consequence of the theory.
Previously, Kock~\cite{Kock_2021} had given a decomposition-space
interpretation (in fact in terms of operads) of the
Calaque--Ebrahimi-Fard-Manchon comodule bialgebra fitting it into the
general framework of the Baez--Dolan construction~\cite{Baez-Dolan:9702},
but his construction gives an operad version with operadic trees
rather than the combinatorial trees relevant in numerical
analysis. An ad hoc quotient construction was required to recover
those. The machinery of connected directed hereditary species delivers the
Calaque--Ebrahimi-Fard-Manchon comodule
bialgebra directly.

%After the case of posets and the case of trees, it is natural to look at the (connected) directed hereditary species of progressive graphs (directed graphs without directed loops). The bialgebra in this case is the one of Manchon~\cite{Manchon:1011.3032}.

Finally, in the context of the theory of operadic categories, our proof that connected directed hereditary species induce operadic categories is very different from Carlier's proof in the discrete case.
Where he simply verified the $9$ axioms for operadic categories one by one
by hand, in the present work we exploit a recent conceptual simplicial
approach to operadic categories by Batanin, Kock, and Weber~\cite{BataninKockWeber}.
They reinterpret the operadic-category axioms in simplicial terms in
such a way that all 9 axioms end up as simplicial identities. In the end
the category of operadic categories can be described as a strict pullback,
involving small categories with chosen local terminal objects (in the
style of \cite{GKW2021}) and certain simplicial
groupoids. With this formalism in hand, we can establish the functor
from connected directed hereditary species to operadic categories by
exploiting the universal property of the pullback, without having to
check any axioms by hand.

\subsection*{Organisation of the paper}

%The present paper introduces the notions of connected directed hereditary species and directed hereditary species and shows that they are examples of monoidal decomposition spaces, comodule bialgebras, and operadic categories. 

After a brief review in Section \ref{section: preliminaries} of needed notions and some basic results on homotopy pullbacks of groupoids, decomposition spaces (\ref{subsection:decomposition spaces}), culf maps (\ref{subsec:culf}) and the decalage construction (\ref{subsec:decalage}); we introduce in Section \ref{section:decomposition K} the notion of contraction (\ref{definition:connectedcontraction}), and the connected directed hereditary species as a $\Grpd$-valued presheaf on the category of finite connected posets and partially defined contractions (\ref{definition: DCHS}). Examples of this notion are the Fauvet--Foissy--Manchon Hopf algebra of finite topologies, the Calaque--Ebrahimi-Fard--Manchon comodule bialgebra of rooted trees (\ref{subsec: runningexamplepartI}) and the Faà di Bruno comodule bialgebra of linear trees (\ref{exa:linear trees 1}). It should be noted that the old theory of hereditary species does not cover these examples.

In Section \ref{subsec:dec space K}, we define the monoidal decomposition space of contractions $\mathbf{K}$. In addition, we show that $\mathbf{K}$ is complete (\ref{proposition:Kiscomplete}), locally finite, locally discrete, and of locally finite length (\ref{proposition:Kislocally}), and monoidal (\ref{propo:k is monoidal}).

In Section \ref{section:decomposition AP}, we define the monoidal decomposition space of admissible maps of preorders $\mathbf{A}$. In \ref{subsec:comparation FFM with K}, we show that the incidence bialgebra of $\mathbf{A}$ corresponds to the Fauvet--Foissy--Manchon bialgebra of finite topologies. In Section \ref{subsec:admissible maps and contractions}, we relate the notions of admissible maps of preorders (due to \cite{AIF_2017__67_3_911_0}) and of contractions of posets through a culf map (\ref{proposition:AP to K is culf}). This map explains the connection between admissible maps of preorders and contractions of posets. In \ref{subsec: double category AP}, we define the double category $\mathbf{AdCon}$ of finite preorders, admissible maps as horizontal morphisms and contractions as vertical morphisms and show that the Waldhausen $S_{\bullet}$-construction in the sense of Bergner et al. \cite{BergnerSDoubleconstruction} of $\mathbf{AdCon}$ is equivalent to the decomposition space of admissible maps $\mathbf{A}$.

In Section \ref{subsec: DCH species as decomposition}, we show that every connected directed hereditary species has an associated a monoidal decomposition space (\ref{propo: H is a ds connected case}) which is locally finite, locally discrete, and of locally finite length (\ref{propo: H is locally etc}). 

Every hereditary species has an underlying {monoidal} restriction species; therefore, we have two bialgebra structures associated with a hereditary species. Carlier \cite{carlier2019hereditary} showed that these bialgebras are compatible in the sense that the incidence bialgebra associated with the restriction species is a left comodule bialgebra over the incidence bialgebra of the hereditary species. In Section \ref{section:comudule bialgebra}, we apply Carlier's ideas to the connected directed case. In \ref{subsec: runningexamplepartIII}, we show that the comodule bialgebra of the connected directed hereditary species of rooted trees is the Calaque--Ebrahimi-Fard--Manchon comodule bialgebra of rooted trees \cite{CALAQUE2011282}, and the comodule bialgebra of the connected directed hereditary species of linear trees is the Faá di Bruno comodule bialgebra of linear trees \cite{Kock_2021}.
  
In Section \ref{section:CDHS and operadics categories}, we describe a different construction on connected directed hereditary species, showing that we have a functor from the category of connected directed hereditary species to the category of operadic categories in the sense Batanin and Markl \cite{BM}, using a new approach by Batanin, Kock, and Weber~\cite{BataninKockWeber}. This result is interesting because it brings new examples to the theory of operadic categories.

On the other hand, Schmitt's hereditary species are not connected directed hereditary species, as the fibres along a surjection between discrete posets are not necessarily connected. To cover these examples, in Section \ref{section:decomposition D}, we introduce the notion of collapse map of posets (\ref{definition: contraction}) and define the decomposition space of collapse maps $\mathbf{D}$ (\ref{subsec: CP definition}). Similarly to the connected case, we prove that directed hereditary species induce monoidal decomposition spaces (\ref{subsec:direcherspecies as decomposition}), comodule bialgebras (\ref{subsec:dhs comodule bialgebra}), and operadic categories (\ref{subsec:dhs as operadic categories}).  %Furthermore, we prove that $\mathbf{D}$ is complete (\ref{proposition:Discomplete}), locally finite, locally discrete and of locally finite length (\ref{proposition:Dislocally}). Similarly to the connected case, we prove that directed hereditary species induce monoidal decomposition spaces (\ref{subsec:direcherspecies as decomposition}), comodule bialgebras (\ref{subsec:dhs comodule bialgebra}), and operadic categories (\ref{subsec:dhs as operadic categories}). 

\subsection*{Acknowledgements}
The authors would like to thank Joachim Kock for his advice and support all along the project. The second author thanks Jesper Møller for hosting him at the Copenhagen Centre for Geometry and Topology, where this work was completed, and Andrew Tonks for his interest and feedback. The authors were supported by grant number MTM2016-80439-P. The second author was supported by grant number PID2020-116481GB-I00(AEI/FEDER, UE) of Spain.

\section{Preliminaries}\label{section: preliminaries}

This section establishes a few background facts and notation for the reader. These results are not new.

%\begin{blanko} 
%{Homotopy pullbacks}\label{subsec:homotopy pullbacks}
%\end{blanko}

%In the setting of groupoids, it is crucial to work with homotopy pullbacks instead of ordinary pullbacks. 
%Given a map of groupoids $p \colon X \rightarrow S$ and an object $s \in S$, the \emph{homotopy fibre} $X_s$ of $p$ over $s$ is the homotopy pullback
%\begin{center}
%\begin{tikzcd}
%X_s \drpullback \arrow[r, " " ] \arrow[d, " "]& X \arrow[d, "p"] \\
%1 \arrow[r, "\ulcorner s \urcorner"']& S.
%\end{tikzcd}
%\end{center}

%We use the following standard lemma many times.

\begin{lem}\cite{LK}\label{lemma:pullbackfibres}
A square of groupoids 
\begin{equation}\label{dg:hopbk}
\begin{tikzcd}
P  \arrow[r, "u" ] \arrow[d, " "]& Y \arrow[d, ""] \\
X \arrow[r, "f"']& S
\end{tikzcd}
\end{equation}
is a homotopy pullback diagram if and only if for each $x \in X$ the induced comparison map $u_x: P_x \rightarrow Y_{fx}$ is an equivalence.
\end{lem}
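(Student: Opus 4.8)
The plan is to work fibrewise over $X$, using the fact that in the $\infty$-category (or homotopy theory) of groupoids, a square is a homotopy pullback precisely when it induces equivalences on all homotopy fibres, and then to compare these fibres with the mapping-groupoid fibres $P_x$ and $Y_{fx}$. Concretely, for a groupoid $S$ and a point $s \in S$, write $Y_s$ for the homotopy fibre of $Y \to S$ over $s$; this is the standard (strict, if one likes) model $Y \times_S S_{/s}$ or equivalently the comma construction, and it is functorial in the obvious sense.

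First I would form the homotopy pullback $P' := X \times^h_S Y$ together with its canonical comparison map $c : P \to P'$ induced by the square \eqref{dg:hopbk}; the square is a homotopy pullback diagram if and only if $c$ is an equivalence of groupoids. Since equivalences of groupoids are detected objectwise on $\pi_0$ and $\pi_1$, and since a map is an equivalence iff it is fully faithful and essentially surjective, the cleanest route is to check that $c$ is an equivalence iff it induces an equivalence on each homotopy fibre over $X$ — but in fact it is more direct to bypass $P'$ and argue as follows.

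Next, the key computation: for each $x \in X$, the homotopy fibre of the composite $P \to X$ over $x$ can be computed in two ways. On one hand it is, by definition, $P_x$ (the homotopy fibre of the left vertical leg). On the other hand, using that homotopy fibres are stable under composition and that $P \to X \to S$ equals $P \to Y \to S$, the homotopy fibre of $P \to X$ over $x$ maps to the homotopy fibre of $P \to S$ over $fx$, and this latter homotopy fibre is computed by the square itself: the square is a homotopy pullback iff for every $x$ the induced map $P_x \to Y_{fx}$ — which is exactly $u_x$ — exhibits $P_x$ as the homotopy fibre of $P \to S$ over $fx$ restricted appropriately, i.e. iff $u_x$ is an equivalence. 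Spelling this out: a homotopy pullback square remains a homotopy pullback after pulling back along any map into $X$, in particular along $\{x\} \hookrightarrow X$ (interpreted as the point inclusion up to homotopy), which yields the square with corners $P_x$, $Y_{fx}$, $\{x\}$, $\{fx\}$-fibre, forcing $u_x$ to be an equivalence; conversely, if every $u_x$ is an equivalence, one reassembles $P$ as $\operatorname{colim}_{x \in X} P_x \simeq \operatorname{colim}_{x \in X} Y_{fx} = X \times^h_S Y$ using that colimits over the groupoid $X$ of the fibres recover the total space (the "Grothendieck construction / straightening" for groupoids).

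**The main obstacle** I anticipate is purely bookkeeping rather than conceptual: one must be careful that "the fibre $P_x$" and "the fibre $Y_{fx}$" mean homotopy fibres (not strict fibres) and that the comparison map $u_x$ is the correctly-induced one; and one must invoke a clean statement that a groupoid is recovered as the homotopy colimit of its fibres over a base groupoid, together with the fact that homotopy colimits over a fixed diagram shape preserve objectwise equivalences. Given the ambient setting of the paper, all of this is standard and presumably the cited reference \cite{LK} packages it; so I would keep the argument short, citing stability of homotopy pullbacks under base change for the ``only if'' direction and the fibrewise criterion for equivalences (via straightening over the groupoid $X$) for the ``if'' direction.
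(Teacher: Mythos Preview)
The paper does not give its own proof of this lemma: it is stated in the preliminaries and attributed to \cite{LK} (Carlier--Kock, in preparation), so there is nothing in the paper to compare your argument against.

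That said, your sketch is the standard and correct argument for this well-known fact. The two directions you outline --- stability of homotopy pullbacks under base change along $\{x\}\to X$ for the ``only if'', and the fibrewise criterion for equivalences of groupoids over a base (equivalently, that the comparison map $P\to X\times^h_S Y$ is an equivalence iff it is so on each homotopy fibre over $X$) for the ``if'' --- are exactly what is needed. Your one caveat about homotopy versus strict fibres is the right thing to flag; in practice one replaces $Y\to S$ by an isofibration so that strict and homotopy fibres agree, which is how the paper uses the lemma throughout (cf.\ the remark immediately following the statement).
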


In case $f$ is an isofibration in Lemma \ref{lemma:pullbackfibres}, the strict pullback of the square~(\ref{dg:hopbk}) is homotopy equivalent to the homotopy pullback  \cite[Theorem 1]{Joyal1993}. We usually omit the term homotopy and say `pullback',

\begin{comment}
\begin{lem}\cite{LK}\label{lemma:reformulationequivalence}
A map $F \colon X \to Y$ between groupoids is
an equivalence if and only if the induced map of sets $\pi_0(f) \colon \pi_0 X \to \pi_0 Y$ is a bijection and for each $x\in X$ the induced map
$\pi_1(X,x) \to \pi_1(Y, f(x))$ is an isomorphism of groups.
\end{lem}
\end{comment}

\begin{blanko} 
{Fat nerve}\label{subsec:fatnerve}
\end{blanko}

The \emph{fat nerve} of a category $\mathbb{X}$ is the simplicial groupoid
\begin{align*}
\fatnerve\mathbb{X}: & \simplexcategory^{\op} \rightarrow  \Grpd      \\
& {[n]} \mapsto    {\Map([n], \mathbb{X})}.
\end{align*}
where $\Map([n], \mathbb{X})$ is the mapping space, defined as the maximal subgroupoid of the functor category $\Fun([n], \mathbb{X})$. 
%Explicitly, $(\nerve\mathbb{C})_0$ is the groupoid of objects of $\mathcal{C}$, $(\nerve\mathbb{K})_n$ is the groupoid of $n$-chains of morphisms of $\mathcal{C}$ and maps are $(n+1)$-tuples of compatible monotone bijections. The inner face maps are given by composition, the outer face maps by forgetting the first, or the last poset in the chain. The degeneracy maps are given by inserting identity maps. 

\begin{comment}
\subsection*{Symmetric monoidal category functor}
\end{comment}

\begin{blanko} 
{Symmetric monoidal category functor}\label{subsec:monoindalfunctor}
\end{blanko}

Let $\FinSet^{\operatorname{bij}}$ denote the groupoid of all finite sets and bijections.
The symmetric monoidal category functor $\mathsf{S} \colon \Grpd \rightarrow \Grpd$ \cite{KockWeber} is defined for each groupoid $X$ as the comma category $\FinSet^{\operatorname{bij}}_{/X}$. To simplify notation, when we refer to an object $\mathsf{S}X$ we will write ${\lbrace P_i \rbrace}_{i \in I}$ where $I$ is a finite set except in Section \ref{section:CDHS and operadics categories}, where we will use finite ordinals in order to have precise constructions in the context of operadic categories.

\begin{comment}
as follows: given a groupoid $X$, the objects of $\mathsf{S}X$ are functors from $B \to X$, where $B \in \FinSet^{\operatorname{bij}}$. The morphisms are homotopy-commutative diagrams
\[\begin{tikzcd}
	B && {B'} \\
	& X
	\arrow[from=1-1, to=2-2]
	\arrow[from=1-3, to=2-2]
	\arrow["\alpha", from=1-1, to=1-3]
\end{tikzcd}\]
where $\alpha$ is a morphism in $\FinSet^{\operatorname{bij}}$. In other words, $\mathsf{S} X$ can be interpreted as the comma category $\FinSet^{\operatorname{bij}}_{/X}$. To simplify notation, when we refer to an object $\mathsf{S}X$ we will write ${\lbrace P_i \rbrace}_{i \in I}$ where $I$ is a finite set except in Section \ref{section:CDHS and operadics categories}, where we will use finite ordinals in order to have precise constructions in the context of operadic categories.
\end{comment}

The algebras over $\mathsf{S}$ are symmetric monoidal categories. The unit sends an element $x$ to the list with one element $(x)$, and the multiplication takes disjoint union of index sets. Furthermore, $\mathsf{S}$ preserves homotopy pullbacks and fibrations \cite{Weber2015InternalAC}. 

\begin{rema}
Let $\mathbb{B}$ denotes the skeletal category of
finite ordinals and all bijections. The groupoid $\mathsf{S}X$ is normally defined
as the comma category (weak slice) $\mathbb{B}_{/X}$. We prefer to work with arbitrary finite sets to simplify many proofs.
\begin{comment}
The functor $\mathsf{S}$ is normally defined as follows: 
given a groupoid $X$, the objects of $\mathsf{S}X$ are
finite lists of objects of $X$, and a morphism $(x_1, x_2 , ... , x_n) \rightarrow (y_1, y_2,... , y_m)$ consists of a bijection $\sigma \colon \underline{n} \rightarrow \underline{m}$ and morphisms $x_i \rightarrow y_{\sigma(i)}$.

Note that a finite list of objects in $X$ is a functor $a \colon \underline{n} \to X$. A morphism in the groupoid $\mathsf{S}(X)$ is a bijection
$\alpha \colon \underline{n} \simeq \underline{n}'$ together with arrows in $X$ from $a(i)$ to $a(\sigma(i))$. We can package that data into saying that it's a homotopy-commutative triangle
\[\begin{tikzcd}
	n && {n'} \\
	& X.
	\arrow[from=1-1, to=2-2]
	\arrow[from=1-3, to=2-2]
	\arrow["\alpha", from=1-1, to=1-3]
\end{tikzcd}\]
The components of the natural transformation $\alpha$ are then precisely the arrows from $a(i)$ to $a(\sigma(i))$. Altogether, if $\mathbb{B}$ denotes the skeletal category of
finite ordinals and all bijections, then we have that $\mathsf{S}X$ is the comma category (weak slice) $\mathbb{B}_{/X}$.

Now that we know this, it is very easy to see the connexion with our presentation of $\mathsf{S}$ from skeletal $\mathbb{B}$ to $\FinSet^{\operatorname{bij}}$:
It is still just $\FinSet^{\operatorname{bij}}_{/X}$. 
\end{comment}
\end{rema}

\begin{comment}
\subsection*{Decomposition spaces}
\end{comment}

\begin{blanko} 
{Decomposition spaces}\label{subsection:decomposition spaces}
\end{blanko}

The \textit{simplex category} $\simplexcategory$ is the category whose objects are the nonempty finite ordinals and whose morphisms are the monotone maps. These are generated by \emph{coface} maps $d^i: [n-1] \rightarrow [n]$, which are the monotone injective functions for which $i \in [n]$ is not in the image, and \emph{codegeneracy} maps $s^i: [n+1] \rightarrow [n]$, which are monotone surjective functions for which $i \in [n]$ has a double preimage. We write $d^\bot := d^0$ and $d^\top := d^n$ for the outer coface maps. 

An arrow of  $\simplexcategory$  is termed \emph{active},
and written $g : [m] \actto [n]$, if it preserves end-points. An arrow
is termed \emph{inert}, and written $f : [m] \rightarrowtail [n]$, if it is distance preserving.

\begin{defi}\cite[Definition 3.1]{GTK1} \label{definitiondecompositionspace}
A \emph{decomposition space} is a simplicial groupoid $X \colon \simplexcategory^{\op} \rightarrow \Grpd$
such that the image of any active-inert pushout in $\simplexcategory$ is a pullback of groupoids. This is equivalent \cite[Proposition 3.5]{GTK1} to requiring that for each $n \geq 2$ the following diagrams are pullbacks for $0 < i < n$:
\begin{center}
\begin{tikzcd}
X_{n+1} \drpullback \arrow[r, "d_{i+1}"]\arrow[d, "d_\bot"']& X_n \arrow[d, "d_\bot"] \\
X_n \arrow[r, "d_i"']& X_{n-1}
\end{tikzcd} \; \; \;
\begin{tikzcd}
X_{n+1} \drpullback \arrow[r, "d_{i}"]\arrow[d, "d_\top"']& X_n \arrow[d, "d_\top"] \\
X_n \arrow[r, "d_i"']& X_{n-1}.
\end{tikzcd}
\end{center}
\end{defi}

The notion of decomposition space is a far-reaching generalization of posets (Rota~\cite{Rota1987}) and M\"obius categories (Leroux~\cite{Lero}) for the purpose of defining incidence algebras and M\"obius inversion. The comultiplication is given by (for $f \in X_1$)
$$\Delta_X(f) = \sum_{\binom{\sigma \in X_2}{d_1(\sigma) = f)}} d_2(\sigma) \otimes d_0(\sigma)$$
which means that we `sum over all 2-simplices with long edge $f$ and return the two short edges'.
\begin{comment}
\noindent The span 
\[\begin{tikzcd}
	{X_1} & {X_2} & {X_1 \times X_1}
	\arrow["{d_1}"', from=1-2, to=1-1]
	\arrow["{(d_2, d_0)}", from=1-2, to=1-3]
\end{tikzcd}\]
\noindent defines a linear functor, the comultiplication
$$\Delta:  \Grpd_{/X_1} \rightarrow \Grpd_{/X_1 \times X_1}$$
$$\; \; \; \; \;  \; \;  \; \;  f \mapsto (d_2, d_0)_! \circ  d_1^* (f).$$
 Likewise, the span
\begin{center}
\begin{tikzcd} [cramped, sep=small]
X_1 & X_0\arrow[l, "s_0"'] \arrow[r, "t"]& 1
\end{tikzcd}
\end{center}
 defines a linear functor, the counit
$$\delta:  \Grpd_{/X_1} \rightarrow \Grpd$$
$$\; \; \;\ \; \; \; \;  \; \;  \; \;  f  \mapsto t_! \circ  s_0^* (f).$$

The decomposition space axioms serve to ensure that $\Delta$ is coassociative with counit $\delta$, up to coherent homotopy \cite[\S 5.3]{GTK1}. This coalgebra $(\Grpd_{/X_1} , \Delta, \delta)$  is called the \emph{incidence coalgebra}. 
\end{comment}

\begin{defi}\cite[\S 2.9]{GTK1} \label{definitionsegal}
A simplicial space $X \colon \simplexcategory^{\op} \rightarrow \Grpd$ is called a \emph{Segal space} if for each $n > 0$  the following diagram is a pullback
\begin{center}
\begin{tikzcd}
X_{n+1} \drpullback \arrow[r, "d_\top"]\arrow[d, "d_\bot"']& X_n \arrow[d, "d_\bot"] \\
X_n \arrow[r, "d_\top"']& X_{n-1}.
\end{tikzcd}
\end{center}
\end{defi}

\begin{exa}\label{exa:fatnerve is Segal}
The fat nerve of a category is a Segal space \cite[\S 2.14]{GTK1}.
\end{exa}

\begin{propo}\cite[Proposition 3.7]{GTK1} \label{segaldecomp}
 Any Segal space is a decomposition space.
\end{propo}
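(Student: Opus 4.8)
The plan is to verify the equivalent formulation of Definition~\ref{definitiondecompositionspace}: for every $n\ge 2$ and every $0<i<n$ the two displayed squares must be pullbacks. The two families are interchanged by the left--right symmetry of $\simplexcategory$ (swap $d_\bot\leftrightarrow d_\top$ and ``front'' sub-intervals with ``back'' ones), so it suffices to treat the first one, with top map $d_{i+1}\colon X_{n+1}\to X_n$, both side maps $d_\bot$, and bottom map $d_i\colon X_n\to X_{n-1}$.

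The key input is the generalized Segal property: iterating (pasting) the defining pullback squares of Definition~\ref{definitionsegal} yields, for every $k\ge 1$, an equivalence $X_k\xrightarrow{\ \sim\ }X_1\times_{X_0}X_{k-1}$ whose two legs are ``restrict to the first edge $\{0,1\}$'' and $d_\bot=d_0$ (``restrict to $\{1,\dots,k\}$''), the fibre product being taken over $X_0$ along the target map $X_1\to X_0$ and the initial-vertex map $X_{k-1}\to X_0$; informally, a point of $X_k$ is a point of $X_{k-1}$ together with an extra edge composable just before its initial vertex. I would apply this with $k=n+1$ and with $k=n$. Since $0<i<n$ forces $i+1\ge2$, the face map $d_{i+1}$ deletes a vertex $\ge 2$ and therefore fixes the edge $\{0,1\}$; and $d_i$ (with $i\ge1$) fixes the initial vertex. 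Hence under the two equivalences the square is identified with
\begin{center}
\begin{tikzcd}
X_1\times_{X_0}X_n \arrow[r,"\identity\times d_i"]\arrow[d,"\mathrm{pr}_2"']& X_1\times_{X_0}X_{n-1}\arrow[d,"\mathrm{pr}_2"]\\
X_n\arrow[r,"d_i"']& X_{n-1},
\end{tikzcd}
\end{center}
with $d_{i+1}$ corresponding to $\identity_{X_1}\times d_i$ and each $d_\bot$ to a second projection (the compatibility of the two base maps to $X_0$ uses again that $d_i$ preserves initial vertices).

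This last square is a homotopy pullback: it exhibits $X_1\times_{X_0}X_n$ as the base change of $X_1\times_{X_0}X_{n-1}$ along $d_i\colon X_n\to X_{n-1}$, i.e. $(X_1\times_{X_0}X_{n-1})\times_{X_{n-1}}X_n\simeq X_1\times_{X_0}X_n$. Because homotopy pullbacks paste and are invariant under replacing the corners by equivalent groupoids (Lemma~\ref{lemma:pullbackfibres}), the original square is a homotopy pullback. The second family of squares is handled in exactly the same way, splitting off the last edge $\{n,n+1\}$ instead of the first. Therefore $X$ is a decomposition space.

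The only real work lies in the combinatorial bookkeeping: deriving the generalized Segal equivalence from the two-term version in Definition~\ref{definitionsegal} (a standard induction/pasting of those squares), and checking that for $0<i<n$ the maps $d_{i+1}$ and $d_i$ are compatible with the ``first edge $+$ remainder'' splitting. I expect this to be the main --- and only --- obstacle; there is no homotopy-theoretic subtlety beyond Lemma~\ref{lemma:pullbackfibres}.
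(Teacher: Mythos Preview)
Your proof is correct. The paper does not actually prove this proposition: it is stated as a citation to \cite[Proposition 3.7]{GTK1} with no argument supplied, so there is no in-paper proof to compare against. Your approach---using the iterated Segal equivalence $X_k\simeq X_1\times_{X_0}X_{k-1}$ to rewrite the decomposition-space squares as base-change squares of the form $(X_1\times_{X_0}X_{n-1})\times_{X_{n-1}}X_n\simeq X_1\times_{X_0}X_n$---is the standard one and matches the spirit of the argument in the cited reference. The bookkeeping you flag (that $d_{i+1}$ with $i+1\ge2$ preserves the first edge, and that $d_i$ with $i\ge1$ preserves the initial vertex, so that the simplicial identity $d_0d_{i+1}=d_id_0$ makes $d_{i+1}$ correspond to $\identity_{X_1}\times d_i$) is exactly right, and the appeal to the left--right involution of $\simplexcategory$ to handle the second family of squares is legitimate.
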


%A simplicial map $F: X \rightarrow Y$ is called a \emph{right fibration} if it is cartesian on all bottom face maps $d_{\perp}$. Similarly, $F$ is called a \emph{left fibration} if it is cartesian on $d_{\top}$.

\begin{blanko}
{Culf maps} \label{subsec:culf}
\end{blanko}
\noindent %A map $F \colon Y \rightarrow X$ of simplicial spaces is \emph{cartesian} on an arrow $[n] \rightarrow [k]$ in $\simplexcategory$, if the naturality square for $F$ with respect to this arrow is a pullback.
A simplicial map $F \colon X \rightarrow Y$ is \emph{culf} if $F$ is cartesian on each active map.

 %A simplicial map  $F \colon X \rightarrow Y$ is \emph{conservative} if it is cartesian with respect to codegeneracy maps, and \emph{ulf} (unique lifting of factorisations) if it is cartesian with respect to inner coface maps.   The notion of culf can be seen as an abstraction of coalgebra homomorphism. The conservative condition corresponds to counit preservation and `ulf' corresponds to comultiplicativity. 
 
\begin{lem}\cite[Lemma 4.3]{GTK1} \label{lemma culfcondition for ds}
A simplicial map between decomposition spaces is culf if and only if it is cartesian on $d^1 \colon [1] \rightarrow [2]$.
\end{lem}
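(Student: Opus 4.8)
The plan is to prove both implications. The forward direction is immediate: if $F$ is culf, then by definition it is cartesian on every active map, and $d^1 \colon [1] \to [2]$ is active (it preserves endpoints $0 \mapsto 0$, $1 \mapsto 2$), so $F$ is cartesian on $d^1$. The content is in the converse, and here I would argue that being cartesian on $d^1$, together with the decomposition space axioms on both $X$ and $Y$, forces cartesianness on all active maps.

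First I would recall the structure of active maps in $\simplexcategory$: every active map $g \colon [m] \actto [n]$ factors as a composite of the codegeneracies $s^i$ and the inner cofaces $d^i$ ($0 < i < n$), since these are exactly the generators that preserve endpoints. Actually it is cleaner to use that the active maps are generated under composition by $s^0 \colon [1] \to [0]$ and $d^1 \colon [1] \to [2]$ in the monoidal sense — more precisely, every active map is a composite of "liftings" of these along the Segal maps — but the practical approach is: (i) show $F$ cartesian on all inner cofaces $d^i \colon [n-1] \to [n]$ for $0 < i < n$, and (ii) show $F$ cartesian on all codegeneracies $s^i$, and then (iii) observe that cartesian squares compose, so $F$ is cartesian on every composite of these, i.e. on every active map. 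Since the class of cartesian natural squares is closed under both horizontal composition and pasting, and any active map is a finite composite of inner cofaces and codegeneracies, steps (i)–(iii) suffice.

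For step (i), the key device is the decomposition-space pullback squares from Definition \ref{definitiondecompositionspace}. Consider the naturality square for $F$ on an inner coface $d_i \colon X_n \to X_{n-1}$. I would induct on $n$. The base case is $n = 2$, where the only inner coface is $d^1 \colon [1] \to [2]$, covered by hypothesis. For the inductive step, given an inner coface $d_i \colon X_n \to X_{n-1}$ with $0 < i < n$, I would fit the $F$-naturality square for $d_i$ into a larger diagram using one of the two decomposition-space squares of Definition \ref{definitiondecompositionspace} (say the one with $d_\bot$), which expresses $X_{n+1}$ as a pullback of $d_i \colon X_n \to X_{n-1}$ along $d_\bot$, and similarly for $Y$. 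Then a standard pullback-pasting/cancellation argument, together with the inductive hypothesis that $F$ is cartesian on lower-dimensional inner cofaces and on outer faces used to witness the pullbacks, lets me conclude that $F$ is cartesian on $d_i$. For step (ii), I would handle the codegeneracies: since $F$ is cartesian on $d^1 \colon X_1 \to X_2$ and the degeneracy $s^0 \colon X_1 \to X_0$ (hence $s_0 \colon X_0 \to X_1$) sits in the simplicial identities relating it to faces, one again pastes the relevant naturality squares against the degeneracy relations; alternatively, note that in the literature (this is exactly \cite[Lemma 4.3]{GTK1}) one reduces the codegeneracy case to the coface case via the splitting of $s^0$.

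The main obstacle I anticipate is step (i)'s inductive bookkeeping: setting up the pasting diagram so that all the squares one pastes are already known to be pullbacks — this requires care about which faces of the decomposition-space squares are "outer" (automatically handled, as outer faces of a Segal-type square) versus which need the inductive hypothesis. A clean way to organize this is to use the active–inert factorization system directly: every active map $g$ sits in an active–inert pushout square in $\simplexcategory$ whose image under $X$ (resp. $Y$) is a pullback, and by decomposing $g$ through $d^1$ one reduces the cartesianness of $F$ on $g$ to cartesianness on $d^1$ via pasting of these pullback squares. I would present the argument in that form, since it makes the role of the decomposition-space axiom transparent and avoids an explicit double induction. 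The degeneracy case is then genuinely easy because $F$ being cartesian on an active map that is split-epi (like $s^0$) follows from being cartesian on its section composed with structure maps, or can be subsumed by noting all degeneracies are active and factor appropriately.
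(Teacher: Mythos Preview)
The paper does not prove this lemma; it is quoted verbatim from \cite[Lemma 4.3]{GTK1} as a preliminary fact. There is therefore no proof in the present paper to compare your attempt against.

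On the substance of your sketch: your plan follows the same strategy as the original argument in \cite{GTK1} --- reduce to the generators of the active maps (inner cofaces and codegeneracies) and bootstrap from the single square for $d^1 \colon [1]\to[2]$ using the decomposition-space pullback squares together with the pasting/cancellation law for pullbacks. One point is underdeveloped. Your treatment of the codegeneracies (``genuinely easy because $s^0$ is split'') works for $s_i \colon X_n \to X_{n+1}$ with $n\geq 1$: paste the $F$-naturality square for $s_i$ against the one for an \emph{inner} retraction $d_i$ or $d_{i+1}$; the composite is the identity square, hence a pullback, and cancellation gives what you want. But for $s_0 \colon X_0 \to X_1$ there is no inner face available as a retraction, so the splitting argument as you phrase it breaks down. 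In \cite{GTK1} this bottom case is handled via the extra pullback squares that hold in any decomposition space between degeneracies and outer faces (the bonus pullbacks); you would need to invoke those explicitly rather than wave at the section property. Apart from this gap, your outline is sound and matches the published proof.
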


We will need the following lemmas in \ref{subsec:direcherspecies as decomposition}.

\begin{lem}\cite[Lemma 4.6]{GTK1} \label{proposition: ds+culf=ds}
If $X$ is a decomposition space and $f \colon Y \rightarrow X$ is a culf map, then also $Y$ is a decomposition space.
\end{lem}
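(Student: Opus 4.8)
The plan is to reduce the statement to a question about homotopy pullbacks of groupoids using the characterization of decomposition spaces in Definition~\ref{definitiondecompositionspace}. Concretely, I want to show that for each $n\geq 2$ and each $0<i<n$ the two square diagrams in that definition, formed with $Y$ in place of $X$, are pullbacks of groupoids, given that the corresponding squares for $X$ are pullbacks and that $f\colon Y\to X$ is culf.

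First I would set up the basic commuting cube. Fix $n\geq 2$ and $0<i<n$; consider, say, the square with $d_\bot$ and $d_{i+1}$ at level $n+1$ of $Y$, and map it via $f$ componentwise to the analogous square for $X$. This gives a commutative cube in $\Grpd$ whose bottom face is a pullback (by hypothesis on $X$). The key fact I would invoke is the standard pasting/prism lemma for homotopy pullbacks: in a commutative cube with pullback bottom face, the top face is a pullback if and only if the three "vertical" comparison squares along the relevant edges are pullbacks — or more usefully, it suffices to show two adjacent vertical faces are pullbacks. The vertical faces here are exactly naturality squares of $f$ along the simplicial operators $d_\bot$, $d_\top$, $d_i$, $d_{i+1}$. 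So the whole thing comes down to showing that $f$ is cartesian on these particular face maps.

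The heart of the argument is therefore to extract from the culf hypothesis that $f$ is cartesian on the relevant face maps appearing in the decomposition-space squares. By definition culf means $f$ is cartesian on all active maps; the outer coface maps $d^\bot, d^\top$ are inert, not active, so that direction needs a separate argument — but the inner coface maps $d^i$ with $0<i<n$ are active, so $f$ being cartesian on those is immediate. Then I would handle the $d_\bot$ (resp.\ $d_\top$) direction by a decalage/pasting trick: since $X$ is already a decomposition space, the $d_\bot$-square for $X$ is a pullback, and pasting the naturality square of $f$ along $d^i$ (which is a pullback) with the $X$-square and comparing to the $Y$-side square, one deduces the naturality square of $f$ along $d_\bot$ is a pullback too, at least after knowing $Y$ satisfies enough Segal-type conditions — here I would instead follow the cleaner route used in \cite{GTK1}: use Lemma~\ref{lemma culfcondition for ds} is not available yet since we don't know $Y$ is a decomposition space, so instead argue directly that cartesianness on active maps plus the pullback cube forces the top face to be a pullback, using that $d^{i}\colon[n]\to[n+1]$ together with one outer coface generate the active-inert factorization of the relevant composites. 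Invoking Lemma~\ref{lemma:pullbackfibres} lets me check the pullback condition fiberwise on $\pi_0$ and $\pi_1$ if a purely diagrammatic argument gets stuck.

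\textbf{Main obstacle.} The delicate point is the $d_\bot$/$d_\top$ directions: culfness gives cartesianness on active maps, but the decomposition-space squares also involve the inert outer faces $d_\bot,d_\top$, and one must genuinely use that $X$ is a decomposition space (not merely a simplicial groupoid) to transfer the pullback property across $f$. I expect the clean way to organize this is the three-for-two property of homotopy pullbacks applied to the cube: bottom face is a pullback (hypothesis on $X$), two of the vertical faces are pullbacks (naturality of $f$ along the active inner face maps $d_i, d_{i+1}$, which are cartesian by culfness), hence the remaining faces — in particular the top face, which is the desired $Y$-square — are pullbacks. Making precise exactly which two vertical faces to use, and checking the cube genuinely commutes with the right orientations, is the part requiring care; everything else is formal manipulation with homotopy pullbacks of groupoids as recalled in Lemma~\ref{lemma:pullbackfibres}.
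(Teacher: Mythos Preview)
The paper does not give its own proof of this lemma; it is quoted as a preliminary result from \cite{GTK1}. So there is nothing in the paper to compare against, and I evaluate your proposal on its own terms.

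Your strategy is correct, and the clean version is exactly the pasting argument you sketch at the end. For the square involving $d_\bot$: paste the $f$-naturality square along $d_{i+1}$ (a pullback since $d_{i+1}$ is an inner face, hence active, and $f$ is culf) on top of the $X$-decomposition square (a pullback by hypothesis) to obtain a pullback rectangle. By the simplicial identity $d_i d_\bot = d_\bot d_{i+1}$ and naturality of $f$, this same rectangle equals the pasting of the desired $Y$-square on top of the $f$-naturality square along $d_i$ (again an inner face, hence a pullback by culfness). The prism lemma then forces the $Y$-square to be a pullback. The $d_\top$-type square is handled identically, since both horizontal edges there are also inner face maps.

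Your ``main obstacle'' is not actually an obstacle. You never need $f$ to be cartesian on $d_\bot$ or $d_\top$; the argument uses only cartesianness on the two \emph{inner} face maps appearing as the horizontal edges of each decomposition-space square, and both of those are active. The two side faces of the cube lying over $d_\bot$ (or $d_\top$) play no role in the deduction. You should therefore drop the detour about ``deducing the naturality square of $f$ along $d_\bot$ is a pullback'' and the hedge about Lemma~\ref{lemma culfcondition for ds} --- they muddy an otherwise straightforward prism argument.
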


In order to be able to take homotopy cardinality to get a coalgebra in vector spaces, $X$ must be \emph{locally finite}. This means that $X_1$ is a locally finite groupoid and that the maps $s_0 \colon X_0 \rightarrow X_1$ and $d_1 \colon X_1 \rightarrow X_0$ have finite fibres. We will also use the notion of \emph{locally discrete} decomposition space, which means that $s_0 \colon X_0 \rightarrow X_1 $ and $d_1 \colon X_1 \rightarrow X_0$ have discrete fibres. Moreover, to have a M\"{o}bius inversion formula, yet another condition is needed. A decomposition space is of \emph{locally finite length} if for each $a \in X_1$ there is an upper bound on the $n$ for which the map $X_n \actto X_1$ has non-degenerate elements in the fibre. Since the culfness condition refers to active maps, just as the finiteness conditions stated above, we have the following result.

\begin{lem}\cite[Lemma 1.12]{GKT:restr} \label{proposition: ds+locallydiscrete = locally discrete}
If $X$ is a locally discrete (resp.~locally finite length) decomposition space and $f \colon Y \rightarrow X$ is a culf map, also $Y$ is locally discrete (resp. locally finite length) decomposition space. This also the case for locally finite, but we must to check that $Y_1$ is locally finite.
\end{lem}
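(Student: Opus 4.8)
The plan is to reduce everything to the behaviour of $f$ on active maps, exactly as the preceding discussion suggests. First I would recall the setup: by Lemma \ref{proposition: ds+culf=ds} we already know $Y$ is a decomposition space, so only the finiteness/discreteness/length conditions remain, and each of these is phrased purely in terms of the maps $X_n \actto X_1$ (via $d_1$, $s_0$, and more generally the unique active map $[1]\actto[n]$). Since $f$ is culf, for every active map $\alpha\colon [m]\actto[n]$ the naturality square
\begin{equation*}
\begin{tikzcd}
Y_n \arrow[r]\arrow[d, "\alpha^*"'] & X_n \arrow[d, "\alpha^*"] \\
Y_m \arrow[r] & X_m
\end{tikzcd}
\end{equation*}
is a (homotopy) pullback. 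By Lemma \ref{lemma:pullbackfibres}, the induced comparison on fibres is then an equivalence: for each $y \in Y_m$ with image $x = f(y) \in X_m$, the fibre of $\alpha^*\colon Y_n \to Y_m$ over $y$ is equivalent to the fibre of $\alpha^*\colon X_n \to X_m$ over $x$.

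Now I would treat the three cases in turn. For \emph{locally discrete}: apply the above with $\alpha = s^0\colon [1]\to[0]$ and with $\alpha = d^1\colon [0]\to[1]$ — wait, more carefully, the relevant maps are $s_0\colon Y_0 \to Y_1$ (the degeneracy, which is the image of the active surjection $[1]\actto[0]$) and $d_1\colon Y_1 \to Y_0$ (the image of the active injection $[0]\actto[1]$, i.e. the unique active map hitting the single edge). Both are images of active maps, so their fibres over any point of $Y$ are equivalent to the corresponding fibres over the image point in $X$; since the latter are discrete by hypothesis on $X$, so are the former, and $Y$ is locally discrete. For \emph{locally finite length}: given $a \in Y_1$ with image $\bar a \in X_1$, the fibre of the active map $Y_n \actto Y_1$ over $a$ is equivalent to the fibre of $X_n \actto X_1$ over $\bar a$, and this equivalence is compatible with degeneracy maps (degeneracies are images of active surjections, hence also respected by the culf pullback squares), so a non-degenerate $n$-simplex over $a$ corresponds to a non-degenerate one over $\bar a$; the upper bound on $n$ valid for $\bar a$ in $X$ therefore works for $a$ in $Y$. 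For \emph{locally finite}: the same fibre equivalences show $s_0\colon Y_0\to Y_1$ and $d_1\colon Y_1\to Y_0$ have finite fibres, since those of $X$ do; this is the content of the parenthetical remark that one must additionally check $Y_1$ is a locally finite groupoid — that part does \emph{not} follow from culfness of $f$ alone and must be assumed or verified separately in applications, which is why the statement flags it explicitly.

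The only genuinely delicate point — and the one I would be most careful about — is the \emph{locally finite length} case, because "non-degenerate" is not a fibrewise-homotopy-invariant notion on the nose; one must check that the culf pullback squares are compatible with the simplicial degeneracy operators, so that the equivalence $Y_n \actto Y_1$-fibre $\simeq$ $X_n\actto X_1$-fibre matches up degenerate simplices on both sides. This follows because every degeneracy $s_i$ factors through an active map and $f$ is a simplicial map (so it commutes with all structure maps), but it deserves an explicit sentence rather than being swept under "by culfness". Everything else is a routine transport of a finiteness or discreteness property across the equivalences furnished by Lemma \ref{lemma:pullbackfibres}.
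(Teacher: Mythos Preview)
The paper does not prove this lemma; it is cited from \cite[Lemma 1.12]{GKT:restr} without argument. Your approach is the correct and standard one, and is essentially what one finds in the cited reference: culfness gives pullback squares on all active maps, Lemma~\ref{lemma:pullbackfibres} converts these into fibrewise equivalences, and the locally discrete / locally finite / locally finite length conditions are all fibrewise conditions on images of active maps, so they transfer.

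One indexing slip to fix: you write ``$d_1\colon Y_1 \to Y_0$, the image of the active injection $[0]\actto[1]$'', but there is no active map $[0]\to[1]$ (an active map must preserve both endpoints). The map you actually want --- and which your parenthetical ``the unique active map hitting the single edge'' correctly identifies --- is $d_1\colon Y_2\to Y_1$, induced by the active $d^1\colon[1]\actto[2]$. The paper itself has a typo in the surrounding definitions (writing ``$d_1\colon X_1\to X_0$'' where $d_1\colon X_2\to X_1$ is meant; compare the proof of Proposition~\ref{proposition:Kislocally}, where the correct map is used), which likely caused your confusion. With that correction your argument is complete; the care you take over non-degeneracy in the locally-finite-length case is appropriate and the reasoning there is sound.
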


\begin{comment}
\subsection*{Decalage}\label{subsec:decalage}
\end{comment}

\begin{blanko} 
{Decalage}\label{subsec:decalage}
\end{blanko}

\noindent Given a simplicial space $X$, the \emph{lower dec} $\Dec_{\perp} X$ is a new simplicial space obtained by deleting $X_0$ and shifting everything one position down, also deleting all $d_0$ face maps and all $s_0$ degeneracy maps. It comes equipped with a simplicial map, called the lower dec map, $d_\perp \colon \Dec_{\perp} X \rightarrow X$ given by the original $d_0$. 
 Similarly, the \emph{upper dec} $\Dec_{\top} X$ is obtained by instead deleting, in each degree, the top face map $d_\top$ and the top degeneracy map $s_\top$. The deleted top face maps become the upper dec map $d_\top \colon \Dec_{\top} X \rightarrow X$. The decomposition property can be characterized in terms of decalage.
 
 \begin{comment} 
\begin{propo}\cite[Proposition 4.9]{GTK1} \label{decareculfs}
If $X$ is a decomposition space then the dec maps $d_\top \colon \Dec_\top X \rightarrow X$ and $d_\perp \colon \Dec_\perp X \rightarrow X$ are culf.
\end{propo}
\end{comment}

\begin{teo}\cite{DK, unital, GTK1} \label{theorem: decalage and segal condition}
A simplicial space $X \colon \simplexcategory^{\op} \rightarrow \Grpd$ is a decomposition space if and only if
both $\Dec_{\perp} X$ and $\Dec_{\top} X$ are Segal spaces.
\end{teo}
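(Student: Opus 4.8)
The plan is to prove both implications by a direct diagram-chasing argument, translating the decomposition-space pullback squares of Definition \ref{definitiondecompositionspace} into the Segal squares of Definition \ref{definitionsegal} for the two decalages, and conversely. The key observation is bookkeeping: in $\Dec_\perp X$ the object in degree $n$ is $X_{n+1}$, the face map $d_i^{\Dec_\perp X}$ is the original $d_{i+1}$, and the top face map $d_\top^{\Dec_\perp X}$ in degree $n$ is the original $d_\top \colon X_{n+1}\to X_n$; dually for $\Dec_\top X$ one has $(\Dec_\top X)_n = X_{n+1}$, with $d_i^{\Dec_\top X} = d_i$ and $d_\bot^{\Dec_\top X} = d_\bot$. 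So the Segal condition for $\Dec_\perp X$ asserts that, for each $n>0$,
\begin{center}
\begin{tikzcd}
X_{n+2} \drpullback \arrow[r, "d_\top"]\arrow[d, "d_1"']& X_{n+1} \arrow[d, "d_1"] \\
X_{n+1} \arrow[r, "d_\top"']& X_{n}
\end{tikzcd}
\end{center}
is a pullback, and the Segal condition for $\Dec_\top X$ asserts the analogous squares with $d_\bot$ in place of the outer faces and $d_{i}$ (for $i$ the relevant inner index) on top.

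First I would prove the ``only if'' direction. Assume $X$ is a decomposition space. By Definition \ref{definitiondecompositionspace} all the inner squares with an outer face on one side are pullbacks; in particular the square displayed above (which is the $i=1$ instance of the first family of squares, with $n$ replaced by $n+1$) is a pullback, and likewise its $d_\bot$-analogue. Running over all $n>0$ this is exactly the statement that $\Dec_\perp X$ and $\Dec_\top X$ are Segal spaces. (One should note that the Segal condition as stated in Definition \ref{definitionsegal} uses the two \emph{outer} faces $d_\bot,d_\top$, whereas in the decalage these correspond to one original outer face together with one original inner face; but by the equivalent formulation in Definition \ref{definitiondecompositionspace} it suffices that \emph{some} such square be a pullback, and a standard pasting argument—using that already-established pullbacks compose—promotes the single inner square to the full Segal tower.) This direction is essentially unwinding the indexing conventions.

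The ``if'' direction is the substantive one. Assume $\Dec_\perp X$ and $\Dec_\top X$ are Segal spaces; I must recover, for every $n\ge 2$ and every $0<i<n$, the two pullback squares of Definition \ref{definitiondecompositionspace}. The Segal condition on $\Dec_\perp X$ gives, for all $m>0$, that the square with $d_\bot$ (original $d_1$) and $d_\top$ on the sides of $X_{m+2}\to X_{m+1}$ is a pullback; the Segal condition on $\Dec_\top X$ gives the dual family. From these two families one obtains the full set of ``one inner face, one outer face'' pullbacks by a pasting/cancellation argument: paste Segal squares for $\Dec_\perp X$ along a common edge, use the cancellation lemma for pullbacks (if the outer rectangle and the right square are pullbacks then so is the left), and induct on the distance of the inner index $i$ from the endpoints, bootstrapping from the outer-face cases that the decalage Segal conditions hand us directly. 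The main obstacle is precisely this combinatorial induction: one must carefully choose which simplicial identities $d_i d_j = d_{j-1} d_i$ to invoke so that the squares line up edge-to-edge, and verify at each stage that the intermediate squares being pasted are themselves already known to be pullbacks. Throughout one works with homotopy pullbacks, invoking Lemma \ref{lemma:pullbackfibres} to check pullback-ness fibrewise when a direct diagrammatic argument is awkward. I would present the induction for the first family of squares in full and remark that the second follows by the order-reversing symmetry of $\simplexcategory$ that swaps $\Dec_\perp$ and $\Dec_\top$.
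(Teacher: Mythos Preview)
The paper does not actually prove this theorem: it is stated in the preliminaries with citations to \cite{DK,unital,GTK1} and used as a black box. So there is no in-paper proof to compare against, and I will assess your argument on its own merits.

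Your overall strategy is correct, but both directions can be tightened. In the ``only if'' direction your parenthetical is superfluous: each Segal square for $\Dec_\perp X$ (respectively $\Dec_\top X$) at level $n$ is \emph{literally} one of the pullback squares in Definition~\ref{definitiondecompositionspace}, namely the second family with $i=1$ (respectively the first family with $i=n$), read sideways. No pasting is required. Note also that you have the families swapped: the square you display, with $d_1$ vertically and $d_\top$ horizontally, is the $i=1$ instance of the \emph{second} family, not the first.

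For the ``if'' direction your pasting induction works in principle, but there is a cleaner route already available in the paper: invoke Proposition~\ref{segaldecomp}. If $\Dec_\top X$ is Segal then it is a decomposition space, and its first-family pullback squares (in $\Dec_\top X$-indexing) translate to the first-family squares for $X$ at level $m$ for $0<i<m-1$; the missing case $i=m-1$ is precisely the basic Segal square for $\Dec_\top X$. Dually, $\Dec_\perp X$ Segal supplies the entire second family for $X$. This replaces your combinatorial induction by a single appeal to Proposition~\ref{segaldecomp} plus one observation per family, and avoids the delicate bookkeeping with simplicial identities that you flag as the ``main obstacle''.
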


\begin{blanko} 
{Monoidal decomposition spaces}\label{subsec:monoidaldspace}
\end{blanko}

There is a natural notion of monoidal decomposition space \cite[\S9]{GTK1}; which leads to bialgebras. It is a decomposition space $X$ with two functors $\eta \colon 1 \rightarrow X$ and $\otimes \colon X \times X \rightarrow X$ required to be a monoidal structure and culf. In examples coming from combinatorics, the monoidal structure will typically be given by categorical sum.

\section{Connected directed hereditary species}
\label{section:decomposition K}

In this section, we will introduce the concept of connected directed hereditary species, but first, we will provide a series of tools necessary to define this notion.
%the monoidal decomposition space $\mathbf{K}$ of finite connected posets and contractions is defined.

\begin{comment}
\subsection*{contractions}
\end{comment}

\begin{blanko} 
{Contractions}\label{sec:decspace}
\end{blanko}

A map of posets $f \colon P \rightarrow Q$ is \emph{convex} if for all $x,y \in P$ and $f(x) \leq w \leq f(y)$ in $Q$ there is a unique $p \in P$ with $x \leq p \leq y$ and $f(p) = w$.

\begin{lem}\label{lemma:convezmapstablepullback}
In the category of posets, convex maps are stable under pullback.
\end{lem}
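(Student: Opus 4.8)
The plan is to work directly with the explicit description of pullbacks in the category of posets. Given convex maps $f \colon P \to Q$ and $g \colon R \to Q$, form the pullback $P \times_Q R$ with underlying set $\{(p,r) : f(p) = g(r)\}$ and the product order: $(p,r) \leq (p',r')$ iff $p \leq p'$ in $P$ and $r \leq r'$ in $R$. I want to show the projection $\pi \colon P \times_Q R \to R$ is convex (the other projection is symmetric, and this is the statement one actually needs). So I fix $(p_1, r_1), (p_2, r_2) \in P \times_Q R$ and an element $w \in R$ with $\pi(p_1,r_1) = r_1 \leq w \leq r_2 = \pi(p_2, r_2)$, and I must produce a unique $(p,r)$ in the pullback with $(p_1,r_1) \leq (p,r) \leq (p_2,r_2)$ and $\pi(p,r) = w$, i.e. $r = w$.

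The key step is to feed the chain $f(p_1) = g(r_1) \leq g(w) \leq g(r_2) = f(p_2)$ in $Q$ into the convexity of $f$: since $p_1 \leq p_2$ in $P$ and $f(p_1) \leq g(w) \leq f(p_2)$, there is a unique $p \in P$ with $p_1 \leq p \leq p_2$ and $f(p) = g(w)$. Then $(p, w)$ is a well-defined element of $P \times_Q R$ (because $f(p) = g(w)$), it satisfies $(p_1, r_1) \leq (p, w) \leq (p_2, r_2)$ by construction (using $r_1 \leq w \leq r_2$), and $\pi(p,w) = w$. For uniqueness: if $(p', w)$ is another such element, then $p_1 \leq p' \leq p_2$ and $f(p') = g(w)$, so uniqueness in the convexity property of $f$ forces $p' = p$. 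This establishes that $\pi$ is convex.

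I do not expect any real obstacle here; the only mild subtlety is bookkeeping about which projection to prove convex and checking that the comparison element $w$ in $R$ indeed has $g(w)$ sandwiched correctly in $Q$ — this is immediate from functoriality of $g$ applied to $r_1 \leq w \leq r_2$. One should also remark that the forgetful functor from posets to sets preserves pullbacks (and that the product order is exactly the subspace order inherited from $P \times R$), so the description of $P \times_Q R$ used above is legitimate; this is standard and can be stated without proof.
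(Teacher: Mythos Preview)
Your argument is correct, and the paper in fact states this lemma without proof, so there is nothing to compare against. One small cosmetic point: you begin by assuming both $f$ and $g$ are convex, but your proof only uses convexity of $f$ (for $g$ you use nothing beyond monotonicity, when you pass from $r_1 \le w \le r_2$ to $g(r_1) \le g(w) \le g(r_2)$). This is as it should be --- ``stable under pullback'' means stable under pullback along arbitrary morphisms --- so you may as well drop the unnecessary hypothesis on $g$ in your opening sentence.
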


%The fibres of a contraction over an object are not necessarily a connected subposet. This fibre connection condition will be important to future results. 
In a poset $P$, we say that $p'$ covers $p$, written $p \lessdot p'$, if $p < p'$ and there is no element $x$ such that $p < x < p'$.  

\begin{defi}\label{definition:connectedcontraction}
A map of posets $f \colon P \rightarrow Q$ is a \emph{contraction} if:
\begin{enumerate}
    \item $f$ is a monotone surjection;
    \item for each $q \in Q$, the fibre $P_q$ is a connected convex subposet of $P$;
    \item for any cover $q \lessdot q'$ in $Q$, there exists a cover $p \lessdot p'$ in $P$ such that $f(p) = q$ and $f(p') = q'$.
\end{enumerate}
\end{defi}

%Let $\mathbb{K}$ denote the category of connected finite non-empty posets and contractions. 
\begin{rema}
The following picture gives an illustration of a contraction
\begin{center}
\begin{tikzpicture}[yscale=1,xscale=1]
\node[draw] (box2) at (-5,0){%
\begin{tikzcd}
	\bullet & \bullet & \bullet \\
	\bullet & \bullet & \bullet
	\arrow[from=2-1, to=2-2]
	\arrow[from=1-1, to=2-1]
	\arrow[from=1-1, to=2-2]
	\arrow[from=1-2, to=2-2]
	\arrow[from=1-2, to=2-3]
	\arrow[from=1-2, to=1-3]
	\arrow[from=1-3, to=2-3]
\end{tikzcd}
};
\node[draw] (box1) at (0,0){%
\begin{tikzcd}
	\bullet & \bullet & \bullet
	\arrow[from=1-1, to=1-2]
	\arrow[from=1-2, to=1-3]
	\arrow[bend right = 30, from=1-1, to=1-3]
\end{tikzcd}
};
\draw[->>, shorten <=3pt, shorten >=3pt] (box2) to (box1);
\end{tikzpicture}
\end{center}
Note the importance of demand only to lift covers $\lessdot$; if we had demanded to be able to lift $<$, then the above map would not be an example of contraction.
\end{rema}

\begin{rema}
The following picture gives an illustration of a monotone surjection that satisfies condition $(2)$ in Definition \ref{definition:connectedcontraction} but does not lift covers
\begin{center}
\begin{tikzpicture}[yscale=1,xscale=1]
\node[draw] (box2) at (-5,0){%
\begin{tikzcd}[sep=scriptsize]
\bullet && \bullet \\
	& \bullet
	\arrow[from=1-1, to=1-3]
	\arrow[from=1-1, to=2-2]
\end{tikzcd}
};
\node[draw] (box1) at (0,0){%
\begin{tikzcd}[sep=scriptsize]
	\bullet && \bullet \\
	& \bullet
	\arrow[from=1-1, to=1-3]
	\arrow[from=1-1, to=2-2]
	\arrow[from=2-2, to=1-3]
\end{tikzcd}
};
\draw[->, shorten <=3pt, shorten >=3pt] (box2) to (box1);
\end{tikzpicture}
\end{center}
If we allow this map to be a contraction, Lemma \ref{lemma:surjectivetoprove K is DS} would be false. This lemma is a key ingredient for developing the connection between directed hereditary species and decomposition spaces, so cover lifting is necessary.
\end{rema}

\begin{lem}\label{lemma:stableconnectedpullback}
In the category of posets, contractions are stable under pullback along convex maps.
\end{lem}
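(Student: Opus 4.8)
The plan is to take a contraction $f\colon P\to Q$ and a convex map $g\colon Q'\to Q$, form the strict pullback $P' = P\times_Q Q'$ in the category of posets, and verify the three defining conditions of Definition \ref{definition:connectedcontraction} for the projection $f'\colon P'\to Q'$. Recall $P'$ has underlying set $\{(p,q')\,:\,f(p)=g(q')\}$ with the product order, and $f'(p,q')=q'$, while the other projection $g'\colon P'\to P$ sends $(p,q')\mapsto p$.

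First I would check surjectivity: given $q'\in Q'$, since $f$ is surjective there is $p\in P$ with $f(p)=g(q')$, so $(p,q')\in P'$ and $f'(p,q')=q'$; monotonicity of $f'$ is immediate. Next, the fibre: for fixed $q'$, the fibre $P'_{q'}$ is in order-preserving bijection with the fibre $P_{g(q')}$ via $g'$ (the second coordinate is constant), hence it is connected because $f$ is a contraction; and it is a convex subposet of $P'$ because convex maps are stable under pullback (Lemma \ref{lemma:convezmapstablepullback}), applied to the pullback of the convex map $P_{g(q')}\hookrightarrow P$ — or more directly: if $(p_1,q')\le (p,q'')\le (p_2,q')$ in $P'$ then $q'\le q''\le q'$ forces $q''=q'$, so the middle element already lies in $P'_{q'}$. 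The genuinely substantive part is condition (3), cover lifting. Given a cover $q'\lessdot q''$ in $Q'$, I want a cover $(p,\tilde q)\lessdot(p',\tilde q')$ in $P'$ mapping to it; necessarily $\tilde q=q'$ and $\tilde q'=q''$, so I must find $p\lessdot p'$ in $P$ with $f(p)=g(q')$, $f(p')=g(q'')$, and such that $(p,q')\lessdot(p',q'')$ in $P'$ (no intermediate element).

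The hard part will be arranging that the lifted cover in $P$ is still a \emph{cover} after pulling back, i.e.\ that no element of $P'$ sneaks strictly between $(p,q')$ and $(p',q'')$. Here I would argue as follows: in $Q$, the image $g(q')\le g(q'')$; since $g$ is convex and $q'\lessdot q''$ there can be no element of $Q'$ strictly between, and I claim $g(q')\lessdot g(q'')$ in $Q$ as well — if $g(q')<w<g(q'')$, convexity of $g$ produces a unique $x\in Q'$ with $q'\le x\le q''$ and $g(x)=w$, and $x\ne q',q''$ since $g$ is injective on such comparable lifts, contradicting $q'\lessdot q''$. Now apply condition (3) for $f$ to the cover $g(q')\lessdot g(q'')$ in $Q$: there is a cover $p\lessdot p'$ in $P$ with $f(p)=g(q')$, $f(p')=g(q'')$. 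Then $(p,q')<(p',q'')$ in $P'$; if $(a,b)$ lies strictly between, then $p\le a\le p'$ in $P$ and $q'\le b\le q''$ in $Q'$ with $f(a)=g(b)$. Since $p\lessdot p'$, either $a=p$ or $a=p'$; if $a=p$ then $g(b)=f(p)=g(q')$ with $q'\le b$, and convexity/injectivity of $g$ (on the comparable pair $q'\le b\le q''$) gives $b=q'$, so $(a,b)=(p,q')$; symmetrically $a=p'$ gives $(a,b)=(p',q'')$. Hence $(p,q')\lessdot(p',q'')$, establishing (3). A small point to double-check is that a convex map need not be injective in general, but it \emph{is} injective on any fibre restricted to a chain, which is all that is used above; I would state this as a preliminary observation. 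Assembling the three verified conditions gives that $f'$ is a contraction, completing the proof.
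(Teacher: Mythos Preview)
Your proof is correct and follows essentially the same route as the paper's: verify surjectivity directly, identify the fibre $P'_{q'}$ with $P_{g(q')}$ to get connectedness and convexity, then show that a cover $q'\lessdot q''$ in $Q'$ maps to a cover $g(q')\lessdot g(q'')$ in $Q$ (using convexity of $g$), lift via the contraction $f$, and check the result is a cover in the pullback. You are in fact more careful than the paper on the last step --- the paper simply asserts that $(p,q)\lessdot(p',q')$ --- and your aside about injectivity is unnecessary caution, since a convex map in the paper's sense is automatically injective (if $g(x)=g(y)$, the uniqueness clause with $w=g(x)$ forces $x=y$).
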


\begin{proof}
Let $P, Q$ and $V$ be posets. Let $f \colon P \twoheadrightarrow V$ be a contraction. Let $g \colon Q \rightarrow V$ be a convex map, and let
\begin{center}
\begin{tikzcd}
P \times_V Q \arrow[d, "\pi_P"'] \arrow[r, "\pi_Q"] \drpullback \arrow[white]{dr}[black, description]{(1)} & Q \arrow[d, "g"] \\
P \arrow[r, "f"', two heads]                                   & V                          
\end{tikzcd}
\end{center}
be a pullback diagram. Since monotone surjections are stable under pullback, we have that $\pi_Q$ is a monotone surjection. Let us see that for each $q \in Q$, the fibre $(P \times_{V} Q)_q$ is a connected convex subposet. Since the diagram $(1)$ is a pullback and $g$ is a convex, for each $q \in Q$, the map $\pi_P: (P \times_V Q)_q \rightarrow P_{g(v)}$ is a convex map. Furthermore, $P_{g(v)}$ is a connected convex subposet of $P$ since $f$ is a contraction. Combining this with the convex property of $\pi_P$, it follows that $(P \times_V Q)_q$ is a connected convex subposet of $P \times_V Q$. 
It only remains to prove that for any cover $q \lessdot q'$ in $Q$, there exists a cover $(p, q) \lessdot (p', q')$ in $P \times_{V} Q$. Let $q \lessdot q'$ be a cover in $Q$. Since $g$ is convex, we have that $g(q) \lessdot g(q')$. Since $f$ is a contraction, there exists $p$ and $p'$ in $P$ such that $p \lessdot p'$, and $f(p)= g(q)$ and $f(p') = g(q')$. This means that $(p,q) \lessdot (p', q')$. 
\end{proof}

% new section start 
%\begin{comment}
%\subsection*{Connected directed hereditary species}
%\label{section:DHS and DCHS as dspace}
%\end{comment}

\begin{defi}
A \emph{partially defined contraction} $P \rightarrow Q$ consists of a convex map $\imath \colon P' \to P$ and a contraction $f \colon P' \twoheadrightarrow Q$ represented as the span:
$\begin{tikzcd}[column sep=scriptsize]
	P & {P'} & Q.
	\arrow["f", two heads, from=1-2, to=1-3]
	\arrow["\imath"', tail, from=1-2, to=1-1]
\end{tikzcd}$
\end{defi}
Let $\mathbb{K}_p$ denote the category of finite connected non-empty posets, and whose morphisms are partially defined contractions. Partially defined contractions are composed by pullback composition of spans in the category of posets by the stability property of convex maps (\ref{lemma:convezmapstablepullback}) and contractions under pullbacks (\ref{lemma:stableconnectedpullback}).

\begin{blanko} 
{Species}\label{section:species}\label{subsec:directedhereditaryspeices}
\end{blanko}

The theory of species was introduced by Joyal~\cite{Joyal:1981} as a combinatorial theory of formal power series. A species is a functor  from the category of finite sets and bijections to the category of sets. Usual types of species include graphs, trees, endomorphisms, permutations, etc. Schmitt \cite{schmitt_1993} extended the notion of species in such a way that the combinatorial structures are classified according to their functorial properties. Let $\mathbb{S}_p$ be the category of finite sets and partial surjections: a \emph{hereditary species}~\cite{schmitt_1993} is a functor $H \colon \mathbb{S}_p \rightarrow \Set$. If partial surjections are portrayed as spans
\begin{tikzcd}[column sep=small]
	P & {P'} & Q,
	\arrow["\iota"', tail, from=1-2, to=1-1]
	\arrow["f", two heads, from=1-2, to=1-3]
\end{tikzcd}
we can say that $H$ is contravariant in injections and covariant in surjections. 

%Let $\mathbb{I}$ be the category of finite sets and injections: a \emph{restriction species}~\cite{schmitt_1993} is a functor $R \colon \mathbb{I}^{\op} \rightarrow \Set$. 

%Gálvez, Kock, and Tonks \cite{GKT:restr} generalized the notion of restriction species to cover examples with posets, trees and many related structures. A \emph{directed restriction species} \cite{GKT:restr} is a functor $R \colon \mathbb{C}^{\op} \rightarrow \Grpd$, where $\mathbb{C}$ is the category of finite posets and convex maps. 
We introduce the new notion of \emph{connected directed hereditary species} to cover examples related to the Fauvet--Foissy--Manchon comodule bialgebra of finite topologies and admissible maps; and the Calaque--Ebrahimi-Fard--Manchon comodule bialgebra of rooted trees. But it does not cover Schmitt hereditary species. For such species, we need the non-connected case, which will be defined in Section \ref{section:decomposition D}.

\begin{comment}
The composite of 
\begin{tikzcd}[column sep=small]
	P & {P'} & Q
	\arrow["\iota"', tail, from=1-2, to=1-1]
	\arrow["f", two heads, from=1-2, to=1-3]
\end{tikzcd} and 
\begin{tikzcd}[column sep=small]
	Q & {Q'} & V
	\arrow["{\iota'}"', tail, from=1-2, to=1-1]
	\arrow["{f'}", two heads, from=1-2, to=1-3]
\end{tikzcd} in $\mathbb{K}_p$ is obtained from the diagram
\[\begin{tikzcd}[column sep=small]
	&& {V'} \\
	& {P'} && {Q'} \\
	P && Q && V
	\arrow["{\operatorname{pr}_1}"', tail, from=1-3, to=2-2]
	\arrow["\iota"', tail, from=2-2, to=3-1]
	\arrow["f", two heads, from=2-2, to=3-3]
	\arrow["{\iota'}"', tail, from=2-4, to=3-3]
	\arrow["{f'}", two heads, from=2-4, to=3-5]
	\arrow["{\operatorname{pr}_2}", two heads, from=1-3, to=2-4]
	\arrow["\lrcorner"{anchor=center, pos=0.125, rotate=-45}, draw=none, from=1-3, to=3-3]
\end{tikzcd}\]
as the span 
\begin{tikzcd}
	P & {V'} & V.
	\arrow["{\iota' \operatorname{pr}_1}"', tail, from=1-2, to=1-1]
	\arrow["{f' \operatorname{pr}_2}", two heads, from=1-2, to=1-3]
\end{tikzcd}
Here $V'$ is the pullback of $f$ and $\iota'$ in the category of posets. By the stability property of convex maps (\ref{lemma:convezmapstablepullback}) and contractions under pullbacks (\ref{lemma:stableconnectedpullback}), we have that  $\operatorname{pr}_1$ is convex and $\operatorname{pr}_2$ is a contraction.
\end{comment}

\begin{defi}\label{definition: DCHS}
A \emph{connected directed hereditary species} is a functor $H \colon \mathbb{K}_p \rightarrow \Grpd$. 
\end{defi}
Note that $H$ is covariant in contractions and contravariant in convex maps.
An element of $H[P]$ is called a $H$-structure on the finite poset $P$. 
\begin{comment}
An $H$-structure on a poset $P$ also induces a $H$ structure on any quotient poset and on any convex subposet. Furthermore, these functorialities are compatible in the sense that for any pullback diagram
\begin{center}
    \begin{tikzcd}
P \arrow[d, "f"', two heads] & P' \arrow[l, "u"', tail] \arrow[d, "f'", two heads] \\
Q                            & Q' \arrow[l, "v", tail]                            
\end{tikzcd}
\end{center}
we have 
$$ H[f'] \circ H[u] = H[v] \circ H[f].$$
This `Beck--Chevalley' law is a consequence of the fact that $H$ must respect the composition of spans.
\end{comment}
\begin{comment}
\subsection*{Coalgebras from directed connected hereditary species}
\end{comment}

\begin{blanko}
{Coalgebras from directed connected hereditary species}\label{subsec:coalgebraofcdhs}
\end{blanko}

As in the case of Schmitt hereditary species, connected directed hereditary species give rise to coalgebras. Let $P$ be a poset and $X$ an $H$-structure on $P$. 
The comultiplication of $X$ is given by 
$$\Delta(X) = \sum_{f \colon P \twoheadrightarrow Q} \lbrace X \vert P_q \rbrace_{q \in Q}  \otimes X_! Q,$$
where the sum ranges over isomorphism classes of contractions $P\twoheadrightarrow Q$. Here $X \vert P_q$ is the restriction of $X$ to the fibre $P_q$ for every $q\in Q$, and $X_! Q$ is the $H$-structure $H(f)(X)$ in $H(Q)$.
%This can also be seen as the sum over (certain compatible) partitions $\sigma$ of the underlying poset $P$ of the restriction of $X$ to the blocks of $\sigma$ and the inheritance of $X$ to the induced poset structure on $\sigma$.

\begin{comment}
\subsection*{Running example, part I: Calaque--Ebrahimi-Fard--Manchon comodule bialgebra of rooted trees}
\end{comment}

\begin{blanko}
{Running example, part I: Calaque--Ebrahimi-Fard--Manchon comodule bialgebra of rooted trees} \label{subsec: runningexamplepartI}
\end{blanko}

 We will see that the coalgebra of rooted trees studied by Calaque, Ebrahimi-Fard, and Manchon \cite{CALAQUE2011282} comes from a connected directed hereditary species. For this, we need some preliminaries.
 
Let $P$ be a poset. For $p, p'$ in $P$, an interval $[p,p']$ is \emph{linear} if for each $z,w \in [p,p']$, we have that $z \leq w$ or $w \leq z$.
 
\begin{defi}
 A \emph{tree} $T$ is a poset with a terminal object where every interval is linear.
\end{defi}
 
A \emph{forest} is a disjoint union of trees. We need the following results to obtain a connected directed hereditary species with connected posets and contractions. Let $P$ be a poset. For each $x \in P$, we define $P_{/x} := \lbrace p \in P \vert p \leq x \rbrace$.
 
 \begin{lem}\label{lemma:slicesoftreesaretrees} 
 Let $P$ be a tree. For each $x \in P$, the poset $P_{/x}$ is a tree with root $x$.
 \end{lem}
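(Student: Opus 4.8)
The plan is to verify the two defining properties of a tree directly for $P_{/x}$: that it has a terminal object, and that every interval in it is linear. Along the way the terminal object will be seen to be $x$ itself, which gives the "root $x$" assertion.

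First I would observe that $x$ is a terminal object of $P_{/x}$: by definition every $p \in P_{/x}$ satisfies $p \leq x$, and $x \in P_{/x}$, so $x$ is the top element of $P_{/x}$. In particular $P_{/x}$ is non-empty (and connected), and its root, in the sense of the terminal object, is $x$.

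The substantive step is the remark that $P_{/x}$ is a lower set of $P$, i.e. down-closed: if $p \leq q$ in $P$ and $q \in P_{/x}$, then $p \leq q \leq x$, so $p \in P_{/x}$. From this it follows that intervals are absolute: for any $p, p' \in P_{/x}$, the interval $[p,p']$ computed in $P_{/x}$ agrees with the interval $[p,p']$ computed in $P$, since any $z$ with $p \leq z \leq p'$ satisfies $z \leq p' \leq x$ and hence already lies in $P_{/x}$. Because $P$ is a tree, this interval is linear in $P$, and therefore linear in $P_{/x}$. Thus every interval of $P_{/x}$ is linear.

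Combining the two points, $P_{/x}$ is a poset with terminal object $x$ in which every interval is linear, i.e. a tree with root $x$. There is no genuine obstacle; the only point deserving (minor) care is the verification that intervals are unchanged under passing from $P$ to the lower set $P_{/x}$, which is exactly what allows the linearity condition to descend.
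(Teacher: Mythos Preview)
Your proof is correct. The paper states this lemma without proof, presumably regarding it as immediate; your argument supplies exactly the straightforward verification one would expect: $x$ is terminal in $P_{/x}$ by definition, and since $P_{/x}$ is a lower set of $P$, its intervals coincide with intervals in $P$ and hence inherit linearity.
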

 
 \begin{lem}\label{lemma:slicessetoftreesareforests}
 Let $P$ be a tree. Let $S$ be a convex subposet of $P$, put $P_{/S} = \sum_{x \in S} P_{/x}$. Then $P_{/S}$ is a forest. 
 \end{lem}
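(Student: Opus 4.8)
\textbf{Proof plan for Lemma \ref{lemma:slicessetoftreesareforests}.}

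The plan is to reduce the statement to Lemma \ref{lemma:slicesoftreesaretrees} by showing that the union $P_{/S} = \sum_{x \in S} P_{/x}$ is in fact a \emph{disjoint} union as posets, i.e.\ that the individual slices $P_{/x}$ and $P_{/y}$ for distinct $x,y \in S$ are order-theoretically independent inside $P$, so that no comparability links one summand to another. Granting that, each summand $P_{/x}$ is a tree by Lemma \ref{lemma:slicesoftreesaretrees}, and a disjoint union of trees is by definition a forest, which finishes the argument.

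First I would make precise what $\sum_{x \in S} P_{/x}$ means: as a \emph{set} it is the union $\bigcup_{x \in S} P_{/x} \subseteq P$ (an element $p$ lying below several $x\in S$ is counted once), equipped with the induced order from $P$; the claim is that this induced poset is isomorphic to the genuine coproduct of the $P_{/x}$. So the key step is: if $p \leq x$ and $p \leq y$ with $x,y\in S$, then $P_{/x} = P_{/y}$ (hence the two ``summands'' coincide and contribute a single tree), and moreover there is no comparability $p \leq q$ with $p \in P_{/x}$, $q \in P_{/y}$ unless $P_{/x}=P_{/y}$. To see the first part, suppose $p \leq x$ and $p \leq y$. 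In a tree $P$, the principal up-set of $p$, namely $\{z : z \geq p\}$, is linearly ordered: indeed $P_{/t}$ is linear for $t$ a common upper bound, but more directly, any two elements above a common element $p$ lie in the interval up to the terminal object $\top$, and intervals in a tree are linear, so $x$ and $y$ are comparable, say $x \leq y$. Since $S$ is convex and $p \leq x \leq y$ with $p$ below something in $S$... — here I must be slightly careful, as $p$ need not be in $S$. The cleaner route: from $x\leq y$ we get $P_{/x}\subseteq P_{/y}$; to get equality I use convexity of $S$ together with the fact that any element of $P_{/y}$ comparable into $P_{/x}$ forces $x,y$ into a common interval, and minimality considerations pin down $x = y$. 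Concretely, if $x < y$ then pick the cover relations along a maximal chain from $x$ to $y$; convexity of $S$ would then force intermediate elements into $S$ only if they were already constrained, so instead I argue directly that $x \ne y$ in $S$ with $x \le y$ is impossible to combine with a shared lower element \emph{unless} we simply allow $P_{/x}\subseteq P_{/y}$ and observe the union is unaffected — so WLOG the index set $S$ can be replaced by its subset of elements that are pairwise incomparable-or-equal relative to the slices, without changing $P_{/S}$.

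For the ``no cross comparability'' step: suppose $p \in P_{/x}$, $q \in P_{/y}$ and $p \leq q$ in $P$, with $x,y \in S$. Then $p \leq q \leq y$, so $p \in P_{/y}$; thus $p \leq x$ and $p \leq y$, and by the linearity of the up-set of $p$ (equivalently, the interval $[p,\top]$ is linear), $x$ and $y$ are comparable. If $x \leq y$ then $q \leq y$ and $p \le x \le y$, so everything lies in $P_{/y}$ — the two summands are contained in a common one. If $y \leq x$ symmetrically. Combined with convexity of $S$ (which ensures that whenever $x \leq y$ both in $S$, any element between them that we care about behaves well), the upshot is that the summands $P_{/x}$ are either equal or have no elements comparable across them, which is exactly the condition for $\bigcup_{x\in S} P_{/x}$ with the induced order to be the coproduct poset $\coprod P_{/x}$.

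I expect the main obstacle to be the bookkeeping around the fact that $p$ need not belong to $S$ and that distinct $x, y \in S$ may have $P_{/x} \subseteq P_{/y}$, so ``disjoint union'' must be interpreted after collapsing redundant indices; once one observes that $P_{/S}$ depends only on the down-closure $\bigcup_x P_{/x}$ and that this set decomposes into connected components each of which is some $P_{/x}$ (using that $P$ is a tree, hence its Hasse diagram is a forest in the graph-theoretic sense), the result is immediate. The role of convexity of $S$ is mainly to guarantee that $P_{/x}\cap S$ and the comparabilities are controlled; I would state this cleanly as: the connected components of the induced poset on $\bigcup_{x\in S}P_{/x}$ are exactly the maximal slices $P_{/x}$, $x\in S$, and invoke Lemma \ref{lemma:slicesoftreesaretrees} on each.
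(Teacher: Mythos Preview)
You have misread the symbol $\sum$. Throughout the paper (see for instance the construction in Lemma~\ref{lemma:surjectivetoprove K is DS}, where one puts $W := \sum_{q \in Q} W_q$), the symbol $\sum$ denotes the formal coproduct of posets, i.e.\ the genuine disjoint union, \emph{not} the union $\bigcup_{x\in S}P_{/x}$ taken inside $P$. With that reading, $P_{/S} := \sum_{x\in S} P_{/x}$ is \emph{by construction} a disjoint union of posets, and the only content of the lemma is that each summand $P_{/x}$ is a tree---which is exactly Lemma~\ref{lemma:slicesoftreesaretrees}. The paper's proof is this two-line argument.

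Your alternative interpretation forces you into an argument that is both unnecessary and, as written, not correct. The claim that $P_{/x}=P_{/y}$ whenever they share a common lower element fails as soon as $S$ contains comparable elements: if $x<y$ are both in $S$ then $x$ itself is a common lower element, yet $P_{/x}\subsetneq P_{/y}$. You notice this and try to repair it by ``collapsing redundant indices'' and passing to maximal elements of $S$, but this already signals that the interpretation is off: under your union reading the number of trees in the resulting forest would be $|\max(S)|$, whereas under the paper's coproduct reading it is always $|S|$, and these differ precisely when $S$ is not an antichain. Note also that convexity of $S$ plays no role in either reading; it appears in the hypothesis only because that is the ambient setting in which the lemma is applied, not because the proof needs it.
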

 
 \begin{proof}
 Since every interval in $P$ is linear, it is straightforward to see that $P_{/S}$ is a disjoint union of posets. Moreover, for each $x \in S$, the poset $P_{/x}$ is a tree by Lemma \ref{lemma:slicesoftreesaretrees}. Hence, $P_{/S}$ is a forest.
 \end{proof}
 
 \begin{lem}\label{lemma:slicesofforestsareforests}
 Let $T$ be a forest. Let $S$ be a convex subposet of $T$. Then $S$ is a forest.
 \end{lem}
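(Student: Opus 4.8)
The plan is to reduce to showing that every connected component $C$ of $S$ is a tree in the sense of the definition above; since any poset is the disjoint union (as posets) of its connected components, this at once gives that $S$ is a forest. I would organise the verification of "$C$ is a tree" into three steps: (a) $C$ is contained in a single tree-component of $T$, (b) every interval of $C$ is linear, and (c) $C$ has a terminal object.

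For (a), observe that a zigzag of order relations witnessing connectedness within $C$ is a fortiori a zigzag within $T$, so all elements of $C$ lie in one connected component $T_0$ of $T$; hence I may assume $T = T_0$ is a tree, with terminal object $r$ (which is then its maximum). For (b), the point is that convexity passes along the two inclusions $C \subseteq S \subseteq T$: for $p \le q$ in $C$, an element $z$ of $T$ with $p \le z \le q$ lies in $S$ by convexity of $S$ in $T$, and then lies in $C$ because it is comparable to $p \in C$. So $[p,q]_C = [p,q]_S = [p,q]_T$, and the last is an interval of the tree $T$, hence a chain; thus every interval of $C$ is linear.

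The heart of the argument is (c), and here I would use finiteness (all posets in play are finite). For $x \in C$ set $U_x := \{y \in C : y \ge x\}$. Since $U_x \subseteq [x,r]_T$ and the latter is an interval of the tree $T$, it is a chain; being finite and nonempty, $U_x$ has a maximum $\rho(x)$, which is a maximal element of $C$. If $x \le y$ in $C$, then $U_y \subseteq U_x$, so $\rho(y) \in U_x$; since $U_x$ is a chain topped by $\rho(x) \in C$ and $\rho(y)$ is maximal in $C$, this forces $\rho(y) = \rho(x)$. Hence $\rho$ takes equal values on comparable elements, and therefore is constant on the connected poset $C$; writing $m$ for the common value, we get $x \le \rho(x) = m$ for all $x \in C$, so $m$ is the terminal object of $C$. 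Combined with (b), $C$ is a tree, which finishes the proof.

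I expect (c) to be the only real difficulty: one must see that the "root map" $\rho$ is well defined and locally constant, which is precisely where the tree hypothesis (the up-set of every element is a chain) and finiteness enter — steps (a) and (b) are purely formal consequences of convexity. It seems worth remarking that finiteness cannot be dropped here: $[0,1)$ is a connected convex subposet of the tree $[0,1]$ with no maximum, hence not a tree.
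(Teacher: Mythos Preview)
Your argument is correct. The paper actually states this lemma without proof, leaving it to the reader, so there is nothing to compare against; your write-up supplies a clean justification of what the authors treat as routine. The three-step decomposition is sound: step (a) is immediate, step (b) uses only that convexity of $S$ in $T$ makes intervals in $C$ coincide with intervals in the ambient tree, and step (c) correctly exploits that in a tree the up-set of any element is a chain (being contained in $[x,r]_T$), together with finiteness, to produce a well-defined root map that is constant on comparable pairs and hence on connected components. Your closing remark about $[0,1)\subset[0,1]$ is apt: finiteness is genuinely needed for (c), which is consistent with the paper's standing convention that all posets are finite.
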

 
 \begin{lem}
 Let $f \colon P \twoheadrightarrow Q$ be a contraction between posets. If $P$ is a tree, then $Q$ is a tree.
 \end{lem}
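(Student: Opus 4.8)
The plan is to verify the three defining conditions of a contraction (Definition~\ref{definition:connectedcontraction}) for the target $Q$, using the fact that a tree is characterized by having a terminal object and all intervals linear. Since $f \colon P \twoheadrightarrow Q$ is already a monotone surjection, the real content is to transport the "terminal object" and "linear intervals" properties from $P$ to $Q$.

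First I would show $Q$ has a terminal object. Let $\ter$ be the terminal object of the tree $P$; I claim $f(\ter)$ is terminal in $Q$. Given any $q \in Q$, surjectivity gives $p \in P$ with $f(p) = q$, and then $p \leq \ter$ in $P$ forces $q = f(p) \leq f(\ter)$ by monotonicity. So $f(\ter)$ is an upper bound, hence (being comparable to everything, and $Q$ a poset) the terminal object.

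Next I would show every interval $[q_0, q_1]$ in $Q$ is linear, i.e.\ totally ordered. Take $q_0 \leq q_1$ in $Q$ and two elements $a, b \in [q_0, q_1]$; I must show $a \leq b$ or $b \leq a$. The idea is to lift a saturated chain from $q_0$ up to $q_1$ through $a$ and $b$ using condition~(3) of contractions (cover lifting): pick a maximal chain $q_0 = r_0 \lessdot r_1 \lessdot \cdots \lessdot r_k = q_1$ refining $q_0 \le a \le q_1$ (so that $a$ appears among the $r_i$; since intervals in a finite poset contain maximal chains through any prescribed element this is fine), and similarly one through $b$. The better route, though, is to lift the whole interval: I would produce $p_0 \leq p_1$ in $P$ with $f(p_0) = q_0$, $f(p_1) = q_1$, such that $f$ restricted to $[p_0, p_1]$ surjects onto $[q_0, q_1]$ — this follows by iterating cover lifting along a maximal chain of $[q_0,q_1]$, using at each step that fibres are convex and connected to splice the lifts together into a chain in $P$. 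Then $[p_0, p_1]$ is linear in the tree $P$, so its image $[q_0, q_1]$, being a monotone image of a chain, is itself a chain, giving comparability of $a$ and $b$.

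The main obstacle will be the lifting argument in the second part: turning the pointwise cover-lifting of condition~(3) into a genuine chain in $P$ lying over a maximal chain of $[q_0, q_1]$. The subtlety is that successive applications of cover lifting produce covers $p_i \lessdot p_i'$ and $p_{i+1} \lessdot p_{i+1}'$ where $p_i'$ and $p_{i+1}$ both lie in the same fibre $P_{r_{i+1}}$ but need not coincide; here I would invoke that each fibre $P_q$ is a \emph{connected convex} subposet (condition~(2)) together with the linearity of intervals in $P$ to conclude that the fibre is in fact a chain, and then reconnect the lifted covers through that fibre. Once the chain in $P$ over the maximal chain of $[q_0,q_1]$ is in hand, linearity of intervals in $P$ and monotonicity of $f$ finish the argument. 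I would also remark that connectedness of $Q$ is immediate, since $Q$ is the image of the connected poset $P$ under a monotone surjection (indeed $Q$ has a terminal object, hence is connected).
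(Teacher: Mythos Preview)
Your terminal-object argument is fine and matches the paper's. The problem is in the linearity argument: the claim that each fibre $P_q$ ``is in fact a chain'' is false. Take $P$ to be the tree with root $r$, children $a,b$, and a further child $c<a$; let $Q=\{q_0<q_1\}$ and send $c\mapsto q_0$ and $a,b,r\mapsto q_1$. This is a contraction (the fibre $\{a,b,r\}$ is connected and convex, and $c\lessdot a$ lifts $q_0\lessdot q_1$), yet $P_{q_1}=\{a,b,r\}$ is not a chain. So your splicing step, as written, does not go through. What \emph{is} true, and is enough, is that every connected subposet of a tree has a maximum: in a tree, $x\le y$ and $x\le z$ force $y,z$ to be comparable (both lie in the linear interval $[x,\top_P]$), so any valley $s_{i-1}>s_i<s_{i+1}$ in a zigzag can be shortcut, and a zigzag between two maximal elements collapses. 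Using this, if $p_0\in P_{q_0}$ and $x_0\lessdot x_1$ is any lift of $q_0\lessdot r_1$, then letting $M_0$ be the maximum of $P_{q_0}$ one has $p_0\le M_0$, $x_0\le M_0$, and $x_1$ comparable to $M_0$ by the tree property; $x_1\le M_0$ is impossible since $f(x_1)=r_1>q_0$, so $p_0\le M_0<x_1$. Iterating gives a chain in $P$ above the fixed $p_0$ hitting any prescribed $q\ge q_0$, whence the up-set of $q_0$ in $Q$ is the image of the chain $[p_0,\top_P]$ and is therefore totally ordered. Note also that lifting \emph{one} maximal chain of $[q_0,q_1]$ does not by itself show $f([p_0,p_1])$ is all of $[q_0,q_1]$; you need the argument above for every $q\in[q_0,q_1]$.

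The paper takes a different route: it pulls back along the convex inclusion $[q_1,q_2]\hookrightarrow Q$ to obtain a contraction $f^{-1}([q_1,q_2])\twoheadrightarrow[q_1,q_2]$ with connected convex source inside $P$, picks preimages of any two interval elements, and argues that the connecting zigzag between them can be made monotone. Both approaches ultimately rest on the same tree property (comparability of elements lying above a common lower bound), but the paper works globally with the preimage of the interval rather than building chains cover by cover.
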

 
\begin{proof}
Let $\top_P$ denote the terminal object of $P$. The object $f(\top_P)$ is terminal in $Q$. Indeed, let $q$ be an object in $Q$. Since $f$ is a surjection, there exists $p \in P$ such that $f(p) = q$. Furthermore, $p < \top_P$ since $\top_P$ is terminal, and therefore $f(p) < f(\top_P)$ by the monotonicity condition of $f$. 
Now we will prove that every interval in $Q$ is linear. Let $[q_1, q_2]$ be an interval in $Q$, and let $\imath \colon [q_1, q_2] \to Q$ denote the canonical convex map. Consider the pullback diagram
\[\begin{tikzcd}
	{f^{-1}([q_1,q_2])} & P \\
	{[q_1,q_2]} & Q.
	\arrow["f", two heads, from=1-2, to=2-2]
	\arrow[tail, from=1-1, to=1-2]
	\arrow["{f'}"', two heads, from=1-1, to=2-1]
	\arrow["\imath"', tail, from=2-1, to=2-2]
	\arrow["\lrcorner"{anchor=center, pos=0.125}, draw=none, from=1-1, to=2-2]
\end{tikzcd}\]
Here $f'$ is a contraction since contractions are stable under pullback along convex maps (\ref{lemma:stableconnectedpullback}). Since $f$ is a contraction and $\imath$ is a convex map, we have that $f^{-1}([q_1, q_2])$ is a connected convex subposet of $P$. For each $z$ and $w$ in $[q_1, q_2]$, there exists $a$ and $b$ in $f^{-1}([q_1, q_2])$ such that $f'(a) = z$ and $f'(b) = w$ since $f'$ is surjective. By the connectivity property of $f^{-1}([q_1, q_2])$, we have that $a$ and $b$ are connected by a ziz-zag. The contraction property of $f'$ forces that the ziz-zag is of the form: $a < \cdots < b$ or $b < \cdots < a$. By the monotonicity property of $f'$ follows that $z < w$ or $w < z$. Hence, $[q_1,q_2]$ is linear.
\end{proof}

 \begin{lem}\label{lemma:contractionsofforestsareforests}
Let $f \colon P \twoheadrightarrow Q$ be a contraction between posets. Suppose, moreover, that $P$ is a forest, then $Q$ is a forest.
 \end{lem}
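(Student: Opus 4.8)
The statement says: if $f \colon P \twoheadrightarrow Q$ is a contraction and $P$ is a forest, then $Q$ is a forest. The natural strategy is to reduce to the connected case, which was just handled in the preceding lemma (a contraction out of a tree has a tree as codomain). First I would observe that a contraction, being a monotone surjection whose fibres are \emph{connected}, must send each connected component of $P$ onto a connected subposet of $Q$, and conversely that the preimage of a connected component of $Q$ is a union of connected components of $P$; hence $f$ restricts to a contraction on each connected component. Concretely, write $P = \sum_{j \in J} T_j$ as a disjoint union of trees. I would show that the images $f(T_j)$ are exactly the connected components of $Q$: they are connected (image of connected under monotone map), they cover $Q$ (since $f$ is surjective), and they are pairwise disjoint — if $q \in f(T_j) \cap f(T_k)$ then the fibre $P_q$, being connected, lies in a single component, forcing $j = k$.

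Next, for each $j$ I would check that the corestriction $f_j \colon T_j \twoheadrightarrow f(T_j)$ is again a contraction. Surjectivity is immediate; the fibres of $f_j$ are fibres of $f$ (since $f(T_j)$ is a full component, $(T_j)_q = P_q$ for $q \in f(T_j)$), hence connected convex subposets; and cover lifting transfers because a cover $q \lessdot q'$ in the component $f(T_j)$ is also a cover in $Q$ (being a convex subposet — or a component — no intermediate elements are added), so the lift $p \lessdot p'$ provided by the contraction $f$ automatically lies in $T_j$ as $f(p) = q \in f(T_j)$. Then the previous lemma applies: each $f(T_j)$ is a tree. Therefore $Q = \sum_{j \in J} f(T_j)$ is a disjoint union of trees, i.e.\ a forest.

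The main obstacle — though a mild one — is justifying cleanly that $f$ respects the decomposition into components, in particular that a cover in a connected component of $Q$ remains a cover in $Q$; this uses that distinct connected components of a poset have no order relations between them, so no element of another component can sit strictly between $q$ and $q'$. One should also be slightly careful that "forest" as defined here (disjoint union of trees, each tree having a terminal object) does not require finitely many components, but since everything in $\mathbb{K}_p$ is finite this is not an issue. Once the component-wise reduction is set up, the lemma follows formally from the tree case.
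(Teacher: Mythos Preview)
The paper states this lemma without proof, leaving it as an immediate consequence of the preceding tree case; your reduction via connected components is precisely the intended argument.

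There is, however, one small logical gap. You argue that the images $f(T_j)$ are connected, cover $Q$, and are pairwise disjoint, and conclude that they are the connected components of $Q$. But these three properties do not suffice: in the poset $\{a<b\}$ the singletons $\{a\}$ and $\{b\}$ are connected, disjoint, and cover, yet the poset is itself connected. What is missing is ruling out any order relation $q<q'$ with $q\in f(T_j)$ and $q'\in f(T_k)$ for $j\neq k$. This is exactly where cover-lifting (not just fibre connectedness) is needed: choose a cover $q_0\lessdot q_1$ along a maximal chain from $q$ to $q'$ at which the index changes, lift it to $p_0\lessdot p_1$ in $P$, and observe that $p_0\in P_{q_0}\subseteq T_j$ and $p_1\in P_{q_1}\subseteq T_k$ (each connected fibre lies in a single tree), contradicting $p_0<p_1$. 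Your final paragraph worries about the wrong direction --- that a cover inside a component of $Q$ remains a cover in $Q$ is automatic, since components are convex; the nontrivial point is the one above. With this correction the plan is complete.
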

 
The functor $H_{\CEM}: \mathbb{K}_p \rightarrow \Grpd$ is defined as follows: if $P$ is a tree $H_{\CEM}(P)$ has one object, which is $P$ itself, and if $P$ is not a tree $H_{\CEM}(P)$ is empty. By Lemma \ref{lemma:slicesofforestsareforests}, we have that $H_{\CEM}$ is contravariant in convex maps. By Lemma \ref{lemma:contractionsofforestsareforests}, the functor $H_{\CEM}$ is covariant in contractions. Therefore, $H_{\CEM}$ is a connected directed hereditary species.
%Hence if $P$ is a tree then $P$ denotes both the tree and the $H_{\CEM}$-structure on $P$.

\begin{rema}\label{remark:incidencecoalgebraCEM}
The comultiplication $\Delta_{H_{\CEM}}$ of the incidence coalgebra of $H_{\CEM}$ is given by:
$$\Delta_{H_{\CEM}}(T) = \sum_{f: T \twoheadrightarrow Q} \lbrace T_q \rbrace_{q \in Q} \otimes Q.$$
This coalgebra is the Calaque--Ebrahimi-Fard--Manchon coalgebra of rooted trees \cite{CALAQUE2011282}. 
\end{rema}

\begin{exa}[\textbf{Faà di Bruno comodule bialgebra of linear trees, part I}]\label{exa:linear trees 1}
A \emph{linear tree} is one in which every node has precisely one input edge. The functor $H_{\FB}: \mathbb{K}_p \rightarrow \Grpd$ is defined as follows: if $P$ is a linear tree $H_{\FB}(P)$ has one object, which is $P$ itself, and if $P$ is not a linear tree $H_{\FB}(P)$ is empty. By Lemmas \ref{lemma:slicesofforestsareforests} and  \ref{lemma:contractionsofforestsareforests}, we have  that  $H_{\FB}$ is contravariant in convex maps and covariant in contractions. Therefore, $H_{\FB}$ is a connected directed hereditary species. 
The bialgebra associated to ${H}_{\CEM}$ is the Faà di Bruno bialgebra of linear trees since there is a one-to-one correspondence
between comultiplying contractions of linear
trees and comultiplying monotone surjections $[n] \twoheadrightarrow 1$. The way it appears
here is like in algebraic topology where it
is the (dual) Landweber--Novikov bialgebra
(see \cite[\S 3]{Morava:Oaxtepec}), whereas the usual presentation of the Fa\`a di Bruno
bialgebra is with (non monotone) surjections (see for example \cite{KockWeber} and \cite{Cebrian:2008.09798}) or with partitions, as in \cite{Figueroa-GraciaBondia:0408145}. Over the rational numbers the two are isomorphic (see for example \cite{EbrahimiFard-Lundervold-Manchon:1402.4761}).
\end{exa}

%\begin{comment}
%\subsection*{The decomposition space K}
%\end{comment}

\section{The decomposition space of contractions}\label{subsec:dec space K}

In this section we will introduce the decomposition space of contractions $\mathbf{K}$, but first we need some preliminaries.

Let $\mathbf{Cat}_{\lt}$ denote the category of categories with chosen local
terminals, or equivalently upper-dec coalgebras~\cite{GKW2021}. Let $\mathbb{K}$ denote the category of finite connected non-empty posets and contractions. 

\begin{exa}
In the category $\mathbb{K}$ of connected finite non-empty posets and contractions a chosen terminal object is a poset with one element.
\end{exa}

The \textit{t-simplex category} $\simplexcategory^{\ter}$ is the category whose objects are the nonempty finite ordinals and whose morphisms are the monotone maps that preserve the top element. %Let $\mathbf{tsGrpd}$ denote the category of $\Grpd$-valued $\simplexcategory^{\ter}$-presheaves. 

\begin{defi}\label{defi: t-nerve}
For $\mathcal{C}$ a category with chosen local terminals, its \emph{fat $\lt$-nerve} $\fatnerve^{\lt}(\mathcal{C})$ is the $\Grpd$-valued $\simplexcategory^{\ter}$-presheaf describe as follows: for $n \geq 1$, the groupoid $\fatnerve^{\lt}(\mathcal{C})_n$ is the same as the groupoid $\fatnerve(\mathcal{C})_{n-1}$. The groupoid $\fatnerve^{\lt}(\mathcal{C})_{0}$ is the groupoid of chosen local terminal objects in $\mathcal{C}$. The face and degeneracy maps act as the usual fat nerve construction except in $d_\perp \colon \fatnerve^{\lt}(\mathcal{C})_{1} \to \fatnerve^{\lt}(\mathcal{C})_{0}$ that sends each object in $\mathcal{C}$ to its corresponding chosen local terminal object. The degeneracy map $s_0 \colon \fatnerve^{\lt}(\mathcal{C})_{0} \to \fatnerve^{\lt}(\mathcal{C})_{1}$ is the inclusion.
\end{defi}

To simplify the notation, we put $\mathbf{K}^{\circ} := \fatnerve^{\lt}(\mathbb{K})$. Thus, $\mathbf{K}^{\circ}_2$ is the groupoid of contractions, and $\mathbf{K}^{\circ}_1$ is the groupoid of finite connected non-empty posets and monotone bijections. $\mathbf{K}^{\circ}_0$ is the terminal groupoid. To obtain the top face maps, it is necessary to introduce families through the symmetric monoidal category functor. We define $\mathbf{K} := \mathsf{S}\mathbf{K}^{\circ}$ to be the symmetric monoidal category functor $\mathsf{S}$ applied to $\mathbf{K}^{\circ}$. All the face maps (except the missing top ones) and degeneracy maps are $\mathsf{S}$ applied to the face and degeneracy maps of $\fatnerve^{\lt}(\mathbb{K})$. The top face map is given by: 

\begin{itemize}
\item For $n \geq 2$, the top face map $d_\top \colon \mathbf{K}_n \rightarrow \mathbf{K}_{n-1}$ is defined as follows: given a $(n-1)$-chain of contractions 
\begin{center}
\begin{tikzcd}
P_0 \arrow[r, "f_0", two heads] & P_1 \arrow[r, "f_1", two heads] & P_2 \arrow[r, two heads] & \cdots \arrow[r, two heads] & P_{n-2} \arrow[r, "f_{n-2}", two heads] & Q,
\end{tikzcd}
\end{center}
for each element $q \in Q$, we can form the fibres over $q$. We end up with a family
\begin{center}
\begin{tikzcd}
\lbrace (P_0)_{q} \arrow[r, "(f_0)_q", two heads] & (P_1)_q \arrow[r, "(f_1)_q", two heads] & (P_2)_{q} \arrow[r, two heads] & \cdots \arrow[r, two heads] & (P_{n-3})_q \arrow[r, "(f_{n-2})_q", two heads] & (P_{n-2})_q \rbrace_{q \in Q}.
\end{tikzcd}
\end{center}
For each $0 \leq i \leq  n-2$, the map $(f_i)_q$ is a contraction by Lemma \ref{lemma:stableconnectedpullback}.
  
\item $d_\top \colon \mathbf{K}_1 \rightarrow \mathbf{K}_{0}$ sends a family of finite posets $\lbrace Q_i \rbrace_{i \in I}$ to the family whose components are the terminal poset $1$ indexed by the disjoint union $\sum_{i \in I} Q_i$.  
\end{itemize}

Note that the fibres are convex non-empty subposets since we only consider contractions, not arbitrary maps. The simplicial identities will be verified in \ref{proposition:Ksimplicial}. To simplify the proof that $\mathbf{K}$ is a decomposition space, we need some preliminaries.  

%The simplicial groupoid $\Dec_\top \mathbf{K}$ is described as follows: $(\Dec_\top \mathbf{K})_0$ is the groupoid whose objects are families $\lbrace P_i \rbrace_{i \in I}$ of finite connected non-empty posets, and a morphism $\lbrace u_i \rbrace_{i \in I} \colon \lbrace P_i \rbrace_{i \in I} \rightarrow \lbrace Q_j \rbrace_{j \in J}$ consists of a bijection $\sigma \colon I \rightarrow J$ and morphisms $u_i \colon P_i \rightarrow Q_{\sigma(i)}$. The objects of the groupoid $(\Dec_\top \mathbf{K})_n$ are finite families of $(n-1)$-chains of contractions between finite non-empty posets and the morphisms of $(\Dec_\top \mathbf{K})_n$ are $n$-tuples of compatible bijections. Let $\mathbb{K}$ denote the category of finite connected non-empty posets and contractions. %On the other hand, with the description of  $\mathbf{N}\mathbb{CP}$  given in \ref{subsec:fatnerve}, it is easy to deduce the following result.

\begin{propo}\label{proposition:dectopKisSegal}
We have an equality $\Dec_\top \mathbf{K} = \mathsf{S}\fatnerve\mathbb{K}$.
\end{propo}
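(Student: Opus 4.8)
The statement to prove is $\Dec_\top \mathbf{K} = \mathsf{S}\fatnerve\mathbb{K}$, an \emph{equality} of simplicial groupoids, so the plan is to check that the two simplicial objects agree degreewise and that the face and degeneracy maps match. The guiding principle is that $\Dec_\top$ deletes the top face and top degeneracy in each degree and shifts down, so $(\Dec_\top \mathbf{K})_n = \mathbf{K}_{n+1}$. Since $\mathbf{K} = \mathsf{S}\mathbf{K}^\circ$ with $\mathbf{K}^\circ = \fatnerve^{\lt}(\mathbb{K})$, and since $\mathsf{S}$ commutes with the (level-wise) deletion and shift operations of $\Dec_\top$, I would first reduce to showing $\Dec_\top \fatnerve^{\lt}(\mathbb{K}) = \fatnerve \mathbb{K}$ and then apply $\mathsf{S}$ to both sides. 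This reduction is the conceptually clean step: $\Dec_\top$ is defined purely in terms of composing with the shift functor $\simplexcategory \to \simplexcategory$, $[n] \mapsto [n] \star [0]$, and $\mathsf{S}$ is applied pointwise, so $\mathsf{S}(\Dec_\top Y) = \Dec_\top(\mathsf{S}Y)$ for any simplicial groupoid $Y$.

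So the heart of the argument is the identity $\Dec_\top \fatnerve^{\lt}(\mathbb{K}) = \fatnerve \mathbb{K}$. For $n \geq 0$ we have $(\Dec_\top \fatnerve^{\lt}(\mathbb{K}))_n = \fatnerve^{\lt}(\mathbb{K})_{n+1}$, which by Definition~\ref{defi: t-nerve} equals $\fatnerve(\mathbb{K})_n$ for $n+1 \geq 1$, i.e.\ for all $n \geq 0$; in particular in degree $0$ this says $\fatnerve^{\lt}(\mathbb{K})_1 = \fatnerve(\mathbb{K})_0$, which is exactly how Definition~\ref{defi: t-nerve} sets things up (the groupoid of objects of $\mathbb{K}$, i.e.\ finite connected non-empty posets and monotone bijections). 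Then I would check that the remaining (non-top) face and degeneracy maps of $\fatnerve^{\lt}(\mathbb{K})$ in degrees $\geq 1$ are by definition exactly the face and degeneracy maps of the ordinary fat nerve of $\mathbb{K}$ — this is literally stipulated in Definition~\ref{defi: t-nerve} (``the face and degeneracy maps act as the usual fat nerve construction except in $d_\perp$''), and the exceptional map $d_\perp \colon \fatnerve^{\lt}(\mathbb{K})_1 \to \fatnerve^{\lt}(\mathbb{K})_0$ together with $s_0 \colon \fatnerve^{\lt}(\mathbb{K})_0 \to \fatnerve^{\lt}(\mathbb{K})_1$ are precisely the ones that $\Dec_\top$ retains and that become, after the shift, the $d_\perp$ and $s_0$ of $\fatnerve\mathbb{K}$ in degree $0 \to$ degree $1$. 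Hence the shifted simplicial structure coincides with that of $\fatnerve\mathbb{K}$.

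Finally I would apply $\mathsf{S}$: since $\mathbf{K}$ is defined as $\mathsf{S}\mathbf{K}^\circ$ with all face and degeneracy maps (except the top ones) being $\mathsf{S}$ applied to those of $\fatnerve^{\lt}(\mathbb{K})$, and since $\Dec_\top$ discards exactly the top face and degeneracy, we get $\Dec_\top \mathbf{K} = \mathsf{S}(\Dec_\top \fatnerve^{\lt}(\mathbb{K})) = \mathsf{S}\fatnerve\mathbb{K}$. One subtlety worth a sentence: one must confirm that the top face map $d_\top$ that was \emph{added by hand} to $\mathbf{K}$ (the one forming fibres over each $q \in Q$) is genuinely the map that $\Dec_\top$ throws away, and that no other structure map of $\mathbf{K}$ was modified in the process — but this is immediate from the construction, since the added $d_\top$ is the new top face in each degree and everything else was defined as $\mathsf{S}$ of the corresponding $\fatnerve^{\lt}$ map. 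I expect the only mild obstacle to be bookkeeping the index shift correctly (making sure degree $n$ on the $\Dec_\top$ side lines up with degree $n+1$ of $\mathbf{K}$, and that the $\lt$-nerve's degree-$0$/degree-$1$ special maps land where claimed), rather than any real mathematical difficulty; the statement is essentially a definitional unwinding once the commutation $\mathsf{S} \circ \Dec_\top = \Dec_\top \circ \mathsf{S}$ is noted.
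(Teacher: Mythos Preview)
Your approach is essentially the paper's own: unwind the definitions $\mathbf{K} = \mathsf{S}\fatnerve^{\lt}(\mathbb{K})$, the fat $\lt$-nerve, and upper decalage. One bookkeeping slip: the exceptional maps $d_\perp \colon \fatnerve^{\lt}(\mathbb{K})_1 \to \fatnerve^{\lt}(\mathbb{K})_0$ and $s_0 \colon \fatnerve^{\lt}(\mathbb{K})_0 \to \fatnerve^{\lt}(\mathbb{K})_1$ are not \emph{retained} by $\Dec_\top$ but \emph{discarded}, since upper decalage deletes degree $0$ altogether; this is exactly the mechanism that makes the statement work --- the non-standard part of the $\lt$-nerve sits in degree $0$ and is thrown away, leaving precisely the ordinary fat nerve after the shift. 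With that correction your argument goes through.
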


\begin{proof}
The proof follows from combining that $\mathbf{K} = \mathsf{S}\fatnerve^{\lt}(\mathbb{K})$, the definition of the fat $\lt$-nerve, and that taking upper decalage is deleting, in each degree, the top face map and the top degeneracy map.
\end{proof}

\begin{rema}\label{rema:decttopKexplication}
Since $\mathbb{K}$ is a category, its fat nerve $\fatnerve\mathbb{K}$ is a Segal space \ref{exa:fatnerve is Segal}, and therefore $\mathsf{S} \fatnerve \mathbb{K}$ is a Segal space, as $\mathsf{S}$ preserves pullbacks and hence Segal objects. By Proposition \ref{proposition:dectopKisSegal}, we have that $\Dec_\top \mathbf{K} = \mathsf{S}\fatnerve\mathbb{K}$. Combining everything, we have that $\Dec_\top \mathbf{K}$ is a Segal space
and therefore
%Since $\mathsf{S}$ preserves pullbacks and $\nerve\mathbb{K}$ is a Segal space,
%Theorem \ref{theorem: decalage and segal condition} implies that $\Dec_\top \mathbf{K}$ is a Segal space. This is equivalent to saying that 
for each $n \geq 2$ the following diagram is a pullback for $0 < i < n$:
\begin{center}
\begin{tikzcd}
\mathbf{K}_{n+1}   \arrow[r, "d_{i+1}"]\arrow[d, "d_\bot"']& \mathbf{K}_n \arrow[d, "d_\bot"] \\
\mathbf{K}_n \arrow[r, "d_i"']& \mathbf{K}_{n-1}.
\end{tikzcd}
\end{center}
\end{rema}

\begin{propo}\label{proposition:Ksimplicial}
The groupoids $\mathbf{K}_n$ and the degeneracy and face maps given above form a pseudosimplicial groupoid $\mathbf{K}$.
\end{propo}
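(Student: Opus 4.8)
The plan is to separate the simplicial identities of $\mathbf{K}$ into those that do not involve the top face maps $d_\top$ and those that do.

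For the first group, observe that once the maps $d_\top$ are deleted, $\mathbf{K}$ is by construction $\mathsf{S}$ applied degreewise to $\fatnerve^{\lt}(\mathbb{K})$. Since $\fatnerve^{\lt}(\mathbb{K})$ is a $\simplexcategory^{\ter,\op}$-presheaf — the fat $\lt$-nerve of the category $\mathbb{K}$ with its chosen local terminal objects, Definition~\ref{defi: t-nerve}, in the spirit of \cite{GKW2021} — and $\mathsf{S}$ is a $2$-functor on $\Grpd$ (it sends a groupoid $X$ to the comma category $\FinSet^{\operatorname{bij}}_{/X}$ and a functor to post-composition with it), applying $\mathsf{S}$ degreewise again produces a $\simplexcategory^{\ter,\op}$-presheaf. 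Hence all simplicial identities of $\mathbf{K}$ among the face maps $d_0,\dots,d_{n-1}$ and all degeneracy maps hold automatically. (Alternatively this is contained in Proposition~\ref{proposition:dectopKisSegal}: $\Dec_\top\mathbf{K}=\mathsf{S}\fatnerve\mathbb{K}$ is a genuine simplicial groupoid.)

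It then remains to check the handful of simplicial identities relating $d_\top$ to the other operators: iterating the two topmost faces, commuting $d_\top$ past the lower faces, commuting $d_\top$ past the lower degeneracies, and $d_\top s_\top=\identity$; the degrees $n=1,2$ are dealt with directly from the explicit description of $d_\top\colon\mathbf{K}_1\to\mathbf{K}_0$ and $d_\top\colon\mathbf{K}_2\to\mathbf{K}_1$. Each of these reduces to one of three elementary facts about fibres of contractions, all consequences of Lemmas~\ref{lemma:convezmapstablepullback} and~\ref{lemma:stableconnectedpullback} together with condition~(2) of Definition~\ref{definition:connectedcontraction}: (a) for composable contractions $P\twoheadrightarrow Q\twoheadrightarrow R$ and $r\in R$, the fibre of the composite over $r$ is the disjoint union, over the points of $Q_r$, of the fibres of $P\twoheadrightarrow Q$; so taking fibres of fibres agrees with taking fibres of the composite, once the iterated index set $\sum_{q\in Q}\underline{(\,\cdot\,)_q}$ is identified with the underlying set of the single fibre; (b) the operation ``take fibres over the last object'' is natural with respect to the lower face and degeneracy maps of the nerve — deleting an outer object, composing through an inner one, and inserting an identity at an inner position all commute with it — by pullback pasting; (c) the fibres of an identity arrow are singletons. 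Fact~(a) gives the iterated-top-face identity, fact~(b) gives the commutations of $d_\top$ with the lower faces and lower degeneracies, and fact~(c) gives $d_\top s_\top=\identity$. The bookkeeping of index sets generated by $d_\top$ is carried by the disjoint-union monoidal structure of $\mathsf{S}$, and each fibre that appears is again a non-empty connected convex subposet and each induced map again a contraction, so every formula lands in $\mathbb{K}$.

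Finally, the qualifier ``pseudo'' is needed only because $d_\top$ is built from fibres, i.e.\ pullbacks, which are functorial merely up to canonical isomorphism; the identities of the second group thus hold up to coherent natural isomorphism, and one has to record the comparison $2$-cells involved (canonical isomorphisms between iterated pullbacks, and between nested and single disjoint unions of index sets) and verify that they satisfy the coherence conditions defining a pseudosimplicial groupoid. I expect this coherence bookkeeping, together with the careful case analysis and index-set identifications behind the $d_\top$-identities, to be where the real work lies — each individual verification being a routine instance of pullback pasting.
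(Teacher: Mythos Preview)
Your proposal is correct and follows essentially the same strategy as the paper: reduce the identities not involving $d_\top$ to the equality $\Dec_\top\mathbf{K}=\mathsf{S}\fatnerve\mathbb{K}$, and handle the remaining ones by pullback pasting. The paper is terser than you are: it singles out $d_\top d_\top \simeq d_\top d_{\top-1}$ as the one genuinely pseudo identity and verifies it via the iterated-pullback diagram (your fact~(a)), while treating the mixed identities $d_i d_\top \simeq d_\top d_i$ and $d_\top s_j$, $d_\top s_\top$ as implicit; your explicit enumeration of facts~(b) and~(c) makes these visible, which is a welcome clarification rather than a different argument.
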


\begin{proof}
The only pseudosimplicial identity is $d_\top d_{\top} \simeq d_\top d_{\top - 1}$. The other simplicial identities are strict and follows from equality $\Dec_\top \mathbf{K} = \mathsf{S}\nerve\mathbb{K}$ of Proposition \ref{proposition:dectopKisSegal}. Let us prove $d_\top d_{\top}(\mathbf{K}_n) \simeq d_\top d_{\top - 1}(\mathbf{K}_n)$ for $n = 2$. For greater $n$ the proof is completely analogous. Given an object 
$\begin{tikzcd}[column sep=scriptsize]
	P & Q & V
	\arrow["f", two heads, from=1-1, to=1-2]
	\arrow["g", two heads, from=1-2, to=1-3]
\end{tikzcd}$
in $\mathbf{K}_3$, we consider the following commutative diagram for each $v \in V$:
\[\begin{tikzcd}
	{(P_v)_q} & {P_v} & P \\
	1 & {Q_v} & Q \\
	& 1 & V.
	\arrow["{\ulcorner v \urcorner}"', from=3-2, to=3-3]
	\arrow[tail, from=1-2, to=1-3]
	\arrow["{f_v}", two heads, from=1-2, to=2-2]
	\arrow[from=2-2, to=3-2]
	\arrow["f", two heads, from=1-3, to=2-3]
	\arrow["g", from=2-3, to=3-3]
	\arrow[tail, from=2-2, to=2-3]
	\arrow[tail, from=1-1, to=1-2]
	\arrow["{\ulcorner q \urcorner}"', from=2-1, to=2-2]
	\arrow[from=1-1, to=2-1]
	\arrow["\lrcorner"{anchor=center, pos=0.125}, draw=none, from=2-2, to=3-3]
	\arrow["\lrcorner"{anchor=center, pos=0.125}, draw=none, from=1-2, to=2-3]
	\arrow["\lrcorner"{anchor=center, pos=0.125}, draw=none, from=1-1, to=2-2]
\end{tikzcd}\]
It is easy to see that $d_\top d_{\top}(\mathbf{K}_3) = \lbrace (P_v)_q \rbrace_{q \in Q}$. Furthermore, note that the top horizontal rectangle is isomorphic to $P_q$ for each $q \in Q$. This implies that $d_\top d_{\top}(\mathbf{K}_3) \simeq d_\top d_{\top - 1}(\mathbf{K}_3)$ since $d_\top d_{\top - 1}(\mathbf{K}_3) = \lbrace P_q \rbrace_{q \in Q}$.
\end{proof}

\begin{lem}\label{lemma:surjectivetoprove K is DS}
Suppose we have a contraction $f \colon P \twoheadrightarrow Q$ between connected posets, and a family of contractions $\lbrace h_q \colon P_q\twoheadrightarrow W_q \rbrace_{q \in {Q}}$ where each $P_q$ and $W_q$ are connected posets. Then there exists a unique connected poset $W$ and contractions $h$ and $g$ such that the diagram 
\[\begin{tikzcd}
	P & W & Q \\
	{P_q} & {W_q}
	\arrow["{h_q}"', two heads, from=2-1, to=2-2]
	\arrow[tail, from=2-1, to=1-1]
	\arrow[dotted, tail, from=2-2, to=1-2]
	\arrow["h"', dotted, two heads, from=1-1, to=1-2]
	\arrow["g"', dotted, two heads, from=1-2, to=1-3]
	\arrow["f", bend left = 20, two heads, from=1-1, to=1-3]
\end{tikzcd}\]
commutes. Here the vertical arrows are convex inclusions.
\end{lem}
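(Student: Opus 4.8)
The plan is to construct $W$ by hand. As a set, put $W := \coprod_{q \in Q} W_q$, let $g \colon W \to Q$ be the evident projection, and let $h \colon P \to W$ send $p$ to $h_{f(p)}(p) \in W_{f(p)}$; then $g \circ h = f$ and $h|_{P_q} = h_q$ hold by construction, and the vertical inclusions $P_q \hookrightarrow P$ are the given convex ones. The only genuine choice is the order on $W$, and the crucial point is to take the \emph{pushforward order} $\preceq$ along $h$, i.e.\ the smallest preorder making $h$ monotone: $w \preceq w'$ iff there is a chain $w = u_0, u_1, \dots, u_n = w'$ together with, for each $i$, elements $p_i \le_P p_i'$ with $h(p_i) = u_i$ and $h(p_i') = u_{i+1}$. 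A more naive relation (``$w \le w'$ iff some preimage of $w$ lies below some preimage of $w'$'') fails to be transitive — one can construct a contraction $\phi \colon A \twoheadrightarrow B$ and a cover $b \lessdot b'$ with an element over $b$ having no element above it over $b'$ — so generating a preorder is essential.

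I would first check that $\preceq$ is a partial order. Reflexivity is immediate; for antisymmetry, observe that both $g$ and $f = g \circ h$ are monotone, so the chains defining $\preceq$ are non-decreasing under $g$. Hence $w \preceq w' \preceq w$ forces $g(w) = g(w') =: q$ and every witnessing chain to remain inside the fibre $W_q$; but $\preceq$ restricted to $W_q$ is exactly the pushforward of $\le_{P_q}$ along the contraction $h_q$, which coincides with $\le_{W_q}$ — the standard fact that for a contraction the target order is the pushforward of the source order, proved by lifting a saturated chain of covers link by link. Antisymmetry of $W_q$ then gives $w = w'$. The same fibrewise identification shows that each $W_q$, as a subposet of $W$, is just $W_q$; and since $g(w) \le g(w'') \le g(w')$ with $g(w) = g(w')$ forces $g(w'') = g(w)$, each $W_q$ is convex in $W$.

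It then remains to verify that $h$ and $g$ are contractions, together with connectivity of $W$ and uniqueness. The map $h$ is monotone by construction and surjective since each $h_q$ is; its fibre over $w \in W_q$ is $h_q^{-1}(w)$, connected and convex in $P_q$ (as $h_q$ is a contraction) hence connected and convex in $P$ (as $P_q$ is convex in $P$). For cover-lifting along $h$, take $w \lessdot w'$ in $W$: if $g(w) = g(w')$ the cover lies in some $W_q$ and lifts to a cover in $P_q$, which is a cover in $P$ by convexity of $P_q$; if $g(w) < g(w')$, then a cover collapses any witnessing chain to its endpoints, so already $h(p) = w$ and $h(p') = w'$ for some $p <_P p'$, and walking along a saturated chain from $p$ to $p'$ — using monotonicity of $h$ and that $w \lessdot w'$ — exhibits a link of that chain mapping to the cover $w \lessdot w'$. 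For $g$: it is a monotone surjection with fibres $W_q$, connected and convex as above; and a cover $q \lessdot q'$ lifts through the contraction $f$ to a cover $p \lessdot p'$ in $P$, whence $h(p) \prec h(p')$ and a saturated chain in $W$ between them must cross from $W_q$ to $W_{q'}$ at some cover, giving cover-lifting for $g$. Connectivity of $W$ then propagates upward from that of $Q$: each $W_q$ is connected and the last step links $W_q$ to $W_{q'}$ whenever $q \lessdot q'$. For uniqueness, any other solution $(W'', g'', h'')$ necessarily has underlying set $\coprod_q W_q$ with $g'' = g$ and $h'' = h$ (forced by the fibre and restriction conditions and that $W_q \hookrightarrow W''$ is an inclusion); its order makes $h''$ monotone, hence contains $\preceq$, and conversely, since $h''$ is a contraction it lifts covers, so a saturated $W''$-chain from $w$ to $w'$ lifts link by link to $P$ and witnesses $w \preceq w'$ — so the order on $W''$ is exactly $\preceq$. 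The main obstacle is getting the order on $W$ right; once one commits to the pushforward order, everything reduces to the single combinatorial observation that a cover $w \lessdot w'$ collapses any connecting chain to its endpoints.
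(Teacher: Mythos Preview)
Your proof is correct and follows essentially the same construction as the paper: $W = \coprod_q W_q$ with the order generated by making $h$ monotone (the paper's definition additionally throws in the fibrewise orders $\le_{W_q}$ explicitly, which you correctly observe is redundant since each $h_q$ is a contraction and hence the pushforward already recovers $\le_{W_q}$). Your verification is in fact more thorough than the paper's --- you check antisymmetry, cover-lifting for both $h$ and $g$, and uniqueness in detail, whereas the paper only sketches the fibre condition for $h$ and asserts the rest ``by analogous arguments.''
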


\begin{proof}
We will do the proof in three steps: in the first place, we will construct the underlying set of the poset $W$ and the functions $h \colon P \rightarrow W$ and $g \colon W \rightarrow V$. After that, we will construct a partial order $<_W$ on $W$ forced by the requirement that $h$ and $g$ are monotone maps. To conclude, we will prove that $f$ and $g$ are contractions.

\begin{enumerate}
\item Put $W := \sum_{q \in Q} W_q$ and $h := \sum_{q \in Q} h_q$. The map $g \colon W \to Q$ is defined as $g(w) = q$ for $w \in W_q$. Furthermore, the diagram
\[\begin{tikzcd}
	P & W & Q \\
	{P_q} & {W_q} & 1
	\arrow["{h_q}"', two heads, from=2-1, to=2-2]
	\arrow[tail, from=2-1, to=1-1]
	\arrow[dotted, tail, from=2-2, to=1-2]
	\arrow["h"', dotted, two heads, from=1-1, to=1-2]
	\arrow["g"', dotted, two heads, from=1-2, to=1-3]
	\arrow["f", bend left = 20, two heads, from=1-1, to=1-3]
	\arrow[from=2-2, to=2-3]
	\arrow["{\ulcorner q\urcorner }"', from=2-3, to=1-3]
\end{tikzcd}\]
commutes at the level of sets by the way $h$ and $g$ were defined.

\item The partial order $<_W$ on $W$ is defined by taking transitive closure in the following relation: for $w, w' \in W$, we declare that $w <_W w'$ if one of the following conditions is satisfied:
\begin{enumerate}
\item In case $w, w' \in W_q$ and  $w <_{W_{q}} w'$;
\item There exist $p <_P p'$ in $P$ such that $h(p) = w$ and $h(p') = w'$.
\end{enumerate}
The condition $(b)$ is necessary for $h$ and $g$ to be monotone maps.

\item We will prove that $h$ is a contraction. Let $w \in W$, by the way $W$ was defined, we have that $w \in W_q$ for some $q \in Q$. Recall that contractions and convex maps are stable under pullback by Lemmas \ref{lemma:convezmapstablepullback} and \ref{lemma:stableconnectedpullback}. These imply that for each $w \in W$ in the commutative diagram
\[\begin{tikzcd}
	{P_w} & {P_q} & P \\
	1 & {W_q} & W,
	\arrow["h", two heads, from=1-3, to=2-3]
	\arrow[tail, from=1-2, to=1-3]
	\arrow[tail, from=1-1, to=1-2]
	\arrow[from=1-1, to=2-1]
	\arrow[tail, from=2-2, to=2-3]
	\arrow["{\ulcorner w \urcorner }"', from=2-1, to=2-2]
	\arrow["{h_q}", two heads, from=1-2, to=2-2]
	\arrow["\lrcorner"{anchor=center, pos=0.125}, draw=none, from=1-1, to=2-2]
\end{tikzcd}\]
the poset $P_w$ is connected and convex since $h_q$ is a contraction. Hence, $h$ is a contraction. By analogous arguments, we have that $g$ is a contraction.
\end{enumerate}
The poset $W$ is connected since $g$ is a contraction and $Q$ is connected.
\end{proof}

\begin{lem}\label{lemma:discretedibrationK}
For each $0 < i < n$, the map $d_i \colon \mathbf{K}_n \rightarrow \mathbf{K}_{n-1}$ is a fibration.
\end{lem}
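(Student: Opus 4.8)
The plan is to reduce the statement, via the decalage description of $\mathbf{K}$, to the single fact that every face map of the fat nerve of a category is an isofibration of groupoids, and then prove that fact directly. The engine of the reduction is that $\mathsf{S}$ preserves fibrations \cite{Weber2015InternalAC}.

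First I would invoke Proposition~\ref{proposition:dectopKisSegal}, which gives the equality $\Dec_\top \mathbf{K} = \mathsf{S}\fatnerve\mathbb{K}$. Since taking the upper dec merely forgets the top face map (and re-indexes), for $0 \le i \le n-1$ the face map $d_i \colon \mathbf{K}_n \to \mathbf{K}_{n-1}$ coincides with the face map $d_i \colon (\mathsf{S}\fatnerve\mathbb{K})_{n-1} \to (\mathsf{S}\fatnerve\mathbb{K})_{n-2}$, that is, with $\mathsf{S}$ applied to the face map $d_i \colon \fatnerve(\mathbb{K})_{n-1} \to \fatnerve(\mathbb{K})_{n-2}$ of the fat nerve of $\mathbb{K}$. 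Because $\mathsf{S}$ preserves fibrations, it is enough to show that every face map of the fat nerve of a category is a fibration (isofibration of groupoids). Observe that when $0 < i < n$ this obliges us to cover the case $i = n-1$, which corresponds to the \emph{top} face map of $\fatnerve(\mathbb{K})_{n-1}$; so it would not suffice to treat only inner face maps of the fat nerve.

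For the fat-nerve statement, recall that $\fatnerve(\mathbb{C})_m$ is the maximal subgroupoid of $\Fun([m],\mathbb{C})$ and that $d_j$ is the restriction to maximal subgroupoids of precomposition $(d^j)^{*} \colon \Fun([m],\mathbb{C}) \to \Fun([m-1],\mathbb{C})$ with the coface $d^j \colon [m-1] \to [m]$. Since $d^j$ is injective on objects, restriction along it is an isofibration of categories: given $F \colon [m]\to\mathbb{C}$ and a natural isomorphism $\theta \colon (d^j)^{*}F \xrightarrow{\sim} G$, I would let $F'$ agree with $F$ on objects not in the image of $d^j$, set $F'(d^j(a)) = G(a)$, and define $F'$ on arrows by conjugating $F$ with the family of isomorphisms $\eta$ with $\eta_c = \identity$ off the image and $\eta_{d^j(a)} = \theta_a$; this $\eta \colon F \xrightarrow{\sim} F'$ is then a natural isomorphism with $(d^j)^{*}F' = G$ and $(d^j)^{*}\eta = \theta$, which is the required lift. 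Restricting to maximal subgroupoids (an isomorphism of functors being a morphism there) shows that $d_j \colon \fatnerve(\mathbb{C})_m \to \fatnerve(\mathbb{C})_{m-1}$ is a fibration of groupoids, and combined with the reduction above this proves the lemma.

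I do not expect a real obstacle. The only points needing care are the bookkeeping identifying $d_i \colon \mathbf{K}_n \to \mathbf{K}_{n-1}$ with a face map of $\fatnerve\mathbb{K}$ through the dec, and the well-definedness of $F'$ on objects — which is precisely where injectivity of $d^j$ on objects is used.
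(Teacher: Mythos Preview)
Your proposal is correct and follows essentially the same approach as the paper: both reduce via the fact that $\mathsf{S}$ preserves fibrations to showing that the relevant face maps of $\mathbf{K}^{\circ} = \fatnerve^{\lt}(\mathbb{K})$ (equivalently, of $\fatnerve\mathbb{K}$) are isofibrations, and then lift an isomorphism by keeping the identity on the object omitted by the coface and transporting along the given isomorphism elsewhere. The only difference is presentational: the paper writes out the lift in the special case $d_1 \colon \mathbf{K}^{\circ}_2 \to \mathbf{K}^{\circ}_1$ and appeals to ``similar arguments'' for the rest, whereas you prove once and for all that restriction along any injective-on-objects functor (in particular any coface $d^j$) is an isofibration, which is a clean way to handle all $0 < i < n$ simultaneously.
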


\begin{proof}
%The proof is analogous to that of Proposition \ref{lemma:discretedibrationDdi}, but using the category $\mathbb{K}$  instead of $\mathbb{D}$. 
Since $\mathsf{S}$ preserves fibrations and $d_i \colon \mathbf{K}_n \rightarrow \mathbf{K}_{n-1}$ is equal to $\mathsf{S}(d_i) \colon \mathsf{S}(\mathbf{K}^{\circ}_n) \rightarrow \mathsf{S}(\mathbf{K}^{\circ}_{n-1})$, it is enough to prove that $d_i \colon \mathbf{K}^{\circ}_n \rightarrow \mathbf{K}^{\circ}_{n-1}$ is a fibration. We will only check that $d_1: \mathbf{K}^{\circ}_2 \rightarrow \mathbf{K}^{\circ}_{1}$ is a fibration, the other cases use similar arguments. Let $f \colon P \twoheadrightarrow Q$ be an object in $\mathbf{K}^{\circ}_2$. Let $u: P' \rightarrow P$ be a morphism in $\mathbf{K}^{\circ}_1$. It is straightforward to check that the morphism $\sigma \colon f \circ u \rightarrow f$, pictured in the diagram
\begin{center}
\begin{tikzcd}
P' \arrow[d, "f\circ u"', two heads] \arrow[r, "u"] \arrow[white]{dr}[black, description]{\sigma} & P \arrow[d, "f", two heads] \\
Q \arrow[r, "\identity_Q"']                        & Q,                          
\end{tikzcd}
\end{center}
is a lift of the morphism $u$ in $\mathbf{K}^{\circ}_2$ such that $d_1(\sigma) = u$. 
\begin{comment}
To prove $\sigma$ is a cartesian lift, we have to proof that for each morphism $\sigma' \colon f' \to f$ in $\mathbf{K}^{\circ}_2$ and morphism $u' \colon P'' \to P$ in $\mathbf{K}^{\circ}_1$, there exists a unique morphism $\eta \colon f' \to (f \circ u)$ such that $\sigma \circ \eta = \sigma'$ and $d_1(\eta) = u'$. So $\sigma$ is cartesian if we found a unique monotone bijection from $Q'$ to $Q$ that makes the diagram 
\[\begin{tikzcd}[column sep=small,row sep=scriptsize]
	&& {P'} \\
	{P''} &&&& P \\
	&&& Q \\
	& {Q'} &&&& Q
	\arrow["u", from=1-3, to=2-5]
	\arrow["{u'}", from=2-1, to=1-3]
	\arrow["{u''}"', from=2-1, to=2-5]
	\arrow["{f'}"', two heads, from=2-1, to=4-2]
	\arrow["u"', from=4-2, to=4-6]
	\arrow["{\identity_Q}"{description}, from=3-4, to=4-6]
	\arrow["{f \circ u}"{pos=0.9}, two heads, from=1-3, to=3-4]
	\arrow[dashed, from=4-2, to=3-4]
	\arrow["f", two heads, from=2-5, to=4-6]
\end{tikzcd}\]
commutes. Here the base square in the diagram represents the morphism $\sigma'$. If we take the dotted arrow as the monotone bijection $u$, it is easy to see that the diagram commutes. Hence, $\sigma$ is a cartesian lift.  
\end{comment}
\end{proof}

\begin{propo}\label{proposition:Kisdecompositionspace}
The pseudosimplicial groupoid $\mathbf{K}$ is a decomposition space.
\end{propo}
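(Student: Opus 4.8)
The plan is to use the decalage characterization of decomposition spaces (Theorem \ref{theorem: decalage and segal condition}): it suffices to show that both $\Dec_\top \mathbf{K}$ and $\Dec_\perp \mathbf{K}$ are Segal spaces. By Proposition \ref{proposition:dectopKisSegal} we already have $\Dec_\top \mathbf{K} = \mathsf{S}\fatnerve\mathbb{K}$, and since $\fatnerve\mathbb{K}$ is a Segal space (Example \ref{exa:fatnerve is Segal}) and $\mathsf{S}$ preserves pullbacks (hence Segal objects), $\Dec_\top \mathbf{K}$ is a Segal space, as already noted in Remark \ref{rema:decttopKexplication}. So the entire content of the proof is to show that $\Dec_\perp \mathbf{K}$ is a Segal space, i.e.\ that for each $n > 0$ the square
\begin{center}
\begin{tikzcd}
\mathbf{K}_{n+2} \arrow[r, "d_\top"]\arrow[d, "d_\bot"']& \mathbf{K}_{n+1} \arrow[d, "d_\bot"] \\
\mathbf{K}_{n+1} \arrow[r, "d_\top"']& \mathbf{K}_{n}
\end{tikzcd}
\end{center}
is a homotopy pullback (here the indices are shifted by one because lower dec deletes $\mathbf{K}_0$).

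First I would reduce $n$: by the usual argument (using that the already-established pullbacks from Remark \ref{rema:decttopKexplication} let one build up the higher Segal squares from the lowest one), it is enough to treat $n=1$, i.e.\ to show that
\begin{center}
\begin{tikzcd}
\mathbf{K}_{3} \arrow[r, "d_2"]\arrow[d, "d_0"']& \mathbf{K}_{2} \arrow[d, "d_0"] \\
\mathbf{K}_{2} \arrow[r, "d_2"']& \mathbf{K}_{1}
\end{tikzcd}
\end{center}
is a pullback, where $d_2 = d_\top$ on these levels. Since $d_0 = \mathsf{S}(d_0)$ is a fibration (it is $\mathsf{S}$ applied to a face map of the fat $\lt$-nerve, and one checks the relevant map is an isofibration much as in Lemma \ref{lemma:discretedibrationK}), and $\mathsf{S}$ preserves homotopy pullbacks, it suffices by Lemma \ref{lemma:pullbackfibres} to check the statement fibrewise over $\mathbf{K}^\circ$, i.e.\ to exhibit, for a fixed $1$-chain of contractions $P \twoheadrightarrow Q$ together with compatible $d_0$- and $d_2$-data, a contractible groupoid of fillers. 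Concretely: an object of the strict pullback over a point consists of a contraction $f\colon P \twoheadrightarrow Q$ (the image under $d_0 d_2$) together with a family of contractions $\{P_q \twoheadrightarrow W_q\}_{q\in Q}$ (an element of $\mathbf{K}_2$ mapping to $P$ under $d_\bot$ of the dec, i.e.\ the vertex $P$) and a contraction $Q \twoheadrightarrow V$ (the $\mathbf{K}_2$-datum on the other side), suitably matched up on vertices; and I must produce a unique $3$-simplex of $\mathbf{K}$, i.e.\ a $2$-chain $P \twoheadrightarrow W \twoheadrightarrow Q$ (or the appropriate family), restricting to all of this.

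This is exactly what Lemma \ref{lemma:surjectivetoprove K is DS} provides: given $f\colon P \twoheadrightarrow Q$ and the family $\{h_q\colon P_q \twoheadrightarrow W_q\}$, it constructs a unique connected poset $W$ with contractions $h\colon P \twoheadrightarrow W$ and $g\colon W \twoheadrightarrow Q$ whose fibres recover the $h_q$, and uniqueness of $W$ (together with the fact that the remaining datum $Q \twoheadrightarrow V$ is carried along unchanged by $d_0$, which only takes fibres over the \emph{target}) gives that the comparison map from $\mathbf{K}_3$ to the pullback is an equivalence — indeed an isomorphism on the nose at the level of $\mathbf{K}^\circ$, before applying $\mathsf{S}$. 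I would then invoke Lemma \ref{lemma:pullbackfibres} to conclude the square of groupoids is a homotopy pullback, and Theorem \ref{theorem: decalage and segal condition} to conclude $\mathbf{K}$ is a decomposition space. Note that the pseudosimplicial structure is already in place by Proposition \ref{proposition:Ksimplicial}; strictly speaking one works with pseudosimplicial objects throughout, which is harmless since the decomposition-space condition is about (homotopy) pullbacks.

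The main obstacle is the bookkeeping in the fibrewise reduction: matching up precisely which vertices and edges of the three given simplices must agree, and checking that Lemma \ref{lemma:surjectivetoprove K is DS} supplies exactly the right filler with exactly the right uniqueness — in particular that the family formulation of $d_\top$ on $\mathbf{K}_n$ for $n\geq 2$ interacts correctly with the dec. The geometric input (connectedness and convexity of all fibres, lifting of covers) is entirely absorbed into Lemmas \ref{lemma:stableconnectedpullback} and \ref{lemma:surjectivetoprove K is DS}, so no new poset-theoretic work should be needed.
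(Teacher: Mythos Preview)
Your overall strategy via Theorem \ref{theorem: decalage and segal condition} is sound and in the end amounts to the same verification the paper carries out directly: the key input in both is Lemma \ref{lemma:surjectivetoprove K is DS}, and the paper's square (2) for $n=2$, $i=1$ is precisely the lowest Segal square for $\Dec_\perp\mathbf{K}$ (transposed). So there is no genuinely different route here.

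There is, however, a real indexing slip in your write-up. When you form $\Dec_\perp \mathbf{K}$ you \emph{delete} the original $d_0$; the new bottom face of $\Dec_\perp \mathbf{K}$ is therefore the original $d_1$, not $d_0$. The Segal square you must check is
\[
\begin{tikzcd}
\mathbf{K}_{3} \arrow[r, "d_3"]\arrow[d, "d_1"']& \mathbf{K}_{2} \arrow[d, "d_1"] \\
\mathbf{K}_{2} \arrow[r, "d_2"']& \mathbf{K}_{1},
\end{tikzcd}
\]
with $d_1$ on the verticals and $d_\top$ ($=d_3$ on the top row, $=d_2$ on the bottom row) on the horizontals; this is exactly the paper's square (2) transposed. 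Your square with $d_0$ on the verticals is the outer-against-outer square, which is the \emph{Segal} (not decomposition) condition for $\mathbf{K}$ itself and is neither what you need nor generally true. This mis-indexing is what produces the spurious datum ``$Q \twoheadrightarrow V$'' in your fibrewise description: with the correct square, fixing $f\colon P\twoheadrightarrow Q$ on the $d_1$-side, the only remaining datum is the family $\{P_q \twoheadrightarrow W_q\}_{q\in Q}$ coming from $d_\top f$, and Lemma \ref{lemma:surjectivetoprove K is DS} supplies exactly the filler $P \twoheadrightarrow W \twoheadrightarrow Q$. Finally, the uniqueness clause of that lemma gives bijectivity on objects, but you still need to argue fullness and faithfulness of $d_\top\colon \Fib_f d_1 \to \Fib_{d_\top f} d_1$ on morphisms; the paper does this explicitly, and it is not quite automatic from the lemma as stated.
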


\begin{proof}
We will prove that the following diagrams are pullbacks for $0 < i < n$:
\begin{center}
\begin{tikzcd}
\mathbf{K}_{n+1} \arrow[white]{dr}[black, description]{(1)}  \arrow[r, "d_{i+1}"]\arrow[d, "d_\bot"']& \mathbf{K}_n \arrow[d, "d_\bot"] \\
\mathbf{K}_n \arrow[r, "d_i"']& \mathbf{K}_{n-1}
\end{tikzcd} \; \; \;
\begin{tikzcd}
\mathbf{K}_{n+1} \arrow[white]{dr}[black, description]{(2)}  \arrow[r, "d_{i}"]\arrow[d, "d_\top"']& \mathbf{K}_n \arrow[d, "d_\top"] \\
\mathbf{K}_n \arrow[r, "d_i"']& \mathbf{K}_{n-1}.
\end{tikzcd}
\end{center}
Let us prove it for $n=2$. For greater $n$ the proof is completely analogous. The square $(1)$ is a pullback by Remark (\ref{rema:decttopKexplication}). To prove that the square $(2)$ is a pullback we will use Lemma \ref{lemma:pullbackfibres}. This means that $(2)$ is a pullback if and only if for each object
$f \colon P \twoheadrightarrow Q$ in $\mathbf{K}_2$, the map $d_\top \colon \Fib_{f} d_1 \rightarrow \Fib_{d_\top f } d_1 $ is an equivalence of groupoids. For an object $\lbrace h_q \colon P_q \twoheadrightarrow W_q \rbrace_{q \in Q}$ in $\Fib_{d_\top f} d_1$, Lemma \ref{lemma:surjectivetoprove K is DS} gives contractions $h$ and $g$ such that the diagram commutes 
\[\begin{tikzcd}
	P & W & Q \\
	{P_q} & {W_q}.
	\arrow["{h_q}"', two heads, from=2-1, to=2-2]
	\arrow[tail, from=2-1, to=1-1]
	\arrow[tail, from=2-2, to=1-2]
	\arrow["h"', two heads, from=1-1, to=1-2]
	\arrow["g"', two heads, from=1-2, to=1-3]
	\arrow["f", bend left = 20, two heads, from=1-1, to=1-3]
\end{tikzcd}\] 
The commutativity of the diagram implies that \begin{tikzcd}[column sep=small,row sep=scriptsize]
P \arrow[r, "f", two heads] & W \arrow[r, "g", two heads] & Q
\end{tikzcd} is an object in $\Fib_{f} d_1$. Therefore, $d_\top$ is surjective in objects. The map
$d_\top$ is full. Indeed, for any morphism $u_qr_q \colon f_q \to f'_q$ in $\Fib_{d_\top f} d_1$, put $u = \sum_{q \in Q} u_q$ and $r = \sum_{q \in Q} r_q$. The map $ur\identity_Q$ satisfies that $d_\top (ur\identity_Q) = u_qr_q$. Furthermore, the diagram
\[\begin{tikzcd}[column sep=scriptsize,row sep=small]
	& P & {} & {W'} \\
	{P_q} && {W'_q} && Q \\
	& P && W \\
	{P_q} && {W_q}
	\arrow["{h'}", two heads, from=1-2, to=1-4]
	\arrow[tail, from=2-3, to=1-4]
	\arrow[tail, from=2-1, to=1-2]
	\arrow[tail, from=4-1, to=3-2]
	\arrow["{u_q}", from=4-1, to=2-1]
	\arrow["u"{pos=0.2}, shift left=1, from=3-2, to=1-2]
	\arrow["{r_q}"{description, pos=0.3}, from=4-3, to=2-3]
	\arrow["{h_q}"',  two heads, from=4-1, to=4-3]
	\arrow["{h'_q}"{description, pos=0.7},  two heads, from=2-1, to=2-3]
	\arrow["r", from=3-4, to=1-4]
	\arrow[tail, from=4-3, to=3-4]
	\arrow["h"{pos=0.8}, two heads, from=3-2, to=3-4]
	\arrow["{g'}", two heads, from=1-4, to=2-5]
	\arrow["g"', two heads, from=3-4, to=2-5]
\end{tikzcd}\]
commutes by Lemma \ref{lemma:surjectivetoprove K is DS} and the definitions of $u$ and $r$. This implies that $ur\identity_Q$ is a morphism in $\Fib_{f} d_1$ and hence $d_\top$ is full. To prove that $d_\top$ is faithful, let $uq\identity_Q$ and $u'q'\identity_Q$ be morphisms in $\Fib_{f} d_1$ such that $d_\top uq\identity_Q = d_\top u'q' \identity_Q$ in $\Fib_{d_\top f} d_1$. This means that for each $q \in Q$, we have $u_q = u'_q$ and $r_q = r'_q$, but $ur$ and $u'r'$ are determined by $u_q r_q$ and ${u'}_q {r'}_q$, hence $u = u'$ and $r = r'$.
\end{proof}

A decomposition space $X$ is \emph{complete} when $s_0 \colon X_0 \rightarrow X_1$ is a monomorphism (i.e. is ($-1$)-truncated). It follows from the decomposition space axiom that, in this case, all degeneracy maps are monomorphisms \cite[Lemma 2.5]{GTK2}.

\begin{propo}\label{proposition:Kiscomplete}
The decomposition space $\mathbf{K}$ is complete.
\end{propo}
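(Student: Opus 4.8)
The plan is to show that $s_0 \colon \mathbf{K}_0 \to \mathbf{K}_1$ is $(-1)$-truncated, i.e. that its fibres are either empty or contractible. Since $\mathbf{K} = \mathsf{S}\mathbf{K}^\circ$ and $\mathsf{S}$ preserves monomorphisms (being a right adjoint-like construction that preserves pullbacks, hence $(-1)$-truncated maps, applied componentwise to the fibres), it suffices to check that $s_0 \colon \mathbf{K}^\circ_0 \to \mathbf{K}^\circ_1$ is a monomorphism. Recall from the definition of the fat $\lt$-nerve that $\mathbf{K}^\circ_0$ is the terminal groupoid (the groupoid of chosen local terminal objects in $\mathbb{K}$, which is the one-element poset together with its automorphisms, hence contractible), $\mathbf{K}^\circ_1$ is the groupoid of finite connected non-empty posets and monotone bijections, and $s_0 \colon \mathbf{K}^\circ_0 \to \mathbf{K}^\circ_1$ is the inclusion sending the point to the terminal poset $1$.

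The key step is then to verify that this inclusion is $(-1)$-truncated, which amounts to checking that the automorphism group of the object $1$ in $\mathbf{K}^\circ_1$ is trivial and that $\mathbf{K}^\circ_0$ has trivial automorphisms as well. The one-element poset has no nontrivial monotone bijections, so $\aut_{\mathbf{K}^\circ_1}(1)$ is the trivial group; likewise the chosen local terminal object in $\mathbb{K}$ has trivial automorphism group. Hence the comparison on homotopy groups is an isomorphism onto its (trivial) image, and on $\pi_0$ the map is injective since it hits a single isomorphism class; by Lemma~\ref{lemma:pullbackfibres} (or directly by the characterization of monomorphisms of groupoids) $s_0$ is a monomorphism. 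Applying $\mathsf{S}$ preserves this, so $s_0 \colon \mathbf{K}_0 \to \mathbf{K}_1$ is a monomorphism, and by the cited \cite[Lemma 2.5]{GTK2} all degeneracy maps of $\mathbf{K}$ are then monomorphisms; in particular $\mathbf{K}$ is complete.

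The only mild subtlety — and the step I would be most careful about — is the claim that $\mathsf{S}$ sends monomorphisms of groupoids to monomorphisms. This is not literally listed among the stated properties of $\mathsf{S}$ in \ref{subsec:monoindalfunctor} (only preservation of homotopy pullbacks and fibrations is recorded), so I would either invoke preservation of pullbacks applied to the pullback square expressing that $s_0$ is a monomorphism (a map $g$ is mono iff the diagonal $X \to X\times_Y X$ is an equivalence, and $\mathsf{S}$ preserves both pullbacks and equivalences), or argue directly: an object of $\mathsf{S}\mathbf{K}^\circ_1$ lying in the image of $s_0$ is a family $\{1\}_{i\in I}$ of copies of the terminal poset, and a morphism between two such families in $\mathsf{S}\mathbf{K}^\circ_1$ is just a bijection of index sets (no room for poset automorphisms), which is exactly a morphism in $\mathsf{S}\mathbf{K}^\circ_0$; so the functor $s_0$ is fully faithful and injective on isomorphism classes, hence a monomorphism. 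Either route closes the argument.
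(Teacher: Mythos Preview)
Your proof is correct and follows essentially the same route as the paper: reduce to showing $s_0 \colon \mathbf{K}^\circ_0 \to \mathbf{K}^\circ_1$ is a monomorphism by using that $\mathsf{S}$ preserves pullbacks (hence monomorphisms), and then observe that $\mathbf{K}^\circ_0$ is the terminal groupoid mapping to the one-element poset. You are in fact slightly more careful than the paper in justifying why preservation of pullbacks yields preservation of monomorphisms, via the diagonal characterization.
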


\begin{proof}
%The proof is analogous to that of Proposition \ref{proposition:Discomplete}, but using the category $\mathbb{KP}$ instead of $\mathbb{CP}$. 
Note that $s_0 \colon \mathbf{K}_0 \rightarrow \mathbf{K}_1$ is actually $\mathsf{S}$ of the map $s_0 \colon \mathbf{K}^{\circ}_0 \rightarrow \mathbf{K}^{\circ}_1$, and $\mathsf{S}$ preserves pullbacks and hence monomorphism. This means that $s_0 \colon \mathbf{K}_0 \rightarrow \mathbf{K}_1$ is a monomorphism if $s_0 \colon \mathbf{K}^{\circ}_0 \rightarrow \mathbf{K}^{\circ}_1$ is a monomorphism, but this is clear since $\mathbf{K}^{\circ}_0$ is the terminal groupoid consisting of only the poset with one element and $s_0$ sends the terminal groupoid to the poset with one element. 

\begin{comment}
 Let $P$ be an object in $\mathbf{K}^{\circ}_1$. Consider the pullback diagram
\begin{center}
\begin{tikzcd}
\Fib_{s_0}  P  \arrow[d] \arrow[r] \drpullback & 1 \arrow[d, "\ulcorner P \urcorner"] \\
\mathbf{K}^{\circ}_0 \arrow[r, "s_0"']                               & \mathbf{K}^{\circ}_1.                                                    \end{tikzcd}
\end{center}
We have that $\Fib_{s_0} P$ is empty if $|P| > 1$. In case $|P| = 1$, we have that $\Fib_{s_0} P \simeq 1$ since $\mathbf{K}^{\circ}_0$ is the terminal groupoid. Therefore, $s_0: \mathbf{K}^{\circ}_0 \rightarrow \mathbf{K}^{\circ}_1$ is a monomorphism.
\end{comment}

\end{proof}

\begin{propo}\label{proposition:Kislocally}
The decomposition space $\mathbf{K}$ is locally finite, locally discrete and of locally finite length.
\end{propo}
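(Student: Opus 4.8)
The plan is to verify the three conditions directly from the description $\mathbf{K} = \mathsf{S}\mathbf{K}^{\circ}$, using only that finite posets are finite (hence have finite automorphism groups) and that contractions are surjective on underlying sets. I first record the relevant data: $\mathbf{K}_0 = \mathsf{S}(\mathbf{K}^{\circ}_0) \simeq \FinSet^{\operatorname{bij}}$; $\mathbf{K}_1 = \mathsf{S}(\mathbf{K}^{\circ}_1)$ is the groupoid of finite families $\{Q_i\}_{i \in I}$ of finite connected non-empty posets and isomorphisms of families; $s_0 \colon \mathbf{K}_0 \to \mathbf{K}_1$ sends a finite set $S$ to the family of $|S|$ singletons; and the face map $d_1 \colon \mathbf{K}_1 \to \mathbf{K}_0$ entering the finiteness conditions is the top face $d_\top$ of Section~\ref{subsec:dec space K}, sending $\{Q_i\}_{i \in I}$ to its total underlying set $\coprod_{i \in I} Q_i$.

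For local finiteness and local discreteness I would compute the three relevant homotopy fibres. First, $\mathbf{K}_1$ is locally finite, since the automorphism group of $\{Q_i\}_{i \in I}$ is finite (built from the finite groups $\operatorname{Aut}(Q_i)$ and symmetric groups on finite index sets). Second, the homotopy fibre of $s_0$ over $\{Q_i\}_{i \in I}$ --- the groupoid of pairs consisting of a finite set $S$ and an isomorphism $\{1\}_{s \in S} \cong \{Q_i\}_{i \in I}$ --- is empty unless every $Q_i$ is a singleton, in which case it is contractible; so $s_0$ has finite discrete fibres. Third, the homotopy fibre of $d_1 = d_\top$ over a finite set $S$ is the groupoid of pairs $(\{Q_i\}_{i \in I}, \phi)$ with $\phi \colon \coprod_{i \in I} Q_i \xrightarrow{\sim} S$; there are only finitely many isomorphism classes of such families, since there are finitely many connected posets of cardinality at most $|S|$, and any automorphism of such a pair is a family-automorphism inducing the identity on the underlying set, hence trivial --- so $d_\top$ has finite discrete fibres as well. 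Together these give that $\mathbf{K}$ is locally finite and locally discrete.

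For local finite length I would use the following description, which follows from $\Dec_\top \mathbf{K} = \mathsf{S}\fatnerve\mathbb{K}$ (Proposition~\ref{proposition:dectopKisSegal}) together with the fact that every degeneracy map of $\mathbf{K}$, and every face map other than the top ones, is $\mathsf{S}$ applied to the corresponding map of $\fatnerve^{\lt}(\mathbb{K})$: for $n \geq 1$ an $n$-simplex of $\mathbf{K}$ is a finite family of chains of $n - 1$ contractions, its long edge is the family of the first (largest) posets of those chains, and, for $n \geq 2$, such a family of chains is degenerate precisely when there is a single step that is an isomorphism in every member of the family. The order-theoretic input is that a contraction is a surjection of underlying sets, hence cardinality-non-increasing along a chain, and that a \emph{bijective} contraction is an isomorphism of posets --- its inverse preserves covers by condition~(3) of Definition~\ref{definition:connectedcontraction}, hence is monotone. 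Consequently, in a chain starting from a poset of cardinality $m$ at most $m - 1$ of the steps are non-isomorphisms. Now fix $a = \{R_i\}_{i \in I} \in \mathbf{K}_1$ and put $N := \sum_{i \in I} |R_i|$. In any $n$-simplex with long edge $a$ the $i$-th chain starts at $R_i$, so the set of steps that are non-isomorphisms in at least one chain has at most $\sum_{i \in I}(|R_i| - 1) < N$ elements (and if $I = \emptyset$ the simplex is the empty family, which is degenerate); therefore for $n - 1 \geq N$ some step is an isomorphism in every chain, and the simplex is degenerate. Hence $N$ is an upper bound for the $n$ for which $\mathbf{K}_n \actto \mathbf{K}_1$ has non-degenerate elements over $a$, so $\mathbf{K}$ is of locally finite length.

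The bulk of the work is this routine identification of homotopy fibres; the one genuinely structural point --- and the place I expect to have to be careful --- is the observation that a bijective contraction is an isomorphism, which is exactly what lets ``non-isomorphism step'' be traded for ``strict drop in cardinality'' in the pigeonhole count, together with extracting the precise form of the long-edge map and of the degeneracy maps of $\mathbf{K} = \mathsf{S}\mathbf{K}^{\circ}$ from the construction in Section~\ref{subsec:dec space K}.
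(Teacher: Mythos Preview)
Your argument for locally finite length is correct and matches the paper's in substance (the paper states it tersely at the level of $\mathbf{K}^{\circ}$, invoking the same pigeonhole on cardinality drops along a chain of contractions). There is, however, a genuine gap in your treatment of local finiteness and local discreteness: you check the wrong face map.

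The displayed ``$d_1 \colon X_1 \to X_0$'' in the paper's recollection is a slip --- the map whose fibres must be finite (resp.\ discrete) is the \emph{generic} face $d_1 \colon X_2 \to X_1$, the left leg of the comultiplication span $X_1 \leftarrow X_2 \to X_1 \times X_1$. The paper's own proof makes the intended meaning unambiguous: it analyses $\Fib_{d_1}(P)$ for $d_1 \colon \mathbf{K}^{\circ}_2 \to \mathbf{K}^{\circ}_1$, i.e.\ the groupoid of contractions $f \colon P \twoheadrightarrow Q$ with fixed source $P$, and shows that any isomorphism in this fibre is forced to be the identity because $f$ is surjective. Your computation of the fibre of $d_\top \colon \mathbf{K}_1 \to \mathbf{K}_0$ --- families of connected posets with prescribed total underlying set --- is a true statement, but it is not the condition that makes the incidence comultiplication finite.

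The missing step is easy with the ingredients you already have: over a fixed family $\{P_i\}_{i \in I} \in \mathbf{K}_1$ there are only finitely many isomorphism classes of families of contractions $\{P_i \twoheadrightarrow Q_i\}$ (each $Q_i$ is a quotient of the finite $P_i$), and an automorphism of such a family over the identity of $\{P_i\}$ must be the identity since each $P_i \twoheadrightarrow Q_i$ is surjective. The paper packages this by first reducing from $\mathbf{K} = \mathsf{S}\mathbf{K}^{\circ}$ to $\mathbf{K}^{\circ}$ (using that $\mathsf{S}$ preserves finite and discrete maps) and then arguing with a single contraction rather than a family; your direct approach at the level of $\mathbf{K}$ is equally valid once it is aimed at $d_1 \colon \mathbf{K}_2 \to \mathbf{K}_1$.
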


%The proof is analogous to that of Proposition \ref{proposition:Dislocally}.
\begin{proof}
Since $\mathsf{S}$ respects finite maps and $\mathbf{K} = \mathsf{S} \mathbf{K}^{\circ}$, we will prove that $\mathbf{K}^{\circ}$ is locally finite, locally discrete and of locally finite length:
\begin{itemize}
\item Note that a connected finite non-empty poset has only a finite number of automorphisms. This means that each object in $\mathbf{K}^{\circ}_1$ has a finite number of automorphisms, and therefore, $\mathbf{K}^{\circ}_1$ is locally finite.
\item In the proof of Proposition \ref{proposition:Kiscomplete}, we see that $s_0 \colon \mathbf{K}^{\circ}_0 \to \mathbf{K}^{\circ}_1$ is finite and discrete. Since $\mathbf{K}^{\circ}_1$ is locally finite and $s_0$ is finite, to prove that $\mathbf{K}^{\circ}$ is locally discrete and locally finite, we have to check that $d_1 \colon \mathbf{K}^{\circ}_2 \to \mathbf{K}^{\circ}_1$ is discrete and finite. In the proof of Lemma \ref{lemma:discretedibrationK}, we showed that $d_1$ is a fibration so we will use strict fibres. In the strict pullback diagram 
\begin{center}
\begin{tikzcd}
\Fib_{d_1}(P) \arrow[d] \arrow[r] \drpullback & 1 \arrow[d, "\ulcorner P \urcorner"] \\
\mathbf{K}^{\circ}_2 \arrow[r, "d_1"'] & \mathbf{K}^{\circ}_1,                                            
\end{tikzcd}
\end{center}
let $f \colon P \twoheadrightarrow Q$ and $f' \colon P \twoheadrightarrow Q'$ be objects in $\Fib_{d_1}(P)$. A morphism  $u \colon f \to f'$ in $\Fib_{d_1}(P)$, it is in fact a monotone bijection $u \colon Q \to Q'$ such that $u \circ f = f'$. This equality with the monotone surjection condition of $f$ and $f'$ force that $u$ is unique. Therefore, $\Fib_{d_1}(P)$ is discrete and $\mathbf{K}^{\circ}$ is locally discrete. Furthermore, the discrete groupoid $\Fib_{d_1}(P)$ is finite since we have a finite number of contractions whose source is $P$ by the finite condition of $P$. Therefore, $\mathbf{K}^{\circ}$ is locally finite.

\item $\mathbf{K}^{\circ}$ is of locally finite length. Indeed, the fibre of $P$ along $\mathbf{K}^{\circ}_n \actto \mathbf{K}^{\circ}_1$ has no degenerate simplices for $n$ greater than the cardinality of $P$.
\end{itemize}
\end{proof}

The decomposition space $\mathbf{K}$ has a monoidal structure given by disjoint union.
Recall $\mathbf{K}_n$ is the groupoid of families of $(n-1)$-chains of contractions. The disjoint union of two such families is just the family whose components are the objects of the families index by the disjoint union of the two index sets. This clearly defines a simplicial map $+_{\mathbf{K}} \colon \mathbf{K} \times \mathbf{K} \to \mathbf{K}$. So $\mathbf{K}$ is a monoidal decomposition space if the map $+_{\mathbf{K}}$ is culf \cite[\S 9]{GTK1}.

\begin{propo}\label{propo:k is monoidal}
The map $+_{\mathbf{K}} \colon \mathbf{K} \times \mathbf{K} \to \mathbf{K}$ is culf.
\end{propo}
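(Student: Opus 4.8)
The plan is to invoke the culfness criterion of Lemma~\ref{lemma culfcondition for ds}. Since $\mathbf{K}$ is a decomposition space (Proposition~\ref{proposition:Kisdecompositionspace}) and $\mathbf{K}\times\mathbf{K}$ is one too — its defining active–inert squares are levelwise products of those of $\mathbf{K}$, and a product of pullback squares is a pullback square — it suffices to show that $+_{\mathbf{K}}$ is cartesian on $d^1\colon[1]\to[2]$, that is, that the square
\begin{center}
\begin{tikzcd}
(\mathbf{K}\times\mathbf{K})_2 \arrow[r, "d_1"]\arrow[d, "+_{\mathbf{K}}"'] & (\mathbf{K}\times\mathbf{K})_1 \arrow[d, "+_{\mathbf{K}}"]\\
\mathbf{K}_2 \arrow[r, "d_1"'] & \mathbf{K}_1
\end{tikzcd}
\end{center}
is a pullback of groupoids.

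The idea is that this square is $\mathsf{S}$ applied to an obvious pullback square at the level of $\mathbf{K}^{\circ}$. First, $d^1\colon[1]\to[2]$ is not the top coface map, so the bottom row $d_1\colon\mathbf{K}_2\to\mathbf{K}_1$ is $\mathsf{S}(d_1^{\circ})$, where $d_1^{\circ}\colon\mathbf{K}^{\circ}_2\to\mathbf{K}^{\circ}_1$ sends a contraction to its source poset. Second, being the free symmetric monoidal construction, $\mathsf{S}$ carries finite coproducts of groupoids to products: there is a natural equivalence $\mathsf{S}(A\sqcup B)\simeq\mathsf{S}A\times\mathsf{S}B$, under which a pair of families corresponds to a single family together with a partition of its index set into an ``$A$-part'' and a ``$B$-part''. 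Under this identification the top row becomes $\mathsf{S}(d_1^{\circ}\sqcup d_1^{\circ})$, while both vertical maps $+_{\mathbf{K}}$ become $\mathsf{S}(\nabla)$, where $\nabla\colon\mathbf{K}^{\circ}_\bullet\sqcup\mathbf{K}^{\circ}_\bullet\to\mathbf{K}^{\circ}_\bullet$ is the fold map — merging two families index-wise is exactly forgetting the left/right labels of a $(\mathbf{K}^{\circ}\sqcup\mathbf{K}^{\circ})$-valued family. Hence the displayed square is $\mathsf{S}$ of
\begin{center}
\begin{tikzcd}
\mathbf{K}^{\circ}_2\sqcup\mathbf{K}^{\circ}_2 \arrow[r, "d_1^{\circ}\sqcup d_1^{\circ}"]\arrow[d, "\nabla"'] & \mathbf{K}^{\circ}_1\sqcup\mathbf{K}^{\circ}_1 \arrow[d, "\nabla"]\\
\mathbf{K}^{\circ}_2 \arrow[r, "d_1^{\circ}"'] & \mathbf{K}^{\circ}_1,
\end{tikzcd}
\end{center}
which is visibly a pullback: pulling $\nabla\colon\mathbf{K}^{\circ}_1\sqcup\mathbf{K}^{\circ}_1\to\mathbf{K}^{\circ}_1$ back along $d_1^{\circ}$ returns $\mathbf{K}^{\circ}_2\sqcup\mathbf{K}^{\circ}_2$ with precisely these two structure maps. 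Since $\mathsf{S}$ preserves homotopy pullbacks \cite{Weber2015InternalAC}, the original square is a pullback, and Lemma~\ref{lemma culfcondition for ds} finishes the proof.

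I expect the only delicate point to be the explicit identification of $+_{\mathbf{K}}$ with $\mathsf{S}(\nabla)$: one must check that the equivalence $\mathsf{S}A\times\mathsf{S}B\simeq\mathsf{S}(A\sqcup B)$ is natural and intertwines the two maps, and that the square at hand really involves only the face maps inherited from $\mathbf{K}^{\circ}$ and none of the top face maps created by $\mathsf{S}$. Should that bookkeeping seem unappealing, one can instead argue fibrewise via Lemma~\ref{lemma:pullbackfibres}: fix a pair of families of connected posets $(\{A_i\}_{i\in I},\{B_j\}_{j\in J})$ in $(\mathbf{K}\times\mathbf{K})_1$ and check that $+_{\mathbf{K}}$ induces an equivalence between the fibre of $d_1$ over it in $(\mathbf{K}\times\mathbf{K})_2$ — a pair of families of contractions with source families $\{A_i\}_{i\in I}$ and $\{B_j\}_{j\in J}$ — and the fibre of $d_1$ over $\{A_i\}_{i\in I}\sqcup\{B_j\}_{j\in J}$ in $\mathbf{K}_2$ — a single family of contractions with that combined source family. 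The point is that $d_1$ does not alter index sets, so the chosen identification of the combined source family with one indexed by $I\sqcup J$ transports uniquely to a splitting of the family of contractions, providing the inverse equivalence.
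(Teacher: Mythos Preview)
Your proof is correct, and both the structural route via $\mathsf{S}(\nabla)$ and the fibrewise alternative are valid. The paper's own proof is essentially your fibrewise alternative: it reduces via Lemma~\ref{lemma culfcondition for ds} to the same square and then argues directly that a pair of families of contractions is uniquely reconstructed from the pair of source families together with the contraction of their disjoint union---exactly the splitting-of-index-sets argument you sketch at the end.

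Your main argument is a genuinely different packaging. By recognising $+_{\mathbf{K}}$ as $\mathsf{S}(\nabla)$ under the equivalence $\mathsf{S}(A\sqcup B)\simeq\mathsf{S}A\times\mathsf{S}B$, you reduce to the elementary fact that the fold-map square over any $d_1^{\circ}$ is a pullback (coproducts being stable under pullback in $\Grpd$), and then invoke that $\mathsf{S}$ preserves homotopy pullbacks. This buys a cleaner conceptual explanation---the culfness of disjoint union is inherited formally from extensivity of $\Grpd$ and the exponential property of $\mathsf{S}$---and would apply verbatim to any simplicial groupoid of the form $\mathsf{S}X^{\circ}$ with the analogous monoidal structure. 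The paper's direct argument is shorter and avoids the bookkeeping you flag about naturality of the equivalence, but is specific to the situation at hand. Both are fine; your structural version is arguably more informative.
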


\begin{proof}
By Lemma \ref{lemma culfcondition for ds}, the map $+_{\mathbf{K}}$ is culf, if the diagram
\[\begin{tikzcd}
	{\mathbf{K}_2 \times \mathbf{K}_2} & {\mathbf{K}_1 \times \mathbf{K}_1} \\
	{\mathbf{K}_2} & {\mathbf{K}_1}
	\arrow["{+_\mathbf{K}}"', from=1-1, to=2-1]
	\arrow["{d_1}"', from=2-1, to=2-2]
	\arrow["{d_1}", from=1-1, to=1-2]
	\arrow["{+_{\mathbf{K}}}", from=1-2, to=2-2]
\end{tikzcd}\]
is a pullback since $\mathbf{K}$ is a decomposition space. But this is clear: a pair of families of contractions (an object in ${\mathbf{K}_2 \times \mathbf{K}_2}$) can be uniquely reconstructed if we know what the two source families of posets are (an object in ${\mathbf{K}_1 \times \mathbf{K}_1}$) and we know how the disjoint union is contract (an object in ${\mathbf{K}_2}$). This is subject to identifying the disjoint union of those two source families of posets with the source of the disjoint union of the two families of contractions (which is to say that the data agree down in $\mathbf{K}_1$).
\end{proof}

Since $\mathbf{K}$ is a monoidal decomposition space, it follows that the resulting incidence coalgebra is also a bialgebra \cite[\S 9]{GTK1}.

\section{The decomposition space of admissible maps}\label{section:decomposition AP}

%part 1 start ----
In this section we will introduce the decomposition space of admissible maps $\mathbf{A}$.

\begin{defi}\cite[\S 2.2]{AIF_2017__67_3_911_0} \label{definition:admissible maps}
Let $T'\rightarrow T$ be an identity-on-objects monotone  map between two preorders. We define $T/T'$ to be the preorder with the same objects as $T$ and $T'$, and the preorder is defined by closing the relation $\mathcal{R}$ by transitivity:
$$x\mathcal{R}y\Longleftrightarrow (x\le_T y \;\text{or}\; y\le_{T'}x).$$
\end{defi}
In other words, we obtain $T/T'$ by inverting those arrows of $T$ that also belong to $T'$, and then closing by transitivity. Later in this section, we will use the following more categorical characterization of quotients.

The underlying set of a preorder $T$ will be denoted as $\underline{T}$, and $\overline{T}$ denotes the discrete preorder of connected components of $T$. We have a natural map $\comp \colon T \to \overline{T}$ that sends each object to its corresponding connected component.

\begin{lem}\label{lemma: definition T/T pushout}
Let $T' \rightarrow T$ be an identity-on-objects monotone  map between two preorders. Then $T/T'$ is the pushout in category of preorders
\begin{center}
\begin{tikzcd}
T'\arrow[d] \arrow[r] \drpushout& T \arrow[d] \\
T'/T' \arrow[r]& T/T'.                 
\end{tikzcd}
\end{center}
%where $T'/T'$ is the preorder obtained by inverting all the arrows of $T'$.
\end{lem}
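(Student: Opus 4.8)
The plan is to verify the universal property of the pushout directly, using the explicit description of $T/T'$ from Definition \ref{definition:admissible maps} together with the observation that $T'/T'$ is simply the preorder on $\underline{T}$ in which exactly the arrows of $T'$ have been made invertible (and then closed by transitivity). First I would record the two structure maps $T \to T/T'$ and $T'/T' \to T/T'$: both are identity-on-objects, the first being the evident quotient (every relation $x \le_T y$ survives), and the second sending the symmetrized-and-transitively-closed relation generated by $T'$ into the transitive closure of $\mathcal{R}$, which is legitimate because each generating relation $x \le_{T'} y$ (and its formal inverse) already holds in $T/T'$ by the defining clause $y \le_{T'} x \Rightarrow x \mathcal{R} y$. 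Commutativity of the square with $T' \to T$ and $T' \to T'/T'$ is then immediate, since all four maps are identity-on-objects and the composite relation on $\underline{T'}=\underline{T}$ agrees both ways.

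Next I would check universality. Suppose given a preorder $S$ and identity-on-objects monotone maps $\alpha \colon T \to S$ and $\beta \colon T'/T' \to S$ agreeing on $T'$. Since every map in sight is identity-on-objects, a cocone is the same data as a single function $\underline{T} \to \underline{S}$ (forced to be the identity on the common underlying set, so really the condition is just that the preorder relations are respected). I claim $\alpha$ itself, viewed as a map out of $T/T'$, is the unique factorization. It is well defined and monotone as a map $T/T' \to S$ precisely when $x \le_T y$ or $y \le_{T'} x$ implies $\alpha(x) \le_S \alpha(y)$, and then automatically when $x \mathcal{R} y$ in the transitive closure, by transitivity of $\le_S$. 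The first disjunct holds because $\alpha$ is monotone on $T$; the second holds because $y \le_{T'} x$ means $x \le_{T'/T'} y$ (the arrow of $T'$ has been inverted in $T'/T'$), and $\alpha$ restricted to $T'$ equals $\beta$ restricted to $T'$, so $\alpha(x) = \beta(x) \le_S \beta(y) = \alpha(y)$ using monotonicity of $\beta$ on $T'/T'$. Uniqueness is trivial: any factorization through $T/T'$ must agree with $\alpha$ after precomposition with the (surjective-on-relations, identity-on-objects) map $T \to T/T'$, and that determines it completely.

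The only genuinely delicate point — and the step I expect to be the main obstacle — is making sure the preorder $T'/T'$ is correctly identified and that the map $T'/T' \to T/T'$ is well defined: one must confirm that symmetrizing the arrows of $T'$ and closing by transitivity does not create any relation that fails to hold after the same operations are applied inside $T/T'$. This is really the assertion that the clause "$y \le_{T'} x$" in the definition of $\mathcal{R}$, once transitively closed, absorbs the entire preorder $T'/T'$; it follows because $T'/T' $ is generated as a preorder by the relations of $T'$ and their inverses, and each of these maps forward into $\mathcal{R}$ before closure. Everything else is bookkeeping with identity-on-objects maps and transitive closures, which I would not spell out in full.
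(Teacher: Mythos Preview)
Your argument is correct and follows the same approach as the paper: verify the pushout universal property directly via the explicit transitive-closure description of $T/T'$. One correction: when checking universality you should not assume $\alpha$ and $\beta$ are identity-on-objects; the universal property requires arbitrary monotone maps into an arbitrary preorder $S$. What you actually need, and what your computation already uses, is only that the cocone condition forces $\underline{\alpha}=\underline{\beta}$ as functions $\underline{T}\to\underline{S}$ (because the legs $T'\to T$ and $T'\to T'/T'$ are bijective on underlying sets); with that observation your verification of monotonicity on the generators of $\mathcal{R}$ goes through verbatim.
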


\begin{proof}
Since the maps are identity-on-objects then $\underline{T/T'} = \underline{T}$. This means that at the level of sets the square is a pushout. Consider the preorder given by closing the relation $\mathcal{R}$ by transitivity:
$$x\mathcal{R}y\Longleftrightarrow (x\le_T y \;\text{or}\; y\le_{T'}x).$$
It is straightforward to see that the diagram commutes at the level of preorders. In case that we have another preorder $<'$ in $T/T'$ that made the square commutes, the monotonicity of the maps force that $<'$ is equal to the transitive closure of $\mathcal{R}$.
\end{proof}

\begin{defi}\cite[Definition 2.2.]{AIF_2017__67_3_911_0}\label{defi:admissible} A map of preorders $T'\rightarrow T$ is \emph{admissible} if it satisfies the following:
\begin{enumerate}
    \item it is the identity-on-objects,
    \item for every connected sub-preorder $Y$ in $T'$, we have that $T_{|\underlyingset{Y}}= Y$,
    \item $x\sim_{T/T'}y$ if and only if $x\sim_{T'/T'}y$.
\end{enumerate}
\end{defi}

\noindent The following picture gives an illustration of an admissible map of preorders:

\begin{center}
\begin{tikzpicture}[yscale=1,xscale=1]
\node[draw] (box1) at (0,0){%
  \begin{tikzcd}[column sep=small,row sep=scriptsize]
  	\bullet & \bullet & \bullet \\
	\bullet & \bullet & \bullet
	\arrow[from=1-1, to=2-1]
	\arrow[from=1-2, to=2-2]
	\arrow[from=1-3, to=2-3]
  \end{tikzcd}
};
\node[draw] (box2) at (4,0){%
\begin{tikzcd}[column sep=small,row sep=scriptsize]
	\bullet & \bullet & \bullet \\
	\bullet & \bullet & \bullet
	\arrow[from=2-1, to=2-2]
	\arrow[from=1-1, to=2-1]
	\arrow[from=1-1, to=2-2]
	\arrow[from=1-2, to=2-2]
	\arrow[from=1-2, to=2-3]
	\arrow[from=1-2, to=1-3]
	\arrow[from=1-3, to=2-3]
\end{tikzcd}
};

\draw[>->, shorten <=3pt, shorten >=3pt] (box1) -- (box2);
\end{tikzpicture}
\end{center}

\begin{lem}\label{lemma: admissble map stable pullback}
In the category of preorders, admissible maps are stable under pullback along identity-on-objects monotone maps.
\end{lem}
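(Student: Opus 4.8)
The plan is to verify the three defining conditions of admissibility (Definition~\ref{defi:admissible}) for the pullback map, using the fact that pullbacks in the category of preorders are computed on underlying sets. Let $g \colon S \to T$ be an identity-on-objects monotone map and $T' \to T$ an admissible map; form the pullback $S' = S \times_T T'$. First I would observe that, since $g$ and $T' \to T$ are identity-on-objects, the underlying set of $S'$ is just $\underline{S}$ and the induced map $S' \to S$ is identity-on-objects; this gives condition~(1). The partial order (preorder) on $S'$ is the intersection of the pulled-back relations, i.e.\ $x \le_{S'} y$ iff $x \le_S y$ and $x \le_{T'} y$ (identifying objects of $S$ with objects of $T$ via $g$).

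For condition~(2), I would take a connected sub-preorder $Y$ of $S'$ and show $S_{|\underline{Y}} = Y$. The image of $Y$ in $T'$ is a sub-preorder whose underlying set is $\underline{Y}$; using connectedness of $Y$ and the fact that $S' \to T'$ is monotone identity-on-objects, one checks the image is connected in $T'$, so admissibility of $T' \to T$ gives $T_{|\underline{Y}} = (\text{that image in } T')$. Restricting both $S$ and $T$ to $\underline{Y}$ and intersecting with $T'_{|\underline{Y}}$ then forces $S'_{|\underline{Y}} = Y$; this is essentially a diagram chase with the relations, using that $\le_{S'} = \le_S \cap \le_{T'}$. For condition~(3), I would use the pushout characterisation of the quotient from Lemma~\ref{lemma: definition T/T pushout}: $S/S'$ is the pushout of $S'/S' \leftarrow S' \to S$, and connected components are preserved by the relevant colimits, so $x \sim_{S/S'} y$ translates into a statement about zig-zags that, after pushing along $g$ to $T/T'$ and invoking condition~(3) for $T' \to T$ together with the compatibility of $g$ with components, reduces to $x \sim_{S'/S'} y$. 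One inclusion ($\sim_{S'/S'} \Rightarrow \sim_{S/S'}$) is automatic since $S'/S' \to S/S'$ is monotone; the other is the content.

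I expect the main obstacle to be condition~(3), the equivalence of the connected-component relations on the quotients. The subtlety is that passing to a quotient $S/S'$ can create new identifications (by inverting arrows of $S'$ and closing transitively), and one must show that the pullback does not introduce spurious components beyond those already present in $S'/S'$ — this is where the hypothesis that $g$ is identity-on-objects and monotone, plus admissibility~(3) for $T' \to T$, must be combined carefully. The cleanest route is probably to argue at the level of the natural maps $\comp \colon S/S' \to \overline{S/S'}$ and to exploit that the square defining the pullback, after applying the quotient construction, maps compatibly to the corresponding square for $T$; then a zig-zag in $S/S'$ between $x$ and $y$ maps to a zig-zag in $T/T'$, which by admissibility of $T'\to T$ comes from a zig-zag in $T'/T'$, and one lifts this back along the pullback using that the arrows involved lie in $S'$ precisely when their images lie in $T'$. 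The remaining conditions~(1) and~(2) are routine once the description $\le_{S'} = \le_S \cap \le_{T'}$ is in hand.
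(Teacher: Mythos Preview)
Your overall approach—direct verification of the three conditions of Definition~\ref{defi:admissible} using the explicit description $\le_{S'} = \le_S \cap \le_{T'}$—is the same as the paper's. Conditions~(1) and~(2) go through essentially as you describe.

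For condition~(3), however, there is a genuine gap in your sketch. You argue: a zig-zag in $S/S'$ maps to one in $T/T'$; admissibility of $T'\to T$ yields a zig-zag in $T'/T'$; then you ``lift this back'' to $S'/S'$. The problem is the lifting step. A zig-zag in $T'$ consists of $T'$-arrows, but a $T'$-arrow need not be an $S$-arrow, so need not lie in $S'$. Your justification that ``the arrows involved lie in $S'$ precisely when their images lie in $T'$'' is correct only for arrows that are already $S$-arrows; it does not let you import an arbitrary $T'$-zig-zag into $S'$. (Concretely, take $T'=T=\{a\to b\}$ and $S=\{a,b\}$ discrete: then $S'$ is discrete but $T'$ is connected, so no $T'$-zig-zag from $a$ to $b$ lifts.)

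What actually works is to stay with the \emph{original} zig-zag in $S/S'$ and argue that each of its steps already lies in $S'$. The key observation is that the composite $h\colon S\to T\to V$, where $V$ is the posetification of $T/T'$ (which by Lemma~\ref{lem:admissible pullback preorders} has underlying set $\overline{T'}$), factors through $S/S'$. Given $x\sim_{S/S'} y$, any witnessing chain $x=z_0,z_1,\dots,z_n=y$ for $x\le_{S/S'}y$ satisfies $h(x)\le_V h(z_1)\le_V\cdots\le_V h(y)$, and symmetrically $h(y)\le_V h(x)$; since $V$ is a poset, all $h(z_i)$ coincide. Hence the entire chain lies in a single fibre of $h$, i.e., in a single connected component $D$ of $T'$. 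By condition~(2) for $T'\to T$ we have $T|_{\underline D}=T'|_{\underline D}$, whence $S'|_{\underline D}=S|_{\underline D}$; thus every $\le_S$-step of the chain is in fact an $\le_{S'}$-step, and $x,y$ are connected in $S'$. You correctly identified condition~(3) as the crux; the fix is to replace the ``lift back'' move by this fibre argument.
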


\begin{proof}
Let $\beta \colon V' \rightarrowtail V$ be an admissible map and let $f \colon T \to V$ be an identity-on-objects monotone map. Consider the pullback diagram
\[\begin{tikzcd}
	{V' \times_V T} & T \\
	{V'} & V.
	\arrow["\beta"', tail, from=2-1, to=2-2]
	\arrow["f", from=1-2, to=2-2]
	\arrow["{\pi_{V'}}"', from=1-1, to=2-1]
	\arrow["{\pi_T}", from=1-1, to=1-2]
	\arrow["\lrcorner"{anchor=center, pos=0.125}, draw=none, from=1-1, to=2-2]
\end{tikzcd}\]
It is easy to see that $\pi_T$ is an identity-on-objects monotone map. For a connected sub-preorder $Y$ of ${V' \times_V T}$, we have that $\pi_{V'}(Y)$ is a connected sub-preorder of $V'$ since $\pi_{V'}$ is a projection map. By the admissible condition of $\beta$, it follows that $\beta(\pi_{V'})$ is a connected sub-preorder of $V$ and $V\vert_{Y} = Y$. Combining this with the commutative of the square and the identity-on-objects monotone condition of the arrows it follows that $\pi_T(Y)$ is a connected sub-preorder of $T$ and $T\vert_{Y} = Y$. To prove that $\pi_{T}$ is admissible, all that remains is to check $x\sim_{T/(V' \times_V T)}y$ if and only if $x\sim_{(V' \times_V T)/(V' \times_V T)}y$, but the commutative condition of the square implies that $x\sim_{T/(V' \times_V T)}y$ if $x\sim_{V/V'}y$ and $x\sim_{(V' \times_V T)/(V' \times_V T)}y$ if  $x\sim_{V'/V'}y$. In other words $x\sim_{V/V'}y$ if and only if $x\sim_{V'/V'}y$ but this is true as consequence of the admissible condition of $\beta$. Hence $\pi_T$ is admissible. 
\end{proof}

%--end part 1
The posetification of a preorder $T$ will be denoted as $\posetify{T}$. Let $\collapse{P}$ denote the discrete poset of connected components of a poset $P$, which is the same as $\posetify{P/P}$.
%--------start part 2-----

\begin{lem}\label{lem:admissible pullback preorders}
An identity-on-objects monotone map of preorders $T'\rightarrow T$ is admissible if and only if the pushout square
\begin{center}
\begin{tikzcd}
T'\arrow[d] \arrow[r] \drpushout& T \arrow[d] \\
\collapse{V'} \arrow[r]& V,                 
\end{tikzcd}
\end{center}
satisfies the following properties:
\begin{enumerate}
    \item $\underlyingset{V}=\collapse{V'}$
    \item it is also a pullback square.
\end{enumerate}
\end{lem}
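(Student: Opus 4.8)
The plan is to analyze the pushout square degree by degree, separating the underlying-set data from the order relation. Since the maps $T' \to T$ and $T' \to \collapse{V'}$ are identity-on-objects, Lemma \ref{lemma: definition T/T pushout} (applied in the poset-quotient form) identifies $V$ with the posetification $\posetify{T/T'}$, so $\underlyingset{V}$ is the set of connected components of $T'$, i.e.\ $\underlyingset{\collapse{V'}}$. This already shows condition (1) holds \emph{unconditionally} — it is not really an extra requirement but a feature of the construction. Thus the content of the lemma is the equivalence between admissibility of $T' \to T$ and the square being a pullback (in the category of preorders, computed as the strict pullback since one leg is a levelwise surjection). I would state this reduction up front.

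Next I would unwind what "the square is a pullback" means concretely. An element of the strict pullback $\collapse{V'} \times_V T$ is a pair: a point of $T$ together with a connected component of $T'$, compatible under the maps to $V$; since both maps are identity-on-objects and $V$ has the $T'$-components as its points, this pair is forced to be $(x, [x]_{T'})$, so the comparison map $T' \to \collapse{V'}\times_V T$ is a bijection on underlying sets. The square is therefore a pullback precisely when this bijection is an \emph{isomorphism of preorders}, i.e.\ the order on $T'$ is exactly the order pulled back from $\collapse{V'}\times_V T$. Spelling that out: $x \le_{T'} y$ iff ($x \le_T y$ and $[x]_{T'} \le_{\collapse{V'}} [y]_{T'}$, the latter being vacuous since $\collapse{V'}$ is discrete, so it forces $[x]_{T'}=[y]_{T'}$). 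So the pullback condition says: $x \le_{T'} y \iff (x \le_T y \text{ and } x \sim_{T'} y)$.

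Now I would prove the two implications. For ($\Rightarrow$): assume $T' \to T$ admissible. The forward direction $x \le_{T'} y \implies x\le_T y$ is just monotonicity, and $x\le_{T'} y \implies x \sim_{T'} y$ is immediate. For the converse, suppose $x \le_T y$ and $x \sim_{T'} y$; let $Y$ be the connected component of $x$ (= of $y$) in $T'$. Condition (2) of Definition \ref{defi:admissible} gives $T|_{\underlyingset{Y}} = Y$, and since $x \le_T y$ with both in $\underlyingset{Y}$, this says $x \le_{Y} y$, hence $x \le_{T'} y$. For ($\Leftarrow$): assume the square is a pullback, i.e.\ the displayed equivalence holds. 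Condition (1) of Definition \ref{defi:admissible} (identity-on-objects) is part of the hypothesis. For (2), let $Y$ be a connected sub-preorder of $T'$; I must show $T|_{\underlyingset{Y}} = Y$. The inclusion $Y \subseteq T|_{\underlyingset{Y}}$ is monotonicity; conversely if $x \le_T y$ with $x,y \in \underlyingset{Y}$, then $x \sim_{T'} y$ (as $Y$ is connected), so the pullback equivalence gives $x \le_{T'} y$, and since $Y$ is a \emph{full} connected sub-preorder this yields $x \le_Y y$ — here I should be slightly careful that "connected sub-preorder" in \cite{AIF_2017__67_3_911_0} means a full sub-preorder on a connected set of objects, which I would note. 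For (3), $x \sim_{T/T'} y \iff x \sim_{T'/T'} y$: the right-hand side is $x \sim_{T'} y$ by definition, and $\sim_{T/T'}$ is the equivalence relation generated by $\le_T$ and $\ge_{T'}$; the pullback condition forces any such zig-zag to stay within a single $T'$-component, giving the nontrivial inclusion, while the reverse inclusion is automatic.

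I expect the main obstacle to be purely bookkeeping rather than conceptual: correctly matching the definition of $T/T'$ (a \emph{preorder}, via transitive closure of $\mathcal{R}$) against the posetified pushout $V$, and being precise about the "connectedness" and "full sub-preorder" conventions of \cite{AIF_2017__67_3_911_0} so that the translation "pullback condition $\iff$ ($x\le_T y \wedge x\sim_{T'}y \Rightarrow x\le_{T'}y$)" is airtight. It may be cleanest to first prove the clean reformulation of the pullback condition as a separate short claim, then feed it into both directions of the equivalence with Definition \ref{defi:admissible}.
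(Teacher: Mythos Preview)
Your reduction has a genuine gap: condition (1) is \emph{not} automatic, and this error propagates through the rest of the argument.

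You correctly identify $V \simeq \posetify{T/T'}$, but then assert that $\underlyingset{V}$ is the set of connected components of $T'$. That is false in general: $\underlyingset{\posetify{T/T'}}$ is the set of $\sim_{T/T'}$--equivalence classes, whereas $\underlyingset{\collapse{T'}}$ is the set of $\sim_{T'/T'}$--equivalence classes. These coincide precisely when condition (3) of Definition~\ref{defi:admissible} holds. (Concretely: take $T'=\{a,b\}$ discrete and $T$ the indiscrete preorder on $\{a,b\}$. Then $\posetify{T/T'}$ is a single point while $\collapse{T'}$ has two points, so condition (1) fails; and indeed the map is not admissible, since $a\sim_{T/T'} b$ but $a\not\sim_{T'/T'} b$.) This is exactly the paper's first step: condition (1) $\Longleftrightarrow$ condition (3) of admissibility. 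A related slip: the map $T'\to\collapse{V'}$ is the quotient to components, not identity-on-objects.

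The error then breaks your ($\Leftarrow$) direction. Your reformulation of the pullback condition as ``$x\le_{T'}y \iff (x\le_T y \text{ and } x\sim_{T'} y)$'' is valid only once condition (1) is in force, since you need $\collapse{T'}\to V$ to be a bijection and $T\to V$ to send $x\mapsto [x]_{T'}$; without (1) the comparison $T'\to \collapse{T'}\times_V T$ need not even be a bijection on underlying sets. And your derivation of condition (3) from the pullback condition alone (``the pullback condition forces any such zig-zag to stay within a single $T'$-component'') fails: in the counterexample above the square \emph{is} a pullback, yet $a\sim_{T/T'} b$ with $a\not\sim_{T'} b$. A step $x\le_T y$ in the zig-zag is not constrained to stay in a $T'$-component by your displayed equivalence.

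The fix is precisely the paper's structure: first prove condition (1) $\Longleftrightarrow$ Definition~\ref{defi:admissible}(3); then, \emph{assuming these hold}, your pullback reformulation becomes valid and your argument that it is equivalent to Definition~\ref{defi:admissible}(2) goes through cleanly.
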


\begin{proof}
First, note that condition 1 is equivalent to condition 3 of Definition~\ref{defi:admissible}. Indeed, if $x\sim_{T/T'}y\Leftrightarrow x\sim_{T'/T'}y$ then necessarily $V=\posetify{T/T'}$ and $\collapse{V'}=\posetify{T'/T'}$ have the same objects, and vice-versa, because the objects of $V$ are given by the equivalence classes of objects of $T/T'$, and the objects of $\collapse{T'}$ are given by the equivalence classes of objects of $T'/T'$. On the other hand, assuming these two conditions hold, condition 2 is equivalent to condition 2 of Definition~\ref{defi:admissible}. Indeed, the connected components of $T'$ are the preimages of $\underlyingset{V}$, and the fact that the square is a pullback is equivalent to the fact that the map $T'\rightarrow T$ is full on each connected component of $T'$, which is precisely condition 2 of Definition~\ref{defi:admissible}. %{\color{red} Perhaps this last statement should be a previous separate lemma.}
\end{proof}

% -----end part 2

% ------- start part 3-------
\begin{comment}
\subsection*{Fauvet--Foissy--Manchon Hopf algebra of finite topologies and admissible maps}
\end{comment}

\begin{blanko}
{Fauvet--Foissy--Manchon Hopf algebra of finite topologies and admissible maps}\label{subsec:comparation FFM with K}
\end{blanko}

Fauvet, Foissy, and Manchon (\cite{AIF_2017__67_3_911_0}, \S 3.1) introduced the notion of quotient of a topology $\mathcal{T}$ on a finite set $X$ by another topology $\mathcal{T}'$ finer than $\mathcal{T}$. The quotient topology $\mathcal{T}/\mathcal{T}'$ thus obtained lives on the same set. Furthermore, they introduced the relation ${\stackMath\mathbin{\stackinset{c}{0ex}{c}{0ex}{<}{\bigcirc}}}$ on the topologies on $X$ defined by $\mathcal{T}' {\stackMath\mathbin{\stackinset{c}{0ex}{c}{0ex}{<}{\bigcirc}}} \mathcal{T}$ if and only if $\mathcal{T}'$ is finer than $\mathcal{T}$ and fulfills the technical condition of $\mathcal{T}$-admissibility. This enabled them to give the internal coproduct 
$$\Delta(\mathcal{T}) = \sum_{\mathcal{T}' {\stackMath\mathbin{\stackinset{c}{0ex}{c}{0ex}{<}{\bigcirc}}} \mathcal{T}} \mathcal{T}' \otimes \mathcal{T}/\mathcal{T}'$$
Since there is a natural bijection between topologies and quasi-orders on a finite set $X$, Fauvet, Foissy, and Manchon also expressed the $\mathcal{T}$-admissibility in the context of preorders. This corresponds to the one we use in this section (\ref{definition:admissible maps}). The above coproduct is just rewritten in the context of preorders as:  
$$\Delta(T) = \sum_{T' \rightarrowtail T} T' \otimes T/T'.$$ 
We shall see in \ref{lem:incidence coalgebra of A} that this coproduct corresponds to the coproduct $\Delta_{\mathbf{A}}$ of the incidence coalgebra of the decomposition space of admissible maps $\mathbf{A}$.

% ----end part 3----------
\begin{comment}
\subsection*{decomposition space A}
\end{comment}

\begin{blanko}
{The decomposition space $\mathbf{A}$}\label{subsec:definition A}
\end{blanko}
% start part 5

A \emph{groupoid preorder} is a preorder where all its morphisms are invertible. Given a finite preorder $T$, we denote by $T^{\invp}$ the groupoid preorder that contains the same objects that $T$ but only the invertible morphisms of $T$.

We describe a pseudo simplicial groupoid (\ref{proposition: APr simplicial}) of admissible maps which we call $\mathbf{A}$. Let $\mathbf{A}_n$ denote the groupoid of $(n-1)$-chains of admissible maps between non-empty finite preorders. Thus, $\mathbf{A}_2$ is the groupoid of admissible maps and $\mathbf{A}_1$ is the groupoid of finite preorders whose underlying sets are ordinals and monotone bijections. $\mathbf{A}_0$ is the groupoid whose objects are the groupoid preorders. Face maps are given by:

\begin{itemize}
\item For $n \geq 2$, the bottom face map $d_\perp \colon \mathbf{A}_n \rightarrow \mathbf{A}_{n-1}$ is defined as follows: given $(n-1)$-chain of admissible maps
\begin{center}
\begin{tikzcd}
T_0 \arrow[r, tail] & T_1 \arrow[r, tail] & T_2 \arrow[r,tail] & \cdots \arrow[r,tail] & T_{n-2} \arrow[r, tail] & T_{n-1},
\end{tikzcd}
\end{center}
for each poset in the chain, we can form the quotient $T_i/T_0$ by Lemma \ref{lemma: definition T/T pushout}. We end up with a $(n-2)$-chain of admissible maps
\begin{center}
\begin{tikzcd}
T_1/T_0 \arrow[r, tail] & T_2/T_0 \arrow[r, tail] & T_3/ T_0 \arrow[r,tail] & \cdots \arrow[r,tail] & T_{n-2}/ T_0 \arrow[r, tail] & T_{n-1} / T_0;
\end{tikzcd}
\end{center}
 
\item $d_\perp \colon \mathbf{A}_1 \rightarrow \mathbf{A}_{0}$ sends a preorder $T$ to $T^{\invp}$.

\item $d_1$ forgets the first preorder in the chain;

\item $d_i \colon \mathbf{A}_n \rightarrow \mathbf{A}_{n-1}$ composes the $i$th and $(i+1)$th admissible map, for $1 < i < n-1$; 

\item $d_{\top}$ forgets the last preorder in the chain.
\end{itemize}
 
Degeneracy maps are given by:

\begin{itemize}
\item $s_\perp \colon \mathbf{A}_n \rightarrow \mathbf{A}_{n+1}$ is given by appending with the map whose source is the underlying groupoid preorder of the first preorder of the chain.

\item $s_i \colon \mathbf{A}_n \rightarrow \mathbf{A}_{n+1}$ inserts an identity arrow at object number $i$, for $0 < i \leq n$.
\end{itemize}

The simplicial identities will be verified in \ref{proposition: APr simplicial}. Let $\mathbb{A}$ denote the category of finite preorders and admissible maps. 

\begin{propo}\label{proposition:dec perp AP is Segal}
We have an equality $\Dec_\perp \mathbf{A} = \fatnerve \mathbb{A}$.
\end{propo}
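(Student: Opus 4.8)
The statement $\Dec_\perp \mathbf{A} = \fatnerve \mathbb{A}$ is an identity of $\Grpd$-valued simplicial presheaves, so the plan is to check that in each simplicial degree the two groupoids coincide, and then that all the structure maps (faces and degeneracies) agree. Recall that the lower dec deletes $\mathbf{A}_0$, shifts everything down by one, and discards all the $d_0$ and $s_0$ operators; concretely $(\Dec_\perp \mathbf{A})_n = \mathbf{A}_{n+1}$, with $i$-th face of $(\Dec_\perp \mathbf{A})_n$ being $d_{i+1}\colon \mathbf{A}_{n+1}\to \mathbf{A}_n$ and $i$-th degeneracy being $s_{i+1}$. On the other hand $\fatnerve(\mathbb{A})_n = \Map([n],\mathbb{A})$ is the maximal subgroupoid of the category of $n$-chains of admissible maps between finite preorders. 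So degreewise both sides are the groupoid of $n$-chains of admissible maps between non-empty finite preorders: this is exactly the description of $\mathbf{A}_{n+1}$ given in the construction just above. That already matches on objects and isomorphisms.

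First I would spell out the degreewise identification. By definition $\mathbf{A}_{n+1}$ is the groupoid of $n$-chains $T_0 \rightarrowtail T_1 \rightarrowtail \cdots \rightarrowtail T_n$ of admissible maps (the normalization to ordinals/bijections in low degrees is just the choice of a skeleton and does not affect the groupoid up to the equality intended here — or, if one is being strict, one notes the fat nerve is likewise built so that its objects are functors $[n]\to\mathbb{A}$, i.e. such chains). Hence $(\Dec_\perp\mathbf{A})_n = \mathbf{A}_{n+1} = \fatnerve(\mathbb{A})_n$ as groupoids, for every $n\geq 0$; in particular for $n=0$ this reads $\mathbf{A}_1 = \fatnerve(\mathbb{A})_0$, the groupoid of finite preorders and monotone bijections, which is again what the construction says.

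Next I would match the structure maps. On the $\fatnerve\mathbb{A}$ side, the $i$-th coface inserts/forgets in the standard way: $d_i$ for $0\le i\le n$ on an $n$-chain deletes $T_i$ and composes its two neighbouring arrows (with the extreme cases $d_0$ forgetting $T_0$ and $d_n$ forgetting $T_n$), and the codegeneracies insert identity arrows. On the $\Dec_\perp\mathbf{A}$ side, the $i$-th face is $d_{i+1}\colon \mathbf{A}_{n+1}\to\mathbf{A}_n$. Going through the list of face maps of $\mathbf{A}$ given above: for $1<i+1<n$ (that is $0<i<n-1$) the map $d_{i+1}$ composes the $(i+1)$-th and $(i+2)$-th admissible maps — i.e. deletes the object $T_{i+1}$ of the chain $T_0\rightarrowtail\cdots\rightarrowtail T_{n}$, which is the $i$-th face in the dec-indexing and matches the fat-nerve coface; the map $d_{\top}=d_{n+1}$ forgets the last preorder $T_n$, matching the top coface of the fat nerve; and the map $d_1$ forgets $T_0$ — but in $\Dec_\perp\mathbf{A}$, $d_1$ on $\mathbf{A}_{n+1}$ is the $0$-th face of degree $n$, which under the fat-nerve indexing is $d_0$, the coface forgetting the $0$-th object, so again they agree. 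The degeneracies $s_{i+1}$ of $\mathbf{A}$ (for $0<i+1\le n+1$) insert an identity arrow at object $i+1$, matching the $i$-th codegeneracy of the fat nerve. Crucially, none of the discarded operators of the dec — the maps $d_0$ (which on $\mathbf{A}$ is the quotient map $T_i\mapsto T_i/T_0$) and $s_0$ (appending the underlying groupoid preorder) and the whole bottom row $\mathbf{A}_0$ — intervene, which is exactly why the non-functorial-looking quotient operation disappears and what is left is the honest nerve of the category $\mathbb{A}$.

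**Main obstacle.** The only real subtlety is bookkeeping: the degree shift and index shift of the lower dec must be tracked carefully so that $d_1$ on $\mathbf{A}$ is correctly identified with the $d_0$-coface of $\fatnerve\mathbb{A}$ and $d_i$ with $d_{i-1}$, and one must confirm that all the remaining simplicial identities on $\mathbf{A}$ (to be established in \ref{proposition: APr simplicial}) restrict on this subdiagram to exactly the nerve identities — which they do automatically, since once degrees $n\ge 1$ of $\mathbf{A}$ are literally chains of admissible maps with composition and insertion of identities as structure maps, they form the nerve of $\mathbb{A}$ by inspection. A secondary point worth a sentence is the normalization in low degrees ($\mathbf{A}_1$ using ordinals and bijections rather than arbitrary finite preorders): either one adopts the same skeletal convention for $\fatnerve\mathbb{A}$, or one observes that this is an equivalence that we are entitled to treat as an equality in the present bookkeeping, exactly as was done for $\mathbf{K}$ with the remark comparing $\mathbb{B}_{/X}$ and $\FinSet^{\mathrm{bij}}_{/X}$. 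I expect the written proof to be short: essentially "unwind both definitions and compare," with the index-shift matching of face maps being the one place to be careful.
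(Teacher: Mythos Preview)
Your proposal is correct. The paper does not actually supply a proof of this proposition --- it is stated without proof and immediately used, just as the analogous Proposition~\ref{proposition:dectopKisSegal} for $\mathbf{K}$ is given only a one-line justification. Your approach of unwinding both definitions and matching the groupoids and structure maps degree by degree is exactly the intended (and only natural) argument; you have written out in detail what the paper leaves implicit.
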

  
Proposition \ref{proposition:dec perp AP is Segal} implies the compatibility of the face and degeneracy maps in $\mathbf{A}$ except the face maps $d_\perp$. We need the following result to verify the simplicial identities for $d_\perp$.

\begin{lem}\label{lemma: pushouts for 2-admissible maps}
Let $T_0 \rightarrowtail T_1$ and $T_1 \rightarrowtail T_2$ be admissible maps. The following diagram
\[\begin{tikzcd}
	{} & {T_0} & {T_1} & {T_2} \\
	& {T_0/T_0} & {T_1/T_0} & {T_2/T_0} \\
	&& {T_1/T_1} & {T_2/ T_1}
	\arrow[tail, from=1-2, to=1-3]
	\arrow[tail, from=1-3, to=1-4]
	\arrow[from=1-3, to=2-3]
	\arrow[from=1-4, to=2-4]
	\arrow[from=2-4, to=3-4]
	\arrow[tail, from=2-3, to=2-4]
	\arrow[from=2-3, to=3-3]
	\arrow[from=1-2, to=2-2]
	\arrow[tail, from=2-2, to=2-3]
	\arrow[tail, from=3-3, to=3-4]
\end{tikzcd}\]
commutes and the squares are pushouts. 
\end{lem}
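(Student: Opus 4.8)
The plan is to deduce the whole statement from Lemma~\ref{lemma: definition T/T pushout} together with the pasting law for pushouts, after first disposing of commutativity for free.

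Since the admissible maps are identity-on-objects and forming a quotient $T/T'$ leaves the underlying set untouched, all the preorders in the diagram sit on one and the same set $X := \underline{T_0} = \underline{T_1} = \underline{T_2}$, and every arrow in the diagram is an identity-on-objects monotone map. Between two preorders on a fixed set there is at most one identity-on-objects monotone map, and such a map exists exactly when the source relation is contained in the target relation. So it suffices to check, for each displayed arrow, the corresponding inclusion of relations: for instance, the order of $T_1/T_0$ is contained in that of $T_2/T_0$ (because $\le_{T_1}$ is contained in $\le_{T_2}$) and in that of $T_1/T_1$ (because $\le_{T_0}$ is contained in $\le_{T_1}$). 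All these are immediate once one recalls that $\le_{T/T'}$ is the transitive closure of $\le_T$ together with the opposite of $\le_{T'}$. Commutativity of the entire diagram is then automatic from the uniqueness just mentioned.

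For the pushout assertions, observe first that Lemma~\ref{lemma: definition T/T pushout} asks only for an identity-on-objects monotone map, and that such maps are closed under composition; hence it applies to the composites $T_0 \rightarrowtail T_2$ and $T_1 \rightarrowtail T_2$ as well. The upper-left square is exactly the pushout square of Lemma~\ref{lemma: definition T/T pushout} for $T_0 \rightarrowtail T_1$. The top rectangle, obtained by pasting the upper-left and upper-right squares along the middle column $T_1 \to T_1/T_0$, is the pushout square of the same lemma for the composite $T_0 \rightarrowtail T_2$: its four corners are $T_0$, $T_2$, $T_0/T_0$, $T_2/T_0$, and by the uniqueness remark its two legs coincide with the canonical quotient maps even though in the diagram they are factored through $T_1$ and $T_1/T_0$. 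The pasting law for pushouts then forces the upper-right square to be a pushout.

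It remains to treat the lower-right square, for which I would paste vertically. By uniqueness again, the composite $T_1 \to T_1/T_0 \to T_1/T_1$ is the canonical map $T_1 \to T_1/T_1$, and likewise $T_2 \to T_2/T_0 \to T_2/T_1$ is the canonical map $T_2 \to T_2/T_1$; hence the rectangle obtained by stacking the upper-right square on top of the lower-right square is precisely the pushout square of Lemma~\ref{lemma: definition T/T pushout} for $T_1 \rightarrowtail T_2$. Since the upper-right square has already been shown to be a pushout, the pasting law gives that the lower-right square is a pushout as well. (Alternatively one can verify directly the ``third isomorphism theorem'' $T_1/T_1 = (T_1/T_0)/(T_1/T_0)$ and $T_2/T_1 = (T_2/T_0)/(T_1/T_0)$ by comparing transitive closures of relations, and then invoke Lemma~\ref{lemma: definition T/T pushout} for the identity-on-objects map $T_1/T_0 \to T_2/T_0$.) The only genuine point requiring care is this bookkeeping: making sure that the corner reached by composing two legs of the diagram is literally the quotient named in the statement, so that the pasted rectangles really are the pushout squares produced by Lemma~\ref{lemma: definition T/T pushout}. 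This, together with the existence of all the arrows, comes down to the single elementary fact that $T/T'$ is the transitive closure of $\le_T$ with the opposite of $\le_{T'}$ adjoined, so I anticipate no real difficulty.
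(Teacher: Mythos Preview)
Your proof is correct and follows essentially the same strategy as the paper: use the pasting (prism) law for pushouts, first horizontally to get the upper-right square, then vertically for the lower-right square. The paper's version is terser and organises the vertical step slightly differently---it replaces the bottom row by $(T_1/T_0)/(T_1/T_0)$ and $(T_2/T_0)/(T_1/T_0)$, pastes two pushouts to get the outer rectangle, and then identifies the bottom-right corner with $T_2/T_1$; this is precisely the ``third isomorphism theorem'' alternative you mention in parentheses. Your explicit treatment of commutativity via uniqueness of identity-on-objects monotone maps, and your justification of the upper-right square, fill in steps the paper leaves implicit.
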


\begin{proof}
By the prism Lemma, the outer diagram
\[\begin{tikzcd}
	{T_1} & {T_2} \\
	{T_1/T_0} & {T_2/T_0} \\
	{\frac{(T_1/T_0)}{(T_1/T_0)}} & {\frac{(T_2/T_0)}{(T_1/T_0)}}
	\arrow[tail, from=1-1, to=1-2]
	\arrow[from=1-1, to=2-1]
	\arrow[from=1-2, to=2-2]
	\arrow[from=2-1, to=3-1]
	\arrow[from=2-2, to=3-2]
	\arrow[tail, from=2-1, to=2-2]
	\arrow[tail, from=3-1, to=3-2]
	\arrow["\lrcorner"{anchor=center, pos=0.125, rotate=180}, draw=none, from=2-2, to=1-1]
	\arrow["\lrcorner"{anchor=center, pos=0.125, rotate=180}, draw=none, from=3-2, to=2-1]
\end{tikzcd}\]
is a pushout. This combining with the fact that ${\frac{(T_1/T_0)}{(T_1/T_0)}} = T_1/T_1$ implies that 
${\frac{(T_2/T_0)}{(T_1/T_0)}}$ is the pushout of $T_1 \rightarrowtail T_2$ along $T_1 \to T_1/T_1$ but $T_2/T_1$ is also a pushout over the same diagram. Therefore, ${\frac{(T_2/T_0)}{(T_1/T_0)}} \cong T_2/T_1$.
\end{proof}

\begin{propo}\label{proposition: APr simplicial}
The groupoids $\mathbf{A}_n$ and the degeneracy and face maps given above form a pseudosimplicial groupoid $\mathbf{A}$.
\end{propo}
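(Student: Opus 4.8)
The plan is to follow the pattern of the proof of Proposition~\ref{proposition:Ksimplicial}: use the decalage identity of Proposition~\ref{proposition:dec perp AP is Segal} to obtain, for free and strictly, all simplicial identities that do not involve the bottom face and degeneracy maps, and then verify by hand the few identities involving $d_\perp$ and $s_\perp$, isolating the single one that holds only up to coherent isomorphism.

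For the first part, recall that $\mathbb{A}$ is a $1$-category, so its fat nerve $\fatnerve\mathbb{A}$ is a strict simplicial groupoid. Since taking the lower dec only deletes $\mathbf{A}_0$, the face maps $d_\perp$ and the degeneracies $s_\perp$ (and reindexes the rest), the equality $\Dec_\perp\mathbf{A}=\fatnerve\mathbb{A}$ shows at once that every simplicial identity of $\mathbf{A}$ among the maps $d_1,d_2,\dots$ and $s_1,s_2,\dots$ holds on the nose. What remains are the identities involving $d_\perp$ or $s_\perp$, and I would dispose of these by direct inspection of the definitions. The operation $d_\perp$ replaces a chain $T_0\rightarrowtail\cdots\rightarrowtail T_{n-1}$ by the chain of quotients $T_1/T_0\rightarrowtail\cdots\rightarrowtail T_{n-1}/T_0$; since forming $T_i/T_0$ is unaffected by composing two admissible maps lying strictly above $T_0$, by forgetting $T_{n-1}$, or by inserting an identity above $T_0$, the identities $d_id_\perp=d_\perp d_{i+1}$ ($i\geq 1$), $d_\perp s_i=s_{i-1}d_\perp$ ($i\geq 1$), $d_is_\perp=s_\perp d_{i-1}$ ($i\geq 2$), $d_\perp s_\perp=d_1 s_\perp=\identity$, and $s_\perp s_\perp=s_1 s_\perp$ (together with the analogous $s_\perp s_i=s_{i+1}s_\perp$) all hold strictly. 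The one subtle point is the interaction of $s_\perp$, which prepends $T_0^{\invp}\rightarrowtail T_0$, with the face maps touching the bottom of the chain: one uses that $T_i/T_0^{\invp}=T_i$ because the morphisms of $T_0^{\invp}$ are already invertible in $T_i$, and that along an admissible map $T_0\rightarrowtail T_1$ one has $T_0^{\invp}=T_1^{\invp}$ — a short consequence of conditions (2) and (3) of Definition~\ref{defi:admissible}, since if $x\cong_{T_1}y$ then $x$ and $y$ lie in a single connected component $Y$ of $T_0$ by (3), whence $x\cong_{T_0}y$ by applying (2) to $Y$.

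Finally, the single genuinely pseudo-simplicial identity is $d_\perp d_\perp\simeq d_\perp d_1$, the lower-dec counterpart of the identity $d_\top d_\top\simeq d_\top d_{\top-1}$ appearing in Proposition~\ref{proposition:Ksimplicial}. On a chain $T_0\rightarrowtail T_1\rightarrowtail\cdots\rightarrowtail T_{n-1}$, the map $d_\perp d_1$ produces $T_2/T_1\rightarrowtail\cdots\rightarrowtail T_{n-1}/T_1$ while $d_\perp d_\perp$ produces $(T_2/T_0)/(T_1/T_0)\rightarrowtail\cdots\rightarrowtail (T_{n-1}/T_0)/(T_1/T_0)$, and Lemma~\ref{lemma: pushouts for 2-admissible maps} supplies canonical isomorphisms $(T_i/T_0)/(T_1/T_0)\cong T_i/T_1$ compatible with the chain maps. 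Since these isomorphisms come from universal properties of pushouts they are natural in the chain and cohere with the strict identities above, so $\mathbf{A}$ is a pseudosimplicial groupoid. I expect the main obstacle to lie not in any deep idea but in the bookkeeping: confirming that $d_\perp d_\perp\simeq d_\perp d_1$ is genuinely the only non-strict identity (in particular the strictness of the $s_\perp$-identities via admissibility) and that the chosen isomorphisms satisfy the required pseudosimplicial coherence conditions.
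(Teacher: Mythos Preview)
Your proposal is correct and follows essentially the same approach as the paper: reduce via $\Dec_\perp\mathbf{A}=\fatnerve\mathbb{A}$ to the identities involving the bottom maps, and then use Lemma~\ref{lemma: pushouts for 2-admissible maps} to establish the single pseudo identity $d_\perp d_\perp\simeq d_\perp d_1$. You are in fact more careful than the paper, which only says ``identities that involve $d_\perp$'' and omits any discussion of the $s_\perp$-identities; your observation that $T_0^{\invp}=T_1^{\invp}$ along an admissible map (via conditions (2) and (3) of Definition~\ref{defi:admissible}) is the right way to make those strict.
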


\begin{proof}
We only need to verify the simplicial identities that involve $d_\perp$. The others follow from the fact that $\Dec_\perp \mathbf{A} = \fatnerve \mathbb{A}$ (\ref{proposition:dec perp AP is Segal}). Let us prove that the following diagram%$d_\perp d_\perp = d_1 d_\perp$,
\[\begin{tikzcd}
	{\mathbf{A}_3} & {\mathbf{A}_2} \\
	{\mathbf{A}_2} & {\mathbf{A}_1}
	\arrow["{d_\perp}", from=1-1, to=1-2]
	\arrow["{d_\perp}", from=1-2, to=2-2]
	\arrow["{d_1}"', from=1-1, to=2-1]
	\arrow["{d_\perp}"', from=2-1, to=2-2]
\end{tikzcd}\]
commutes, for the other cases the proof follows the same arguments but the notation becomes much heavier. Let $T_0 \rightarrowtail T_1 \rightarrowtail T_2$ be an object in $\mathbf{A}_3$. It easy to check that $d_\perp d_\perp (T_0 \rightarrowtail T_1 \rightarrowtail T_2) = \frac{T_2/T_0}{T_1/T_0}$ and 
$d_\perp d_1 (T_0 \rightarrowtail T_1 \rightarrowtail T_2) = T_2/T_1$. So the square commutes when $\frac{T_2/T_0}{T_1/T_0} \cong T_2/T_1$, but in the proof of Lemma \ref{lemma: pushouts for 2-admissible maps} we have this isomorphism. 
\end{proof}
 %\noindent contractions can be characterized similarly:

%--------end part 5-------

%-------start part 6
\begin{lem}\label{lemma: surjectivity of d0 for AP}
Suppose we have admissible maps $T' \rightarrowtail T$ and $Q \rightarrowtail T/T'$, there exists a unique preorder $P$ and admissible maps $T' \rightarrowtail P$ and $P \rightarrowtail T$ such that the diagram
\[\begin{tikzcd}
	{T'} & P & T \\
	& Q & {T/T'}
	\arrow[dotted, tail, from=1-1, to=1-2]
	\arrow[dotted, tail, from=1-2, to=1-3]
	\arrow[tail, from=2-2, to=2-3]
	\arrow[dotted, from=1-2, to=2-2]
	\arrow[from=1-3, to=2-3]
	\arrow[bend left = 30, tail, from=1-1, to=1-3]
\end{tikzcd}\]
commutes.
\end{lem}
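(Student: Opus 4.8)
The plan is to realise $P$ as a pullback of preorders and read off every assertion from that description, parallel to Lemma~\ref{lemma:surjectivetoprove K is DS} in the contraction case. Every arrow in sight is identity-on-objects, so $T'$, $T$, $T/T'$ and $Q$ all share one underlying set $X$. Put $P := T \times_{T/T'} Q$, the pullback in preorders of the quotient $T \twoheadrightarrow T/T'$ along the admissible inclusion $Q \rightarrowtail T/T'$; concretely $\underline P = X$ and $x \le_P y \iff (x \le_T y \text{ and } x \le_Q y)$. Write $\pi_T \colon P \to T$ and $\pi_Q \colon P \to Q$ for the projections; both are monotone and identity-on-objects, and by Lemma~\ref{lemma: admissble map stable pullback} (pulling the admissible $Q \rightarrowtail T/T'$ back along the identity-on-objects monotone map $T \to T/T'$) the map $\pi_T \colon P \rightarrowtail T$ is admissible.

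Next I show the given map $T' \rightarrowtail T$ factors through $\pi_T$. As a preorder morphism is determined on objects, it suffices to check $\le_{T'} \subseteq \le_P = {\le_T}\cap{\le_Q}$: the inclusion into $\le_T$ is monotonicity, and if $x \le_{T'} y$ then the arrow $x \to y$ is inverted in $T/T'$, so $x$ and $y$ become identified there; hence, using conditions (3) and (2) of admissibility of $Q \rightarrowtail T/T'$ (Definition~\ref{defi:admissible}), they lie in a common connected component $Y$ of $Q$ on which $Q$ and $T/T'$ carry the same order, and since $x \le_{T'} y$ forces $x\le_{T/T'} y$, restriction to $\underline Y$ gives $x \le_Q y$. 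This produces an identity-on-objects monotone $\jmath \colon T' \to P$ with $\pi_T\jmath$ the given map; since preorder morphisms are objectwise-determined, the triangle $T' \to P \to T$ and the square $T' \to P \to Q \to T/T'$ commute automatically, so the diagram of the statement commutes with the dotted vertical map taken to be $\pi_Q$.

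It remains to verify that $\jmath \colon T' \rightarrowtail P$ is admissible and that $P$ is unique. Condition (1) is clear; condition (2) follows from the sandwich $\le_{T'}\subseteq\le_P\subseteq\le_T$ and condition (2) of $T' \rightarrowtail T$, since for a connected sub-preorder $Y$ of $T'$ one gets $\le_Y \subseteq {\le_P}|_{\underline Y} \subseteq {\le_T}|_{\underline Y} = \le_Y$. For condition (3) the natural route is the pushout--pullback criterion of Lemma~\ref{lem:admissible pullback preorders} together with the pasting trick of Lemma~\ref{lemma: pushouts for 2-admissible maps}: one checks that the pushout of $T' \to P$ along $T' \to \collapse{T'}$ is computed inside $T/T'$ and recovers $Q$ — equivalently $P/T' \cong Q$ over $T/T'$, which is also exactly the $d_\perp$-compatibility needed when this lemma is later applied to prove $\mathbf A$ is a decomposition space. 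For uniqueness, if $T' \rightarrowtail P' \rightarrowtail T$ is another solution, the map $P' \to Q$ in the commuting diagram is forced to be identity-on-objects monotone, whence $\le_{T'} \subseteq \le_{P'} \subseteq {\le_T}\cap{\le_Q} = \le_P$; conversely admissibility of $P' \rightarrowtail T$ (condition (2)) applied on the connected components of $P'$, together with $P'/T' \cong Q$, forces $\le_P \subseteq \le_{P'}$, so $P' = P$. The main obstacle is precisely this connectivity bookkeeping interlocking the three admissibility hypotheses — condition (3) for the leg $T' \rightarrowtail P$, the identification $P/T' \cong Q$, and the matching step in the uniqueness argument; the pullback presentation of $P$ makes monotonicity, the identity-on-objects property, and the commutativity of the whole diagram essentially automatic.
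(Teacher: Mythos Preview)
Your construction of $P$ as the pullback $T \times_{T/T'} Q$ and your verification that $\pi_T \colon P \rightarrowtail T$ is admissible via Lemma~\ref{lemma: admissble map stable pullback} are exactly the paper's. The divergence is in how you establish that $\jmath \colon T' \rightarrowtail P$ is admissible. You check conditions (1) and (2) of Definition~\ref{defi:admissible} cleanly via the sandwich $\le_{T'}\subseteq\le_P\subseteq\le_T$, but for condition (3) you only gesture at a strategy (``one checks that $P/T' \cong Q$'') and explicitly flag it as the main obstacle without carrying it out. That is a genuine gap: the identification $P/T' \cong Q$ is real work, and neither Lemma~\ref{lem:admissible pullback preorders} nor Lemma~\ref{lemma: pushouts for 2-admissible maps} delivers it off the shelf in the direction you need.

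The paper bypasses this bookkeeping entirely with a prism-lemma argument. It first notes there is an admissible map $T'/T' \rightarrowtail Q$ (since both $T'/T' \rightarrowtail T/T'$ and $Q \rightarrowtail T/T'$ are admissible), then assembles the two-square diagram
\[
\begin{tikzcd}
	{T'} & P & T \\
	{T'/T'} & Q & {T/T'}
	\arrow[from=1-1, to=1-2]
	\arrow[tail, from=1-2, to=1-3]
	\arrow[from=1-1, to=2-1]
	\arrow[from=1-2, to=2-2]
	\arrow[from=1-3, to=2-3]
	\arrow[tail, from=2-1, to=2-2]
	\arrow[tail, from=2-2, to=2-3]
\end{tikzcd}
\]
in which the right square is a pullback by definition of $P$ and the outer rectangle is a pullback by Lemma~\ref{lem:admissible pullback preorders} applied to the admissible $T' \rightarrowtail T$. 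The prism lemma then forces the left square to be a pullback, and a second application of Lemma~\ref{lemma: admissble map stable pullback} (pulling back the admissible $T'/T' \rightarrowtail Q$ along the identity-on-objects monotone $P \to Q$) yields admissibility of $T' \rightarrowtail P$ directly, without ever unpacking condition (3) by hand. This also produces the factorisation $T' \to P$ via the universal property of the pullback rather than by your explicit order computation, and the uniqueness you address (which the paper leaves implicit) follows immediately from that same universal property.
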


\begin{proof}
Put $P = Q \times_{T/T'} T$. Since admissible maps are stable under pullbacks, the map $P \to T$ is admissible. Furthermore, we have a natural map from $T'/T' \to Q$ which is admissible since $T'/T' \rightarrowtail T/T'$ and $Q \rightarrowtail T/T'$ are admissible. Therefore, the outer diagram  
\[\begin{tikzcd}
	{T'} & P & T \\
	{T'/T'} & Q & {T/T'}
	\arrow[from=1-3, to=2-3]
	\arrow[from=1-2, to=2-2]
	\arrow[tail, from=2-2, to=2-3]
	\arrow[tail, from=1-2, to=1-3]
	\arrow[from=1-1, to=2-1]
	\arrow[bend right = 25, tail, from=2-1, to=2-3]
	\arrow[bend left = 25, tail, from=1-1, to=1-3]
	\arrow[tail, from=2-1, to=2-2]
	\arrow[dotted, from=1-1, to=1-2]
	\arrow["\lrcorner"{anchor=center, pos=0.125}, draw=none, from=1-2, to=2-3]
\end{tikzcd}\]
commutes. The dotted arrow then exists by the pullback property of $P$. By the prism Lemma, the left square is a pullback since the outer diagram is a pullback by Lemma \ref{lem:admissible pullback preorders}. The map $T' \to P$ is admissible since admissible maps are stable under pullbacks and 
$T'/T' \rightarrowtail Q$ is admissible.
\end{proof}

\begin{propo}\label{proposition: AP is ds}
The pseudosimplicial groupoid $\mathbf{A}$ is a decomposition space.
\end{propo}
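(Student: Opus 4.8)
The plan is to mimic the proof of Proposition~\ref{proposition:Kisdecompositionspace}, with the roles of top and bottom interchanged. By Definition~\ref{definitiondecompositionspace} it suffices to prove that for each $n\geq 2$ and $0<i<n$ both squares
\[
\begin{tikzcd}
\mathbf{A}_{n+1} \arrow[r, "d_{i+1}"]\arrow[d, "d_\bot"']& \mathbf{A}_n \arrow[d, "d_\bot"] \\
\mathbf{A}_n \arrow[r, "d_i"']& \mathbf{A}_{n-1}
\end{tikzcd}\qquad
\begin{tikzcd}
\mathbf{A}_{n+1} \arrow[r, "d_{i}"]\arrow[d, "d_\top"']& \mathbf{A}_n \arrow[d, "d_\top"] \\
\mathbf{A}_n \arrow[r, "d_i"']& \mathbf{A}_{n-1}
\end{tikzcd}
\]
are pullbacks. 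The square with $d_\top$ on the outer edges comes for free: by Proposition~\ref{proposition:dec perp AP is Segal} we have $\Dec_\perp\mathbf{A}=\fatnerve \mathbb{A}$, which is a Segal space by Example~\ref{exa:fatnerve is Segal}, and just as in Remark~\ref{rema:decttopKexplication} — now reading off the Segal squares of $\Dec_\perp\mathbf{A}$ rather than of $\Dec_\top\mathbf{K}$ — this forces all the squares with $d_\top$ outside to be pullbacks. So the only content is in the squares with $d_\bot$ outside, for which the relevant tool is the ``surjectivity of $d_0$'' Lemma~\ref{lemma: surjectivity of d0 for AP}.

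I would spell out the case $n=2$, $i=1$; larger $n$ is entirely analogous with heavier notation. The square in question,
\[
\begin{tikzcd}
\mathbf{A}_{3} \arrow[r, "d_{2}"]\arrow[d, "d_\bot"']& \mathbf{A}_2 \arrow[d, "d_\bot"] \\
\mathbf{A}_2 \arrow[r, "d_1"']& \mathbf{A}_{1},
\end{tikzcd}
\]
commutes by the simplicial identity $d_0 d_2 = d_1 d_0$ (valid in $\mathbf{A}$ by Proposition~\ref{proposition: APr simplicial}). By Lemma~\ref{lemma:pullbackfibres} it is a pullback if and only if, for every object $x=(S_0\rightarrowtail S_1)$ of $\mathbf{A}_2$, the comparison map
\[
d_2\colon \Fib_{x}\bigl(d_\bot\colon\mathbf{A}_3\to\mathbf{A}_2\bigr)\longrightarrow \Fib_{S_1}\bigl(d_\bot\colon\mathbf{A}_2\to\mathbf{A}_1\bigr)
\]
is an equivalence of groupoids (here $d_1 x=S_1$; one may first note, exactly as for $\mathbf{K}$ in Lemma~\ref{lemma:discretedibrationK}, that the face maps involved are isofibrations, so these are honest fibres). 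Unwinding the definition of the face maps, the left-hand fibre is the groupoid whose objects are factorizations $U_0\rightarrowtail R\rightarrowtail U_1$ into admissible maps with $R/U_0\cong S_0$ and $U_1/U_0\cong S_1$ compatibly with the given $S_0\rightarrowtail S_1$; the right-hand fibre is the groupoid of admissible maps $U_0\rightarrowtail U_1$ with $U_1/U_0\cong S_1$; and $d_2$ sends a factorization to its composite $U_0\rightarrowtail U_1$.

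It then remains to show this comparison functor is essentially surjective, faithful and full. Essential surjectivity is exactly Lemma~\ref{lemma: surjectivity of d0 for AP} applied with $T'=U_0$, $T=U_1$ and $Q=S_0$ (note $T/T'=U_1/U_0=S_1$ and $Q\rightarrowtail T/T'$ is the given admissible map): it produces a factorization $U_0\rightarrowtail R\rightarrowtail U_1$, and one checks that $d_\bot$ of it equals $x$ by identifying $R/U_0$ with $S_0$ through the characterization of admissible maps as squares that are simultaneously pushouts and pullbacks (Lemma~\ref{lem:admissible pullback preorders}), using that $U_0\rightarrowtail R$ is admissible. Faithfulness is the uniqueness clause of Lemma~\ref{lemma: surjectivity of d0 for AP}. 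For fullness one promotes that lemma to a statement about morphisms: an isomorphism $(U_0\rightarrowtail U_1)\xrightarrow{\sim}(U_0'\rightarrowtail U_1')$ of admissible maps over $S_1$ lifts uniquely to an isomorphism of the reconstructed factorizations, obtained by running the pullback construction $R=Q\times_{T/T'}T$ over the morphism — the same ``big commuting diagram'' bookkeeping used for fullness in the proof of Proposition~\ref{proposition:Kisdecompositionspace}. Hence the comparison is an equivalence, the $d_\bot$-outer square is a pullback, and $\mathbf{A}$ is a decomposition space.

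The step I expect to be the main obstacle is this fibre identification together with fullness: one must verify that the object $R=Q\times_{T/T'}T$ produced by Lemma~\ref{lemma: surjectivity of d0 for AP} genuinely lies in the fibre, i.e.\ that the induced map $R\to R/U_0$ recovers $S_0$ on the nose, which is where the pushout--pullback square for admissible maps (Lemma~\ref{lem:admissible pullback preorders}) and the stability of admissible maps under pullback along identity-on-objects monotone maps (Lemma~\ref{lemma: admissble map stable pullback}) do the real work; and, correspondingly, one must upgrade the ``unique $P$'' of Lemma~\ref{lemma: surjectivity of d0 for AP} to a functorial construction, so that the naturality needed for fullness and faithfulness is available.
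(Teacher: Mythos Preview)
Your proposal is correct and follows essentially the same approach as the paper: the $d_\top$-outer squares are handled via $\Dec_\perp\mathbf{A}=\fatnerve\mathbb{A}$ being Segal, and the $d_\bot$-outer square is reduced, via Lemma~\ref{lemma:pullbackfibres}, to a fibrewise equivalence established using Lemma~\ref{lemma: surjectivity of d0 for AP}. The only cosmetic difference is the orientation of the fibre argument: the paper fixes the object $\alpha\colon T'\rightarrowtail T$ in the upper-right copy of $\mathbf{A}_2$ (so that $T'$ and $T$ stay fixed and Lemma~\ref{lemma: surjectivity of d0 for AP} applies verbatim), whereas you fix $x=(S_0\rightarrowtail S_1)$ in the lower-left copy; both are valid instances of Lemma~\ref{lemma:pullbackfibres} and lead to the same verification.
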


\begin{proof}
We will prove that the following diagrams are pullbacks for $0 < i < n$:
\begin{center}
\begin{tikzcd}
\mathbf{A}_{n+1} \arrow[white]{dr}[black, description]{(1)}  \arrow[r, "d_{i+1}"]\arrow[d, "d_\bot"']& \mathbf{A}_n \arrow[d, "d_\bot"] \\
\mathbf{A}_n \arrow[r, "d_i"']& \mathbf{A}_{n-1}
\end{tikzcd} \; \; \;
\begin{tikzcd}
\mathbf{A}_{n+1} \arrow[white]{dr}[black, description]{(2)}  \arrow[r, "d_{i}"]\arrow[d, "d_\top"']& \mathbf{A}_n \arrow[d, "d_\top"] \\
\mathbf{A}_n \arrow[r, "d_i"']& \mathbf{A}_{n-1}.
\end{tikzcd}
\end{center}
Let us prove it for $n=2$. For greater $n$ the proof is completely analogous. The square $(2)$ is a pullback as a consequence of Lemma \ref{proposition:dec perp AP is Segal}. To prove that the square $(1)$ is a pullback we will use Lemma \ref{lemma:pullbackfibres}. This means that $(1)$ is a pullback if and only if for each object
$\alpha \colon T' \rightarrowtail T$ in $\mathbf{A}_2$, the map $d_\perp \colon \Fib_{\alpha} d_2 \rightarrow \Fib_{d_\perp \alpha } d_1$ is an equivalence of groupoids. For an object $V \rightarrowtail T/T'$ in $\Fib_{d_\perp \alpha } d_1$, Lemma \ref{lemma: surjectivity of d0 for AP} gives the following commutative diagram
\[\begin{tikzcd}
	{T'} & V & T \\
	{T'/T'} & {V'} & {T/T'}
	\arrow[from=1-3, to=2-3]
	\arrow[from=1-2, to=2-2]
	\arrow[tail, from=2-2, to=2-3]
	\arrow[tail, from=1-2, to=1-3]
	\arrow[from=1-1, to=2-1]
	\arrow[bend right = 20, tail, from=2-1, to=2-3]
	\arrow["\alpha", bend left = 20, tail, from=1-1, to=1-3]
	\arrow[tail, from=2-1, to=2-2]
	\arrow[from=1-1, to=1-2]
	\arrow["\lrcorner"{anchor=center, pos=0.125}, draw=none, from=1-2, to=2-3]
\end{tikzcd}\]
where the horizontal arrows are admissible maps. The commutativity of the diagram implies that $T' \rightarrowtail V \rightarrowtail T$ is an object in  $\Fib_{\alpha} d_2$ and hence $d_\perp$ is surjective on objects. For the full condition of $d_\perp$, let $p' \colon V' \rightarrowtail R'$ be admissible map such that the lower square
\[\begin{tikzcd}[column sep=small,row sep=small]
	T && V && {T'} \\
	& T && R && {T'} \\
	&& {V'} && {T/T'} \\
	&&& {R'} && {T/T'}
	\arrow["\identity"', from=1-1, to=2-2]
	\arrow[tail, from=1-1, to=1-3]
	\arrow[tail, from=1-3, to=1-5]
	\arrow["\alpha", bend left = 20, from=1-1, to=1-5]
	\arrow[from=1-3, to=3-3]
	\arrow[from=2-4, to=4-4]
	\arrow["{p'}"', from=3-3, to=4-4]
	\arrow[tail, from=4-4, to=4-6]
	\arrow["\identity", from=1-5, to=2-6]
	\arrow["p"{description}, dotted, from=1-3, to=2-4]
	\arrow[tail, from=2-2, to=2-4]
	\arrow[tail, from=2-4, to=2-6]
	\arrow[from=1-5, to=3-5]
	\arrow[from=2-6, to=4-6]
	\arrow[tail, from=3-3, to=3-5]
	\arrow["\identity", from=3-5, to=4-6]
\end{tikzcd}\]
commutes. This means that $\lbrace p', \identity_{T/T'}\rbrace$ is a morphism in $\Fib_{d_\perp \alpha} d_1$. Lemma \ref{lemma: surjectivity of d0 for AP} applied to $V'$ and $R'$ gives the top admissible maps. The pullback property of $R$ gives the dotted arrow $p \colon V \to R$. The commutativity of the diagram follows from the pullback condition of $V$ and $R$, and therefore $\lbrace\identity_T,  p,  \identity_{T'}\rbrace$ is a morphism in $\Fib_{\alpha} d_2$. This implies that $d_\perp$ is full. The faithful condition of $d_\perp$ is straightforward to check using Lemma \ref{lemma: surjectivity of d0 for AP}. Indeed, let $\lbrace\identity_T,  p,  \identity_{T'}\rbrace$ and $\lbrace\identity_T,  q,  \identity_{T'}\rbrace$ be morphisms in $\Fib_{\alpha} d_2$ such that $d_\perp \lbrace\identity_T,  p,  \identity_{T'}\rbrace = d_\perp \lbrace\identity_T,  q,  \identity_{T'}\rbrace$. This means that the following diagram commutes
\[\begin{tikzcd}[column sep=small,row sep=small]
	T && V && {T'} \\
	& T && R && {T'} \\
	&& {V'} && {T/T'} \\
	&&& {R'} && {T/T'}
	\arrow["\identity"', from=1-1, to=2-2]
	\arrow[tail, from=1-1, to=1-3]
	\arrow[tail, from=1-3, to=1-5]
	\arrow["\alpha", bend left= 20, from=1-1, to=1-5]
	\arrow[from=1-3, to=3-3]
	\arrow[from=2-4, to=4-4]
	\arrow["{p'}"', from=3-3, to=4-4]
	\arrow[tail, from=4-4, to=4-6]
	\arrow["\identity", from=1-5, to=2-6]
	\arrow["p"', shift right=1, from=1-3, to=2-4]
	\arrow[tail, from=2-2, to=2-4]
	\arrow[tail, from=2-4, to=2-6]
	\arrow[from=1-5, to=3-5]
	\arrow[from=2-6, to=4-6]
	\arrow[tail, from=3-3, to=3-5]
	\arrow["\identity"', from=3-5, to=4-6]
	\arrow["q", shift left=1, from=1-3, to=2-4]
\end{tikzcd}\]
By the pullback property of $R$, it follows that $p = q$.  
\end{proof}

\begin{propo}\label{proposition: AP is complete}
The decomposition space $\mathbf{A}$ is complete.
\end{propo}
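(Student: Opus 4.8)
The plan is to unfold the definition of completeness: we must check that the degeneracy map $s_0 \colon \mathbf{A}_0 \to \mathbf{A}_1$ is a monomorphism of groupoids, i.e.\ is $(-1)$-truncated. So the first task is to identify this map concretely. Recall that $\mathbf{A}_1$ is the groupoid of finite preorders (with underlying set an ordinal) and monotone bijections, while $\mathbf{A}_0$ is the groupoid of finite groupoid preorders. From the simplicial identity $d_\perp \circ s_0 = \identity_{\mathbf{A}_0}$ together with the description $d_\perp(T) = T^{\invp}$, the map $s_0$ must send a groupoid preorder $G$ to a preorder whose underlying groupoid preorder is $G$; the natural and correct choice is $s_0(G) = G$, regarding a groupoid preorder as a particular preorder. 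Equivalently, since $\Dec_\perp \mathbf{A} = \fatnerve\mathbb{A}$ by Proposition~\ref{proposition:dec perp AP is Segal}, the groupoid $\mathbf{A}_1$ is the maximal subgroupoid of $\mathbb{A}$, and $s_0$ is the evident inclusion of the groupoid preorders. One must be careful here not to confuse this $s_0$ with the higher degeneracies $s_\perp \colon \mathbf{A}_n \to \mathbf{A}_{n+1}$ for $n \geq 1$, which append the underlying groupoid preorder of the first term of a chain and are not subgroupoid inclusions; this identification is the only delicate point of the argument.

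Next I would show that $s_0$ exhibits $\mathbf{A}_0$ as a full subgroupoid of $\mathbf{A}_1$, hence is fully faithful. Indeed, a morphism in $\mathbf{A}_1$ between two objects that happen to be groupoid preorders is a monotone bijection $G \to G'$; since $G$ and $G'$ are groupoid preorders, hence symmetric relations, such a bijection automatically preserves and reflects the relations, so it is already an isomorphism of groupoid preorders, i.e.\ a morphism of $\mathbf{A}_0$. Thus every $\mathbf{A}_1$-morphism between objects of $\mathbf{A}_0$ lies in $\mathbf{A}_0$.

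Finally I would invoke the standard fact that a fully faithful functor between groupoids is a monomorphism in $\Grpd$, spelled out here via homotopy fibres: for $T \in \mathbf{A}_1$ the homotopy fibre of $s_0$ over $T$ is empty when $T$ is not a groupoid preorder, and otherwise is nonempty and such that for any two objects $(G,\gamma)$ and $(G',\gamma')$ there is a unique morphism between them, namely the forced arrow $(\gamma')^{-1}\circ\gamma$, which lies in $\mathbf{A}_0$ by the previous paragraph. Hence the fibre is a contractible groupoid (connected, with trivial automorphism groups), so $s_0$ is $(-1)$-truncated and $\mathbf{A}$ is complete. I do not anticipate a genuine obstacle: once $s_0$ is correctly pinned down, everything is formal — much as in the proof of Proposition~\ref{proposition:Kiscomplete}, although here no appeal to the functor $\mathsf{S}$ is needed.
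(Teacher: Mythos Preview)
Your proof is correct and follows essentially the same approach as the paper: both verify that $s_0 \colon \mathbf{A}_0 \to \mathbf{A}_1$ is $(-1)$-truncated by examining its homotopy fibres over an arbitrary $T \in \mathbf{A}_1$, distinguishing the case where $T$ is a groupoid preorder from the case where it is not. Your framing via full faithfulness of the inclusion is a bit more explicit than the paper's terse fibre computation, but the underlying argument is the same.
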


\begin{proof}
Let $T$ be an object in $\mathbf{A}_1$. Consider the pullback diagram
\begin{center}
\begin{tikzcd}
\Fib_{s_0}  T  \arrow[d] \arrow[r] \drpullback & 1 \arrow[d, "\ulcorner T \urcorner"] \\
\mathbf{A}_0 \arrow[r, "s_0"']                               & \mathbf{A}_1.                                                    \end{tikzcd}
\end{center}
We have that $\Fib_{s_0} T$ is empty if $T$ is not a groupoid preorder. In case $T$ is a groupoid preorder, we have that $\Fib_{s_0} T \simeq T^{\invp}$. Therefore, $s_0 \colon \mathbf{A}_0 \rightarrow \mathbf{A}_1$ is a monomorphism.
\end{proof}

% --------end part 6-------

Recall that the comultiplication of the incidence coalgebra of $\mathbf{A}$ is given by the formula:
$$\Delta_{\mathbf{A}}(\alpha) = \sum_{\binom{\alpha \in \mathbf{A}_2}{d_1(\alpha) = T)}} d_2(\alpha) \otimes d_0(\alpha)$$
which means that we sum over all admissible maps $\alpha \colon T' \rightarrowtail T$ that have target $T$ and return $T'$ and $T/T'$. But this is precisely the comultiplication of the Fauvet--Foissy--Manchon coalgebra of finite topological spaces after using the bijection between finite topological spaces and finite preorders (see \S \ref{subsec:comparation FFM with K}). Hence, we have the following result.

\begin{lem}\label{lem:incidence coalgebra of A}
The incidence coalgebra $\Delta_{\mathbf{A}}$ is the Fauvet--Foissy--Manchon coalgebra of finite topological spaces. 
\end{lem}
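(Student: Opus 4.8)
The plan is to unwind the incidence-coalgebra construction for $\mathbf{A}$, read off its comultiplication and counit from the explicit face and degeneracy maps of $\mathbf{A}$, and then transport the resulting formulas along the Alexandrov-type bijection between finite preorders and finite topologies, recognising the Fauvet--Foissy--Manchon operations recalled in~\ref{subsec:comparation FFM with K}.

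First I would note that $\mathbf{A}$ is a complete decomposition space by Propositions~\ref{proposition: AP is ds} and~\ref{proposition: AP is complete}, so its incidence coalgebra is defined, with underlying object indexed by $\mathbf{A}_1$ and comultiplication given by the span $\mathbf{A}_1 \xleftarrow{d_1} \mathbf{A}_2 \xrightarrow{(d_\top,\, d_\perp)} \mathbf{A}_1 \times \mathbf{A}_1$, that is, $\Delta_{\mathbf{A}}(T) = \sum_{\sigma \in \mathbf{A}_2,\, d_1\sigma \cong T} d_\top(\sigma) \otimes d_\perp(\sigma)$, exactly as in~\ref{subsection:decomposition spaces}. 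Next I would spell out the three face maps $\mathbf{A}_2 \to \mathbf{A}_1$ on an object, which by the construction of $\mathbf{A}$ is an admissible map $\alpha \colon T' \rightarrowtail T$: the map $d_1$ forgets the first preorder and returns the long edge $T$; the top face $d_\top$ forgets the last preorder and returns $T'$; and the bottom face $d_\perp$ forms the quotient and returns $T/T'$ (Definition~\ref{definition:admissible maps}, which is a genuine preorder by Lemma~\ref{lemma: definition T/T pushout}). Hence $\Delta_{\mathbf{A}}(T) = \sum_{\alpha\colon T'\rightarrowtail T} T' \otimes T/T'$, the sum running over isomorphism classes of admissible maps with target $T$. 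A short groupoid-cardinality remark then identifies this with the sum over all admissible maps into $T$ supported on the underlying set of $T$: the automorphisms of $T$ act on that set, and grouping orbits produces precisely the automorphism-group weights of the homotopy cardinality.

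Finally I would apply the natural bijection between finite quasi-orders and finite topologies on a fixed finite set, as in~\ref{subsec:comparation FFM with K}. Under it, an identity-on-objects monotone map $T' \to T$ corresponds to a topology $\mathcal{T}'$ finer than $\mathcal{T}$; the conditions of Definition~\ref{defi:admissible} are exactly the preorder translation of Fauvet--Foissy--Manchon's notion of $\mathcal{T}$-admissibility; and the preorder quotient $T/T'$ of Definition~\ref{definition:admissible maps} corresponds to their quotient topology $\mathcal{T}/\mathcal{T}'$ --- indeed this is precisely how Definition~\ref{definition:admissible maps} was obtained. Therefore $\Delta_{\mathbf{A}}(\mathcal{T})$ becomes the sum of $\mathcal{T}' \otimes \mathcal{T}/\mathcal{T}'$ over admissible $\mathcal{T}'$, which is the Fauvet--Foissy--Manchon coproduct. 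One also checks that the counits agree: the counit of the incidence coalgebra of $\mathbf{A}$ is the span $\mathbf{A}_1 \xleftarrow{s_0} \mathbf{A}_0 \to 1$, and $\mathbf{A}_0$ is the groupoid of groupoid preorders, that is, of discrete topologies, matching the Fauvet--Foissy--Manchon counit. Since the incidence coalgebra of a decomposition space is automatically coassociative and counital, matching the two formulas suffices.

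The only real work here is bookkeeping rather than conceptual: matching the groupoid-cardinality weights on isomorphism classes in $\mathbf{A}_1$ with the plain FFM sum over topologies on a fixed set, and being careful that the face maps used --- especially the nonstandard bottom face $d_\perp$, which is the quotient map rather than an ordinary simplicial face --- are exactly the ones recorded in the construction of $\mathbf{A}$ and verified in Proposition~\ref{proposition: APr simplicial}. All the genuine content has already been discharged in Lemma~\ref{lemma: definition T/T pushout}, Propositions~\ref{proposition: APr simplicial} and~\ref{proposition: AP is ds}, and the discussion in~\ref{subsec:comparation FFM with K}, so the proof amounts to assembling those facts.
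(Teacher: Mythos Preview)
Your approach is essentially the same as the paper's: unwind the span defining $\Delta_{\mathbf{A}}$, identify $d_1,d_\top,d_\perp$ on an admissible map $T'\rightarrowtail T$ as $T$, $T'$, and $T/T'$ respectively, and then invoke the preorder/topology dictionary from~\ref{subsec:comparation FFM with K}. The paper does exactly this (in the paragraph immediately preceding the lemma), just more tersely and without discussing the counit or groupoid cardinality.

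One small slip in your added detail: the objects of $\mathbf{A}_0$ are groupoid preorders, i.e.\ preorders in which $x\le y$ implies $y\le x$, which are equivalence relations; under the Alexandrov correspondence these become partition topologies (open sets are unions of equivalence classes), not discrete topologies in general. This does not affect your main argument, since the paper's own treatment of the lemma concerns only the comultiplication, but you should correct the description if you keep the counit remark.
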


\begin{rema}
There is a natural bijection between finite $T_0$-topological spaces and finite posets. It would be interesting to explore the constructions given above from a topological point of view. The starting point would be to translate the notion of contraction between posets into a contraction between finite $T_0$-spaces. We will decide to leave this point of view for future work.  
\end{rema}

\section{Admissible maps and contractions}\label{subsec:admissible maps and contractions}

\begin{comment}
%\subsection*{Admissible maps and contractions}
\begin{blanko} 
{Admissible maps and contractions}
\end{blanko}
%\section*{contractions preorders}\label{subsec:admissible maps and contractions}
\end{comment}

In this section, we will relate the notions of admissible maps of preorders (due to \cite{AIF_2017__67_3_911_0}) and of contractions for posets through a culf map (see \ref{proposition:AP to K is culf}). This provides a deeper explanation of both classes of maps and simultaneously allows us to relate the Fauvet--Foissy--Manchon bialgebra of finite topological spaces with the Calaque--Ebrahimi--Fard--Manchon bialgebra of rooted trees (see \S \ref{subsec:comparation FFM with K}). To relate admissible maps between preorders and contractions of posets, it is necessary to introduce some results.

\begin{defi}\label{definition: contraction preorders}
A monotone map of preorders $f \colon T \rightarrow V$ is a \emph{contraction} if:
\begin{enumerate}
    \item its is identity in objects;
    \item for each $x \in V$, the fibre $f^{-1}(x^{\simeq})$ is a connected sub-preorder of $T$, where $x^{\simeq}$ denotes the subpreorder of objects equivalent to $x$ in $V$;
    \item for any cover $a \lessdot a'$ in $V$, there exists a cover $t \lessdot t'$ in $T$ such that $f(t) = a$ and $f(t') = a'$.
\end{enumerate}
\end{defi}

%The category of preorders is equivalent to the category of posets. The following results will allow us to safely manipulate the notions above in the category of posets. We denote by $\posetify{T}$ the posetification of a preorder.
Given a preorder $T$ there is a canonical contraction $T \to T/T$. We will show that for any admissible map $T' \rightarrowtail T$, we can construct a contraction $T \twoheadrightarrow T/T'$. An illustration of the canonical contraction from an admissible map is given by the following pushout diagram

\begin{center}
\begin{tikzpicture}[yscale=1,xscale=1]
\node[draw, label={$T'$}] (box1) at (0,3){%
  \begin{tikzcd}[column sep=small,row sep=scriptsize]
  	\bullet & \bullet & \bullet \\
	\bullet & \bullet & \bullet
	\arrow[from=1-1, to=2-1]
	\arrow[from=1-2, to=2-2]
	\arrow[from=1-3, to=2-3]
  \end{tikzcd}
};
\node[draw, label={$T$}] (box2) at (5,3){%
\begin{tikzcd}[column sep=small,row sep=scriptsize]
	\bullet & \bullet & \bullet \\
	\bullet & \bullet & \bullet
	\arrow[from=1-1, to=2-1]
	\arrow[from=1-2, to=2-2]
	\arrow[from=1-3, to=2-3]
	\arrow[from=2-1, to=2-2]
	\arrow[from=1-2, to=1-3]
\end{tikzcd}
};

\node[draw, label=below:{$T'/T'$}] (box3) at (0,0){
\begin{tikzcd}[column sep=small,row sep=scriptsize]
	\bullet & \bullet & \bullet \\
	\bullet & \bullet & \bullet
	\arrow[bend left = 20, from=1-1, to=2-1]
	\arrow[bend left = 20, from=1-2, to=2-2]
	\arrow[bend left = 20, from=1-3, to=2-3]
	\arrow[bend left = 20, from=2-1, to=1-1]
	\arrow[bend left = 20, from=2-2, to=1-2]
	\arrow[bend left = 20, from=2-3, to=1-3]
\end{tikzcd}
};

\node[draw, label=below:{$T/T'$}] (box4) at (5,0){
\begin{tikzcd}[column sep=small,row sep=scriptsize]
	\bullet & \bullet & \bullet \\
	\bullet & \bullet & \bullet
	\arrow[bend left = 20, from=1-1, to=2-1]
	\arrow[bend left = 20, from=1-2, to=2-2]
	\arrow[bend left = 20, from=1-3, to=2-3]
	\arrow[bend left = 20, from=2-1, to=1-1]
	\arrow[bend left = 20, from=2-2, to=1-2]
	\arrow[bend left = 20, from=2-3, to=1-3]
	\arrow[from=2-1, to=2-2]
	\arrow[from=1-2, to=1-3]
\end{tikzcd}
};

\draw[>->, shorten <=3pt, shorten >=3pt] (box1) -- (box2);
\draw[->>, shorten <=3pt, shorten >=3pt] (box1) to (box3);
\draw[->>, shorten <=3pt, shorten >=3pt] (box2) -- (box4);
\draw[>->, shorten <=3pt, shorten >=3pt] (box3) to (box4);
\end{tikzpicture}
\end{center}

Recall that $\posetify{T}$ is the posetification of a preorder $T$ and let $\posy \colon T \to \posetify{T}$ denote the posetification monotone map.

\begin{lem} 
Let $T$ be a preorder. There is a bijection between the set of admissible maps onto $T$ and the set of admissible maps onto $\posetify{T}$.
\end{lem}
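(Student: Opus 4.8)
The plan is to show that the posetification map $\posy\colon T\to\posetify{T}$ induces the desired bijection by "restriction of admissible maps". Concretely, given an admissible map $\beta\colon T'\rightarrowtail T$, I would form the pullback of $\beta$ along an appropriate comparison and check that it descends to an admissible map $\widetilde{T}'\rightarrowtail\posetify{T}$; conversely, given an admissible map $\gamma\colon S\rightarrowtail\posetify{T}$, I would pull it back along $\posy$ to obtain an identity-on-objects monotone map over $T$. The key observation making this work is that $T$ and $\posetify{T}$ have the \emph{same connected components} and the \emph{same covering relation} (posetification only collapses mutually-equivalent elements, which lie in the same connected component and are never related by a cover in the induced order); hence conditions (2) and (3) of Definition~\ref{defi:admissible} — which only refer to connected sub-preorders and to the relations $\sim_{T/T'}$, $\sim_{T'/T'}$ — are insensitive to replacing $T$ by $\posetify{T}$. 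In fact, by Lemma~\ref{lem:admissible pullback preorders} an admissible map is characterised by a pushout square that is also a pullback with $\underline{V}=\collapse{V'}$, and these data ($\collapse{V'}=\overline{T'}$ and $V=\posetify{T/T'}$) are manifestly unchanged under posetification since $\overline{T'}=\overline{\posetify{T'}}$ and $\posetify{T/T'}=\posetify{\posetify{T}/\posetify{T'}}$.

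First I would set up the two assignments explicitly. For $\Phi$ (from admissible maps onto $T$ to admissible maps onto $\posetify{T}$): given $\beta\colon T'\rightarrowtail T$, note $T'$ is identity-on-objects over $T$, so it has the same underlying set; define $\posetify{T}'$ to be $T'$ with the order induced so that $\posetify{T}'\to\posetify{T}$ is identity-on-objects — equivalently, $\posetify{T}'$ is the image-factorisation preorder whose relation is the restriction of $\le_{\posetify{T}}$ along connected components of $T'$. Using Lemma~\ref{lem:admissible pullback preorders}, verify that the pushout square for $\posetify{T}'\to\posetify{T}$ is still a pullback: the connected components of $\posetify{T}'$ equal those of $T'$ (posetification does not merge or split components), so fullness on components transfers. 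For $\Psi$ (the inverse): given $\gamma\colon S\rightarrowtail\posetify{T}$, pull back along $\posy\colon T\to\posetify{T}$; since $\posy$ is identity-on-objects and admissible maps are stable under pullback along identity-on-objects monotone maps (Lemma~\ref{lemma: admissble map stable pullback}), $\posy^*\gamma\colon\posy^*S\rightarrowtail T$ is admissible.

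Then I would check $\Phi$ and $\Psi$ are mutually inverse. The round-trip $\Psi\Phi$ on $\beta\colon T'\rightarrowtail T$ gives the pullback of $\posetify{T}'\to\posetify{T}$ along $\posy$; since both $T'\to T$ and this pullback are identity-on-objects admissible maps sitting over $T$ with the \emph{same} induced relation on each connected component (both restrict the order of $T$, resp.\ of $\posetify{T}$ pulled back, which agree as $\posy$ only collapses equivalent elements), they coincide. The other composite $\Phi\Psi$ is handled symmetrically, using that $\posetify{(\posy^*S)}=S$ because $S$ is already a poset and $\posy$ is a bijection on connected components. I would phrase the core equality as: both constructions land in the same sub-object because an admissible map is determined (by condition (2)) by which connected sub-preorders it picks out, and that combinatorial datum is the same whether measured in $T$ or in $\posetify{T}$.

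The main obstacle I anticipate is the bookkeeping around condition (2) of Definition~\ref{defi:admissible} ("$T_{|\underline{Y}}=Y$ for every connected sub-preorder $Y$ of $T'$") when passing between $T$ and $\posetify{T}$: one must be careful that a connected sub-preorder of $T'$ maps to a connected sub-preorder after posetification and that the full-subpreorder condition is preserved under the quotient $T\to\posetify{T}$, i.e.\ that no "new" relations appear on $\underline{Y}$ in $\posetify{T}$ that were absent in $T$. This is exactly the statement that $\posy$ reflects the covering relation on elements lying in distinct equivalence classes, which I would isolate as a short preliminary observation (or fold into the proof of the preceding results on contractions of preorders). Once that is in hand, both directions and the inversion are routine diagram chases using Lemmas~\ref{lemma: admissble map stable pullback} and~\ref{lem:admissible pullback preorders}.
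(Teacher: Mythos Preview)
Your proposal contains a concrete error and, more importantly, misses the one observation that makes the lemma almost immediate. The error: in constructing $\Psi$ you write ``since $\posy$ is identity-on-objects'' and invoke Lemma~\ref{lemma: admissble map stable pullback}. But $\posy\colon T\to\posetify{T}$ is \emph{not} identity-on-objects --- it quotients by $\sim_T$ --- so that lemma does not apply. (The pullback $\posy^*S$ does happen to have underlying set $\underline{T}$, because $\gamma$ is a bijection on objects, but its admissibility needs a separate argument.) Similarly, your forward map $\Phi$ is ill-defined as written: you say ``define $\posetify{T}'$ to be $T'$ with the order induced so that $\posetify{T}'\to\posetify{T}$ is identity-on-objects'', yet $\underline{T'}=\underline{T}\neq\underline{\posetify{T}}$ in general, so no choice of order on $T'$ achieves this.

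What is missing is the single fact that drives everything: \emph{admissibility forces $\sim_{T'}=\sim_T$}. One direction is monotonicity; for the other, if $x\sim_T y$ then $x\sim_{T/T'}y$, hence by condition~(3) of Definition~\ref{defi:admissible} $x\sim_{T'/T'}y$, so $x,y$ lie in a common connected component $Y$ of $T'$, and then condition~(2) gives $T|_{\underline{Y}}=Y$, whence $x\sim_{T'}y$. Once $\sim_{T'}=\sim_T$, the map $T'\mapsto\posetify{T'}$ lands in identity-on-objects maps onto $\posetify{T}$ and is visibly a bijection --- this is the paper's entire proof. Your claim ``$\posetify{(\posy^*S)}=S$ because $S$ is already a poset and $\posy$ is a bijection on connected components'' also hides exactly this point: it holds only because the equivalence relation on $\posy^*S$ coincides with $\ker\posy$, which is again $\sim_{T'}=\sim_T$ in disguise. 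Isolate that observation first and the elaborate pullback machinery becomes unnecessary.
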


\begin{proof} The conditions of Definition~\ref{defi:admissible} imply that if $x\sim_T y$ then $x\sim_{T'}y$, so that contract $T$ to $\posetify{T}$ does not have any effect in the set of admissible maps.
\end{proof}

\begin{comment}
\begin{lem} Let $T'\rightarrow T$ be an identity of objects monotone map. Then the square
\begin{center}
\begin{tikzcd}
T'\arrow[d] \arrow[r] \drpushout& T \arrow[d] \\
\posetify{T'} \arrow[r]& \posetify{T},                 
\end{tikzcd}
\end{center}
is a pushout. 
\end{lem}
\end{comment}

\begin{lem}\label{lemma:posetification of contraction is a contraction}
Let $f \colon T \to V$ be a contraction between preorders. Then $\posetify{f} \colon \posetify{T} \to \posetify{V}$ is a contraction of posets.
\end{lem}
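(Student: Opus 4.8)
The plan is to verify the three conditions of Definition~\ref{definition:connectedcontraction} for $\posetify{f} \colon \posetify{T} \to \posetify{V}$, using the corresponding three conditions of Definition~\ref{definition: contraction preorders} satisfied by $f$. The key observation throughout is that the posetification map $\posy \colon T \to \posetify{T}$ identifies exactly those objects $x, y$ with $x \simeq y$ (i.e. $x \leq y$ and $y \leq x$), and similarly for $V$; moreover $\posy$ is surjective, monotone, and reflects the order in the sense that $\posy(x) \leq \posy(y)$ in $\posetify{T}$ iff $x \leq y$ in $T$. Since $f$ is the identity on objects, there is an induced map $\posetify{f}$ on posetifications, and one checks it is well-defined because $x \simeq_T y$ implies $f(x) \simeq_V f(y)$.

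First I would handle condition (1), that $\posetify{f}$ is a monotone surjection: surjectivity is immediate since $f$ is identity-on-objects (hence surjective) and posetification is surjective on objects, and monotonicity follows from that of $f$ together with the order-reflecting property of posetification on the source. Next, for condition (2), I would show that for each class $\bar x \in \posetify{V}$ the fibre $(\posetify{T})_{\bar x}$ is the image under $\posy_T$ of the sub-preorder $f^{-1}(x^{\simeq}) \subseteq T$; since $f^{-1}(x^{\simeq})$ is connected by hypothesis and posetification preserves connectedness (it is a surjection of preorders which is the identity on objects, hence preserves zig-zags), the fibre is connected. Convexity of the fibre follows because $\posetify{f}$ being monotone means the fibre is the preimage of a single point, and preimages of points under monotone maps are convex once one knows the fibre over the preorder class was convex in $T$ — here I would invoke that $f^{-1}(x^{\simeq})$ is a connected \emph{sub-preorder}, so its posetification sits convexly inside $\posetify{T}$.

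For condition (3) — lifting covers — I would take a cover $\bar a \lessdot \bar a'$ in $\posetify{V}$. Pulling back along $\posy_V$, this corresponds to elements $a, a'$ of $V$ with $a < a'$ and such that there is nothing strictly between their classes; because posetification only collapses $\simeq$-classes, one sees $a \lessdot a'$ can be arranged in $V$ (picking representatives, any strict intermediate element would descend to a strict intermediate class). Then condition (3) for $f$ gives a cover $t \lessdot t'$ in $T$ with $f(t) = a$, $f(t') = a'$; applying $\posy_T$ and using that $\posy_T$ sends covers to covers (again because it only collapses $\simeq$-classes, which are order-trivial) yields $\posy_T(t) \lessdot \posy_T(t')$ in $\posetify{T}$ with $\posetify{f}(\posy_T(t)) = \bar a$ and $\posetify{f}(\posy_T(t')) = \bar a'$.

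The main obstacle I anticipate is the careful bookkeeping around covers under posetification: one must be sure that a cover relation downstairs in $\posetify{V}$ genuinely lifts to a cover (not merely a strict inequality) in $V$, and that covers in $T$ push forward to covers in $\posetify{T}$ rather than collapsing. Both rest on the fact that a $\simeq$-class is an ``order-indiscrete blob'' and that posetification does nothing but quotient these blobs to points, so no new strict comparabilities are created or destroyed except within the blobs; making this precise — perhaps via a short lemma that $\posy$ induces an order isomorphism between the poset of $\simeq$-classes of $T$ and $\posetify{T}$, compatibly with covers — is where the real content lies, and everything else is then formal.
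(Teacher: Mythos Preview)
Your proposal is correct and follows essentially the same route as the paper's proof: verify the three conditions of Definition~\ref{definition:connectedcontraction} by transporting the three conditions of Definition~\ref{definition: contraction preorders} across posetification, using the commuting square $\posy_V \circ f = \posetify{f} \circ \posy_T$. Your treatment of cover-lifting is in fact more explicit than the paper's (which just asserts that ``the posetification map respects covers''); your only excess is in the convexity step, where the first sentence already suffices --- the fibre of a monotone map over a point is automatically convex, so the appeal to $f^{-1}(x^{\simeq})$ being a sub-preorder is unnecessary.
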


\begin{proof}
The map $\posetify{f} \colon \posetify{T} \to \posetify{V}$ is monotone since for each $[t] \in \posetify{T}$, we have that $\posetify{f}([t]) = [f(t)]$. Furthermore, the diagram
\[\begin{tikzcd}
	T & \posetify{T} \\
	V & \posetify{V}
	\arrow["f"', two heads, from=1-1, to=2-1]
	\arrow["{\posy_T}", from=1-1, to=1-2]
	\arrow["{\posy_V}"', from=2-1, to=2-2]
	\arrow["\posetify{f}", from=1-2, to=2-2]
	\arrow["{(1)}"{description}, draw=none, from=1-1, to=2-2]
\end{tikzcd}\]
commutes. The commutativity of the square together with the monotone surjection condition of $f$ and ${\posy_T}$ and ${\posy_V}$ implies that $\posetify{f}$ is a monotone surjection. Let's prove that for each $[v] \in \posetify{V}$, the fibre $\posetify{f}_{[v]}$ is a connected convex subposet of $\posetify{T}$. Let $[a]$ and $[b]$ be objects in $\posetify{f}_{[v]}$. This implies that 
$$ f(a) \sim_V v \sim_V f(b),$$
and therefore $a$ and $b$ are objects in $f^{-1}(v^{\simeq})$. Furthermore, $a$ and $b$ are connected since $f^{-1}(v^{\simeq})$ is a connected preorder in $T$ by the contraction condition of $f$. Therefore, $[a]$ and $[b]$ are connected.
Let's prove that $\posetify{f}$ lifts covers: let $[v] \lessdot [v']$ be a cover in $\posetify{V}$. Since the posetification map respects covers, we have that $v \lessdot v'$ in $V$. Recall that $f$ lifts covers since it is a contraction. This means that there exists a cover $t \lessdot t'$ in $T$ such that $f(t) = v$ and $f(t') = v$. Applying the posetification map to $t \lessdot t'$, we have that $[t] \lessdot [t']$ in $\posetify{T}$, and by the commutative of the square $(1)$, we get that $\posetify{f}([t]) = [v]$ and $\posetify{f}([t']) = [v]$.
\end{proof}

\begin{lem}\label{lemma:the posetification T to T/T' is connected}
For an admissible map of preorders $\alpha \colon T' \rightarrowtail T$, the map ${T} \to {T/T'}$ is a contraction.
\end{lem}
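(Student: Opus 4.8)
The plan is to verify, for the canonical map $q\colon T \to T/T'$, the three conditions of Definition~\ref{definition: contraction preorders}. Condition~(1), that $q$ is the identity on objects, is immediate: by Definition~\ref{definition:admissible maps} the preorder $T/T'$ has the same underlying set as $T$, and $q$ acts as the identity on it.

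For condition~(2) I would fix $x \in T/T'$ and analyse the subpreorder $x^{\simeq}$ of objects equivalent to $x$. By condition~(3) of admissibility (Definition~\ref{defi:admissible}), $y\sim_{T/T'}x$ iff $y\sim_{T'/T'}x$, and the right-hand side holds exactly when $y$ lies in the connected component $C_x$ of $x$ in $T'$; so, as a set, $q^{-1}(x^{\simeq})=C_x$, and as a subpreorder of $T$ it is the full subpreorder $T_{|\underline{C_x}}$. By condition~(2) of admissibility, $T_{|\underline{C_x}}=C_x$, which is connected by construction. Hence each fibre $q^{-1}(x^{\simeq})$ is a connected subpreorder of $T$.

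Condition~(3), lifting covers, is the substantial part. Given a cover $a\lessdot a'$ in $T/T'$, I would first unwind $a\le_{T/T'}a'$ into a chain $a=z_0,z_1,\dots,z_n=a'$ whose steps are either $T$-arrows $z_i\le_T z_{i+1}$ or inverted $T'$-arrows $z_{i+1}\le_{T'}z_i$. The key observations are: the $\sim_{T/T'}$-classes are exactly the connected components of $T'$ (as in the verification of (2)); the class $[z_i]$ is non-decreasing along the chain, since each step yields $z_i\le_{T/T'}z_{i+1}$; an inverted step stays inside a single component of $T'$, so only $T$-steps can change the class; and, since $a\lessdot a'$ leaves no class strictly between $[a]$ and $[a']$, every $[z_i]$ equals $[a]$ or $[a']$ and the class switches exactly once. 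Thus there is a single ``crossing'' step, necessarily a genuine $T$-arrow $z\le_T z'$ with $z\in C_a$ and $z'\in C_{a'}$. Finally I would upgrade this crossing pair to a cover of $T$: if some $w$ satisfies $z<_T w<_T z'$, then applying $q$ and using $z\sim_{T/T'}a$, $z'\sim_{T/T'}a'$ together with $a\lessdot a'$ forces $w\in C_a$ or $w\in C_{a'}$, so $(w,z')$ resp.\ $(z,w)$ is again a crossing pair but with a strictly smaller interval in $T$; iterating and using finiteness of $T$ produces a cover $z_\ast\lessdot_T z_\ast'$ with $z_\ast\in a^{\simeq}$ and $z_\ast'\in (a')^{\simeq}$, which is the required lift of $a\lessdot a'$.

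The hard part will be exactly this last condition: extracting a single ``crossing'' arrow from a zig-zag presentation of $a\le_{T/T'}a'$, checking that it is an honest $T$-arrow rather than an inverted $T'$-arrow, and then running the finiteness descent to turn it into a cover of $T$. Everything else reduces to bookkeeping with the admissibility conditions and the description of $T/T'$ from Definition~\ref{definition:admissible maps}.
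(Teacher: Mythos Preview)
Your argument is correct and, for condition~(3), considerably more careful than the paper's. The paper asserts that if $x\lessdot x'$ in $T/T'$ then already $x<_T x'$, on the grounds that passing from $T$ to $T/T'$ ``only inverts the relations that are in $T'$''. This overlooks the transitive closure: take $T=\{a,b,c\}$ with $a<c$ and $b<c$, and $T'$ the subpreorder with the single relation $b<c$; then $T'\rightarrowtail T$ is admissible, and $a\lessdot b$ is a cover in $T/T'$, yet $a\not<_T b$. Your approach --- extract from a zig-zag witnessing $a\le_{T/T'}a'$ the unique $T$-arrow crossing between the components $C_a$ and $C_{a'}$, then run a finiteness descent to refine it to a $T$-cover --- is exactly what is needed to handle such cases, and the monotonicity of the classes $[z_i]$ together with the fact that inverted $T'$-steps cannot change component makes the extraction step clean.

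One small point worth flagging: your final lift $z_\ast\lessdot_T z_\ast'$ has $z_\ast\in a^{\simeq}$ and $z_\ast'\in (a')^{\simeq}$, so $q(z_\ast)\sim_{T/T'} a$ rather than $q(z_\ast)=a$ literally. Since $q$ is the identity on objects and (as the example above shows) one cannot in general arrange $z_\ast=a$, condition~(3) of Definition~\ref{definition: contraction preorders} must be read up to $\sim_V$. This is harmless downstream: Lemma~\ref{lemma:posetification of contraction is a contraction} immediately passes to posetifications, where the distinction disappears.
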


\begin{proof}
The map $f \colon T \to T/T'$ is given by the pushout in the category of preorders 
\[\begin{tikzcd}
	{T'} & T \\
	{T'/T'} & {T/T'}.
	\arrow["\alpha", tail, from=1-1, to=1-2]
	\arrow[from=1-1, to=2-1]
	\arrow[tail, from=2-1, to=2-2]
	\arrow["f", from=1-2, to=2-2]
	\arrow["\lrcorner"{anchor=center, pos=0.125, rotate=180}, draw=none, from=2-2, to=1-1]
\end{tikzcd}\]
The map $f$ is clearly identity-on-objects. For each $x \in T/T'$, we have that $f^{-1}(x^{\simeq})$ is connected. Indeed, for each $y \simeq x$ in $T/T'$, we have three possibilities: $y \simeq_T x$ in $T$, or $x <_{T'} y$ or $y <_{T'} x$. By the admissible property of $\alpha$, we have that $x <_T y$ or $y <_T x $ if $x < y$ or $y < x$ in $T'$ since $\alpha$ respects the connected subpreorders from $T'$ to $T$. 
Furthermore, $f$ lifts covers. Indeed, if $x \lessdot x'$ in $T/T'$ then $x < x'$ in $T$ since $f$ only inverts the relations that are in $T'$ and preserves all other relations in $T$. Hence, $f$ is a contraction of preorders. % and therefore $\posetify{f} \colon \posetify{T} \to \posetify{T/T'}$ is a contractions of posets by Lemma \ref{lemma:posetification of contraction is a contraction}.  
\end{proof}

% --- end part 4------
\begin{lem}\label{lemma:bijection adm and contr pre}
Let $T$ be a finite preorder. There is a bijection between the set of admissible maps with target $T$ and the set of contractions with source $T$.
\end{lem}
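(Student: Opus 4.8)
The plan is to construct mutually inverse maps between the set $\mathrm{Adm}(T)$ of admissible maps with target $T$ and the set $\mathrm{Con}(T)$ of contractions with source $T$. In one direction, I would send an admissible map $\alpha\colon T'\rightarrowtail T$ to the canonical quotient map $f_\alpha\colon T\to T/T'$, which is a contraction of preorders by Lemma~\ref{lemma:the posetification T to T/T' is connected}. In the other direction, given a contraction $f\colon T\twoheadrightarrow V$, I would let $T^{f}$ be the sub-preorder of $T$ on the same objects with $x\le_{T^{f}}y$ if and only if $x\le_{T}y$ and $f(x)\simeq_{V}f(y)$; transitivity of $\simeq_{V}$ makes $T^{f}$ a preorder and the inclusion $T^{f}\rightarrowtail T$ an identity-on-objects monotone map, and I send $f$ to it. Everything then reduces to two statements: $T/T^{f}=V$ as preorders on $\underline{T}=\underline{V}$ (so the first assignment is a left inverse of the second), and $T^{f_\alpha}=T'$ (so it is also a right inverse), plus the check that $T^{f}\rightarrowtail T$ is admissible so that the second assignment is well defined.

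The heart of the argument is the identity $T/T^{f}=V$. One inclusion is routine: a generating relation $x\le_{T}y$ of $T/T^{f}$ maps to $x\le_{V}y$ by monotonicity of $f$, and a reversed generator coming from $x\le_{T^{f}}y$ gives $f(x)\simeq_{V}f(y)$, so every relation of $T/T^{f}$ is a relation of $V$. For the converse, let $x\le_{V}y$. If $x\simeq_{V}y$, then $x$ and $y$ lie in a common fibre $f^{-1}(z^{\simeq})$, which is connected in $T$ by clause~(2) of Definition~\ref{definition: contraction preorders}; along any edge $a\le_{T}b$ of a connecting zig-zag both endpoints lie in the fibre, so $a\le_{T^{f}}b$ and hence $a\simeq_{T/T^{f}}b$, and chaining gives $x\simeq_{T/T^{f}}y$. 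If instead $x<_{V}y$, I pass to the posetification $\posetify{V}$ and pick a maximal chain of covers $[x]=[v_{0}]\lessdot[v_{1}]\lessdot\cdots\lessdot[v_{m}]=[y]$; each $[v_{i}]\lessdot[v_{i+1}]$ forces $v_{i}\lessdot_{V}v_{i+1}$, and the cover-lifting clause~(3) of Definition~\ref{definition: contraction preorders} (which for an identity-on-objects map $f$ says exactly that a cover of $V$ is already a cover of $T$) yields $v_{i}\le_{T}v_{i+1}$, hence $v_{i}\le_{T/T^{f}}v_{i+1}$; chaining gives $x\le_{T/T^{f}}y$. As $x\le_{V}y$ forces $x\simeq_{V}y$ or $x<_{V}y$ in a preorder, this gives $V\subseteq T/T^{f}$, hence equality; moreover the quotient map $T\to T/T^{f}$ is literally $f$, so $f_{(T^{f}\rightarrowtail T)}=f$.

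For the remaining two points, the key observation is that the connected components of $T^{f}$ are exactly the fibres of $f$: a $T^{f}$-relation lies inside one fibre, so each component is contained in a fibre, and conversely each fibre is connected in $T^{f}$ because $T^{f}$ and $T$ restrict to the same preorder on a fibre, which is connected in $T$. Admissibility of $T^{f}\rightarrowtail T$ then follows: clause~(1) of Definition~\ref{defi:admissible} is clear, clause~(2) says the inclusion is full on each connected component and holds because $T^{f}$ and $T$ agree on fibres, and clause~(3) becomes, via $T/T^{f}=V$, the tautology ``$x\simeq_{V}y$ iff $x,y$ lie in a common fibre''. Finally, for an admissible $\alpha\colon T'\rightarrowtail T$, unwinding the definitions gives $x\le_{T^{f_\alpha}}y\iff x\le_{T}y$ and $x\simeq_{T/T'}y$; by clause~(3) of admissibility the second condition says $x,y$ lie in the same connected component of $T'$, and then clause~(2) of admissibility gives $x\le_{T'}y$, the converse being immediate, so $T^{f_\alpha}=T'$ and (both being identity-on-objects inclusions) $(T^{f_\alpha}\rightarrowtail T)=\alpha$. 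The main obstacle I anticipate is the reverse inclusion $V\subseteq T/T^{f}$: it is precisely here that both non-trivial clauses of the definition of a contraction are needed — connectedness of fibres for the equivalence case and cover-lifting for the strict case — together with finiteness of $T$ to guarantee chains of covers in $\posetify{V}$.
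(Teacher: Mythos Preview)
Your proof is correct and takes essentially the same approach as the paper: your preorder $T^{f}$ is precisely the pullback $T\times_{V}V^{\invp}$ the paper uses, written out explicitly (for $x,y\in T$, having $x\le_T y$ and $f(x)\simeq_V f(y)$ is exactly the condition for $(x,f(x))\le (y,f(y))$ in the fibre product). The paper's proof is terser because it deduces admissibility of $T^{f}\rightarrowtail T$ from pullback stability of admissible maps (Lemma~\ref{lemma: admissble map stable pullback}) rather than checking the three clauses directly, and it does not spell out that the two constructions are mutually inverse; your argument supplies those verifications in full, and the key identity $T/T^{f}=V$ that you establish is exactly what is needed to close the loop.
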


\begin{proof}
Given an admissible map $T' \rightarrowtail T$, the map $T \twoheadrightarrow T/T'$ is a contraction by Lemma \ref{lemma:the posetification T to T/T' is connected}. For a contraction $T \twoheadrightarrow V$, consider the pullback diagram:
 \[\begin{tikzcd}
	{T \times_V V^{\invp}} & T \\
	{V^{\invp}} & V.
	\arrow[from=2-1, to=2-2]
	\arrow[two heads, from=1-2, to=2-2]
	\arrow[from=1-1, to=2-1]
	\arrow[from=1-1, to=1-2]
	\arrow["\lrcorner"{anchor=center, pos=0.125}, draw=none, from=1-1, to=2-2]
\end{tikzcd}\]
The map $V^{\invp} \to V$ is an identity-on-objects monotone map. Furthermore, this map is canonical admissible since $V^{\invp}$ only contains the invertible morphisms in $V$, which is equivalent to condition $(3)$ of the requirement to be admissible (\ref{defi:admissible}). The condition $(2)$ to be admissible is automatic since we only consider invertible morphism in $V^{\invp}$. Therefore, $V^{\invp} \to V$ is admissible.
Since admissible maps are stable under pullback along identity-on-objects monotone maps (\ref{lemma: admissble map stable pullback}), the map ${T \times_V V^{\invp}} \to T$ is admissible.
\end{proof}

% comment posets and T0 finite spaces

% ---start part 7

\begin{cons}\label{construction: AP to K}
We will construct a simplicial map $\RAdCon$ from the decomposition space of admissible maps $\mathbf{A}$ to the decomposition space of contractions $\mathbf{K}$. \\
The functor $\RAdCon_1 \colon \mathbf{A}_1 \to \mathbf{K}_1$ sends a preorder $T$ to the family of the connected components $\lbrace \posetify{T}_i \rbrace_{i \in \posetify{{T}/{T}}}$ of the posetification of $T$. 
\begin{comment}
where each element of the family is defined by the pullback:
\[\begin{tikzcd}
	{\posetify{T}_i} & {\posetify{T}} \\
	1 & {\posetify{T/T}}.
	\arrow[from=1-1, to=2-1]
	\arrow[from=1-1, to=1-2]
	\arrow[two heads, from=1-2, to=2-2]
	\arrow["{\ulcorner i \urcorner}"', from=2-1, to=2-2]
	\arrow["\lrcorner"{anchor=center, pos=0.125}, draw=none, from=1-1, to=2-2]
\end{tikzcd}\]
\end{comment}
For $n \geq 2$, let 
\begin{center}
\begin{tikzcd}
T_0 \arrow[r, tail] & T_1 \arrow[r, tail] & T_2 \arrow[r, tail]& \cdots \arrow[r, tail] & T_{n-2} \arrow[r, tail] & T_{n-1}
\end{tikzcd}
\end{center}
be an object in $\mathbf{A}_n$. Consider the following diagram consisting of pushout squares
\begin{equation*}\tag{1}\label{diagram:construction AP 1}
\begin{tikzcd}[column sep=small,row sep=small]
    & {T_0} & {T_1} & \cdots & {T_{n-2}} & {T_{n-1}} \\
	& {T_0/T_0} & {T_1/T_0} & \cdots & {T_{n-2}/T_0} & {T_{n-1}/T_0} \\
	&& {T_1/T_1} & \cdots & \vdots & \vdots \\
	&&&& {T_{n-2}/T_{n-2}} & {T_{n-1}/T_{n-2}} \\
	&&&&& {T_{n-1}/T_{n-1}}.
	\arrow[tail, from=1-2, to=1-3]
	\arrow[tail, from=1-3, to=1-4]
	\arrow[tail, from=1-4, to=1-5]
	\arrow[tail, from=1-5, to=1-6]
	\arrow[tail, from=2-4, to=2-5]
	\arrow[tail, from=3-4, to=3-5]
	\arrow[two heads, from=1-2, to=2-2]
	\arrow[two heads, from=1-3, to=2-3]
	\arrow[two heads, from=1-5, to=2-5]
	\arrow[two heads, from=1-6, to=2-6]
	\arrow[two heads, from=2-5, to=3-5]
	\arrow[two heads, from=2-6, to=3-6]
	\arrow[tail, from=2-3, to=2-4]
	\arrow[tail, from=3-3, to=3-4]
	\arrow[tail, from=4-5, to=4-6]
	\arrow[two heads, from=4-6, to=5-6]
	\arrow[tail, from=2-2, to=2-3]
	\arrow[two heads, from=2-3, to=3-3]
	\arrow[two heads, from=3-5, to=4-5]
	\arrow[two heads, from=3-6, to=4-6]
	\arrow[tail, from=3-5, to=3-6]
	\arrow[tail, from=2-5, to=2-6]
	\arrow[two heads, from=1-4, to=2-4]
	\arrow[two heads, from=2-4, to=3-4]
\end{tikzcd}
\end{equation*}
Since the top horizontal arrows are admissible maps and the canonical map $T_0 \rightarrowtail T_0/T_0$ is a contraction, we have that the horizontal maps are admissible and the verticals are contractions by the stability property of admissible maps and contractions under pushout (\ref{lemma: admissble map stable pullback} and \ref{lemma:the posetification T to T/T' is connected}). 
Taking the posetification of the last column of the diagram (\ref{diagram:construction AP 1}), we obtain the chain 
\begin{equation*}\tag{2}\label{diagram:construction AP 2}
\begin{tikzcd}
	\posetify{{T_{n-1}}} & \posetify{T_{n-1}/T_{0}} &  \posetify{T_{n-1}/T_{1}} & \cdots & \posetify{T_{n-1}/T_{n-2}} & \posetify{T_{n-1}/T_{n-1}}
	\arrow[two heads, from=1-2, to=1-3]
	\arrow[two heads, from=1-1, to=1-2]
	\arrow[two heads, from=1-3, to=1-4]
	\arrow[two heads, from=1-4, to=1-5]
	\arrow[two heads, from=1-5, to=1-6]
\end{tikzcd}
\end{equation*}
of contractions between posets. Since the posets in the chain are not necessarily connected, for each $i \in \posetify{T_{n-1}/T_{n-1}}$, we take the pullback of (\ref{diagram:construction AP 2}) along $\ulcorner i \urcorner \colon 1 \to \posetify{T_{n-1}/T_{n-1}}$ to obtain the diagram
\begin{equation*}\tag{3}\label{diagram:construction AP 3}
\begin{tikzcd}
	\posetify{T_{n-1}} & \posetify{T_{n-1}/T_{0}} & \cdots & \posetify{T_{n-1}/T_{n-2}} & \posetify{T_{n-1}/T_{n-1}} \\
	{(\posetify{T_{n-1}})_i} & {(\posetify{T_{n-1}/T_{0}})_i} & \cdots & {(\posetify{T_{n-1}/T_{n-2}})_i} & 1
	\arrow[two heads, from=1-2, to=1-3]
	\arrow[two heads, from=1-1, to=1-2]
	\arrow[two heads, from=1-3, to=1-4]
	\arrow[two heads, from=1-4, to=1-5]
	\arrow["{\ulcorner i \urcorner }"', from=2-5, to=1-5]
	\arrow[from=2-4, to=2-5]
	\arrow[two heads, from=2-3, to=2-4]
	\arrow[two heads, from=2-2, to=2-3]
	\arrow[two heads, from=2-1, to=2-2]
	\arrow[from=2-4, to=1-4]
	\arrow[from=2-3, to=1-3]
	\arrow[from=2-2, to=1-2]
	\arrow[from=2-1, to=1-1]
	\arrow["\lrcorner"{anchor=center, pos=0.125, rotate=90}, draw=none, from=2-4, to=1-5]
	\arrow["\lrcorner"{anchor=center, pos=0.125, rotate=90}, draw=none, from=2-3, to=1-4]
	\arrow["\lrcorner"{anchor=center, pos=0.125, rotate=90}, draw=none, from=2-1, to=1-2]
\end{tikzcd}
\end{equation*}
where the lower part is a chain of contractions between connected posets since the top part of the diagram is given by  a chain of contractions. 

For $n \geq 2$, the functor $\RAdCon_n \colon \mathbf{A}_n \to \mathbf{K}_n$ sends an $(n-1)$-chain of admissible maps to the family of $(n-1)$-chains of contractions
\begin{equation*}\tag{4}\label{diagram:construction AP 4}
\begin{tikzcd}
	\lbrace (\posetify{{T_{n-1}}})_i & (\posetify{T_{n-1}/T_{0}})_i & (\posetify{T_{n-1}/T_{1}})_i & \cdots & (\posetify{T_{n-1}/T_{n-2}})_i \rbrace_{i \in \posetify{T_{n-1}/T_{n-1}}}
	\arrow[two heads, from=1-2, to=1-3]
	\arrow[two heads, from=1-1, to=1-2]
	\arrow[two heads, from=1-3, to=1-4]
	\arrow[two heads, from=1-4, to=1-5]
\end{tikzcd}
\end{equation*}
\end{cons}

To prove that $\RAdCon$ is a simplicial map, we need the following result:

\begin{lem}\label{lemma:dtop and R compatible}
For each admissible map $T' \rightarrowtail T$, we have that
$$ \lbrace \posetify{T}_i \rbrace_{i \in \posetify{T/T'}} = \lbrace \posetify{T'}_i \rbrace_{i \in \posetify{T'/T'}} . $$
\end{lem}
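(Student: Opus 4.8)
The plan is to unwind both families into restrictions to the connected components of $T'$ and then feed in the two substantive clauses of Definition~\ref{defi:admissible}. Write $f\colon T\to T/T'$ for the canonical map; by Lemma~\ref{lemma:the posetification T to T/T' is connected} it is a contraction of preorders, so $\posetify f\colon\posetify T\twoheadrightarrow\posetify{T/T'}$ is a contraction of posets by Lemma~\ref{lemma:posetification of contraction is a contraction}, and the left‑hand family $\lbrace\posetify T_i\rbrace_{i\in\posetify{T/T'}}$ is its family of fibres. On the right, $T'/T'$ collapses each connected component of $T'$ to a point, so $\posetify{T'/T'}=\collapse{T'}$ is discrete and $\lbrace\posetify{T'}_i\rbrace_{i\in\posetify{T'/T'}}$ is exactly the family of connected components of $\posetify{T'}$, indexed by the set of connected components of $T'$.

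First I would match the two index sets. Clause~(3) of Definition~\ref{defi:admissible} (equivalently clause~(1) of Lemma~\ref{lem:admissible pullback preorders}) says that the relations $\sim_{T/T'}$ and $\sim_{T'/T'}$ on the common object set $\underline T=\underline{T'}$ coincide; hence $\posetify{T/T'}$ and $\posetify{T'/T'}$ have literally the same underlying set, namely $\collapse{T'}$. Fix an index $i$, corresponding to a connected component $C$ of $T'$. Since $T/T'$ is obtained from $T$ by only inverting arrows already in $T'$ and closing transitively (Definition~\ref{definition:admissible maps}), every relation of $T$ survives in $T/T'$, so $x\sim_T y$ implies $x\sim_{T/T'}y$; combined with clause~(3) this forces any $\sim_T$‑class meeting $\underline C$ to lie inside $\underline C$. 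Therefore the fibre $\posetify T_i$ of $\posetify f$ over $i$ has underlying set $\lbrace[x]_{\sim_T}:x\in\underline C\rbrace$, and likewise $\posetify{T'}_i$ has underlying set $\lbrace[x]_{\sim_{T'}}:x\in\underline C\rbrace$.

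Then I would compare the two posets. Clause~(2) of Definition~\ref{defi:admissible} applied to $Y=C$ gives $T_{|\underline{C}}=C=T'_{|\underline{C}}$, i.e. $T$ and $T'$ restrict to the same preorder on $\underline C$. Hence $\sim_T$ and $\sim_{T'}$ agree on $\underline C$, so the two underlying sets above coincide; and since the order on the fibre of a contraction is the order induced from the total poset (the fibre being a convex subposet), the order of $\posetify T_i$ is the restriction to $[C]$ of the order of $\posetify T$, which by $T_{|\underline{C}}=T'_{|\underline{C}}$ equals the restriction of the order of $\posetify{T'}$, i.e. the order of $\posetify{T'}_i$. Thus $\posetify T_i=\posetify{T'}_i$ ($=\posetify C$, the posetification of the component $C$) for every $i$, and the two families agree.

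The delicate point I would want to write out carefully is that the fibre of $\posetify f$ over $i$ is genuinely $\posetify C$ and nothing larger: one must check both that no relation of $\posetify T$ between two elements of $[C]$ arises from a $T$‑relation whose endpoints are not both in $\underline C$, and that passing to this fibre commutes with posetification. Both reduce to $T_{|\underline{C}}=C$ together with the inclusion $\sim_T\subseteq\sim_{T/T'}$, so in the end the lemma is precisely clauses~(2) and~(3) of admissibility, repackaged through Lemmas~\ref{lemma:the posetification T to T/T' is connected} and~\ref{lemma:posetification of contraction is a contraction}.
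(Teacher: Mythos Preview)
Your argument is correct, and it takes a genuinely different route from the paper's proof. The paper proceeds diagrammatically: from admissibility it invokes Lemma~\ref{lem:admissible pullback preorders} to conclude that the square
\[
\begin{tikzcd}
\posetify{T'} \arrow[r,tail] \arrow[d,two heads] & \posetify{T} \arrow[d,two heads] \\
\posetify{T'/T'} \arrow[r,tail] & \posetify{T/T'}
\end{tikzcd}
\]
is a pullback, then applies the prism lemma to the rectangle obtained by pasting this square with the defining pullback $\posetify{T'}_i = 1 \times_{\posetify{T'/T'}} \posetify{T'}$; the outer rectangle is then the defining pullback for $\posetify{T}_i$, forcing $\posetify{T}_i = \posetify{T'}_i$. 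Your proof instead unwinds both fibres element by element, reducing the equality of index sets to clause~(3) of Definition~\ref{defi:admissible} and the equality of each fibre to clause~(2). What the paper's approach buys is brevity and a clean conceptual statement (``both pullbacks compute the same thing''); what your approach buys is that it is entirely self-contained and makes transparent exactly which axiom does which job, without needing to pass through the pullback reformulation of admissibility or to justify that posetification interacts well with that pullback square. Both are perfectly acceptable.
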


\begin{proof}
Since $T' \rightarrowtail T$ is admissible, we have that $\underline{T'/T'} = \underline{T/T'}$ and the right square 
\[\begin{tikzcd}
	{\posetify{T}_i} \\
	& {\posetify{T'}_i} & {\posetify{T'}} & {\posetify{T}} \\
	& 1 & {\posetify{T'/T'}} & {\posetify{T/T'}}
	\arrow[tail, from=2-2, to=2-3]
	\arrow[tail, from=2-3, to=2-4]
	\arrow[tail, from=3-3, to=3-4]
	\arrow[two heads, from=2-4, to=3-4]
	\arrow[two heads, from=2-3, to=3-3]
	\arrow[from=2-2, to=3-2]
	\arrow["{\ulcorner i\urcorner }"', from=3-2, to=3-3]
	\arrow[bend right = 20, from=1-1, to=3-2]
	\arrow[bend left = 20, from=1-1, to=2-4]
	\arrow["\lrcorner"{anchor=center, pos=0.125}, draw=none, from=2-2, to=3-3]
	\arrow[dotted, from=1-1, to=2-2]
\end{tikzcd}\]
is a pullback as a consequence of Lemma \ref{lem:admissible pullback preorders}. By the prism Lemma the outer rectangle is a pullback since the left square is a pullback by definition. So by the pullback property of $\posetify{T'}_i$, the dotted arrow exists and is an equality since the outer diagram is a pullback.
\end{proof}

To see that $\RAdCon$ is a simplicial map, we have to verify that for $0 \leq i < n$, the diagram 
\[\begin{tikzcd}
	{\mathbf{A}_n} & {\mathbf{A}_{n-1}} \\
	{\mathbf{K}_n} & {\mathbf{K}_{n-1}}
	\arrow["{\RAdCon_n}"', from=1-1, to=2-1]
	\arrow["{d_i}"', from=2-1, to=2-2]
	\arrow["{\RAdCon_{n-1}}", from=1-2, to=2-2]
	\arrow["{d_i}", from=1-1, to=1-2]
\end{tikzcd}\]
commutes, but this follows from Construction \ref{construction: AP to K}. For the case that $i = n$, we have to use Lemma \ref{lemma:dtop and R compatible}. Let us prove it for $n = 2$. For greater $n$ the proof is completely analogous, but the notation becomes much heavier. To prove that the diagram
\[\begin{tikzcd}
	{\mathbf{A}_2} & {\mathbf{A}_{1}} \\
	{\mathbf{K}_2} & {\mathbf{K}_{n-1}}
	\arrow["{\RAdCon_2}"', from=1-1, to=2-1]
	\arrow["{d_\top}"', from=2-1, to=2-2]
	\arrow["{\RAdCon_{1}}", from=1-2, to=2-2]
	\arrow["{d_\top}", from=1-1, to=1-2]
\end{tikzcd}\]
commutes. It is enough to verify that for each admissible map $T' \rightarrowtail T$, we have that 
$$ \lbrace \posetify{T}_i \rbrace_{i \in \posetify{T/T'}} = \lbrace \posetify{T'}_i \rbrace_{i \in \posetify{T'/T'}}  $$
since $d_\top (T' \rightarrowtail T) = T'$ and $d_\top (\posetify{T} \twoheadrightarrow \posetify{T/T'}) = \lbrace \posetify{T}_i \rbrace_{i \in \posetify{T/T'}}$. But this follows from Lemma \ref{lemma:dtop and R compatible}.
  
\setcounter{equation}{0}

\begin{propo}\label{proposition:AP to K is culf}
The map $\RAdCon \colon \mathbf{A} \to \mathbf{K}$ is culf.
\end{propo}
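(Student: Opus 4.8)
The plan is to reduce to the decomposition-space criterion for culfness. Since $\mathbf{A}$ and $\mathbf{K}$ are decomposition spaces (Propositions \ref{proposition: AP is ds} and \ref{proposition:Kisdecompositionspace}), Lemma \ref{lemma culfcondition for ds} says that $\RAdCon$ is culf if and only if it is cartesian on $d^1 \colon [1] \to [2]$, that is, if and only if the square
\[\begin{tikzcd}
\mathbf{A}_2 \arrow[r, "\RAdCon_2"] \arrow[d, "d_1"'] & \mathbf{K}_2 \arrow[d, "d_1"] \\
\mathbf{A}_1 \arrow[r, "\RAdCon_1"'] & \mathbf{K}_1
\end{tikzcd}\]
is a (homotopy) pullback. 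By Lemma \ref{lemma:pullbackfibres} it is enough to prove that for every finite preorder $T$, regarded as an object of $\mathbf{A}_1$, the induced comparison map on fibres
\[
\RAdCon_2 \colon \Fib_T\bigl(d_1 \colon \mathbf{A}_2 \to \mathbf{A}_1\bigr) \longrightarrow \Fib_{\RAdCon_1(T)}\bigl(d_1 \colon \mathbf{K}_2 \to \mathbf{K}_1\bigr)
\]
is an equivalence of groupoids.

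Next I would identify both fibres concretely. On the left, $d_1 \colon \mathbf{A}_2 \to \mathbf{A}_1$ forgets the first preorder of a chain, so $\Fib_T(d_1)$ is equivalent to the groupoid of admissible maps $T' \rightarrowtail T$ with target $T$ (and isomorphisms over $T$). On the right, $\RAdCon_1(T) = \{\posetify{T}_i\}_{i \in \posetify{T/T}}$ is the family of connected components of the posetification $\posetify{T}$, and $d_1 \colon \mathbf{K}_2 \to \mathbf{K}_1$ is $\mathsf{S}$ applied to the source map $\mathbf{K}^{\circ}_2 \to \mathbf{K}^{\circ}_1$; since $\mathsf{S}$ preserves fibres, $\Fib_{\RAdCon_1(T)}(d_1)$ is the product over the components $\posetify{T}_i$ of the groupoids of contractions with source $\posetify{T}_i$, i.e.\ the groupoid of families $\{\posetify{T}_i \twoheadrightarrow W_i\}_i$ of contractions. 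Because a contraction has connected fibres, such a family is the same datum as a single contraction $\posetify{T} \twoheadrightarrow W$ out of the poset $\posetify{T}$ (each connected component of $\posetify{T}$ being mapped onto a connected component of $W$). Tracing through Construction \ref{construction: AP to K} shows that under these identifications $\RAdCon_2$ sends an admissible map $T' \rightarrowtail T$ to the contraction $\posetify{T} \twoheadrightarrow \posetify{T/T'}$.

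It then remains to see that $T' \rightarrowtail T \;\mapsto\; \bigl(\posetify{T} \twoheadrightarrow \posetify{T/T'}\bigr)$ is an equivalence of groupoids, which I would do by factoring it into three equivalences. First, Lemma \ref{lemma:bijection adm and contr pre} identifies admissible maps with target $T$ with contractions of preorders with source $T$, carrying $T' \rightarrowtail T$ to $T \twoheadrightarrow T/T'$ (Lemma \ref{lemma:the posetification T to T/T' is connected}). Second, posetification is an equivalence between contractions of preorders with source $T$ and contractions of posets with source $\posetify{T}$: the forward functor $T \twoheadrightarrow V \mapsto \posetify{T} \twoheadrightarrow \posetify{V}$ is Lemma \ref{lemma:posetification of contraction is a contraction}, and an inverse sends a contraction $g \colon \posetify{T} \twoheadrightarrow W$ to the preorder $V$ on the objects of $T$ with $x \le_V y \Leftrightarrow g(\comp_T x) \le_W g(\comp_T y)$ together with the identity-on-objects map $T \to V$, which one checks to be a contraction with $\posetify{V} \cong W$; since in a contraction of preorders both maps are the identity on objects, these two functors are mutually inverse up to coherent isomorphism. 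Third, the target of this chain is exactly the $\mathbf{K}$-side fibre described above. The composite of the three equivalences agrees with $\RAdCon_2$ on the fibre by Construction \ref{construction: AP to K}, which finishes the proof.

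The step I expect to be the most delicate is the second equivalence: verifying that the preorder $V$ reconstructed from a poset contraction $g$ really satisfies the two nontrivial contraction axioms (connectedness of the fibres $f^{-1}(x^{\simeq})$ and cover-lifting, using that the posetification map $\posy \colon T \to \posetify{T}$ both preserves and reflects covering relations), and checking that posetification and this reconstruction are inverse as \emph{functors} of groupoids, not merely as bijections on isomorphism classes. The bookkeeping needed to move between "family of contractions on the components of $\posetify{T}$" and "single contraction out of $\posetify{T}$" is the only other point requiring care; everything else is a direct application of the cited lemmas.
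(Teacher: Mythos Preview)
Your reduction to the fibrewise criterion via Lemma~\ref{lemma culfcondition for ds} and Lemma~\ref{lemma:pullbackfibres} is the same opening as the paper's, but from there the two arguments diverge. The paper establishes that $\RAdCon_2 \colon \Fib_T(d_1) \to \Fib_{\RAdCon_1 T}(d_1)$ is an equivalence by a direct essentially-surjective/full/faithful verification: given a contraction $f \colon \posetify{T} \twoheadrightarrow P$ on the $\mathbf{K}$ side, it builds the preimage admissible map explicitly as the pullback $\underline{P} \times_P T \rightarrowtail T$ (using stability of admissible maps under pullback), and then uses the pushout description of $T/T'$ together with the universal property of pullbacks to handle fullness and faithfulness. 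Your route instead factors the comparison through the intermediate groupoid of \emph{preorder} contractions out of $T$, invoking Lemma~\ref{lemma:bijection adm and contr pre} for the first leg and Lemma~\ref{lemma:posetification of contraction is a contraction} (plus an explicit inverse) for the second.

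Your approach is more modular and explains better \emph{why} the statement holds, since it isolates the posetification step as its own equivalence rather than burying it in a diagram chase. The paper's approach is more self-contained and avoids introducing the auxiliary groupoid of preorder contractions. Two small points to tighten in your write-up: first, you write $\comp_T$ where you mean the posetification map $\posy_T \colon T \to \posetify{T}$ (the map $\comp$ goes to the discrete set of connected components, which is not what you want). Second, Lemma~\ref{lemma:bijection adm and contr pre} is stated only as a bijection of sets; you should remark that both the admissible-map fibre and the preorder-contraction groupoid are essentially discrete (since these maps are identity-on-objects, an isomorphism over $T$ is forced to be the identity), so that the set-level bijection automatically upgrades to a groupoid equivalence. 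With those adjustments your factorisation argument goes through.
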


\begin{proof}
Since $\mathbf{A}$ and $\mathbf{K}$ are decomposition spaces, to prove that $\RAdCon$ is culf, it is enough to 
check that the diagram
\[\begin{tikzcd}
	{\mathbf{A}_2} & {\mathbf{A}_1} \\
	{\mathbf{K}_2} & {\mathbf{K}_1}
	\arrow["{\RAdCon_2}"', from=1-1, to=2-1]
	\arrow["{d_1}"', from=2-1, to=2-2]
	\arrow["{\RAdCon_1}", from=1-2, to=2-2]
	\arrow["{d_1}", from=1-1, to=1-2]
\end{tikzcd}\]
is a pullback  by Lemma \ref{lemma culfcondition for ds}. To prove that the square is a pullback, we will use Lemma \ref{lemma:pullbackfibres}. This means that for each object $T \in \mathbf{A}_1$, we have to show that the map $\RAdCon_2 \colon \Fib_T (d_1) \to \Fib_{\RAdCon_1T}(d_1)$ is an equivalence. We will divide the proof into three steps: first we will prove that is $\RAdCon_2 \colon \Fib_T (d_1) \to \Fib_{\RAdCon_1T}(d_1)$ is essentially surjective on objects. After we will show that $\RAdCon_2$ is full and finally that it is faithful.
\begin{itemize}
\item Let $f \colon \posetify{T} \twoheadrightarrow P$ be an object in $\Fib_{\RAdCon_1T}(d_1)$. Consider the following pullback diagram:
\[\begin{tikzcd}
	{\underline{P} \times_P T } & T \\
	{\underline{P} \times_P \posetify{T} } & \posetify{T} \\
	{\underline{P}} & P.
	\arrow["\posy", two heads, from=1-2, to=2-2]
	\arrow["f", two heads, from=2-2, to=3-2]
	\arrow[tail, from=3-1, to=3-2]
	\arrow["\alpha", from=1-1, to=1-2]
	\arrow[from=1-1, to=2-1]
	\arrow[from=2-1, to=3-1]
	\arrow[from=2-1, to=2-2]
	\arrow["\lrcorner"{anchor=center, pos=0.125}, draw=none, from=1-1, to=2-2]
	\arrow["\lrcorner"{anchor=center, pos=0.125}, draw=none, from=2-1, to=3-2]
\end{tikzcd}\]
Since $\underline{P} \to P$ is admissible, the map $\alpha$ is admissible by the stability property of admissible maps under pullback (Lemma \ref{lemma: admissble map stable pullback}). It is easy to check that $\alpha$ is an object in $\Fib_T(d_1)$. To verify that $\RAdCon_2(\alpha) \cong f$, consider the commutative diagram:
\[\begin{tikzcd}
	{\underline{P} \times_P T } & T \\
	{(\underline{P} \times_P T)/\underline{P} \times_P T } & {T/(\underline{P} \times_P T )} & \posetify{T} \\
	& {\underline{P}} & P.
	\arrow["\alpha", tail, from=1-1, to=1-2]
	\arrow[two heads, from=1-1, to=2-1]
	\arrow["f", two heads, from=2-3, to=3-3]
	\arrow["\posy", two heads, from=1-2, to=2-3]
	\arrow[two heads, from=1-2, to=2-2]
	\arrow[from=2-1, to=2-2]
	\arrow["\lrcorner"{anchor=center, pos=0.125, rotate=180}, draw=none, from=2-2, to=1-1]
	\arrow[from=2-1, to=3-2]
	\arrow[tail, from=3-2, to=3-3]
	\arrow["u"', dashed, from=2-2, to=3-3]
	\arrow["{(1)}"{description}, draw=none, from=1-1, to=2-2]
	\arrow["{(2)}"{description, pos=0.6}, draw=none, from=1-2, to=3-3]
\end{tikzcd}\]
The square $(1)$ is a pushout by definition of ${T/(\underline{P} \times_P T )}$. Since the outer diagram commutes, the pushout property of ${T/(\underline{P} \times_P T )}$ gives the dotted arrow $u$. Applying the posetification functor to the square $(2)$, we obtain the commutative diagram:
\[\begin{tikzcd}
	{\posetify{T}} & {\posetify{T}} \\
	{\posetify{T/(\underline{P} \times_P T)}} & P.
	\arrow["f", two heads, from=1-2, to=2-2]
	\arrow["\identity", from=1-1, to=1-2]
	\arrow["{\RAdCon_2(\alpha)}"', two heads, from=1-1, to=2-1]
	\arrow["{u'}"', from=2-1, to=2-2]
	\arrow["{(3)}"{description}, draw=none, from=1-1, to=2-2]
\end{tikzcd}\]
Furthermore, $u'$ is a bijection. Indeed, $u'$ is a monotone surjection since $\identity$ and ${\RAdCon_2(\alpha)}$ and $f$ are monotone surjections, and the diagram $(3)$ commutes. The injectivity condition of $u'$ follows from the fact that the diagrams $(1)$ and $(2)$ commute and $u'$ is obtained from the posetification of $u$. This implies that ${\RAdCon_2(\alpha)} \simeq f$, and therefore $\RAdCon_2$ is essentially surjective on objects.

\item Let's prove that $\RAdCon_2$ is full. Let $u$ be a morphism in $\Fib_{\RAdCon_1T}(d_1)$. The morphism $u$ can be illustrated by the following diagram
\[\begin{tikzcd}[column sep=scriptsize]
	& {\posetify{T}} \\
	P && {P'}.
	\arrow["f"', two heads, from=1-2, to=2-1]
	\arrow["{f'}",two heads, from=1-2, to=2-3]
	\arrow["u"', from=2-1, to=2-3]
\end{tikzcd}\]
Since $\RAdCon_2$ is essentially surjective on objects, for $f$ and $f'$ in $\Fib_{\RAdCon_1T}(d_1)$, we have objects $\alpha$ and $\alpha'$ in $\Fib_{T}(d_1)$ such that $\RAdCon_2(\alpha) = f$, and $\RAdCon_2(\alpha') = f'$, and the following diagram commutes 
\[\begin{tikzcd}[sep=small]
	W && T \\
	& {W'} && T \\
	{\posetify{W}} && {\posetify{T}} \\
	& {\posetify{W}'} && {\posetify{T}} \\
	{\underline{P}} && P \\
	& {\underline{P}'} && {P'}.
	\arrow[from=6-2, to=6-4]
	\arrow["{f'}", two heads, from=4-4, to=6-4]
	\arrow[two heads, from=2-4, to=4-4]
	\arrow["{\alpha'}"'{pos=0.2}, tail, from=2-2, to=2-4]
	\arrow[two heads, from=2-2, to=4-2]
	\arrow[from=4-2, to=6-2]
	\arrow[tail, from=4-2, to=4-4]
	\arrow["u"', from=5-1, to=6-2]
	\arrow[from=5-1, to=5-3]
	\arrow["u", from=5-3, to=6-4]
	\arrow["\identity", from=1-3, to=2-4]
	\arrow[two heads, from=1-3, to=3-3]
	\arrow["f"'{pos=0.8}, two heads, from=3-3, to=5-3]
	\arrow["\identity", from=3-3, to=4-4]
	\arrow["\alpha", tail, from=1-1, to=1-3]
	\arrow[from=3-1, to=5-1]
	\arrow[two heads, from=1-1, to=3-1]
	\arrow["{\overline{u}}"', dotted, from=1-1, to=2-2]
	\arrow[tail, from=3-1, to=3-3]
\end{tikzcd}\]
The pullback property of $W'$ gives the dotted arrow $\overline{u}$, and the top square commutes. This implies that $\overline{u} \colon \alpha \to \alpha'$ is a morphism in $\Fib_T(d_1)$ and $\RAdCon_2(\overline{u}) = u$. 

\item It only remains to prove that $\RAdCon_2$ is faithful. Let $v$ and $v'$ be morphisms in $\Fib_{T}(d_1)$ such that $\RAdCon_2(v) = \RAdCon_2(v')$. The objects $v$ and $v'$ can be illustrated by the following diagram
\begin{equation}\tag{3}\label{equation: T to P diagram 4}
\begin{tikzcd}
	W && {W'} \\
	& T.
	\arrow["\alpha"', tail, from=1-1, to=2-2]
	\arrow["{\alpha'}", tail, from=1-3, to=2-2]
	\arrow["v"', shift right=1, from=1-1, to=1-3]
	\arrow["{v'}", shift left=2, from=1-1, to=1-3]
\end{tikzcd}
\end{equation}
Applying $\RAdCon_2$ to (\ref{equation: T to P diagram 4}), we have the following commutative diagram 
\[\begin{tikzcd}[sep=small]
	W && T \\
	& {W'} && T \\
	{\posetify{W}} && {\posetify{T}} \\
	& {\posetify{W}'} && {\posetify{T}} \\
	{\underline{P}} && P \\
	& {\underline{P}'} && {P'}.
	\arrow[from=6-2, to=6-4]
	\arrow["{\RAdCon_2(\alpha')}", two heads, from=4-4, to=6-4]
	\arrow[two heads, from=2-4, to=4-4]
	\arrow["{\alpha'}"'{pos=0.2}, tail, from=2-2, to=2-4]
	\arrow[two heads, from=2-2, to=4-2]
	\arrow[from=4-2, to=6-2]
	\arrow[tail, from=4-2, to=4-4]
	\arrow["{\RAdCon_2(v)}"', from=5-1, to=6-2]
	\arrow[from=5-1, to=5-3]
	\arrow["{\RAdCon_2(v)}"', from=5-3, to=6-4]
	\arrow["\identity", from=1-3, to=2-4]
	\arrow[two heads, from=1-3, to=3-3]
	\arrow["{\RAdCon_2(\alpha)}"'{pos=0.8}, two heads, from=3-3, to=5-3]
	\arrow["\identity", from=3-3, to=4-4]
	\arrow["\alpha", tail, from=1-1, to=1-3]
	\arrow[from=3-1, to=5-1]
	\arrow[two heads, from=1-1, to=3-1]
	\arrow[tail, from=3-1, to=3-3]
	\arrow["{v'}"', shift right=1, from=1-1, to=2-2]
	\arrow["v", shift left=1, from=1-1, to=2-2]
\end{tikzcd}\]
By the pullback property of $W'$, there exists a unique map from $W$ to $W'$ such that the above diagram commutes. This forces $v = v'$. Hence, $\RAdCon_2$ is faithful.
\end{itemize}
\end{proof}

\begin{rema}
The culf map $\RAdCon \colon \mathbf{A} \to \mathbf{K}$ induces a homomorphism from the Fauvet--Foissy--Manchon bialgebra of admissible maps $\Delta_{\mathbf{A}}$ to the incidence bialgebra of contractions $\Delta_\mathbf{K}$. 
\end{rema}

\begin{comment}
\subsection{The Waldhausen construction and admissible maps}
\label{subsec: double category AP}
\end{comment}

\begin{blanko} 
{Admissible maps and the Waldhausen construction}
\label{subsec: double category AP}
\end{blanko}

Bergner, Osorno, Ozornova, Rovelli, and Scheimbauer \cite{BergnerSDoubleconstruction} showed an equivalence between decomposition objects and augmented stable double Segal objects, which is given by an $S_{\bullet}$-construction. In this section, we will construct a stable augmented double category using admissible maps and contractions between preorders.

%To define a simplicial object via an $S_{\bullet}$-construction, an object $\ast$ that is initial for horizontal morphisms and terminal for vertical morphisms is necessary.

A \emph{double category} is a category internal to categories. More informally,
it consists of the following data, subject to some axioms: a set of objects;
two different classes of morphisms (horizontal and vertical); and squares, which are connected by various source, target, identity, and composition maps.

\begin{exa}
Let $\mathbf{{AdCon}}$ denote the double category of finite preorders, admissible maps as horizontal morphisms, and contractions as vertical morphisms. The squares are pushout diagrams of admissible maps along contractions of preorders.
\end{exa}

An \emph{augmentation} of a double category $\mathbf{C}$ consists of a set of objects $C$ satisfying the condition that for every object $d$ of $\mathbf{C}$, there are a unique horizontal morphism $c \rightarrowtail d$ and a unique vertical morphism $d \twoheadrightarrow c'$ such that $c$ and $c'$ are in $C$ \cite{BergnerSDoubleconstruction}. We need some results to show an augmentation in $\mathbf{{AdCon}}$. 

\begin{lem}\label{lemma:chosen iniital admissibles gr. pre.}
The category of finite preorders and admissible maps has chosen initial objects given by the groupoid finite preorders.
\end{lem}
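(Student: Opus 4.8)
The plan is to exhibit, for each finite preorder $T$, the canonical inclusion $\imath_T \colon T^{\invp} \rightarrowtail T$ of its underlying groupoid preorder, to check that it is admissible, and then to show that it is the \emph{unique} admissible map into $T$ whose source is a groupoid preorder. Together these two statements produce the chosen initial objects and identify them with the groupoid finite preorders.

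The first step is to verify that $\imath_T \colon T^{\invp} \rightarrowtail T$ meets the three conditions of Definition~\ref{defi:admissible}. Condition (1) is immediate, since $T^{\invp}$ has the same objects as $T$ by construction. For the remaining two the key observation is that two objects lie in the same connected component of $T^{\invp}$ precisely when they are isomorphic in $T$: any zigzag of invertible arrows of $T$ linking $x$ and $y$ composes to give $x \simeq_T y$, using that $\simeq_T$ is an equivalence relation, and conversely the unique arrow $x \to y$ is invertible whenever $x \simeq_T y$. Consequently, on each connected component $Y$ of $T^{\invp}$ both $T|_{\underline{Y}}$ and $T^{\invp}|_{\underline{Y}}$ are the preorder on $\underline{Y}$ with all objects mutually isomorphic, so they agree and $T|_{\underline{Y}} = Y$; equivalently, $\imath_T$ is full on each connected component of $T^{\invp}$, which is condition (2) via the reformulation in \ref{lem:admissible pullback preorders}. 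For condition (3), inverting in $T$ the arrows already belonging to $T^{\invp}$ (namely the invertible ones) changes nothing, so $T/T^{\invp} = T$, while $T^{\invp}/T^{\invp} = T^{\invp}$ because $T^{\invp}$ is a groupoid preorder; hence $x \sim_{T/T^{\invp}} y$ and $x \sim_{T^{\invp}/T^{\invp}} y$ both amount to $x \simeq_T y$.

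The second step is uniqueness. Let $\alpha \colon G \rightarrowtail T$ be an admissible map with $G$ a groupoid preorder. By condition (1) it is identity-on-objects, so $G$ and $T$ share their objects and $\le_G$ is contained in $\le_T$; since $\le_G$ is symmetric this forces $x \simeq_T y$ whenever $x \le_G y$, i.e.\ $\le_G$ is contained in $\le_{T^{\invp}}$. For the reverse containment, suppose $x \simeq_T y$; then $x \le_T y$ and $y \le_T x$ give $x \sim_{T/G} y$, and since $G$ is a groupoid preorder we have $G/G = G$, so condition (3) applied to $\alpha$ gives $x \sim_{G/G} y$, that is $x \le_G y$. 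Therefore $\le_{T^{\invp}}$ is contained in $\le_G$, so $G = T^{\invp}$ and $\alpha = \imath_T$.

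Combining the two steps, every finite preorder $T$ receives a unique admissible map from a groupoid preorder, namely $\imath_T$; in particular the only admissible maps between groupoid preorders are identities (apply the uniqueness statement with $T$ itself a groupoid preorder), so the groupoid finite preorders are exactly the chosen (local) initial objects of the category of finite preorders and admissible maps. I do not expect any real obstacle here: the argument is a direct unwinding of Definitions~\ref{definition:admissible maps} and \ref{defi:admissible}, and the only point demanding a little care is keeping track of what $T/T'$ and the relation $\sim$ become when $T' = T^{\invp}$ and when $G$ is itself a groupoid preorder.
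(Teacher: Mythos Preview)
Your proof is correct and follows the paper's approach of exhibiting the canonical admissible map $T^{\invp} \rightarrowtail T$ (whose admissibility the paper had already checked in the proof of Lemma~\ref{lemma:bijection adm and contr pre}). Your argument is considerably more complete than the paper's one-line proof, in particular supplying the uniqueness step --- that any admissible map from a groupoid preorder into $T$ must coincide with $\imath_T$ --- which the paper leaves entirely implicit.
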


\begin{proof}
Given a finite preorder $T$, we have a canonical groupoid finite preorder $T^{\invp}$ and a canonical admissible map $T^{\invp} \to T$. 
\end{proof}

\begin{lem}\label{lemma:chosen terminal contractions gr. pre.}
The category of finite preorders and contractions of preorders has chosen terminal objects given by the groupoid finite preorders.
\end{lem}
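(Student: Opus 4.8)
The plan is to mirror the proof of Lemma \ref{lemma:chosen iniital admissibles gr. pre.}: for each finite preorder $T$ I would produce a canonical contraction onto a groupoid finite preorder and then check that it is the unique such contraction, which is exactly what is needed for the groupoid finite preorders to serve as chosen (local) terminal objects for finite preorders and contractions.

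First I would take the quotient $T/T$, i.e.\ Definition \ref{definition:admissible maps} applied to $\identity_T$, together with the canonical map $c_T \colon T \twoheadrightarrow T/T$, which is a contraction of preorders, as recorded just after Definition \ref{definition: contraction preorders}. I would then observe that $T/T$ is a groupoid finite preorder: by construction $x \le_{T/T} y$ holds exactly when $x$ and $y$ lie in the same connected component of $T$, so this relation is symmetric and every morphism of $T/T$ is invertible.

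For uniqueness, suppose $g \colon T \twoheadrightarrow G$ is any contraction with $G$ a groupoid finite preorder. Since contractions are the identity on objects, $\underline{G}=\underline{T}$, and monotonicity of $g$ forces the relation $\le_T$ to be contained in $\le_G$; as $\le_G$ is an equivalence relation (all morphisms invertible), it therefore contains the equivalence relation generated by $\le_T$, which is precisely $\le_{T/T}$. Hence each $G$-equivalence class is a union of connected components of $T$. But condition (2) of Definition \ref{definition: contraction preorders} says that for every $x$ the fibre $g^{-1}(x^{\simeq})$ — namely the $G$-class of $x$ equipped with the order inherited from $T$ — is a connected sub-preorder of $T$, and a union of two or more distinct connected components of $T$ is never connected. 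So every $G$-class is a single connected component of $T$, whence $\le_G$ coincides with $\le_{T/T}$, i.e.\ $G = T/T$ and $g = c_T$; condition (3) then holds automatically since $T/T$ has no strict relations and hence no covers. I expect the existence half to be immediate, and the only delicate point to be this last observation, that the connectedness requirement on fibres is exactly what prevents the $G$-classes from being non-trivial unions of connected components of $T$.
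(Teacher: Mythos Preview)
Your proof is correct and follows the same approach as the paper: produce the canonical contraction $T\twoheadrightarrow T/T$ and observe that $T/T$ is a groupoid preorder. The paper's own proof stops there, leaving the terminality (uniqueness of the contraction to a groupoid preorder) implicit, whereas you actually carry out that verification via the connectedness-of-fibres argument; this extra step is sound and makes your argument more complete than the one in the paper.
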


\begin{proof}
Given a finite preorder $T$, we have a groupoid finite preorder $T/T$ and a canonical identity-on-objects monotone map $T \to T/T$, which is a contraction since we invert all the morphism in $T$. Indeed, the lift of covers is automatic. The connection of the fibres is given by the fact that $T/T$ is the preorder of connected components of $T$.
\end{proof}

\begin{propo}
The double category $\mathbf{{AdCon}}$ has an augmentation given by the set of groupoid finite preorders
\end{propo}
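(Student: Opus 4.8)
The plan is to check directly that the set $C$ of all groupoid finite preorders meets the definition of an augmentation: for every object $T$ of $\mathbf{AdCon}$ (a finite preorder) there must be a \emph{unique} admissible map $c\rightarrowtail T$ with $c\in C$ and a \emph{unique} contraction $T\twoheadrightarrow c'$ with $c'\in C$. Existence is already in hand: Lemma~\ref{lemma:chosen iniital admissibles gr. pre.} provides the canonical admissible map $T^{\invp}\rightarrowtail T$, and Lemma~\ref{lemma:chosen terminal contractions gr. pre.} provides the canonical contraction $T\twoheadrightarrow T/T$, and both $T^{\invp}$ and $T/T$ are groupoid finite preorders. So the whole content is the two uniqueness statements, which I would treat separately. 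A useful preliminary remark is that admissible maps and contractions are identity-on-objects (clause~(1) of Definitions~\ref{defi:admissible} and~\ref{definition: contraction preorders}), so any candidate $c$ or $c'$ has underlying set $\underline{T}$ and the comparison morphism is forced to be the identity map; hence uniqueness of the morphism reduces to uniqueness of the preorder relation on $c$ resp.\ $c'$. Also, a groupoid finite preorder has a symmetric relation, hence an equivalence relation, so its connected components are its equivalence classes and it has no strict covers (so the cover-lifting clause~(3) of Definition~\ref{definition: contraction preorders} is automatic on the vertical side).

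For the vertical uniqueness, I would take a contraction $f\colon T\twoheadrightarrow G$ with $G$ a groupoid preorder. Monotonicity gives $\mathord{\le_T}\subseteq\mathord{\le_G}$, and since $\le_G$ is an equivalence relation on $\underline{T}$ it contains the equivalence relation generated by $\le_T$, i.e.\ ``lying in the same connected component of $T$''; so every $\le_G$-class is a union of connected components of $T$. But clause~(2) of Definition~\ref{definition: contraction preorders} forces each fibre $f^{-1}(x^{\simeq})$, that is each $\le_G$-class, to be connected in $T$, hence a single connected component. Therefore $\le_G$ is exactly the ``same connected component'' relation, so $G=T/T$ and $f$ is the canonical contraction.

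For the horizontal uniqueness, I would take an admissible map $G\rightarrowtail T$ with $G$ a groupoid preorder. Monotonicity together with symmetry of $\le_G$ gives $\mathord{\le_G}\subseteq\mathord{\sim_T}=\mathord{\le_{T^{\invp}}}$. Feeding the connected sub-preorders $Y=[x]_G$ into clause~(2) of Definition~\ref{defi:admissible} shows that $T$ restricted to each $\le_G$-class is the complete relation; consequently forming $T/G$ as in Definition~\ref{definition:admissible maps} --- invert the $G$-edges of $T$ and close by transitivity --- adds nothing, so $T/G=T$. Similarly $G/G=G$ because $G$ is already an equivalence relation. Now clause~(3) of Definition~\ref{defi:admissible} reads $\sim_{T/G}=\sim_{G/G}$, i.e.\ $\sim_T=\mathord{\le_G}$; combined with the inclusion above this gives $\mathord{\le_G}=\mathord{\sim_T}$, so $G=T^{\invp}$ and the map is the canonical inclusion. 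Together the two uniqueness statements yield the augmentation.

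I expect the horizontal uniqueness to be the main obstacle: one has to use all three clauses of Definition~\ref{defi:admissible} in concert --- especially getting the identification $T/G=T$ right and then reading off $\le_G$ from clause~(3) --- whereas the vertical part is essentially a short connectedness argument.
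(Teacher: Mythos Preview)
Your proposal is correct and follows the same route as the paper: existence comes from the canonical maps of Lemmas~\ref{lemma:chosen iniital admissibles gr. pre.} and~\ref{lemma:chosen terminal contractions gr. pre.}, and then one checks the augmentation axiom. The paper's own proof is terser --- it simply cites the two lemmas and the fact that $T^{\invp}$ and $T/T$ are groupoid preorders, leaving uniqueness implicit in the ``chosen initial'' and ``chosen terminal'' language of those lemmas --- whereas you spell out the uniqueness arguments directly from Definitions~\ref{defi:admissible} and~\ref{definition: contraction preorders}; this is extra detail rather than a different method.
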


\begin{proof}
For each preorder $T$, we have a canonical admissible map $T^{\invp} \rightarrowtail T$ (\ref{lemma:chosen iniital admissibles gr. pre.}) and a canonical contraction $T \twoheadrightarrow T/T$ (\ref{lemma:chosen terminal contractions gr. pre.}). It is easy to see that $T^{\invp}$ and $T/T$ are groupoid preorders, and hence the set of groupoid finite preorders is an augmentation of $\mathbf{AdCon}$.
\end{proof}

A double category is \emph{stable} if every square is uniquely determined by its span of source morphisms and, independently, by its cospan of target morphisms \cite{BergnerSDoubleconstruction}.

\begin{propo}
The double category $\mathbf{{AdCon}}$ is stable.
\end{propo}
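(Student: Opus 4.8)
The plan is to exploit one structural feature: every morphism of $\mathbf{AdCon}$, being both an admissible map (Definition~\ref{defi:admissible}(1)) and a contraction of preorders (Definition~\ref{definition: contraction preorders}(1)), is identity-on-objects. Hence in any square
\[\begin{tikzcd}
	{T'} & T \\
	{V'} & V
	\arrow["\alpha", tail, from=1-1, to=1-2]
	\arrow["p"', two heads, from=1-1, to=2-1]
	\arrow["q", two heads, from=1-2, to=2-2]
	\arrow["\beta"', tail, from=2-1, to=2-2]
\end{tikzcd}\]
all four preorders carry one underlying set $X$, all four maps are the identity on $X$, and the square amounts to a quadruple of preorder relations on $X$ with $R_{T'}\subseteq R_T$, $R_{T'}\subseteq R_{V'}$, $R_T\subseteq R_V$ and $R_{V'}\subseteq R_V$. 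Over a fixed $X$, the pushout of two identity-on-objects maps of preorders is the join $R\vee R'$ (the transitive closure of the union) and the pullback is the meet $R\cap R'$, because the underlying-set pushout (resp.\ pullback) of identities is again the identity on $X$. So being a square --- a pushout of an admissible map along a contraction --- forces $R_V=R_T\vee R_{V'}$.

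With this in hand I would first note that a square is tautologically recovered from its span of source morphisms $(\alpha\colon T'\to T,\ p\colon T'\to V')$: that span supplies $X$ and $R_{T'},R_T,R_{V'}$, and then $R_V=R_T\vee R_{V'}$ is forced, so the whole square is determined. The real work is to recover a square equally from its cospan of target morphisms $(\beta\colon V'\to V,\ q\colon T\to V)$, i.e.\ to prove that the square is also a pullback, which amounts to the identity $R_{T'}=R_T\cap R_{V'}$. The inclusion $R_{T'}\subseteq R_T\cap R_{V'}$ is clear. For the reverse inclusion, suppose $x\le_T y$ and $x\le_{V'}y$. Because $p\colon T'\twoheadrightarrow V'$ is a contraction, elements lying in a common connected component of $V'$ already lie in a common connected component of $T'$: a contraction only inverts arrows (and closes by transitivity), and such arrows cannot connect distinct components. (This is an elementary unwinding of conditions~(2)--(3) of Definition~\ref{definition: contraction preorders}; equivalently it follows from Lemma~\ref{lemma:bijection adm and contr pre}, which presents $V'$ as $T'$ with a chosen set of arrows inverted.) Thus $x\le_{V'}y$ puts $x$ and $y$ in a common connected component $Y$ of $T'$; and since $\alpha\colon T'\rightarrowtail T$ is admissible, condition~(2) of Definition~\ref{defi:admissible} applied to the connected sub-preorder $Y$ gives $T_{|\underlyingset{Y}}=Y$, whence $x\le_T y$ yields $x\le_{T'}y$. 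This proves $R_{T'}=R_T\cap R_{V'}$, so the square is the pullback of its target cospan and is determined by it.

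Combining the two observations: a square of $\mathbf{AdCon}$ is uniquely determined by its span of source morphisms (it equals that span together with the forced pushout $R_V=R_T\vee R_{V'}$) and, independently, by its cospan of target morphisms (the cospan determines $X$ and $R_{V'},R_V,R_T$, and then $R_{T'}=R_T\cap R_{V'}$ is forced). Therefore $\mathbf{AdCon}$ is stable.

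The hardest part will be the reverse inclusion $R_T\cap R_{V'}\subseteq R_{T'}$, and within it the two auxiliary facts it rests on: that a contraction of preorders does not merge connected components, and that an admissible map is full on each connected component of its source. Neither is deep --- the second is precisely the reading of Definition~\ref{defi:admissible}(2) already used in the proof of Lemma~\ref{lem:admissible pullback preorders} --- but together they are exactly what makes every square of $\mathbf{AdCon}$ simultaneously a pushout and a pullback, which is the substance of stability.
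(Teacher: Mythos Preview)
Your proof is correct and shares the paper's strategy: show that every square of $\mathbf{AdCon}$ (a pushout by definition) is also a pullback, so that it is determined equally by its source span and by its target cospan. The paper dispatches this in one line by invoking Lemma~\ref{lem:admissible pullback preorders}; you instead give a direct argument on preorder relations over the common underlying set, establishing $R_{T'}=R_T\cap R_{V'}$ from condition~(2) of Definition~\ref{defi:admissible} together with the fact that contractions of preorders preserve connected components (justified via Lemma~\ref{lemma:bijection adm and contr pre}). Your route is slightly more elementary and more explicit: Lemma~\ref{lem:admissible pullback preorders} as stated treats only the pushout along the terminal contraction $T'\to\collapse{T'}$, so the paper's citation implicitly relies on a reduction to that case, whereas your argument handles an arbitrary contraction directly.
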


\begin{proof}
By definition of $\mathbf{{AdCon}}$ the squares are pushout diagrams as follows:
\[\begin{tikzcd}
	{\cdot} & {\cdot} \\
	{\cdot} & {\cdot}
	\arrow[tail, from=1-1, to=1-2]
	\arrow[tail, from=2-1, to=2-2]
	\arrow[two heads, from=1-1, to=2-1]
	\arrow[two heads, from=1-2, to=2-2]
	\arrow["\lrcorner"{anchor=center, pos=0.125, rotate=180}, draw=none, from=2-2, to=1-1]
\end{tikzcd}\]
where the horizontal morphisms are admissible maps and the vertical are contractions of preorders. Since the top horizontal arrow is admissible, the square is also a pullback as a consequence of Lemma \ref{lem:admissible pullback preorders}. This means that all the squares in $\mathbf{{AdCon}}$ are bipullbacks, and therefore $\mathbf{{AdCon}}$ is stable.
\end{proof}

In an augmented stable double category, there is a bijection between the set of horizontal morphisms and the set of vertical arrows \cite{BergnerSDoubleconstruction}. This implies that in $\mathbf{{AdCon}}$, we have a bijection between admissible maps and contractions of preorders. This bijection was also proved in Lemma \ref{lemma:bijection adm and contr pre}.

Bergner, Osorno, Ozornova, Rovelli, and Scheimbauer \cite{BergnerSDoubleconstruction} expressed the Waldhausen $S_{\bullet}$-construction in terms of a  
functor $S_{\bullet}$ from the category of augmented double categories to the category of simplicial groupoids. If we only consider stable augmented double categories, they showed that the functor $S_{\bullet}$ sends stable augmented double categories to decomposition spaces. 

\begin{teo}\cite[Theorem 4.8]{BergnerSDoubleconstruction}\label{teorema:Wal construction}
The Waldhausen $S_{\bullet}$-construction restricts to the functor
$$S_{\bullet} \colon \operatorname{DCat}_{\operatorname{aug}}^{\operatorname{st}} \to \Dcmp,$$
where $\operatorname{DCat}_{\operatorname{aug}}^{\operatorname{st}}$ denotes the category of augmented stable double categories. 
\end{teo}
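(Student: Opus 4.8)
The plan is to reduce the assertion to the décalage criterion, Theorem~\ref{theorem: decalage and segal condition}. Since $S_{\bullet}$ is manifestly functorial in the double category (it is a groupoid of grid-shaped diagrams, natural in its argument), the only thing left to prove is that $S_{\bullet}\mathbf{C}$ is a \emph{decomposition} space whenever the augmented double category $\mathbf{C}$ is stable; and Theorem~\ref{theorem: decalage and segal condition} reduces this, in turn, to showing that both $\Dec_{\perp}S_{\bullet}\mathbf{C}$ and $\Dec_{\top}S_{\bullet}\mathbf{C}$ are Segal spaces.

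First I would unwind the $S_{\bullet}$-construction. In each degree $S_{n}\mathbf{C}$ is the groupoid of ``staircase'' diagrams in $\mathbf{C}$ indexed by $\mathrm{Ar}[n]$, with the diagonal entries and the outermost arrows constrained to lie in the chosen augmentation; the inner face and degeneracy maps compose or insert horizontal and vertical morphisms, while the two outer face maps are the augmentation-sensitive operations. Thus $(\Dec_{\perp}S_{\bullet}\mathbf{C})_{n}=S_{n+1}\mathbf{C}$, equipped only with the operators that survive the décalage. The key claim I would then establish is that this simplicial groupoid is isomorphic to the fat nerve of the \emph{horizontal edge category} $\mathbf{C}_{h}$ (objects of $\mathbf{C}$, horizontal morphisms), and, symmetrically, that $\Dec_{\top}S_{\bullet}\mathbf{C}\cong\fatnerve(\mathbf{C}_{v})$ for the vertical edge category $\mathbf{C}_{v}$. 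Granting these identifications, Example~\ref{exa:fatnerve is Segal} says both décalages are Segal spaces, and Theorem~\ref{theorem: decalage and segal condition} yields the result.

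To prove the identification I would construct the comparison functor degreewise. From a staircase representing an element of $S_{n+1}\mathbf{C}$ one reads off the column that survives the décalage: by the augmentation its ends are chosen augmentation objects and in between it is a string of composable horizontal morphisms, i.e.\ an object of $\fatnerve(\mathbf{C}_{h})$ in the appropriate degree. Stability is exactly what makes this functor an equivalence: since every square of $\mathbf{C}$ is uniquely determined by its span of source morphisms (and, independently, by its cospan of target morphisms), all the remaining entries and filling squares of the staircase are uniquely and naturally reconstructed from that column together with the augmenting vertical morphisms, and conversely every such column extends uniquely-up-to-unique-isomorphism to a full staircase; hence the functor is fully faithful and essentially surjective, and its compatibility with the surviving simplicial operators is immediate. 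The argument for $\Dec_{\top}$ is the mirror image, reconstructing staircases from their last row and invoking the cospan-of-targets half of stability.

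The hard part is precisely this staircase bookkeeping in arbitrary degree: one has to organise the interplay between the low-dimensional augmentation data sitting in the corners of the grid and the stability-driven filling of its interior, and then check that the degreewise equivalences assemble into an isomorphism of simplicial groupoids compatible with all faces and degeneracies. (In the running case $\mathbf{C}=\mathbf{AdCon}$ these identifications specialise to Proposition~\ref{proposition:dec perp AP is Segal} and its $\Dec_{\top}$-counterpart, with $\mathbf{C}_{h}=\mathbb{A}$ and the stability hypothesis supplied by Lemma~\ref{lem:admissible pullback preorders}, which makes every square of $\mathbf{AdCon}$ a bipullback.)
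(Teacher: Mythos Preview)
The paper does not give its own proof of this theorem: immediately after the statement it says that ``the construction of $S_{\bullet}$ and the proof of Theorem~\ref{teorema:Wal construction} require some preliminaries that are beyond the scope of this paper but which can be found in detail in \cite{BergnerSDoubleconstruction}'', and then passes to a description of the specific example $S_{\bullet}(\mathbf{AdCon})$. So there is no ``paper's own proof'' to compare against.

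That said, your sketch is a sound strategy and, in fact, is exactly the mechanism the paper exploits for its particular example. The identification $\Dec_{\perp}S_{\bullet}\mathbf{C}\simeq\fatnerve(\mathbf{C}_h)$ via ``project to the first row and reconstruct the rest of the staircase from augmentation plus stability'' is precisely what Proposition~\ref{propo:stretficationA} carries out for $\mathbf{C}=\mathbf{AdCon}$ (with $\mathbf{C}_h=\mathbb{A}$), and the décalage criterion then finishes the job. Your use of stability is correct: the source-span half gives the $\Dec_{\perp}$ identification, the target-cospan half the $\Dec_{\top}$ one.

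Two small inaccuracies in your write-up, neither fatal: (i) you write ``column'' where you mean the first \emph{row} (the horizontal morphisms run along rows in the paper's conventions); (ii) it is not true that \emph{both} ends of that row lie in the augmentation --- only the initial entry $V_{00}$ does, while $T_{0,n+1}$ is arbitrary. What you actually need, and what the augmentation axiom gives, is that $V_{00}$ and the map $V_{00}\rightarrowtail T_{01}$ are uniquely determined by $T_{01}$, so projecting away $V_{00}$ loses no information. With those corrections your outline is a faithful précis of the argument in \cite{BergnerSDoubleconstruction} specialised to double categories.
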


The construction of $S_{\bullet}$ and the proof of Theorem \ref{teorema:Wal construction} require some preliminaries that are beyond the scope of this paper but which can be found in detail in \cite{BergnerSDoubleconstruction}. So we will choose to give a detailed description of the example we are interested in, the decomposition space $S_{\bullet}(\mathbf{{AdCon}})$. The objects of the groupoid $(S_{\bullet}\mathbf{{AdCon}})_0$ are finite preorders and the morphisms monotone bijections. The objects of the groupoid $(S_{\bullet}\mathbf{{AdCon}})_n$ are diagrams of the form:

\begin{equation*}\tag{1}\label{diagram: Sn AP}
\begin{tikzcd}[sep=scriptsize]
	{V_{00}} & {T_{01}} & {T_{02}} & \cdots & {T_{0{(n-1})}} & {T_{0n}} \\
	& {V_{11}} & {T_{12}} & \cdots & {T_{1{(n-1})}} & {T_{1n}} \\
	&& {V_{22}} & \cdots & \vdots & \vdots \\
	&&&& {V_{(n-1)(n-1)}} & {T_{(n-1)n}} \\
	&&&&& {V_{nn}}
	\arrow[tail, from=1-1, to=1-2]
	\arrow[tail, from=1-2, to=1-3]
	\arrow[tail, from=1-3, to=1-4]
	\arrow[two heads, from=1-2, to=2-2]
	\arrow[two heads, from=1-3, to=2-3]
	\arrow[tail, from=2-2, to=2-3]
	\arrow[tail, from=2-3, to=2-4]
	\arrow[from=2-3, to=3-3]
	\arrow[tail, from=3-3, to=3-4]
	\arrow[two heads, from=1-4, to=2-4]
	\arrow[two heads, from=2-4, to=3-4]
	\arrow[tail, from=1-4, to=1-5]
	\arrow[tail, from=1-5, to=1-6]
	\arrow[tail, from=2-4, to=2-5]
	\arrow[two heads, from=1-5, to=2-5]
	\arrow[two heads, from=1-6, to=2-6]
	\arrow[two heads, from=2-6, to=3-6]
	\arrow[two heads, from=2-5, to=3-5]
	\arrow[tail, from=3-4, to=3-5]
	\arrow[tail, from=2-5, to=2-6]
	\arrow[from=3-5, to=3-6]
	\arrow[two heads, from=3-5, to=4-5]
	\arrow[two heads, from=3-6, to=4-6]
	\arrow[tail, from=4-5, to=4-6]
	\arrow["\square"{description}, draw=none, from=1-2, to=2-3]
	\arrow["\square"{description}, draw=none, from=1-3, to=2-4]
	\arrow["\square"{description}, draw=none, from=2-3, to=3-4]
	\arrow["\square"{description}, draw=none, from=1-4, to=2-5]
	\arrow["\square"{description}, draw=none, from=2-4, to=3-5]
	\arrow["\square"{description}, draw=none, from=1-5, to=2-6]
	\arrow[draw=none, from=2-5, to=3-6]
	\arrow["\square"{description}, draw=none, from=3-5, to=4-6]
	\arrow[two heads, from=4-6, to=5-6]
	\arrow["\square"{description}, draw=none, from=2-5, to=3-6]
\end{tikzcd}
\end{equation*}
where each $V_{ii}$ is a groupoid finite preorder and the squares are bipullbacks. The morphisms in $(S_{\bullet}\mathbf{{AdCon}})_n$ are families of monotone bijections between the finite preorders of the diagrams. 
The face maps $d_i \colon (S_{\bullet}\mathbf{{AdCon}})_n \to (S_{\bullet}\mathbf{{AdCon}})_{n-1}$ acts by `erasing' all the finite preorders that containing an index $i$.  The degeneracy maps $s_i \colon (S_{\bullet}\mathbf{{AdCon}})_{n} \to (S_{\bullet}\mathbf{{AdCon}})_{n+1}$ `repeat' the $i$th row and the $i$th column. 

We have a canonical map from $\frow \colon S_\bullet(\mathbf{{AdCon}}) \to \mathbf{A}$, that forgets the extra information and only conserves the first row of the diagram (\ref{diagram: Sn AP}) deleting the groupoid finite preorder $V_{00}$ in the chain. In other words, its sends the diagram (\ref{diagram: Sn AP}) to the chain 
\[\begin{tikzcd}
	{T_{01}} & {T_{02}} & \cdots & {T_{0(n-1)}} & {T_{0n}}.
	\arrow[tail, from=1-1, to=1-2]
	\arrow[tail, from=1-2, to=1-3]
	\arrow[tail, from=1-3, to=1-4]
	\arrow[tail, from=1-4, to=1-5]
\end{tikzcd}\]  

\begin{propo}\label{propo:stretficationA}
The map $\frow \colon S_\bullet(\mathbf{{AdCon}}) \to \mathbf{A}$ is an equivalence of simplicial spaces.
\end{propo}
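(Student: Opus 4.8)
The plan is to check that $\frow$ is a levelwise equivalence of simplicial groupoids, which for simplicial spaces is the same as being an equivalence; so I would fix $n$ and show that $\frow_n\colon S_\bullet(\mathbf{{AdCon}})_n\to\mathbf{A}_n$ is an equivalence of groupoids, the case $n=0$ being immediate (on both sides one has the groupoid of finite groupoid preorders). For $n\geq 1$ the governing idea is that an $n$-staircase in $\mathbf{{AdCon}}$ is completely determined, up to coherent isomorphism, by its first row, so I would make this precise by exhibiting an explicit quasi-inverse $\Phi_n\colon\mathbf{A}_n\to S_\bullet(\mathbf{{AdCon}})_n$ that ``fills in the staircase''.

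Concretely, to a chain of admissible maps $T_{01}\rightarrowtail T_{02}\rightarrowtail\cdots\rightarrowtail T_{0n}$ I would assign the diagram with entries $T_{ij}:=T_{0j}/T_{0i}$ for $0\leq i\leq j\leq n$, where $T_{00}:=T_{01}^{\invp}$ is the chosen initial groupoid preorder with its canonical admissible map $T_{00}\rightarrowtail T_{01}$ (Lemma~\ref{lemma:chosen iniital admissibles gr. pre.}; note $T_{0j}/T_{00}=T_{0j}$ because $T_{00}$ has only invertible arrows). The horizontal maps $T_{ij}\rightarrowtail T_{i(j+1)}$ are then admissible and the vertical maps $T_{ij}\twoheadrightarrow T_{(i+1)j}$ are contractions by the stability statements (Lemmas~\ref{lemma: admissble map stable pullback} and~\ref{lemma:the posetification T to T/T' is connected}); each diagonal entry $T_{ii}=T_{0i}/T_{0i}$ is a groupoid preorder; and by the prism-Lemma argument of Lemma~\ref{lemma: pushouts for 2-admissible maps}, applied iteratively down the columns, every square of the diagram is a pushout, hence a bipullback by Lemma~\ref{lem:admissible pullback preorders}. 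So this is a well-defined object of $S_\bullet(\mathbf{{AdCon}})_n$ --- it is diagram~(\ref{diagram:construction AP 1}) of Construction~\ref{construction: AP to K} with the corner $T_{00}$ prepended --- and $\Phi_n$ extends to monotone bijections by functoriality of quotients. I would then check $\frow_n\circ\Phi_n=\identity_{\mathbf{A}_n}$ on the nose, and build a natural isomorphism $\identity\cong\Phi_n\circ\frow_n$ as follows: for $D\in S_\bullet(\mathbf{{AdCon}})_n$ with first row $V_{00}\rightarrowtail T_{01}\rightarrowtail\cdots\rightarrowtail T_{0n}$, the augmentation forces $V_{00}\cong T_{01}^{\invp}$ and each diagonal entry $V_{ii}\cong T_{0i}/T_{0i}$ uniquely, and stability forces each square of $D$ to be the unique bipullback on its span of source morphisms; working down the rows by induction, and identifying the pushouts that arise with the iterated quotients via Lemmas~\ref{lemma: definition T/T pushout} and~\ref{lemma: pushouts for 2-admissible maps}, one obtains canonical isomorphisms between the entries of $D$ and the $T_{0j}/T_{0i}$, assembling into $D\cong\Phi_n(\frow_n(D))$.

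The hard part, I expect, is not any single computation but the bookkeeping: verifying that the assignment $D\mapsto(\text{fill-in of its first row})$ is genuinely functorial and that the comparison isomorphisms are natural across the whole staircase simultaneously. This is exactly what the two defining features of $\mathbf{{AdCon}}$ are designed to give --- ``augmented'' rigidifies the corner $V_{00}$ and all the diagonal entries $V_{ii}$ up to unique isomorphism, and ``stable'' rigidifies every square in terms of its two source maps --- so once these are invoked carefully together with the pushout identities of Lemma~\ref{lemma: pushouts for 2-admissible maps} (as already used to build $\RAdCon$ in Construction~\ref{construction: AP to K}), the levelwise equivalences $\frow_n$ follow, and hence $\frow$ is an equivalence of simplicial spaces.
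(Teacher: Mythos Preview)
Your proposal is correct and follows essentially the same approach as the paper: both arguments hinge on the observation that a staircase in $\mathbf{AdCon}$ is determined, up to unique isomorphism, by its first row, and both reconstruct the full staircase from a chain $T_{01}\rightarrowtail\cdots\rightarrowtail T_{0n}$ via the iterated quotients $T_{0j}/T_{0i}$, prepending the canonical $T_{01}^{\invp}$ and invoking Lemma~\ref{lemma: pushouts for 2-admissible maps} and Lemma~\ref{lem:admissible pullback preorders} to verify the bipullback squares. The only cosmetic difference is packaging: the paper argues directly that $\frow$ is essentially surjective and fully faithful (the latter because morphisms of staircases are determined by their restriction to the first row), whereas you assemble the same construction into an explicit quasi-inverse $\Phi_n$ and then verify $\frow_n\Phi_n=\identity$ and $\Phi_n\frow_n\cong\identity$; your invocation of stability and augmentation to obtain the natural isomorphism $D\cong\Phi_n(\frow_n(D))$ is exactly what underlies the paper's fully-faithful step.
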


\begin{proof}
The map $\frow$ is essentially surjective on objects. Let us prove it for an $3$-simplex, for a $n$-simplex the proof is completely analogous. 
For a $3$-simplex 
\[\begin{tikzcd}
	{T_0} & {T_1} & {T_2}
	\arrow[tail, from=1-1, to=1-2]
	\arrow[tail, from=1-2, to=1-3]
\end{tikzcd}\] 
in $\mathbf{A}$, we construct the following diagram
\[\begin{tikzcd}
	{T_{0}} & {T_1} & {T_2} \\
	{T_{0}/T_{0}} & {T_1/T_0} & {T_2/T_0} \\
	& {T_1/T_1} & {T_2/T_1}
	\arrow[tail, from=1-1, to=1-2]
	\arrow[two heads, from=1-1, to=2-1]
	\arrow[from=2-1, to=2-2]
	\arrow[two heads, from=1-2, to=2-2]
	\arrow["\lrcorner"{anchor=center, pos=0.125, rotate=180}, draw=none, from=2-2, to=1-1]
	\arrow[tail, from=1-2, to=1-3]
	\arrow[tail, from=2-2, to=2-3]
	\arrow[two heads, from=1-3, to=2-3]
	\arrow[two heads, from=2-2, to=3-2]
	\arrow[tail, from=3-2, to=3-3]
	\arrow[two heads, from=2-3, to=3-3]
	\arrow["\lrcorner"{anchor=center, pos=0.125, rotate=180}, draw=none, from=2-3, to=1-2]
	\arrow["\lrcorner"{anchor=center, pos=0.125, rotate=180}, draw=none, from=3-3, to=2-2]
\end{tikzcd}\]
where each square is a pushout, and since the top arrows are admissible it follows that the squares are bipullbacks by Lemma \ref{lem:admissible pullback preorders}. Furthermore, adding to the top row the canonical admissible map $T_0^{\invp} \rightarrowtail T_0$ and the end of the last column the contraction $T_2/T_1 \twoheadrightarrow (T_2/T_1)/(T_2/T_1)$, we obtain a $3$-simplex in $S_\bullet(\mathbf{{AdCon}})$. The map $\frow$ is full and faithful since any morphism between $n$-simpleces in $S_\bullet(\mathbf{{AdCon}})$ is unique determined by families of monotone bijections between the preorders of the top row of the $n$-simpleces in $S_\bullet(\mathbf{{AdCon}})$. Since $\frow$ is full and faithful, and essentially surjective on objects, then it is an equivalence of decomposition spaces.
\end{proof}

Note that $S_\bullet(\mathbf{{AdCon}})$ is a strict simplicial space while $\mathbf{A}$ is a pseudo simplicial space. So the equivalence $\frow \colon S_\bullet(\mathbf{{AdCon}}) \to \mathbf{A}$ allows us to see $S_\bullet(\mathbf{{AdCon}})$ as the strictification of $\mathbf{A}$ in the sense of Gambino \cite[\S 6.4]{gambino_2008}.

%\begin{comment}
%\subsection*{Connected directed hereditary species as decomposition spaces}
%\begin{blanko}
%{Connected directed hereditary species as decomposition spaces}
%\end{blanko}
%\end{comment}

\section{Connected directed hereditary species as decomposition spaces}\label{subsec: DCH species as decomposition}

In this section, we will show how to obtain a decomposition space from a connected directed hereditary species.

Let $H \colon\mathbb{K}_p \rightarrow \Grpd$ be a directed hereditary species. The left fibration $\int H \to \mathbb{K}_p$ denotes the Grothendieck construction of $H$. The objects of $\int H$ are pairs $(P,x)$ where $P$ is a finite poset and $x \in H[P]$. The morphisms are pairs $(p, \alpha) \colon (P, x) \to (Q, y)$ where $p \colon P \to Q$ is a partially defined contraction and $\alpha \colon H(p)(x) \to y$ is a morphism in $H[Q]$.  We are interested in the category, with chosen local terminal objects, of connected finite non-empty posets and contractions $\mathbb{K}$, not all partially defined contractions. For that, we consider $\mathbb{H}$ as the pullback
\[\begin{tikzcd}
	{\mathbb{H}} \drpullback & {\int H} \\
	{\mathbb{K}} & {\mathbb{K}_p}.
	\arrow[from=1-2, to=2-2]
	\arrow[hook, from=2-1, to=2-2]
	\arrow[from=1-1, to=2-1]
	\arrow[from=1-1, to=1-2]
%	\arrow["\lrcorner"{anchor=center, pos=0.125}, draw=none, from=1-1, to=2-2]
\end{tikzcd}\]
Furthermore, we define $\mathbf{H} := \mathsf{S} \fatnerve^{\lt} \mathbb{H}$. Thus, an object in $\mathbf{H}_n$ is a family of $(n-1)$-chains of contractions

%Recall that $\mathbf{K}_1=\mathsf{S}\nerve \mathbb{K}^{\simeq}$, so that $\mathsf{S}$ applied to the Grothendieck construction yields a well-defined map $\mathbf{H}_1 \rightarrow \mathbf{K}_1$: it takes a family of $H$-structures to the family of underlying posets. 
\begin{equation*}
\begin{tikzcd}[column sep=35pt]
P_0 \arrow[r, "f_0", two heads] & P_1 \arrow[r, two heads] &   \cdots\arrow[r, two heads]&P_{n-2} \arrow[r, "f_{n-2}",two heads]&P_{n-1}
\end{tikzcd}
\end{equation*}
with a $H$-structure on $P_0$ and $\mathbf{H}_0$ is the groupoid of families of $H$-structure over the poset with one element. We define the bottom face map as follows:  we remove the first element of the chain, $P_0$, and take the $H$-structure $H(f_0)$ on $P_1$. To define the top face map, we use (contravariant) functoriality on convex map: for each element $a \in P_{n-1}$, we can form the fibres over $a$. We end up with a family
\begin{center}
\begin{tikzcd}
\lbrace (P_0)_{a} \arrow[r, "(f_0)_a", two heads] & (P_1)_a \arrow[r, "(f_1)_a", two heads] & (P_2)_{a} \arrow[r, two heads] & \cdots \arrow[r, two heads] & (P_{n-3})_a \arrow[r, "(f_{n-3})_a", two heads] & (P_{n-2})_a \rbrace_{a \in P_{n-1}},
\end{tikzcd}
\end{center} 
and we restrict the $H$-structure on $P_0$ to the corresponding $H$-structure on the fibre $(P_0)_a$, in the same way as for $\mathbf{K}$. %The inner face maps and degeneracy maps are defined by using square (\ref{dg:decH}). 

\begin{propo}\label{propo: H is a MDS}
The groupoids $\mathbf{H}_n$ form a simplicial groupoid $\mathbf{H}$.
\end{propo}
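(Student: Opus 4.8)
The plan is to mimic, almost verbatim, the proof that $\mathbf{K}=\mathsf{S}\fatnerve^{\lt}\mathbb{K}$ is a pseudosimplicial groupoid (Proposition~\ref{proposition:Ksimplicial}), carrying the $H$-structures along by functoriality. First I would record that $\mathbb{H}$ is again a category with chosen local terminal objects, so that $\fatnerve^{\lt}\mathbb{H}$ is a well-defined $\Grpd$-valued $\simplexcategory^{\ter}$-presheaf in the sense of Definition~\ref{defi: t-nerve}. Indeed, $\mathbb{H}\to\mathbb{K}$ is a pullback of the left fibration $\int H\to\mathbb{K}_p$, hence itself a left fibration, and since $\mathbb{K}$ has chosen local terminals (the one-element poset $1$, together with the unique contraction $P\twoheadrightarrow 1$), the object $(P,x)$ acquires the chosen local terminal $\bigl(1,\,H(P\twoheadrightarrow 1)(x)\bigr)$, with structure map the (essentially unique) cocartesian lift of $P\twoheadrightarrow 1$. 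Applying $\mathsf{S}$ then produces all of $\mathbf{H}=\mathsf{S}\fatnerve^{\lt}\mathbb{H}$ together with every face and degeneracy map except the top face maps, and these are supplied by the fibre-and-restrict recipe described just before the statement: over each $a\in P_{n-1}$ one forms the pullback chain of contractions (which remains a chain of contractions by Lemma~\ref{lemma:stableconnectedpullback}) and transports the $H$-structure on $P_0$ to $(P_0)_a$ along the convex inclusion $(P_0)_a\hookrightarrow P_0$.

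Next, exactly as in Proposition~\ref{proposition:dectopKisSegal}, deleting the top face and top degeneracy in each degree gives $\Dec_\top\mathbf{H}=\mathsf{S}\fatnerve\mathbb{H}$, and since $\fatnerve\mathbb{H}$ is the fat nerve of a category it is strictly simplicial; hence all simplicial identities of $\mathbf{H}$ not involving $d_\top$ hold on the nose, and only the identities involving $d_\top$ remain. The inner ones — $d_i d_\top=d_\top d_{i}$ for the relevant $i$, $s_i d_\top=d_\top s_{i}$, and compatibility of $d_\top$ with $d_\perp$ — reduce, after projecting along $\mathbb{H}\to\mathbb{K}$, to the corresponding computations already performed for $\mathbf{K}$ (taking fibres over $a$ commutes with composing contractions and with inserting identities), together with the check that the carried $H$-structures match. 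That last check is the Beck--Chevalley law $H[f']\circ H[u]=H[v]\circ H[f]$ attached to the defining pullback squares of posets (convex along contraction), which holds because $H\colon\mathbb{K}_p\to\Grpd$ must respect composition of spans; I would spell this out once, for the square obtained by pulling the fibre inclusion back along a contraction.

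Finally, the one genuinely non-strict identity is $d_\top d_\top\simeq d_\top d_{\top-1}$. Here I would reproduce the pasting diagram from the proof of Proposition~\ref{proposition:Ksimplicial}: for a chain $P_0\twoheadrightarrow\cdots\twoheadrightarrow P_{n-1}\twoheadrightarrow Q$, the iterated fibres $\bigl((P_0)_q\bigr)$ taken over $q\in Q$ agree, up to the canonical comparison isomorphism coming from the composition of two pullback squares, with the fibres $(P_0)_a$ taken directly, all intermediate maps staying contractions by Lemma~\ref{lemma:stableconnectedpullback}; and the two resulting $H$-structures on these identified fibres agree under this isomorphism, again by the Beck--Chevalley law. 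I expect this very last point — that the comparison isomorphism witnessing $d_\top d_\top\simeq d_\top d_{\top-1}$ is compatible with the $H$-structures, not merely with the underlying posets — to be the main (and essentially the only) obstacle, and it is precisely where functoriality of $H$ on composites of spans is used; everything else is inherited from the pseudosimplicial structure of $\mathbf{K}$ established in Proposition~\ref{proposition:Ksimplicial}.
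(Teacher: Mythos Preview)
Your proposal is correct and follows essentially the same approach as the paper. The paper's own proof is more compressed: it notes that identities involving only inner faces and degeneracies hold because $\mathbf{H}=\mathsf{S}\fatnerve^{\lt}\mathbb{H}$, then checks $d_\top d_\bot\simeq d_\bot d_\top$ explicitly via the Beck--Chevalley square (your functoriality-on-spans argument), and finally remarks that $d_\top d_\top\simeq d_\top d_{\top-1}$ and $d_\bot d_\bot\simeq d_\bot d_{\bot+1}$ follow from functoriality of $H$ in composites of convex maps and of contractions respectively. Your organization via $\Dec_\top\mathbf{H}=\mathsf{S}\fatnerve\mathbb{H}$ absorbs the last of these into the nerve structure, and your emphasis lands on $d_\top d_\top$ rather than $d_\top d_\bot$ as the illustrative case, but the substance is the same.
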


\begin{proof}
It is straightforward to see that all the simplicial identities involving inner face maps and degeneracy maps are satisfied since $\mathbf{H} = \mathsf{S} \fatnerve^{\lt} \mathbb{H}$. Let us see the identity $d_{\top}d_{\bot}\simeq d_{\bot}d_{\top}$: consider an element 
\begin{equation*}
\begin{tikzcd}[column sep=35pt]
P_0 \arrow[r, "f_0", two heads] & P_1 \arrow[r, two heads] &   \cdots\arrow[r, two heads]&P_{n-2} \arrow[r, "f_{n-2}",two heads]&P_{n-1}
\end{tikzcd}
\end{equation*}
of $\mathbf{H}_n$. Clearly the identity is satisfied at the level of posets, since it is satisfied in $\mathbf{K}$. Hence we only need to see that the final $H$-structures coincide, but this is just a consequence of the functoriality of $H$ applied to the square
 \begin{equation*}
\begin{tikzcd}[column sep=30pt, row sep=30]
P_0 \arrow[d,two heads] &(P_0)_p \arrow[d,two heads]\arrow[l,tail]\dlpullback\\
P_1&(P_1)_p\arrow[l,tail].
\end{tikzcd}
\end{equation*}
for every $p\in P_{n-1}$. Similarly, the identities $d_{\top}d_{\top}\simeq d_{\top}d_{\top-1}$ and $d_{\bot}d_{\bot}\simeq d_{\bot}d_{\bot+1}$ come from functoriality of $H$ in composition of convex maps and composition of contractions, respectively.
\end{proof}

Since the map $\mathbb{H} \to \mathbb{K}$ is a left fibration and $\mathsf{S}$ preserves pullbacks, we have a canonical culf map $\mathbf{H} \rightarrow \mathbf{K}$.

\begin{propo}\label{propo: H is a ds connected case}
The simplicial groupoid $\mathbf{H}$ is a monoidal decomposition space.
\end{propo}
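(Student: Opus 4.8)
The plan is to build on the decomposition space $\mathbf{K}$ of contractions established in Section~\ref{subsec:dec space K}, together with the canonical map $\mathbf{H} \to \mathbf{K}$, and deduce everything about $\mathbf{H}$ from properties of $\mathbf{K}$ via culfness. First I would verify that $\mathbf{H} \to \mathbf{K}$ is indeed culf. Since $\mathbb{H} \to \mathbb{K}$ is a left fibration (being the pullback of the Grothendieck construction $\int H \to \mathbb{K}_p$ along the inclusion $\mathbb{K} \hookrightarrow \mathbb{K}_p$), and the fat $\lt$-nerve construction together with $\mathsf{S}$ preserve the relevant pullbacks, the induced map $\mathbf{H} = \mathsf{S}\fatnerve^{\lt}\mathbb{H} \to \mathsf{S}\fatnerve^{\lt}\mathbb{K} = \mathbf{K}$ is cartesian on the active maps. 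Concretely, one checks that the bottom face maps (which act by pushing an $H$-structure forward along the first contraction and forgetting $P_0$) and the top face maps (which restrict the $H$-structure to fibres over points of $P_{n-1}$) are exactly the lifts dictated by the left fibration $\mathbb{H}\to\mathbb{K}$, so the squares comparing active maps in $\mathbf{H}$ and $\mathbf{K}$ are pullbacks; by Lemma~\ref{lemma culfcondition for ds} it suffices to check cartesianness on $d^1\colon [1]\to[2]$, and this reduces to the fact that an $H$-structure on $P$ together with a contraction of the underlying poset uniquely determines (up to coherent equivalence) the induced structures on source and quotient, which is precisely the functoriality of $H$ on spans.

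Once culfness of $\mathbf{H}\to\mathbf{K}$ is in hand, the decomposition space property follows immediately: $\mathbf{K}$ is a decomposition space by Proposition~\ref{proposition:Kisdecompositionspace}, and Lemma~\ref{proposition: ds+culf=ds} says that the source of a culf map into a decomposition space is again a decomposition space. For the monoidal structure, I would define $+_{\mathbf{H}}\colon \mathbf{H}\times\mathbf{H}\to\mathbf{H}$ by disjoint union of families, carrying along the $H$-structures component-wise; this is well-defined because $H$-structures are indexed over the individual connected posets in a family, and disjoint union of index sets simply concatenates these. The compatibility with the monoidal structure $+_{\mathbf{K}}$ on $\mathbf{K}$ is built into the construction, so that the square relating $+_{\mathbf{H}}$ and $+_{\mathbf{K}}$ commutes. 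To see $+_{\mathbf{H}}$ is culf, I would argue exactly as in Proposition~\ref{propo:k is monoidal}: by Lemma~\ref{lemma culfcondition for ds} it is enough that the square with $d_1\colon\mathbf{H}_2\times\mathbf{H}_2\to\mathbf{H}_1\times\mathbf{H}_1$ and $d_1\colon\mathbf{H}_2\to\mathbf{H}_1$ is a pullback, and a pair of families of contractions-with-$H$-structure can be uniquely reconstructed from the two source families (with their $H$-structures) and the information of how the disjoint union contracts — together with the agreement condition down in $\mathbf{H}_1$. Alternatively, and more cleanly, one observes that culfness of $+_{\mathbf{H}}$ follows from culfness of $+_{\mathbf{K}}$ together with culfness of $\mathbf{H}\to\mathbf{K}$ by a pasting/cancellation argument on the relevant pullback squares, since $+_{\mathbf{K}}\circ(\text{proj})$ and the $\mathbf{H}\to\mathbf{K}$ comparison fit into a commuting cube all of whose relevant faces are pullbacks.

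The main obstacle I anticipate is the careful bookkeeping needed to confirm that the top face map on $\mathbf{H}$ is genuinely well-defined and simplicial — i.e., that restricting an $H$-structure to the fibres $(P_0)_a$ over points $a\in P_{n-1}$ is compatible with the other face and degeneracy maps and with the $\mathsf{S}$-formalism of families. This is essentially the content already addressed in Proposition~\ref{propo: H is a MDS} (the identities $d_\top d_\bot\simeq d_\bot d_\top$, $d_\top d_\top\simeq d_\top d_{\top-1}$, etc., all reduce to functoriality of $H$ on the evident pullback squares of convex maps and contractions), so for the present proposition I would simply cite that result and focus on culfness. In short: invoke Proposition~\ref{propo: H is a MDS} for the simplicial structure, verify that $\mathbf{H}\to\mathbf{K}$ is culf using the left-fibration property and Lemma~\ref{lemma culfcondition for ds}, apply Lemma~\ref{proposition: ds+culf=ds} to conclude $\mathbf{H}$ is a decomposition space, and then promote this to a monoidal decomposition space by exhibiting $+_{\mathbf{H}}$ and checking it is culf exactly as for $\mathbf{K}$ in Proposition~\ref{propo:k is monoidal}.
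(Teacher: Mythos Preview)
Your proposal is correct and follows essentially the same approach as the paper: the paper's proof is in fact just two sentences, invoking the culf map $\mathbf{H}\to\mathbf{K}$ (obtained from the left fibration $\mathbb{H}\to\mathbb{K}$ together with the pullback-preservation of $\mathsf{S}$) and Lemma~\ref{proposition: ds+culf=ds}, then noting that the monoidal structure is again disjoint union. Your write-up simply expands on the details the paper leaves implicit, especially the verification that $+_{\mathbf{H}}$ is culf.
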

\begin{proof}
Since there is a culf map $\mathbf{H} \rightarrow \mathbf{K}$ and $\mathbf{K}$ is a  decomposition space, so is $\mathbf{H}$ by Lemma \ref{proposition: ds+culf=ds}. The monoidal structure is again given by the disjoint union.
\end{proof}

\begin{propo} \label{propo: H is locally etc}
The decomposition space $\mathbf{H}$ is complete, locally finite, locally discrete and of locally finite length.
\end{propo}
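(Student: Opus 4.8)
The plan is to follow the pattern of Propositions~\ref{proposition:Kiscomplete} and~\ref{proposition:Kislocally}: transfer local discreteness, locally finite length, and most of local finiteness from $\mathbf{K}$ to $\mathbf{H}$ along the culf map $\mathbf{H}\to\mathbf{K}$ constructed just above, and handle completeness and the local finiteness of $\mathbf{H}_1$ by hand. I would use throughout that $\mathbf{H}=\mathsf{S}\fatnerve^{\lt}\mathbb{H}$ and that $\mathsf{S}$ preserves pullbacks, hence monomorphisms, and also respects finite maps. For completeness, the degeneracy $s_0\colon\mathbf{H}_0\to\mathbf{H}_1$ is $\mathsf{S}$ applied to $s_0\colon\fatnerve^{\lt}(\mathbb{H})_0\to\fatnerve^{\lt}(\mathbb{H})_1$, which by Definition~\ref{defi: t-nerve} is the inclusion of the full subgroupoid of chosen local terminal objects of $\mathbb{H}$ — namely the pairs consisting of the one-element poset together with an $H$-structure on it — into the groupoid of all objects of $\mathbb{H}$; a full subgroupoid inclusion is a monomorphism, and $\mathsf{S}$ preserves monomorphisms, so $\mathbf{H}$ is complete. (Alternatively: the codegeneracy $s^0\colon[1]\to[0]$ is active, so the culf map $\mathbf{H}\to\mathbf{K}$ is cartesian there, exhibiting $s_0\colon\mathbf{H}_0\to\mathbf{H}_1$ as a pullback of $s_0\colon\mathbf{K}_0\to\mathbf{K}_1$, which is a monomorphism by Proposition~\ref{proposition:Kiscomplete}; monomorphisms are stable under pullback.)

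Local discreteness and locally finite length are then immediate: by Proposition~\ref{proposition:Kislocally} the decomposition space $\mathbf{K}$ is locally discrete and of locally finite length, and $\mathbf{H}\to\mathbf{K}$ is culf, so Lemma~\ref{proposition: ds+locallydiscrete = locally discrete} gives both properties for $\mathbf{H}$ directly. The same lemma reduces local finiteness to the single extra verification that $\mathbf{H}_1$ is a locally finite groupoid.

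For that remaining point, I would argue as follows. Since $\mathbf{H}_1=\mathsf{S}(\fatnerve^{\lt}(\mathbb{H})_1)$ and $\mathsf{S}$ respects finiteness, it suffices that every object $(P,x)$ of $\mathbb{H}$ has a finite automorphism group. The forgetful homomorphism $\aut_{\mathbb{H}}(P,x)\to\aut_{\mathbb{K}}(P)$ lands in the (finite) automorphism group of the finite poset $P$, and its kernel is $\aut_{H[P]}(x)$, which is finite because $H$ is valued in locally finite groupoids; hence $\aut_{\mathbb{H}}(P,x)$ is finite, $\mathbf{H}_1$ is locally finite, and therefore $\mathbf{H}$ is a locally finite decomposition space. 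I expect the only non-routine point to be exactly this last finiteness input on the values of $H$ — everything else is a direct transcription of the arguments for $\mathbf{K}$, the main care being to identify the relevant degeneracy/face maps correctly as images under $\mathsf{S}$ and along the culf map.
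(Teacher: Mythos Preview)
Your proof follows the paper's approach: use the culf map $\mathbf{H}\to\mathbf{K}$ together with Lemma~\ref{proposition: ds+locallydiscrete = locally discrete} and Propositions~\ref{proposition:Kiscomplete} and~\ref{proposition:Kislocally} to transfer the properties from $\mathbf{K}$, checking separately that $\mathbf{H}_1$ is locally finite. The paper is terser on two points you spell out. First, it does not isolate how completeness transfers; your argument that the active codegeneracy $s^0$ makes $s_0\colon\mathbf{H}_0\to\mathbf{H}_1$ a pullback of the monomorphism $s_0\colon\mathbf{K}_0\to\mathbf{K}_1$ does this cleanly. Second, for the finiteness of automorphisms in $\mathbf{H}_1$ the paper simply asserts they are ``given by permutations of the family and by the automorphisms of the underlying finite posets'', omitting the contribution $\aut_{H[P]}(x)$ that you correctly identify as the kernel of the forgetful map to $\aut_{\mathbb{K}}(P)$. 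Note, though, that the paper nowhere assumes $H$ is valued in locally finite groupoids, so your clause ``which is finite because $H$ is valued in locally finite groupoids'' is an extra hypothesis rather than something already on the table; you are right to flag this as the one non-routine input, and strictly speaking either this hypothesis or the paper's tacit assumption that $H$-structures contribute no nontrivial automorphisms is needed for the statement as written.
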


\begin{proof}
Note that $\mathbf{H}_1$ is the groupoid of finite families of non-empty $H$-structures. The automorphisms of an object in $\mathbf{H}_1$ are given by permutations of the family and by the automorphisms of the underlying finite posets. Therefore each object can only have finitely many automorphisms. The rest follows from Propositions \ref{proposition: ds+locallydiscrete = locally discrete}, \ref{proposition:Kiscomplete}, and \ref{proposition:Kislocally}.
\end{proof}

Since $\mathbf{H}$ is locally finite by Lemma \ref{propo: H is locally etc}, the homotopy sum resulting from $\mathbf{H}$ is just an ordinary sum, as in \ref{subsec:coalgebraofcdhs}. Therefore, we have the following result:
%we can take homotopy cardinality of its incidence bialgebra and obtain a bialgebra structure on $\mathbb{Q}_{\pi_0 \mathbf{H}_1}$.

\begin{propo} \label{propo: H hom cardinality}
The homotopy cardinality of the incidence bialgebra of $\mathbf{H}$ is isomorphic to the incidence bialgebra of $H$.
\end{propo}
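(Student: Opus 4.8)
The plan is to unwind the two sides of the claimed isomorphism and match them term by term, using that all the relevant finiteness and completeness hypotheses have already been established. First I would recall that the incidence bialgebra of a locally finite decomposition space $X$ is obtained by taking homotopy cardinality: the underlying vector space is spanned by isomorphism classes of objects of $X_1$, the comultiplication of a basis element $[a]$ (for $a\in X_1$) is $\sum d_2(\sigma)\otimes d_0(\sigma)$ over $\sigma\in X_2$ with $d_1(\sigma)\simeq a$, weighted by groupoid cardinalities, and the counit picks out the degenerate $1$-simplices. Since $\mathbf{H}$ is complete, locally finite, locally discrete and of locally finite length (\ref{propo: H is locally etc}), all these homotopy sums are genuine finite sums and the coefficients are honest rational numbers; in fact local discreteness forces the relevant fibres to be discrete, so the cardinalities appearing are $1$.

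Next I would identify the combinatorial data on both sides. On the $\mathbf{H}$ side, $\mathbf{H}_1$ is the groupoid of finite families of non-empty $H$-structures, so isomorphism classes of objects are finite multisets of connected $H$-structures; under the monoidal (disjoint-union) structure the bialgebra is the free commutative algebra on connected $H$-structures, i.e.\ on pairs $(P,x)$ with $P$ a connected finite non-empty poset and $x\in H[P]$ (taken up to isomorphism). This is exactly the underlying algebra of the incidence bialgebra of $H$ as described in \ref{subsec:coalgebraofcdhs}. For the comultiplication, a $2$-simplex of $\mathbf{H}$ with long edge a connected $H$-structure $(P,x)$ is, by construction of $\mathbf{H}=\mathsf{S}\fatnerve^{\lt}\mathbb{H}$ and of the top face map, precisely a contraction $f\colon P\twoheadrightarrow Q$ (up to iso), its two short edges being $d_2(\sigma)=\{(P_q, x|_{P_q})\}_{q\in Q}$ (a family indexed by $Q$, via the top face map and contravariant functoriality along the convex inclusions $P_q\hookrightarrow P$) and $d_0(\sigma)=(Q, x_!Q)$ with $x_!Q=H(f)(x)$ (via the bottom face map). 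Hence
\[
\Delta_{\mathbf{H}}(P,x)=\sum_{f\colon P\twoheadrightarrow Q}\{(P_q,x|_{P_q})\}_{q\in Q}\otimes(Q,x_!Q),
\]
the sum over isomorphism classes of contractions, which is verbatim the formula $\Delta(X)=\sum_{f\colon P\twoheadrightarrow Q}\{X|P_q\}_{q\in Q}\otimes X_!Q$ of \ref{subsec:coalgebraofcdhs}. I would note that the contraction-counting here is exactly the content of Lemma \ref{lemma:surjectivetoprove K is DS} together with the fibre-lifting built into the top face map of $\mathbf{K}$, so the combinatorics really do coincide and not merely up to some reindexing; the fact that each coefficient is $1$ is where local discreteness (\ref{propo: H is locally etc}, via \ref{proposition:Kislocally}) is used. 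The counit on both sides sends $(P,x)$ to $0$ unless $P$ is the one-point poset, in which case it sends it to $1$ (from $\mathbf{H}_0$ being families of $H$-structures on the point), and the algebra structures agree by definition of the monoidal structure on $\mathbf{H}$ and on $H$.

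Finally I would assemble these matchings into a statement that the evident linear map sending a basis element $(P,x)$ of the incidence bialgebra of $H$ to the class of the one-element family $((P,x))$ in $\mathbf{H}_1$ is a bijection on basis elements, is an algebra homomorphism (disjoint union to disjoint union), is a coalgebra homomorphism (the comultiplication formulas literally agree), and is compatible with units and counits — hence a bialgebra isomorphism. I expect the main obstacle to be purely bookkeeping: carefully checking that the short edges produced by the top and bottom face maps of $\mathbf{H}$, which involve $\mathsf{S}$, the fat $\lt$-nerve, the fibrewise restriction along convex maps, and pushforward along the contraction, are exactly $X|P_q$ and $X_!Q$ with the right indexing, and that the passage from groupoid cardinalities to plain coefficients introduces no spurious factors — this is where \ref{propo: H is locally etc} (completeness plus local discreteness plus local finiteness) does all the work, so no genuinely new ingredient is needed beyond the results already proved.
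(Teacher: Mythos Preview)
Your proposal is correct and follows essentially the same approach as the paper: the paper's justification is simply that local finiteness and local discreteness (Proposition~\ref{propo: H is locally etc}) reduce the homotopy sums to ordinary sums over isomorphism classes, after which the comultiplication formula for $\mathbf{H}$ coincides literally with the formula in \ref{subsec:coalgebraofcdhs}. Your unwinding of $d_\top$ and $d_\bot$ on a $2$-simplex to recover $\{X|P_q\}_{q\in Q}$ and $X_!Q$ is exactly the intended verification; the appeal to Lemma~\ref{lemma:surjectivetoprove K is DS} is unnecessary here (that lemma is used for the decomposition-space axiom, not for identifying the comultiplication), but it does no harm.
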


\begin{comment}
\begin{proof} 
Recall that in \ref{subsec:coalgebraofcdhs}, we explained that for a connected directed hereditary species $H$, the comultiplication of its incidence coalgebra is described as follows: let $P$ be a poset and $X$ an $H$-structure on $P$. The comultiplication of $X$ is given by 
$$\Delta(X) = \sum_{\pi_0(f \colon P \twoheadrightarrow Q)} \lbrace i_q(X) \rbrace_{q \in Q} \otimes f(X),$$
where the sum ranges over isomorphism classes of contractions $P\twoheadrightarrow Q$. Here $i_q(X)$ is the restriction of $X$ to the fibre $X_q$ for every $q\in Q$, and $f(X)$ is the inheritance of $X$ to $Q$.

On the other hand, the comultiplication of the incidence coalgebra of $\mathbf{H}$ is given by pullback along $d_1$ and composition with $(d_2,d_0)$. Let $1\xrightarrow{\ulcorner{G}\urcorner} \mathbf{H}_1$ be an object of $\Grpd_{/G}$. Recall that this means that $G=(P,X)$, where $P$ is a poset and $X$ is a $H$-structure on $P$. The pullback of $\ulcorner G \urcorner$ along $d_1$ is the groupoid $(\mathbf{H}_1)_G$ of contractions with $P$ as source and $X$ as $H$-structure. Composing with $(d_2,d_0)$ amounts to returning, for each contraction $f \colon P\twoheadrightarrow Q$, the pair $(Q,f(X))$ (the inheritance of $X$ to $Q$) and the family of pairs $(P_q, i_q(X))$ for each $q\in Q$ (restriction of $X$ to each fibre), where $i_q \colon P_q \rightarrowtail P_q$. Since $\mathbf{H}$ is locally discrete $(\mathbf{H}_1)_G$ is discrete, and the homotopy cardinality counts isomorphism classes. Hence the two bialgebras are isomorphic.
\end{proof}

\end{comment}

\begin{blanko}
{Running example, part II: Calaque--Ebrahimi-Fard--Manchon comodule bialgebra of rooted trees} \label{subsec: runningexamplepartII}
\end{blanko}

In \ref{subsec: runningexamplepartI}, we constructed the connected directed hereditary species $H_{\CEM}$ of trees and contractions. Let $\mathbf{H}_{\CEM}$ denote the decomposition space 
associated with ${H}_{\CEM}$. Thus, $(\mathbf{H}_{\CEM})_n$ is the groupoid whose objects are families of $(n-1)$-chains of contractions between trees and whose morphisms are monotone bijections. The face and degeneracy maps of $\mathbf{H}_{\CEM}$ are defined as in $\mathbf{K}$. The incidence coalgebra of $\mathbf{H}_{\CEM}$ is described in Remark \ref{remark:incidencecoalgebraCEM}.

\begin{exa}[\textbf{Faà di Bruno comodule bialgebra of linear trees, part II}]\label{exa:linear trees 2}
Let $\mathbf{H}_{\FB}$ denote the decomposition space induced by the directed hereditary species of linear trees $H_{\FB}$ (\ref{exa:linear trees 1}). The description of $\mathbf{H}_{\FB}$ is similar to the description of $\mathbf{H}_{\CEM}$ but we consider linear trees instead of arbitrary trees. 
\end{exa}

\begin{comment}
\subsection*{Comodule structure}
\end{comment}

\begin{blanko}
{Comodule structure}\label{subsec:comodule}
\end{blanko}

The theory of comodules in the context of decomposition spaces (2-Segal spaces) has been developed by Walde \cite{walde2017hall}, and independently by Young \cite{Young_2018}, both in the context of Hall algebras. Carlier \cite{carlier2019hereditary} gave a conceptual way to reformulate their definitions using linear functors. Given a map between two simplicial groupoids $F \colon C \rightarrow X$, the span
\begin{equation*}
\begin{tikzcd}
C_0 & C_1 \arrow[l, "d_\top"'] \arrow[r, "{(F_1,d_\perp)}"] & X_1 \times C_0
\end{tikzcd}
\end{equation*}
defines a linear functor 
$$\gamma \colon \Grpd_{/C_0} \rightarrow \Grpd_{/X_1} \otimes \Grpd_{/C_0}.$$

\begin{propo}\cite[Proposition 2.1.1]{carlier2019hereditary}\label{propo:leftcomodulegeneral}
Let $F \colon C \rightarrow X$ be a map between simplicial groupoids. Suppose moreover that $C$ is Segal, $X$ is a decomposition space, and the map $F \colon C \rightarrow X$ is culf, then the span 
$$
\begin{tikzcd}
C_0 & C_1 \arrow[l, "d_\top"'] \arrow[r, "{(F_1,d_\perp)}"] & X_1 \times C_0
\end{tikzcd}
$$
induces on the slice category $\Grpd_{/C_0}$ the structure of a left $\Grpd_{/X_1}$-comodule.
\end{propo}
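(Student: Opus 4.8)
The plan is to verify the two comodule axioms --- coassociativity and counitality --- in the calculus of linear functors, i.e. of spans, exactly as the coassociativity of the incidence coalgebra of a decomposition space is verified in \cite{GTK1}. Write the coaction as $\gamma = (F_1,d_\perp)_!\,d_\top^{\,*}$, coming from the span $C_0 \xleftarrow{\,d_\top\,} C_1 \xrightarrow{(F_1,d_\perp)} X_1\times C_0$; the comultiplication of $\Grpd_{/X_1}$ as $\Delta = (d_2,d_0)_!\,d_1^{\,*}$; and the counit as $\epsilon = t_!\,s_0^{\,*}$. Since composing linear functors means composing spans, i.e. taking homotopy pullbacks of the middle objects, each side of each axiom is presented by a single span, and it suffices to identify the apexes and check that the legs agree.

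For coassociativity I would first compute the apex of $(\identity_{\Grpd_{/X_1}}\otimes\gamma)\circ\gamma$: it is the pullback of the right leg $(F_1,d_\perp)\colon C_1\to X_1\times C_0$ of $\gamma$ against the left leg $\identity_{X_1}\times d_\top\colon X_1\times C_1\to X_1\times C_0$ of $\identity\otimes\gamma$; here the $X_1$-coordinate is freely determined (it equals $F_1$ of the first map), and what remains is the pullback of $d_\perp$ and $d_\top$ from $C_1$ to $C_0$, namely $C_1\times_{C_0}C_1$. Because $C$ is a Segal space this is $C_2$ (Definition~\ref{definitionsegal} for $n=1$). On the other side, the apex of $(\Delta\otimes\identity_{\Grpd_{/C_0}})\circ\gamma$ is the pullback of $(F_1,d_\perp)\colon C_1\to X_1\times C_0$ against $d_1\times\identity_{C_0}\colon X_2\times C_0\to X_1\times C_0$; now the $C_0$-coordinate is freely determined and what remains is $C_1\times_{X_1}X_2$ --- which is again $C_2$, because $F$ is culf and hence cartesian on the active map $d^1\colon[1]\to[2]$, so that $(d_1,F_2)\colon C_2\to C_1\times_{X_1}X_2$ is an equivalence (cf.\ Lemma~\ref{lemma culfcondition for ds}, noting $C$ is a decomposition space by Proposition~\ref{segaldecomp}). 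Transporting the legs of both composite spans to $C_2$ along these equivalences and using the simplicial identities together with the naturality $F_1 d_i = d_i F_2$, both composite spans become
\[
C_0 \xleftarrow{\;d_1 d_1\;} C_2 \xrightarrow{\;(\,d_2 F_2,\; d_0 F_2,\; d_0 d_0\,)\;} X_1\times X_1\times C_0,
\]
so the two linear functors agree. The higher coherences are obtained the same way, matching both iterated coactions against $C_n$ via the Segal condition on $C$ in degree $n$ and the culfness of $F$ on the higher active maps, just as the higher coassociativity coherences are handled in \cite[\S 5]{GTK1}.

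For counitality I would compute the apex of $(\epsilon\otimes\identity_{\Grpd_{/C_0}})\circ\gamma$: pulling back $(F_1,d_\perp)\colon C_1\to X_1\times C_0$ against $s_0\times\identity_{C_0}\colon X_0\times C_0\to X_1\times C_0$ gives $C_1\times_{X_1}X_0$, which, since $F$ is culf and hence cartesian on the active map $s^0\colon[1]\to[0]$, is $C_0$, with the projection $C_0\to C_1$ being $s_0$. The two legs of the resulting span are then $d_\top\circ s_0 = \identity_{C_0}$ and $d_\perp\circ s_0 = \identity_{C_0}$, so the composite span is the identity span and $(\epsilon\otimes\identity)\circ\gamma\simeq\identity$. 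The one delicate point in the whole argument --- bookkeeping rather than a genuine difficulty --- is the coassociativity leg-matching: one has to keep careful track of which leg of a composed span is inherited from which factor, and feed the simplicial identities through in the right order; everything else is a direct application of the Segal, decomposition-space, and culf pullback conditions.
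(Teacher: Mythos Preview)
The paper does not give its own proof of this proposition: it is quoted verbatim from \cite[Proposition 2.1.1]{carlier2019hereditary} and stated without argument, so there is nothing in the present paper to compare against. Your proof is correct and is exactly the standard span-calculus verification one expects (and essentially the argument Carlier gives): identify the apex of each composite span using the Segal condition on $C$ for one side and culfness of $F$ for the other, then match legs via the simplicial identities. The leg computations and the use of culfness on the active degeneracy $s^0\colon[1]\to[0]$ for counitality are all fine.
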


Given a connected directed hereditary species $H$, the upper decalage of $\mathbf{H}$ induces a left comodule over $\Grpd_{/\mathbf{H}_1}$. 

\begin{lem} \label{lemma:M0isleftcomoduleoverH1}
The slice category $\Grpd_{/\mathbf{H}_1}$ is a left comodule over $\Grpd_{/\mathbf{H}_1}$.
\end{lem}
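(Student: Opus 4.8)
The plan is to derive this as an instance of Carlier's comodule criterion, Proposition~\ref{propo:leftcomodulegeneral}, applied to the upper dec map of $\mathbf{H}$. Set $C := \Dec_\top \mathbf{H}$, $X := \mathbf{H}$, and let $F := d_\top \colon \Dec_\top \mathbf{H} \to \mathbf{H}$ be the upper dec map. Under the standard identifications $(\Dec_\top \mathbf{H})_0 = \mathbf{H}_1$ and $(\Dec_\top \mathbf{H})_1 = \mathbf{H}_2$, the span of Proposition~\ref{propo:leftcomodulegeneral} becomes
\[
\begin{tikzcd}
\mathbf{H}_1 & \mathbf{H}_2 \arrow[l, "d_1"'] \arrow[r, "{(d_2,d_0)}"] & \mathbf{H}_1 \times \mathbf{H}_1,
\end{tikzcd}
\]
which is precisely the comultiplication span of the incidence coalgebra of $\mathbf{H}$; the resulting coaction is the coregular coaction of that coalgebra on itself.

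The three hypotheses of Proposition~\ref{propo:leftcomodulegeneral} are all at hand. First, $C = \Dec_\top \mathbf{H}$ is a Segal space by Theorem~\ref{theorem: decalage and segal condition}, since $\mathbf{H}$ is a decomposition space by Proposition~\ref{propo: H is a ds connected case}. Second, $X = \mathbf{H}$ is a decomposition space, again by Proposition~\ref{propo: H is a ds connected case}. Third, $F = d_\top \colon \Dec_\top \mathbf{H} \to \mathbf{H}$ is culf: both $\Dec_\top \mathbf{H}$ (Segal, hence a decomposition space by Proposition~\ref{segaldecomp}) and $\mathbf{H}$ are decomposition spaces, so by Lemma~\ref{lemma culfcondition for ds} it suffices to check that $F$ is cartesian on $d^1 \colon [1] \to [2]$; this naturality square unwinds to
\[
\begin{tikzcd}
\mathbf{H}_3 \arrow[r, "d_1"] \arrow[d, "d_\top"'] & \mathbf{H}_2 \arrow[d, "d_\top"] \\
\mathbf{H}_2 \arrow[r, "d_1"'] & \mathbf{H}_1,
\end{tikzcd}
\]
which is one of the defining pullback squares of the decomposition space $\mathbf{H}$ (the $d_\top$-family with $n=2$, $i=1$ in Definition~\ref{definitiondecompositionspace}). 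Hence Proposition~\ref{propo:leftcomodulegeneral} applies and equips $\Grpd_{/C_0}$ with the structure of a left $\Grpd_{/X_1}$-comodule; since $C_0 = X_1 = \mathbf{H}_1$, this is exactly the asserted left $\Grpd_{/\mathbf{H}_1}$-comodule structure on $\Grpd_{/\mathbf{H}_1}$.

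There is no genuine obstacle here: the statement is an assembly of already-established facts, and the only step requiring a line of verification is the culfness of the upper dec map, which is classical \cite{GTK1} and in any case reduces as above to a single decomposition-space pullback square. One could alternatively just cite that the dec maps out of a decomposition space are always culf, and skip the square entirely.
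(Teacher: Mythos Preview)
Your proof is correct and follows essentially the same route as the paper: both apply Proposition~\ref{propo:leftcomodulegeneral} to $C=\Dec_\top\mathbf{H}$, $X=\mathbf{H}$, and $F=d_\top$, identifying the resulting span with the comultiplication span of $\mathbf{H}$. The only difference is that you spell out the culfness of $d_\top$ via the decomposition-space pullback square, whereas the paper simply asserts it (the standard fact that dec maps of decomposition spaces are culf).
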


\begin{proof} 
Note that $\Dec_{\top} \mathbf{H}$ is a Segal space since $\mathbf{H}$ is a decomposition space. Furthermore, the decalage map $d_\top \colon \Dec_{\top} \mathbf{H} \rightarrow \mathbf{H}$ is culf. Since $\Dec_{\top} \mathbf{H}$ is Segal and $d_{\top}$ is culf, it follows that
\[\begin{tikzcd}
	{\mathbf{H}_1} & {\mathbf{H}_2} & {\mathbf{H}_1 \times \mathbf{H}_1}
	\arrow["{d_1}"', from=1-2, to=1-1]
	\arrow["{(d_\top,d_0)}", from=1-2, to=1-3]
\end{tikzcd}\]
induces a left comodule structure of $\Grpd_{/\mathbf{H}_1}$ over $\Grpd_{/\mathbf{H}_1}$ by Lemma \ref{propo:leftcomodulegeneral}.
\end{proof}

\section{The incidence comodule bialgebra of a connected directed hereditary species}\label{section:comudule bialgebra}

Let $\mathbb{I}$ be the category of finite sets and injections. A \emph{restriction species}~\cite{schmitt_1993} is a functor $R \colon \mathbb{I}^{\op} \rightarrow \Set$.
Every hereditary species is, in particular, a restriction species by precomposition with the inclusion $\mathbb{I}^{\op} \rightarrow \mathbb{S}_p$. Therefore we have two bialgebra structures associated to a hereditary species $H$: the incidence bialgebra $B$ of $H$ and the incidence bialgebra $A$ of the restriction species $R$ associated to $H$. Carlier \cite{carlier2019hereditary} showed that $A$ is a left comodule bialgebra over $B$. The main result of this section is to apply the Carlier ideas to the directed case.  

\begin{blanko}
{Directed restrictions species}\label{subsec:direcrestspe}
\end{blanko}

Let $\mathbb{C}$ denote the category of connected finite posets and convex maps. A directed restriction species in the sense of Gálvez--Kock--Tonks \cite{GKT:restr} is a functor $R \colon \mathbb{C}^{\op} \rightarrow \Grpd.$
As usual, the idea is that the value of a poset $S$ is the groupoid of all possible $R$-structures that have $S$ as the underlying poset.

The notion of directed restriction species needs to be modified to fit our theory of directed hereditary species since we only work with finite connected posets. Let $\mathbb{C}^{\circ}$ denote the category of connected finite posets and convex maps and $\mathbb{C}' := \mathsf{S}\mathbb{C}^{\circ}$. A \emph{directed restriction species} is a functor
$$R \colon (\mathbb{C}')^{\op} \to \Grpd.$$
Every hereditary species $H$ is, in particular, a directed restriction species. Indeed, let $R^{\circ}$ denote the precomposition of $H$ with the inclusion $(\mathbb{C}^{\circ})^{\op} \to \mathbb{K}_{p}$ which is identity on objects and sends a convex map $\imath \colon P' \rightarrow P$ to the span $\begin{tikzcd}[column sep=small]
	P & {P'} & {P'}.
	\arrow["\imath"', from=1-2, to=1-1]
	\arrow["\identity", from=1-2, to=1-3]
\end{tikzcd}$
The directed restriction species $R$ is the monoidal extension of $R^{\circ}$. In other words, $R$ is the unique functor that makes the following diagram commutes:
\[\begin{tikzcd}
	{(\mathbb{C}^{\circ})^{\op}} & {\mathbb{K}_p} \\
	{(\mathbb{C}')^{\op}}.
	\arrow["{R^{\circ}}", from=1-1, to=1-2]
	\arrow[from=1-1, to=2-1]
	\arrow["R"', dotted, from=2-1, to=1-2]
\end{tikzcd}\]

Every directed restriction species $R$ induces a decomposition space $\mathbf{R}$ where an $n$-simplex is a family of $n$-layered posets with an $R$-structure on the underlying posets. In other words, the objects of $\mathbf{R}_2$ are families of maps of posets $P \rightarrow 2$ with a $R$-structure on $P$, and $\mathbf{R}_n$ is the groupoid of families of $R$-structures $P \to n$. The construction of $\mathbf{R}$ follows from the theory developed by Gálvez, Kock, and Tonks in \cite[\S 7]{GKT:restr} considering only finite connected posets.

The comultiplication $\Delta_R: \Grpd_{/\mathbf{R}_1} \rightarrow \Grpd_{/\mathbf{R}_1} \otimes \Grpd_{/\mathbf{R}_1}$ is given by the span
$$
\begin{tikzcd}
\mathbf{R}_1 & \mathbf{R}_2 \arrow[l, "d_1"'] \arrow[r, "{(d_2,d_0)}"] & \mathbf{R}_1 \times \mathbf{R}_1
\end{tikzcd}
$$
where $d_1$ joins the two layeres of the $2$-simplex and $d_\top$ and $d_\perp$ return the first and second layers respectively.

\begin{comment}
\begin{rema}
Our notion of directed restriction species is less general than the Gálvez--Kock--Tonks notion \cite{GKT:restr}. For example, the map picture in the following diagram
\begin{center}
\begin{tikzpicture}[yscale=1,xscale=1]
\node[draw] (box1) at (0,0){%
\begin{tikzcd}[column sep=tiny]
	\bullet & \bullet
\end{tikzcd}
};
\node[draw] (box2) at (4,0){%
\begin{tikzcd}[column sep=tiny]
	\bullet && \bullet \\
	& \bullet
	\arrow[from=1-1, to=2-2]
	\arrow[from=1-3, to=2-2]
\end{tikzcd}
};

\draw[>->, shorten <=3pt, shorten >=3pt] (box1) -- (box2);
\end{tikzpicture}
\end{center}
is a morphism in $\mathbb{C}$ but not in $\mathbb{C}'$.
\end{rema}
\end{comment}

\begin{exa}\label{exa:restriction BCK}
The Butcher--Connes--Kreimer Hopf algebra~\cite{Connes_1998, Dur} comes from the directed restriction species of trees $R_{\BCK}$:  a forest has an underlying poset, whose convex subposets inherit a tree structure (see Lemma \ref{lemma:slicesofforestsareforests}). The comultiplication $\Delta_{R_{\BCK}}$ is defined by summing over certain admissible cuts $c$:
$$\Delta_{R_{\BCK}}(T) = \sum_{c \in \mbox{\scriptsize{admi.cuts}}(T)} P_c \otimes R_c.$$
%An admissible cut $c$ is a splitting of the set of nodes into two subsets, such that the second forms a subtree $R_c$ containing the root node (or is the empty forest); the first subset, the complement crown, then forms a subforest $P_c$.
Note that $R_{\BCK}$ is the ordinary directed restriction species associated with the connected directed hereditary species $H_{\CEM}$.
\end{exa}

\begin{rema}
Recall that for a connected directed hereditary species $H$, we have that $\mathbf{R}_1 = \mathbf{H}_1$, so that by Lemma~\ref{lemma:M0isleftcomoduleoverH1}, the slice category $\Grpd_{/ \mathbf{R}_1}$ is a left $\Grpd_{/ \mathbf{H}_1}$-comodule. The left $\Grpd_{/ \mathbf{H}_1}$-coaction is the linear functor $\gamma_{\mathbf{R}_1} \colon \Grpd_{/ \mathbf{R}_1} \to \Grpd_{/ \mathbf{H}_1} \otimes \Grpd_{/ \mathbf{R}_1}$ given by the span 
\[\begin{tikzcd}
	{\mathbf{R}_1} & {\mathbf{H}_2} & {\mathbf{H}_1 \times \mathbf{R}_1}.
	\arrow["{d_1}"', from=1-2, to=1-1]
	\arrow["{(d_\top, d_0)}", from=1-2, to=1-3]
\end{tikzcd}\]
%where $d_\top \colon \mathbf{M} \to \mathbf{H}$ is the map given by taking fibres (\S \ref{subsec:comodule structure M}).
\end{rema}

The decomposition space $\mathbf{R}$ has a monoidal structure given by disjoint union.
Recall $\mathbf{R}_n$ is the groupoid of families of finite connected posets with $n-1$ compatible cuts. The disjoint union of two such structures is given by taking the disjoint union of the underlying posets, with the cuts concatenated. This defines a simplicial map $+_{\mathbf{R}} \colon \mathbf{R} \times \mathbf{R} \to \mathbf{R}$. So $\mathbf{R}$ is a monoidal decomposition space if the map $+_{\mathbf{R}}$ is culf \cite[\S 9]{GTK1}.

\begin{propo}\label{propo:R is monoidal}
The map $+_{\mathbf{R}} \colon \mathbf{R} \times \mathbf{R} \to \mathbf{R}$ is culf.
\end{propo}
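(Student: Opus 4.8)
The plan is to prove culfness of $+_{\mathbf{R}}$ in exactly the same way as the analogous statement for $\mathbf{K}$ (Proposition~\ref{propo:k is monoidal}). By Lemma~\ref{lemma culfcondition for ds}, since $\mathbf{R}$ is a decomposition space, it suffices to check that $+_{\mathbf{R}}$ is cartesian on $d^1 \colon [1] \to [2]$, i.e.\ that the square
\[\begin{tikzcd}
	{\mathbf{R}_2 \times \mathbf{R}_2} & {\mathbf{R}_1 \times \mathbf{R}_1} \\
	{\mathbf{R}_2} & {\mathbf{R}_1}
	\arrow["{+_\mathbf{R}}"', from=1-1, to=2-1]
	\arrow["{d_1}"', from=2-1, to=2-2]
	\arrow["{d_1 \times d_1}", from=1-1, to=1-2]
	\arrow["{+_{\mathbf{R}}}", from=1-2, to=2-2]
\end{tikzcd}\]
is a pullback of groupoids.

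First I would spell out what the objects are: an object of $\mathbf{R}_2$ is a family of finite connected posets $P$ equipped with a one-step cut (equivalently a monotone map $P \to 2$, i.e.\ a pair of complementary convex subposets, the ``crown'' and the ``trunk'') together with an $R$-structure on each underlying $P$; the map $d_1$ forgets the cut and just records the family of posets-with-$R$-structure. An object of $\mathbf{R}_2 \times \mathbf{R}_2$ is a pair of such cut-families, and $+_{\mathbf{R}}$ concatenates the index sets and takes disjoint unions (so cuts and $R$-structures are carried along componentwise). Then I would verify the pullback property by the usual reconstruction argument: given an object of $\mathbf{R}_2$ (a family of cut posets-with-$R$-structure, arising as a disjoint union) and an object of $\mathbf{R}_1 \times \mathbf{R}_1$ (the two intended source families) whose images agree in $\mathbf{R}_1$ (the disjoint union of the two source families equals the source family of the given cut family), there is an essentially unique way to split the given cut family into two cut families with those prescribed sources --- namely, partition the index set according to the chosen identification, and restrict each cut and each $R$-structure to the corresponding block. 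This uses that $R$-structures and cuts are defined per connected component, so splitting the index set splits the data with no choices. I would phrase this exactly as in the proof of Proposition~\ref{propo:k is monoidal}, adding only the remark that the $R$-structure on each component is transported along the evident isomorphism.

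The only genuine content beyond the $\mathbf{K}$ case is the bookkeeping of the $R$-structures, and this is routine because $\mathbf{R}$ is built (via \cite[\S 7]{GKT:restr}) so that its simplicial structure over the families of posets is exactly that of $\mathbf{K}$ with an extra layer of $R$-data that is functorial and monoidal by construction; there is no real obstacle. I would therefore keep the proof short, referring back to Proposition~\ref{propo:k is monoidal}.

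\begin{proof}
By Lemma \ref{lemma culfcondition for ds}, since $\mathbf{R}$ is a decomposition space, the map $+_{\mathbf{R}}$ is culf if and only if the square
\[\begin{tikzcd}
	{\mathbf{R}_2 \times \mathbf{R}_2} & {\mathbf{R}_1 \times \mathbf{R}_1} \\
	{\mathbf{R}_2} & {\mathbf{R}_1}
	\arrow["{+_\mathbf{R}}"', from=1-1, to=2-1]
	\arrow["{d_1}"', from=2-1, to=2-2]
	\arrow["{d_1 \times d_1}", from=1-1, to=1-2]
	\arrow["{+_{\mathbf{R}}}", from=1-2, to=2-2]
\end{tikzcd}\]
is a pullback of groupoids. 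This is verified exactly as in the proof of Proposition \ref{propo:k is monoidal}. An object of $\mathbf{R}_2$ is a family of finite connected posets with one compatible cut and an $R$-structure on each underlying poset; the map $d_1$ records only the family of underlying posets with their $R$-structures. Hence a pair of families of cut $R$-structures (an object of $\mathbf{R}_2 \times \mathbf{R}_2$) can be reconstructed, uniquely up to coherent isomorphism, from the two source families of $R$-structures (an object of $\mathbf{R}_1 \times \mathbf{R}_1$) together with a cut $R$-structure on their disjoint union (an object of $\mathbf{R}_2$), subject to identifying that disjoint union with the source of the given cut family (which is the condition that the data agree in $\mathbf{R}_1$): one simply partitions the index set accordingly and restricts each cut and each $R$-structure to the corresponding block, which involves no choices since cuts and $R$-structures are defined componentwise. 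Therefore the square is a pullback and $+_{\mathbf{R}}$ is culf.
\end{proof}
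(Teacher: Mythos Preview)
Your proposal is correct and follows exactly the same approach as the paper, which simply states that the proof is analogous to that of Proposition~\ref{propo:k is monoidal} but using cuts instead of contractions. Your version just makes explicit the reconstruction argument and the bookkeeping of the $R$-structures, which is consistent with (and slightly more detailed than) the paper's one-line reference.
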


\begin{proof}
The proof is analogous to that Proposition \ref{propo:k is monoidal}, but using cuts instead of contractions.
\begin{comment}
By Lemma \ref{lemma culfcondition for ds}, the map $+_{\mathbf{R}}$ is culf, if the diagram
\[\begin{tikzcd}
	{\mathbf{R}_2 \times \mathbf{R}_2} & {\mathbf{R}_1 \times \mathbf{R}_1} \\
	{\mathbf{R}_2} & {\mathbf{R}_1}
	\arrow["{+_\mathbf{R}}"', from=1-1, to=2-1]
	\arrow["{d_1}"', from=2-1, to=2-2]
	\arrow["{d_1}", from=1-1, to=1-2]
	\arrow["{+_{\mathbf{R}}}", from=1-2, to=2-2]
\end{tikzcd}\]
is a pullback since $\mathbf{R}$ is a decomposition space. But this is clear: a pair of families of finite connected posets with a cut (an object in ${\mathbf{R}_2 \times \mathbf{R}_2}$) can be uniquely reconstructed if we know what are the families of the underlying finite connected posets are (an object in ${\mathbf{R}_1 \times \mathbf{R}_1}$) and we know how the disjoint union is cut (an object in ${\mathbf{R}_2}$). This provided of course that we can identify the disjoint union of those two families of posets with the family of the underlying posets of the disjoint union (which is to say that the data agree down in $\mathbf{R}_1$).
\end{comment}
\end{proof}

Since $\mathbf{R}$ is a monoidal decomposition space, it follows that the resulting incidence coalgebra is also a bialgebra \cite[\S 9]{GTK1}.

\begin{comment}
\subsection*{comodules bialgebras}
\end{comment}

\begin{blanko}
{Comodule bialgebra}\label{subsec:comodulebialgebra}
\end{blanko}

For a background on comodule bialgebras, see for example \cite[\S 3.2]{abe2004hopf}, \cite[\S 5]{carlier2019hereditary}, and \cite{Manchon:Abelsymposium}. We follow the terminology of Carlier \cite[\S 5]{carlier2019hereditary}. Let $B$ be a bialgebra. We can associate $B$ with a canonical braided monoidal category of left $B$-comodules. The categorical structure comes from the coalgebra structure of $B$, and the (braided) monoidal structure arises from the algebra structure of $B$. A \emph{comodule bialgebra} over
$B$ is a bialgebra object in the (braided) monoidal category of left $B$-comodules, where a \emph{bialgebra object} in the braided monoidal category of left $B$-comodules is a $B$-comodule $M$ together with structure maps
\begin{align*}
 \Delta_M \colon M \rightarrow M \otimes M    & & \epsilon_M \colon M \rightarrow \mathbb{Q}\\
 \mu_M \colon M \otimes M \rightarrow M  & &  \eta_M \colon \mathbb{Q} \rightarrow M 
\end{align*}
which are all required to be $B$-comodules maps and to satisfy the bialgebra axioms. We will deal, in particular, with the requirement that $\Delta_M$ and $\epsilon_M$ are compatible with the coaction $\gamma \colon M \to B \otimes M$. In Proposition \ref{proposition:Aleftcomodule}, the two other axioms will be automatically satisfied, because as comodule, $M$ coincides with $B$ itself and the algebra structure of $M$ coincides with that of $B$. 

\begin{comment}
\[\begin{tikzcd}
	M & {M \otimes M} \\
	& {B \otimes M \otimes B \otimes M} \\
	{B \otimes M} & {B \otimes M \otimes M}
	\arrow["\gamma"', from=1-1, to=3-1]
	\arrow["{\Delta_M}", from=1-1, to=1-2]
	\arrow["{\identity_B \otimes \Delta_M}"', from=3-1, to=3-2]
	\arrow["{\gamma \otimes \gamma}", from=1-2, to=2-2]
	\arrow["\omega", from=2-2, to=3-2]
\end{tikzcd} \; \; \; 
\begin{tikzcd}
	M & {\mathbb{Q}} \\
	{B \otimes M} & B
	\arrow["\gamma"', from=1-1, to=2-1]
	\arrow["{\identity_B \otimes \epsilon}"', from=2-1, to=2-2]
	\arrow["{\eta_B}", from=1-2, to=2-2]
	\arrow["{\epsilon_M}", from=1-1, to=1-2]
\end{tikzcd}
\]
where the map $\omega$ is given by first swapping the two middle tensor factors and then using the multiplication of $B$ in the two now adjacent $B$-factors.
\end{comment} %To simply the proof of Proposition \ref{proposition:Alefctcomodulebialgebra}, we shall invoke the following result.

%\begin{lem}\cite[Lemma 5.2]{carlier2019hereditary}\label{lemma:comodulecoalgebra to comodulebialgebra}
%If $M$ is a comodule coalgebra over $C$, then the free algebra $\mathsf{S}M$ is naturally a comodule bialgebra over $C$.
%\end{lem}

Recall that every connected directed hereditary species is also a directed restriction species. Let $H$ be a connected directed hereditary species. We denote by $R$ the induced directed restriction species and, as usual, $\mathbf{H}$ and $\mathbf{R}$ correspond to decomposition spaces. Also, the incidence coalgebra of $\mathbf{R}$ is denoted $A$, and the incidence bialgebra of $\mathbf{H}$ is denoted $B$.

\begin{lem}\label{lem:comul A is Bcomod map}
The comultiplication structure of $A$ is a $B$-comodule map.
\end{lem}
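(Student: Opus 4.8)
The plan is to translate the statement into a condition about linear functors on slice categories over groupoids, and then to verify that condition by producing a single commutative diagram of simplicial groupoids and applying a Beck--Chevalley / base-change argument. Concretely, the coalgebra $A = \Grpd_{/\mathbf{R}_1}$ has comultiplication $\Delta_A$ induced by the span $\mathbf{R}_1 \xleftarrow{d_1} \mathbf{R}_2 \xrightarrow{(d_2,d_0)} \mathbf{R}_1 \times \mathbf{R}_1$, and the $B$-coaction $\gamma_{\mathbf{R}_1} \colon \Grpd_{/\mathbf{R}_1} \to \Grpd_{/\mathbf{H}_1} \otimes \Grpd_{/\mathbf{R}_1}$ is induced by the span $\mathbf{R}_1 \xleftarrow{d_1} \mathbf{H}_2 \xrightarrow{(d_\top,d_0)} \mathbf{H}_1 \times \mathbf{R}_1$ coming from $\Dec_\top \mathbf{H}$. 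Saying that $\Delta_A$ is a $B$-comodule map amounts to the commutativity (up to coherent equivalence) of the square expressing that coacting and then comultiplying on the $A$-factor equals comultiplying and then coacting on both factors, followed by the multiplication on the two $B$-factors. Since all structure maps are linear functors given by spans, it suffices by the calculus of linear functors (pull-push) to exhibit an equivalence of the two composite spans, which in turn follows from a diagram of groupoids built out of $\mathbf{R}_2$, $\mathbf{R}_3$, $\mathbf{H}_2$, $\mathbf{H}_3$ whose relevant squares are pullbacks.

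The key steps, in order, would be: (1) Recall that $\mathbf{R}_1 = \mathbf{H}_1$ and that $\mathbf{R}$ is a decomposition space with a culf map to $\mathbf{H}$ (it sits over $\mathbf{K}$, or one argues directly as in Section~\ref{subsec:direcrestspe}), so that $\Dec_\top \mathbf{H}$ is Segal and $d_\top \colon \Dec_\top \mathbf{H} \to \mathbf{H}$ is culf, as in Lemma~\ref{lemma:M0isleftcomoduleoverH1}. (2) Identify the two composite linear functors $M \to B \otimes M \otimes M$ as pull-push along explicit zig-zags: one route uses $\mathbf{H}_2$ then $\mathbf{R}_2$ (fibred over the middle $\mathbf{R}_1$), the other uses $\mathbf{R}_2$ then $\mathbf{H}_2$ on each factor, with the two $B$-factors merged via the multiplication of $B$ (which is pull-push along $\mathbf{H}_1 \times \mathbf{H}_1 \leftarrow \mathbf{H}_2 \to \mathbf{H}_1$, i.e.\ disjoint union, since $\mathbf{H}$ is monoidal by Proposition~\ref{propo: H is a ds connected case}). (3) Produce the mediating groupoid: roughly, families of a contraction $f\colon P \twoheadrightarrow Q$ together with a cut of $P$ (equivalently a compatible cut of $Q$ and cuts of all fibres $P_q$), i.e.\ a suitable fibre product of $\mathbf{H}_2$ and $\mathbf{R}_2$ over $\mathbf{R}_1$, and show both composites factor through it. (4) Check that the requisite squares are pullbacks: here one invokes that $\mathbf{H}$ and $\mathbf{R}$ are decomposition spaces (so inner and outer face maps are cartesian on actives), that the functorialities of $H$ satisfy the Beck--Chevalley law along pullback squares of posets (the restriction of an $H$-structure commutes with inheritance along contractions, exactly the compatibility noted after Definition~\ref{definition: DCHS} and used in Proposition~\ref{propo: H is a MDS}), and that contractions are stable under pullback along convex maps (Lemma~\ref{lemma:stableconnectedpullback}) so that cutting a fibre and taking a fibre of a cut agree. (5) Conclude by the uniqueness of pull-push along a given span that the two linear functors agree, hence $\Delta_A$ is a $B$-comodule map; the counit compatibility $\epsilon_A$ is the degenerate case and is checked the same way (or deferred to the full comodule-bialgebra statement in Proposition~\ref{proposition:Aleftcomodule}).

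The main obstacle I expect is step (3)--(4): making precise the mediating groupoid and verifying that the relevant square — the one that says ``cut $P$, then contract the first layer'' equals ``contract $P$, then cut'' at the level of $H$-structures — is genuinely a (homotopy) pullback. This is where the combinatorics of contractions and convex cuts interact: one needs that for a contraction $f\colon P \twoheadrightarrow Q$, a convex decomposition of $P$ into a lower convex part and a complementary part induces, via the connected fibres $P_q$, a matching decomposition downstairs and fibrewise, and that the $H$-structures transported by $H$ along the two routes coincide by functoriality applied to the evident commuting square of posets. Once that single Beck--Chevalley-type square is in hand, the rest is the formal linear-algebra bookkeeping of composing spans, which I would not write out in detail. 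The monoidal (disjoint-union) bookkeeping through $\mathsf{S}$ is routine since $\mathsf{S}$ preserves pullbacks.
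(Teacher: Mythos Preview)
Your approach is the paper's approach: exhibit a mediating groupoid $Z$ and verify two pullback squares so that the two composite spans agree. But your description of $Z$ has the combinatorics reversed. The paper takes $Z$ to consist of chains $P \twoheadrightarrow Q \to 2$ (a contraction followed by a cut of the \emph{target} $Q$, with an $H$-structure on $P$); the cut of $P$ is then derived by composition. Your description ``contraction $f\colon P \twoheadrightarrow Q$ together with a cut of $P$ (equivalently a compatible cut of $Q$ and cuts of all fibres $P_q$)'' is not correct: an arbitrary cut of $P$ need not be constant on the fibres of $f$, so it does not descend to a cut of $Q$, and these data are not equivalent. Correspondingly, your step~(4) claim that a convex decomposition of $P$ ``induces, via the connected fibres $P_q$, a matching decomposition downstairs'' is false in general; connectedness of fibres does not force a cut upstairs to respect them.

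The actual combinatorial work goes the other way. One of the two squares expresses $Z$ as the pullback of $\mathbf{H}_2 \xrightarrow{(d_\top,d_0)} \mathbf{H}_1 \times \mathbf{R}_1 \xleftarrow{\identity \otimes d_1} \mathbf{H}_1 \times \mathbf{R}_2$, which is immediate once $Z$ is defined as above. The substantive square is the other one: given a cut $P \to 2$ with layers $P_1,P_2$ and contractions $P_i \twoheadrightarrow Q_i$ of each layer, one must build a single poset $Q$ with a cut $Q \to 2$ and a contraction $P \twoheadrightarrow Q$ restricting to the given $f_i$. This is done by the same gluing construction as in Lemma~\ref{lemma:surjectivetoprove K is DS} (put $Q=\sum_i Q_i$ and generate the order from the $Q_i$ together with images of relations in $P$). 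Once you correct the direction of the data in $Z$, the rest of your plan matches the paper.
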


\begin{proof}
We need to show that the following diagram commutes, and the squares $(1)$ and $(2)$ are pullbacks:
\begin{center}
\begin{tikzcd}
\mathbf{R}_1 &  & \mathbf{R}_2 \arrow[white]{rrd}[black, description]{(1)} \arrow[ll, "d_1"'] \arrow[rr, "{(d_2,d_0)}"]                                                                                                      &  & \mathbf{R}_1 \times \mathbf{R}_1 \\
\mathbf{H}_2 \arrow[u, "d_1"] \arrow[dd, "{(d_\top, d_0)}"']  \arrow[white]{rrdd}[black, description]{(2)} &  & Z \arrow[ll, "\overline{d}_2", dashed] \arrow[rr, "\overline{d}_3", dashed] \arrow[u, "\overline{d}_1", dashed] \arrow[dd, "{(g, \overline{d_0})}"', dashed] &  & \mathbf{H}_2 \times \mathbf{H}_2 \arrow[u, "d_1 \otimes d_1"'] \arrow[d, "{(d_\top,d_0) \otimes (d_\top,d_0)}"] \\
&  &                                                                                                                           &  & \mathbf{H}_1 \times \mathbf{R}_1 \times \mathbf{H}_1 \times \mathbf{R}_1 \arrow[d, "\omega"]                    \\
\mathbf{H}_1 \times \mathbf{R}_1                        &  & \mathbf{H}_1 \times \mathbf{R}_2  \arrow[ll, "\identity \otimes d_1"] \arrow[rr, "{\identity \otimes (d_2, d_0)}"']                                             &  & \mathbf{H}_1 \times \mathbf{R}_1 \times \mathbf{R}_1.                                                 
\end{tikzcd}
\end{center}
Here the map $\omega$ is given by first swapping the two middle tensor factors and then using the multiplication of $\mathbf{R}$. The objects of the groupoid $Z$ are families of pairs of $2$-chains of maps $P \twoheadrightarrow Q \rightarrow 2$, such that the first one is a contraction and the other is a cut, and with a $H$-structure on $P$. The map $\overline{d}_0$ forgets the contraction, the map $\overline{d}_1$ composes the contraction and the cut, the map $\overline{d}_2$ forgets the cut, the map $\overline{d}_3$ gives the pair of contractions $(P_1 \twoheadrightarrow Q_1, P_2 \twoheadrightarrow Q_2)$, and the map $g$ sends the $2$-chain to the family $\lbrace P_q \rbrace_{q \in Q}$ of the fibres of the contraction $P \twoheadrightarrow Q$.

By Lemma \ref{lemma:pullbackfibres}, the square $(1)$ is a pullback if for any pair of contractions $(f_1 \colon P_1 \twoheadrightarrow Q_1, f_2 \colon P_2 \twoheadrightarrow Q_2) \in \mathbf{H}_2 \times \mathbf{H}_2$ and cut $P \to 2 \in \mathbf{R}_2$ such that 
$$(d_2, d_0)(P \to 2) = (P_1, P_2) = (d_1 \otimes d_1)(P_1 \twoheadrightarrow Q_1, P_2 \twoheadrightarrow Q_2),$$
there exists a finite connected poset $Q$, a contraction $f \colon P \twoheadrightarrow Q$, and a cut $Q \to 2$ such that the diagram 
\[\begin{tikzcd}
	P & Q & 2 \\
	{P_i} & {Q_i}
	\arrow["f", two heads, from=1-1, to=1-2]
	\arrow[from=1-2, to=1-3]
	\arrow[tail, from=2-1, to=1-1]
	\arrow[tail, from=2-2, to=1-2]
	\arrow["{f_i}"', two heads, from=2-1, to=2-2]
\end{tikzcd}\]
commutes for $i = 1,2$. But this is easy to show if we put $Q := \sum_{i \in 2} Q_i$, and $f := \sum_{i \in 2} f_i$, and consider the partial order $<_Q$ on $Q$ given by taking transitive closure in the following relation: for $q, q' \in Q$, we declare that $q <_Q q'$ if $q, q' \in Q_i$ and  $q <_{Q_{i}} q'$ or there exists $p <_P p'$ in $P$ such that $f(p) = q$ and $f(p') = q'$.

%the fibre of $\overline{d}_1$ over an object $P \rightarrow \underline{2}$ of $\mathbf{R}_2$ consists of pairs of $2$-chains of maps, where the first one is a contraction from $P$, and the second one is a map to $\underline{2}$, such that the composition is $P \rightarrow \underline{2}$. The fibre of $\overline{d}_1$ over the object $(d_2,d_0)(P \rightarrow \underline{2})$ is a pair of contractions with sources $P_1$ and $P_2$. This is equivalent to the fibre of $\overline{d}_0$ over $P \rightarrow \underline{2}$ since we can take the disjoint union of $P_1 \twoheadrightarrow Q_1$ and $P_2 \twoheadrightarrow Q_2$ to get a contraction, and we obtain a map to $\underline{2}$ by sending elements of $Q_1$ to $1$ and $Q_2$ to $2$.

We will prove that $(2)$ is a pullback. By the prism Lemma, $(2)$ is a pullback if the outer diagram 
\[\begin{tikzcd}[sep=2.25em]
	Z & {\mathbf{H}_1 \times \mathbf{R}_2 } & {\mathbf{R}_2} \\
	{\mathbf{H}_2} & {\mathbf{H}_1 \times \mathbf{R}_1 } & {\mathbf{R}_1}
	\arrow["{(d_\top, d_0)}", from=2-1, to=2-2]
	\arrow[from=2-2, to=2-3]
	\arrow["{d_0}"', bend right =20, from=2-1, to=2-3]
	\arrow["{(g, d_0)}"', from=1-1, to=1-2]
	\arrow[from=1-2, to=1-3]
	\arrow["{d_1}", from=1-3, to=2-3]
	\arrow["{\identity \otimes d_1}"{description}, from=1-2, to=2-2]
	\arrow["{\overline{d}_2}"', from=1-1, to=2-1]
	\arrow["{\overline{d}_0}", bend left =20, from=1-1, to=1-3]
	\arrow["{(3)}"{description}, draw=none, from=1-2, to=2-3]
\end{tikzcd}\]
and the square $(3)$ are pullbacks. The square $(3)$ is obtained after projecting away $\mathbf{H}_1$ so it is straightforward to see that it is a pullback. By Lemma \ref{lemma:pullbackfibres}, the outer diagram is a pullback since for any contraction $P \twoheadrightarrow Q \in \mathbf{H}_2$ and cut $Q \to 2 \in \mathbf{R}_2$, we can form the $2$-chain $P \twoheadrightarrow Q \to 2$, which is an object in $Z$, such that it makes the outer diagram commutes.
\end{proof}

\begin{lem}\label{lem: counit A is Bcom map}
The counit structure of $A$ is a $B$-comodule map.
\end{lem}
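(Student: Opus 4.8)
The plan is to mirror the structure of the proof of Lemma \ref{lem:comul A is Bcomod map}, but now for the counit $\epsilon_A$ rather than the comultiplication $\Delta_A$. Recall that the counit of the incidence coalgebra of $\mathbf{R}$ is the linear functor induced by the span
$\begin{tikzcd}[column sep=small]\mathbf{R}_1 & \mathbf{R}_0 \arrow[l, "s_0"'] \arrow[r, "t"] & 1\end{tikzcd}$,
which on a connected $R$-structure returns $1$ if the poset is "empty-layered" (i.e.\ an object coming from $\mathbf{R}_0$, a single layer with no cut) and $0$ otherwise. To say that $\epsilon_A$ is a $B$-comodule map is to say that the diagram
\begin{center}
\begin{tikzcd}
\mathbf{R}_1 & \mathbf{R}_0 \arrow[l, "s_0"'] \arrow[r, "t"] & 1 \\
\mathbf{H}_2 \arrow[u, "d_1"] \arrow[d, "{(d_\top, d_0)}"'] & W \arrow[l] \arrow[r] \arrow[u] \arrow[d] & \mathbf{H}_1 \arrow[u] \arrow[d, "="] \\
\mathbf{H}_1 \times \mathbf{R}_1 & \mathbf{H}_1 \times \mathbf{R}_0 \arrow[l] \arrow[r] & \mathbf{H}_1
\end{tikzcd}
\end{center}
commutes with the left-hand and right-hand squares cartesian, where $W$ is the groupoid of $2$-chains $P \twoheadrightarrow Q$ with a $H$-structure on $P$ together with a choice of section exhibiting $Q$ in $\mathbf{R}_0$ — but since $\mathbf{R}_0$ consists of single-layer posets, this forces $Q$ to be that trivial layering, so $W$ is just $\mathbf{H}_2$ with $Q$ carrying the degenerate cut, equivalently the groupoid of contractions $P \twoheadrightarrow Q$ with an $H$-structure on $P$ and no further data.

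First I would spell out the maps: the top span restricted from $\mathbf{H}$-data is, on $W$, the pair of maps "forget down to $\mathbf{H}_1$ via the section picking out the source layering, which is $s_0 \circ d_1$-compatible" on the left and "terminal map" on the right; the left vertical sends a contraction-with-$H$-structure to the family of fibres $\{P_q\}_{q\in Q}$ in $\mathbf{H}_1$ together with the underlying layered poset in $\mathbf{R}_0$, which again is just the trivial layering. Then I would check commutativity, which is immediate because every arrow is either a projection, a terminal map, or an application of the fibre construction, and these obviously agree. For the cartesianness of the right-hand square, the point is that $t \colon \mathbf{R}_0 \to 1$ and $t \colon \mathbf{H}_1 \to 1$ are both the terminal maps, so the square is automatically a pullback. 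For the left-hand square, I would invoke Lemma \ref{lemma:pullbackfibres}: fixing a family of fibres $\{P_q\}_{q\in Q}$ in $\mathbf{H}_1$ with the trivial layering, I must reconstruct the contraction $P \twoheadrightarrow Q$; this is exactly the content of Lemma \ref{lemma:surjectivetoprove K is DS} (gluing a family of fibres along $Q$ into a single poset), together with the fact that the $H$-structure on $P$ is determined by the Beck--Chevalley compatibility from the $H$-structures on the fibres — or more simply, in the counit case $Q$ carries no cut so there is nothing extra to match.

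The main obstacle, as in the previous lemma, is bookkeeping: making precise exactly which groupoid $W$ sits in the middle and verifying the two squares are pullbacks rather than merely commuting. But compared with Lemma \ref{lem:comul A is Bcomod map} this is strictly easier, since the role played there by the non-trivial cut $Q \to 2$ (and the associated transitive-closure order on $Q = \sum Q_i$) degenerates here to the trivial layering, so the only genuine input is the gluing lemma \ref{lemma:surjectivetoprove K is DS} already used to prove $\mathbf{K}$ is a decomposition space. Concretely I would conclude: by Lemma \ref{lemma:surjectivetoprove K is DS} the comparison fibre map for the left square is an equivalence, hence by Lemma \ref{lemma:pullbackfibres} the square is a pullback; the right square is a pullback of terminal maps; commutativity is routine; therefore $\epsilon_A$ is a $B$-comodule map. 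Combined with Lemma \ref{lem:comul A is Bcomod map} this will give, in the next result, that $A$ is a comodule bialgebra over $B$.
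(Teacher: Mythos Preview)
Your identification of $\mathbf{R}_0$ is the source of the trouble. In the restriction-species decomposition space an $n$-simplex is a family of $n$-layered posets, that is, families of monotone maps $P \to \underline{n}$ with an $R$-structure on $P$. Thus $\mathbf{R}_1$ is families of posets with $R$-structure (one layer, no cut), while $\mathbf{R}_0$ consists of maps $P \to \underline{0} = \emptyset$, forcing $P$ empty. So $\mathbf{R}_0$ is not ``single-layer posets with the trivial layering'' --- that description is $\mathbf{R}_1$ --- but essentially the terminal groupoid containing only the empty family. Correspondingly, the right column of the compatibility diagram is the span for the unit $\eta_B$, namely $1 \leftarrow 1 \to \mathbf{H}_1$, not $1 \leftarrow \mathbf{H}_1 = \mathbf{H}_1$ as you wrote.

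With the correct $\mathbf{R}_0$, the middle groupoid $W$ is not anything like $\mathbf{H}_2$. The pullback in the lower-left square --- that of $(d_\top,d_0)\colon\mathbf{H}_2\to\mathbf{H}_1\times\mathbf{R}_1$ along $\identity\otimes s_0\colon\mathbf{H}_1\times\mathbf{R}_0\to\mathbf{H}_1\times\mathbf{R}_1$ --- consists of families of contractions $P\twoheadrightarrow Q$ with $H$-structure on $P$ such that the induced structure on $Q$ lies in the image of $s_0$, i.e., $Q$ is the empty family. Since contractions are surjections, $P$ must then also be empty, so this pullback is just $\mathbf{R}_0$ again. That is the whole argument: emptiness propagates backward along surjections. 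No gluing is needed, and Lemma~\ref{lemma:surjectivetoprove K is DS} plays no role here; invoking it to ``reconstruct $P\twoheadrightarrow Q$ from its fibres'' is solving a problem that does not arise in the counit case.
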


\begin{proof}
We must show that the following diagram commutes and the squares $(4)$ and $(5)$ are pullbacks:
\begin{center}
    \begin{tikzcd}
\mathbf{R}_1                                           &  & \mathbf{R_0} \arrow[white]{rd}[black, description]{(4)} \arrow[r] \arrow[ll, "s_0"']                                             & 1   \\
\mathbf{H}_2 \arrow[white]{rrd}[black, description]{(5)} \arrow[u, "d_1"] \arrow[d, "{(d_\top, d_0)}"'] &  & \mathbf{R}_0 \arrow[u, dotted] \arrow[d, dotted] \arrow[r, dotted] \arrow[ll, dotted] & 1 \arrow[d] \arrow[u]  \\
\mathbf{H}_1 \times \mathbf{R}_1                       &  & \mathbf{H}_1 \times \mathbf{R}_0 \arrow[r] \arrow[ll, " \identity \otimes s_0"]       & \mathbf{H}_1.
\end{tikzcd}
\end{center}
The pullback of $\identity \otimes s_0$ along $(d_\top, d_0)$ is the groupoid of families of contractions $P \twoheadrightarrow Q$ with a $H$-structure on $P$, such that the induced $H$-structure on $Q$ is an empty $H$-structure. This forces that both $P$ and $Q$ are empty, and therefore it is any $H$-structure on the empty poset, which is $\mathbf{R}_0$. This means that $(5)$ is a pullback.
\end{proof}

\begin{propo}\label{proposition:Aleftcomodule}
$A$ is naturally a left $B$-comodule bialgebra.
\end{propo}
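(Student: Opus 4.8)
The plan is to assemble the comodule-bialgebra structure on $A$ from the pieces already established, checking only the two compatibility axioms that are not formal. First I would recall the setup: as a comodule, $A$ is $\Grpd_{/\mathbf{R}_1}$ with the $B = \Grpd_{/\mathbf{H}_1}$-coaction $\gamma_{\mathbf{R}_1}$ given by the span $\mathbf{R}_1 \leftarrow \mathbf{H}_2 \rightarrow \mathbf{H}_1 \times \mathbf{R}_1$ (Lemma~\ref{lemma:M0isleftcomoduleoverH1} together with the observation $\mathbf{R}_1 = \mathbf{H}_1$ and the remark in \S\ref{subsec:direcrestspe}); and $A$ is a bialgebra because $\mathbf{R}$ is a monoidal decomposition space (Proposition~\ref{propo:R is monoidal}). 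So the four structure maps $\Delta_A, \epsilon_A, \mu_A, \eta_A$ are all in hand, and being a comodule bialgebra object amounts to: (i) $\Delta_A$ is a $B$-comodule map; (ii) $\epsilon_A$ is a $B$-comodule map; (iii) $\mu_A$ is a $B$-comodule map; (iv) $\eta_A$ is a $B$-comodule map; plus the bialgebra axioms relating these maps, which hold because $A$ is already a bialgebra.

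Next I would dispatch (i) and (ii): these are exactly Lemma~\ref{lem:comul A is Bcomod map} and Lemma~\ref{lem: counit A is Bcom map}, so nothing more is needed there. For (iii) and (iv), I would invoke the structural fact that both the multiplication and the unit of $A$ are induced by the monoidal structure $+_{\mathbf{R}}$ on $\mathbf{R}$, while the multiplication and unit of $B$ come from $+_{\mathbf{H}}$ on $\mathbf{H}$, and the culf monoidal map $\mathbf{H} \to \mathbf{K}$ together with the compatibility $\mathbf{R}_1 = \mathbf{H}_1$ makes $+_{\mathbf{R}}$ and $+_{\mathbf{H}}$ agree in the relevant degrees. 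Concretely, $\mu_A$ is a $B$-comodule map iff a certain square built from $+_{\mathbf{R}}$, $+_{\mathbf{H}}$ and the coaction span is a (homotopy) pullback, and this is proved exactly as in the proof of Proposition~\ref{propo:R is monoidal} and Lemma~\ref{lem:comul A is Bcomod map}: a pair of $n$-layered posets-with-cut, each equipped with a contraction out of it, is uniquely reconstructed from (the pair of contractions, the disjoint-union layered-poset-with-cut) subject to agreement down in $\mathbf{R}_1 = \mathbf{H}_1$; here one uses that the disjoint union of contractions is again a contraction and that fibres distribute over disjoint unions. The unit case (iv) is the degenerate instance: the empty poset carries a unique $H$-structure and a unique contraction, so the relevant square reduces to a triviality, much as in the proof of Lemma~\ref{lem: counit A is Bcom map}.

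Finally I would note that the remaining bialgebra-object axioms — associativity and unitality of $\mu_M$, coassociativity and counitality of $\Delta_M$, and the bialgebra compatibility between $\mu_M, \eta_M, \Delta_M, \epsilon_M$ — are satisfied automatically because, as stressed in the discussion preceding the statement, the underlying bialgebra $(M, \Delta_M, \epsilon_M, \mu_M, \eta_M)$ is literally $A$ with its incidence-bialgebra structure, which is a bialgebra by Proposition~\ref{propo:R is monoidal} and \cite[\S 9]{GTK1}. The naturality in $H$ follows since every construction used — $\mathbf{H}$, $\mathbf{R}$, the coaction span, and the monoidal structures — is functorial in the connected directed hereditary species. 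The main obstacle is step (iii): verifying that $\mu_A$ is a $B$-comodule map requires checking that a two-square diagram analogous to the one in Lemma~\ref{lem:comul A is Bcomod map} has both squares cartesian, and the bookkeeping for the disjoint-union-of-contractions reconstruction (especially getting the induced partial order on the disjoint union right, and checking the fibrewise $H$-structures match) is where the real work lies; everything else is either already proved or formal.

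\begin{proof}
As a left $B$-comodule, $M := A = \Grpd_{/\mathbf{R}_1}$ with coaction $\gamma_{\mathbf{R}_1}$ (Lemma~\ref{lemma:M0isleftcomoduleoverH1}, using $\mathbf{R}_1 = \mathbf{H}_1$). Its bialgebra structure maps $\Delta_M = \Delta_A$, $\epsilon_M = \epsilon_A$, $\mu_M = +_{\mathbf{R}!}$, $\eta_M$ are those of the incidence bialgebra of the monoidal decomposition space $\mathbf{R}$ (Proposition~\ref{propo:R is monoidal}, \cite[\S 9]{GTK1}). By Lemma~\ref{lem:comul A is Bcomod map} the comultiplication $\Delta_M$ is a $B$-comodule map, and by Lemma~\ref{lem: counit A is Bcom map} the counit $\epsilon_M$ is a $B$-comodule map. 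That $\mu_M$ is a $B$-comodule map follows, as in the proofs of Propositions~\ref{propo:k is monoidal} and~\ref{propo:R is monoidal} and Lemma~\ref{lem:comul A is Bcomod map}, from the fact that a pair of layered posets-with-cut each equipped with a contraction is uniquely reconstructed from the pair of contractions together with the disjoint-union layered-poset-with-cut, subject to the data agreeing in $\mathbf{R}_1 = \mathbf{H}_1$; here one uses that disjoint unions of contractions are contractions and that forming fibres commutes with disjoint union. The unit $\eta_M$ is a $B$-comodule map since the empty poset admits a unique $H$-structure and a unique contraction, so the relevant square is trivially cartesian (cf.\ the proof of Lemma~\ref{lem: counit A is Bcom map}). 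The remaining bialgebra-object axioms hold automatically because $(M, \Delta_M, \epsilon_M, \mu_M, \eta_M)$ is just the incidence bialgebra $A$ of $\mathbf{R}$. Finally, all of $\mathbf{H}$, $\mathbf{R}$, the coaction span, and the monoidal structures are functorial in $H$, so the comodule-bialgebra structure is natural in $H$.
\end{proof}
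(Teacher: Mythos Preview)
Your proof is correct and follows the same overall strategy as the paper: invoke Lemmas~\ref{lem:comul A is Bcomod map} and~\ref{lem: counit A is Bcom map} for the comultiplication and counit, then handle the multiplication and unit separately.

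The one noteworthy difference is in how you treat axioms (iii) and (iv). You propose to verify directly that $\mu_A$ and $\eta_A$ are $B$-comodule maps by checking the relevant pullback squares, in the style of Lemma~\ref{lem:comul A is Bcomod map}. The paper instead uses the observation made in \S\ref{subsec:comodulebialgebra}: since $\mathbf{R}_1 = \mathbf{H}_1$, the comodule $M$ literally coincides with $B$ (the coaction $\gamma$ \emph{is} the comultiplication $\Delta_B$), and the algebra structure on $M$ is the same as that on $B$ (both are disjoint union). Hence asking that $\mu_M$ be a $B$-comodule map is exactly asking that $\mu_B$ be compatible with $\Delta_B$, i.e.\ the bialgebra axiom for $B$, which holds because $\mathbf{H}$ is a monoidal decomposition space (Proposition~\ref{propo:k is monoidal}). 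Similarly for $\eta_M$. So the paper's proof reduces to one sentence: ``The bialgebraic part follows from the monoidal property of $\mathbf{H}$ and $\mathbf{R}$, and that the multiplication of both is the same.'' Your approach works too, but the paper's shortcut avoids the reconstruction argument you flag as the ``main obstacle''.
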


\begin{proof}
Combining Lemmas \ref{lem:comul A is Bcomod map} and \ref{lem: counit A is Bcom map}, it follows that $A$ is naturally a left $B$-comodule coalgebra. The bialgebraic part follows from the monoidal property of $\mathbf{H}$ and $\mathbf{R}$, and that the multiplication of both is the same for how $\mathbf{R}$ was constructed.
\end{proof}

\begin{rema}
The arguments used in the proof that $A$ is naturally a left $B$-comodule coalgebra in Proposition \ref{proposition:Aleftcomodule} are similar to those given in the proof of Proposition 5.3 in \cite{carlier2019hereditary} considering contractions instead of monotone surjections and families of finite connected posets instead of sets. We prefer to add the proof to make the paper as self-contained as possible, but in any case, the ideas come from Carlier \cite{carlier2019hereditary}.
\end{rema}

%Combining Lemma \ref{lemma:comodulecoalgebra to comodulebialgebra} and Proposition \ref{proposition:Aleftcomodule}, we have the following result.

%\begin{propo}\label{proposition:Alefctcomodulebialgebra}
%$A$ is a left comodule bialgebra over $B$.
%\end{propo}

 \begin{blanko}
{Running example, part III: Calaque--Ebrahimi-Fard--Manchon comodule bialgebra of rooted trees} \label{subsec: runningexamplepartIII}
\end{blanko}

The connected directed hereditary species $H_{\CEM}$ of trees, described in \S \ref{subsec: runningexamplepartI}, induces a comodule bialgebra. The comultiplication $\Delta_{H_{\CEM}}$ is explained in Remark \ref{remark:incidencecoalgebraCEM}. The second comultiplication is given by the directed restriction species of trees $R_{\BCK}$ (\S \ref{subsec:direcrestspe}). By Proposition \ref{proposition:Aleftcomodule}, this is a comodule bialgebra usually known as the Calaque--Ebrahimi-Fard--Manchon comodule bialgebra of rooted trees \cite{CALAQUE2011282}.

On the other hand, Kock \cite[\S 5.4]{Kock_2021} showed another presentation of the Calaque--Ebrahimi-Fard--Manchon comodule bialgebra of rooted trees related to the reduced Baez--Dolan construction of the terminal operad. In the operadic setting, operadic trees have an input edge (the leaf edge) and an output edge (the root edge), and there is a tree without nodes (where leaf=root). To relate operadic trees and combinatorial trees, we have to forget all decorations and shave off leaf edges and root edge. This construction is known as the \emph{core} of an operadic tree.    

Since the trees involved in the Calaque--Ebrahimi-Fard--Manchon comodule bialgebra are  combinatorial trees, it is not possible to realise this bialgebra as the incidence bialgebra of an operad, but this can be solved using the core construction. Kock \cite[Proposition 5.4.7]{Kock_2021} proved that taking core of the incidence comodule bialgebra of the reduced Baez--Dolan construction of the terminal operad, we obtain the Calaque--Ebrahimi-Fard--Manchon comodule bialgebra of rooted trees. Note that our approach to this comodule bialgebra is more straightforward since it follows from the connected directed hereditary species $H_{\CEM}$ of trees.

\begin{exa}[\textbf{Faà di Bruno comodule bialgebra of linear trees, part III}]\label{exa:linear trees 3}
The connected directed hereditary species $H_{\FB}$ of linear trees, described in Example \ref{exa:linear trees 1}, induces a comodule bialgebra: the comultiplication $\Delta_{H_{\FB}}$ is explained in Example \ref{exa:linear trees 1}. The second comultiplication is given by the directed restriction species of linear trees $R_{\FB}$, which is similar to $R_{\BCK}$ but we consider linear trees instead of arbitrary trees. By Proposition \ref{proposition:Aleftcomodule}, this is a comodule bialgebra.  

In fact, it is the Faà di Bruno comodule bialgebra of linear trees.  The form in which it arises here is very similar to that shown
in \cite[Section 5.2]{Kock_2021} from the reduced Baez--Dolan
construction on the identity monad. The difference between these two presentations of  the Faà di Bruno comodule bialgebra is analogous to the difference between the Calaque--Ebrahimi-Fard--Manchon bialgebra via the reduced Baez--Dolan construction~\cite{Kock_2021} and the construction from the directed hereditary species $H_{\CEM}$.

%Note that there is a subtlety: in the operadic setting, it is about operadic trees, where there is an input edge (the leaf edge) and an output edge (the root edge), and there is a tree without nodes (where leaf=root). At the level of the bialgebra coming from the hereditary structure, this does not make any difference because the
%Baez--Dolan construction used in \cite{Kock_2021} is
%the reduced Baez--Dolan construction, where this
%trivial tree is excluded. But at the level of
%the coalgebra of the underlying directed restriction
%species, this difference matters: the trivial operadic
%tree with an isolated edge becomes a nontrivial group-like
%element in the incidence coalgebra, which does not appear
%in the directed restriction species. %The free-monad
%construction in \cite{Kock_2021} gives only a bialgebra, whereas
%the directed restriction species gives a Hopf algebra,
%namely its connected quotient.

%In fact, it is Faà di Bruno's comodule bialgebra of linear trees \cite{Kock_2021}.
\end{exa}    

\section{Connected directed hereditary species and operadic categories}\label{section:CDHS and operadics categories}

The goal of this section is to construct a functor from the category of connected directed hereditary species to the category of operadic categories.

Operadic categories were introduced by Batanin and Markl~\cite{BM}
and used to prove the duoidal Deligne conjecture. An operadic
category is a kind of combinatorial structure whose `algebras'
are operads of various kinds depending on the operadic category.
For example, $\Delta$ is an operadic category and its operads
are nonsymmetric operads.

An operadic category $\mathcal{C}$ has chosen local terminal objects (that is,
in each connected component there is a chosen terminal object),
a cardinality functor $\vert - \vert : \mathcal{C} \to \mathbf{FinSet}$
to the standard skeleton of the category of
finite sets, and a notion of fibre: this is an assignment that
for each morphism $F \colon Y \to X$
in the operadic category, and each $i \in \vert {X} \vert$ gives a new
object denoted $f^{-1}(i)$, but this is abstract and does not
have to be a fibre in the usual sense of the word. For example,
it is not necessarily a subobject of $Y$. These data are subject to
many axioms, which can be formulated in various ways
\cite{BM}, \cite{lack2016operadic}, \cite{GKW2021}. The Carlier proof \cite{carlier2019hereditary} that hereditary
species induce operadic categories consisted in checking the whole
list of axioms.

Garner, Kock, and Weber~\cite{GKW2021} observed that the
chosen-local-terminals structure amounts precisely to be a coalgebra
for the upper decalage comonad, and went on to give a characterisation
of operadic categories in terms of a certain modified decalage
comononad. 

Recently Batanin, Kock, and Weber~\cite{BataninKockWeber} have
found a more conceptual characterisation, where all the axioms end up
formulated as simplicial identities. Their discovery is that just as
the chosen-local-terminals structure amounts to an extra top degeneracy
map, the fibre structures amount to an extra top face map, except
that this extra top face map lives in the Kleisli category for the
free-symmetric-monoidal-category monad. To understand this point of view, we will introduce a few concepts.

\begin{comment}
\subsection*{lt-nerve}
\end{comment}

\begin{blanko}
{The lt-nerve} \label{subsec: lt nerve}
\end{blanko}

Recall that $\mathbf{Cat}_{\lt}$ is the category of categories with chosen local
terminals. Given a opfibration $p \colon \mathcal{E} \to \mathcal{B}$ and an object $x \in \mathcal{E}$, we will denote by $f_{!}(x)$ the opcartesian lift for a map $f \colon p(x) \to y$ in $\mathcal{B}$.

\begin{lem}\label{lemma: local terminals and discrete opfribration}

Let $\mathcal{B}$ be a category with chosen local terminal objects and let $p \colon \mathcal{E} \to \mathcal{B}$ be a discrete opfibration. Consider the pullback diagram
\[\begin{tikzcd}
	{\Fib_{{\mathcal{B_{\lt}}}}(p)} & {\mathcal{E}} \\
	\mathcal{B}_{\lt} & {\mathcal{B}}.
	\arrow[hook, from=2-1, to=2-2]
	\arrow["p", from=1-2, to=2-2]
	\arrow[from=1-1, to=2-1]
	\arrow[from=1-1, to=1-2]
	\arrow["\lrcorner"{anchor=center, pos=0.125}, draw=none, from=1-1, to=2-2]
\end{tikzcd}\]
The objects in $\Fib_{{\mathcal{B_{\lt}}}}(p)$ equip $\mathcal{E}$ with chosen local terminal objects.
\end{lem}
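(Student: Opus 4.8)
The plan is to transport the chosen-local-terminals structure of $\mathcal{B}$ along $p$, using the fact that a discrete opfibration has \emph{unique} lifts (not merely unique up to isomorphism), and then to identify the transported chosen objects with the objects of $\Fib_{\mathcal{B}_{\lt}}(p)$. Recall that equipping a category with chosen local terminal objects amounts precisely to selecting, in each connected component, a terminal object; so it suffices to show that the objects of $\Fib_{\mathcal{B}_{\lt}}(p)$ constitute exactly one such object per connected component of $\mathcal{E}$.

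First I would fix notation for $\mathcal{B}$: for each object $b$ write $\ter_{\mathcal{B}}(b)$ for the chosen local terminal in the connected component of $b$, and $\tau_b \colon b \to \ter_{\mathcal{B}}(b)$ for the unique arrow to it. Given $e \in \mathcal{E}$ with $p(e) = b$, let $(\tau_b)_!(e) \colon e \to \ter_{\mathcal{E}}(e)$ be the unique lift of $\tau_b$ with source $e$. Then $p(\ter_{\mathcal{E}}(e)) = \ter_{\mathcal{B}}(b)$ is a chosen local terminal of $\mathcal{B}$, so by the defining pullback square $\ter_{\mathcal{E}}(e)$ is an object of $\Fib_{\mathcal{B}_{\lt}}(p)$, and $(\tau_b)_!(e)$ connects $e$ to it inside $\mathcal{E}$.

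The key step is to show that $\ter_{\mathcal{E}}$ is constant on connected components of $\mathcal{E}$; it suffices to treat a single arrow $g$ between $e$ and $e'$. If $g \colon e \to e'$, then $\ter_{\mathcal{B}}(p(e)) = \ter_{\mathcal{B}}(p(e'))$, and since arrows into a local terminal are unique we get $\tau_{p(e')} \circ p(g) = \tau_{p(e)}$; hence $(\tau_{p(e')})_!(e') \circ g$ is a lift of $\tau_{p(e)}$ with source $e$, so by uniqueness of lifts it equals $(\tau_{p(e)})_!(e)$, and in particular $\ter_{\mathcal{E}}(e) = \ter_{\mathcal{E}}(e')$; the case $g \colon e' \to e$ is symmetric. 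Writing $\ter_{\mathcal{E}}(K)$ for the common value on a connected component $K$, I would then check it is terminal in $K$ — any arrow $e' \to \ter_{\mathcal{E}}(K)$ projects to an arrow $p(e') \to \ter_{\mathcal{B}}(p(e'))$, which must be $\tau_{p(e')}$, hence by unique lifting equals $(\tau_{p(e')})_!(e')$ — and that it is the \emph{only} object of $\Fib_{\mathcal{B}_{\lt}}(p)$ lying in $K$: if $e'' \in \Fib_{\mathcal{B}_{\lt}}(p) \cap K$, then $p(e'')$ is the chosen local terminal of its component, so $\tau_{p(e'')} = \identity$, its unique lift with source $e''$ is $\identity_{e''}$, and therefore $e'' = \ter_{\mathcal{E}}(e'') = \ter_{\mathcal{E}}(K)$. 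Thus the objects of $\Fib_{\mathcal{B}_{\lt}}(p)$ form exactly one terminal object per connected component of $\mathcal{E}$, i.e. a chosen-local-terminals structure on $\mathcal{E}$.

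I expect the main obstacle to be the well-definedness of $\ter_{\mathcal{E}}$ on connected components: this is the one place where discreteness of the opfibration is genuinely indispensable, and one must be a little careful to commute the relevant triangles in $\mathcal{B}$ (via uniqueness of maps into a local terminal) before invoking unique lifting in $\mathcal{E}$.
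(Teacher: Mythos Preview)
Your proof is correct and follows essentially the same approach as the paper: construct the candidate local terminal $\ter_{\mathcal{E}}(e)$ by uniquely lifting the canonical arrow $\tau_{p(e)}$ along the discrete opfibration, then show constancy on connected components via the triangle $\tau_{p(e')}\circ p(g)=\tau_{p(e)}$ and unique lifting. Your write-up is in fact a little more thorough than the paper's, since you also spell out the terminality check (any arrow into $\ter_{\mathcal{E}}(K)$ must project to $\tau_{p(e')}$ and hence be its unique lift) and the uniqueness of the chosen object per component, whereas the paper leaves these implicit.
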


\begin{proof}
Let $x$ be an object in $\mathcal{E}$. Since $\mathcal{B}$ has chosen local terminal objects,  we have a unique map $t_{p(x)}: p(x) \to c_{p(x)}$ in $\mathcal{E}$, where $c_{p(x)}$ is a chosen local terminal object. Furthermore, we have a unique lift $(t_{p(x)})_{!}(x): x \to c_{x}$ for the map $t_{p(x)}$ in $\mathcal{E}$ since $p$ is a discrete opfibration. Note that $c_x \in \Fib_{\mathcal{B}_{\lt}}(p)$. To prove that $c_x$ is a chosen local terminal object in $\mathcal{E}$ it is enough to prove that any map $f \colon x \to y$ in $\mathcal{E}$ forces that $c_x = c_y$. Indeed, the objects $c_{p(x)}$ and $c_{p(y)}$ are connected by the zig-zag illustrated in the following diagram
\[\begin{tikzcd}
	{p(x)} & {p(y)} \\
	{c_{p(x)}} & {c_{p(y)}}.
	\arrow["{p(f)}", from=1-1, to=1-2]
	\arrow["{t_{p(x)}}"', from=1-1, to=2-1]
	\arrow["{t_{p(y)}}", from=1-2, to=2-2]
\end{tikzcd}\]
Since $c_{p(x)}$ and $c_{p(y)}$ are in the same connected component in $\mathcal{B}$, the chosen local terminal object property forces that $c_{p(x)} = c_{p(y)}$. This implies that the maps $(t_{p(y)})_{!}(y) \circ f \colon x \to c_y$ and $(t_{p(x)})_{!}(x) \colon x \to c_x$ are two lifts for the map $t_{p(x)}$. But the discrete opfibration property of $p$ forces that $(t_{p(y)})_{!}(y) \circ f = (t_{p(x)})_{!}(x)$, and hence $c_x = c_y$.
\end{proof}

We now work with posets whose underlying sets are ordinals and strict pullbacks. These assumptions are necessary to follow the work of Batanin and Markl~\cite[\S 1]{BM} on operadic categories. 
\begin{comment}
%diagram of categories
\[\begin{tikzcd}
	{\mathbb{W}} & {\mathbb{X}} \\
	{\mathbb{Y}} & {\mathbb{Z}}
	\arrow[from=1-1, to=2-1]
	\arrow[from=1-1, to=1-2]
	\arrow[from=1-2, to=2-2]
	\arrow[from=2-1, to=2-2]
	\arrow["\lrcorner"{anchor=center, pos=0.125}, draw=none, from=1-1, to=2-2]
\end{tikzcd}\]
we have to assume that $\mathbb{W} := \mathbb{X} \times_{\mathbb{Z}} \mathbb{Y}$. 
\end{comment}

Moreover, we work only with $\mathbf{Set}$-valued species, as in the Schmitt theory of hereditary species. This is necessary to ensure that given a connected directed hereditary species $H \colon \mathbb{K}_p \to \mathbf{Set}$, its Grothendieck construction $\int H \to \mathbb{K}_p$ is a discrete opfibration.

For a connected directed hereditary species, the category $\mathbb{H}$ is the pullback of the Grothendieck construction $\int H \to \mathbb{K}_p$ along the inclusion $\mathbb{K} \to \mathbb{K}_p$.

\begin{lem}\label{lemma: GC of H is lt-nerve}
Let $H \colon \mathbb{K}_p \rightarrow \Set$ be a connected directed hereditary species. Then $\mathbb{H}$ is a category with chosen local terminal objects.
\end{lem}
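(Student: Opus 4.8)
The plan is to exhibit $\mathbb{H}$ as a category with chosen local terminal objects by transporting the chosen local terminals of $\mathbb{K}$ along the discrete opfibration $\mathbb{H} \to \mathbb{K}$, using Lemma~\ref{lemma: local terminals and discrete opfribration}. First I would verify the two hypotheses of that lemma. The category $\mathbb{K}$ of finite connected non-empty posets and contractions has chosen local terminal objects given by the poset with one element: in each connected component of $\mathbb{K}$ the terminal object is the singleton poset $1$, and for each $P$ the canonical map $P \twoheadrightarrow 1$ is a contraction (it is a monotone surjection, its unique fibre $P$ is connected and convex by hypothesis on $P$, and cover-lifting is vacuous since $1$ has no covers). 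Next, since we are in the $\mathbf{Set}$-valued setting as stipulated just before the statement, the Grothendieck construction $\int H \to \mathbb{K}_p$ is a discrete opfibration, and discrete opfibrations are stable under pullback; hence the base change $\mathbb{H} \to \mathbb{K}$ along the inclusion $\mathbb{K} \hookrightarrow \mathbb{K}_p$ is again a discrete opfibration.

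With these two facts in hand, Lemma~\ref{lemma: local terminals and discrete opfribration} applies verbatim with $\mathcal{B} = \mathbb{K}$, $p \colon \mathcal{E} = \mathbb{H} \to \mathbb{K}$, and equips $\mathbb{H}$ with chosen local terminal objects: concretely, for an object $(P,x)$ of $\mathbb{H}$ with $P$ connected and $x \in H[P]$, the chosen local terminal in its connected component is the opcartesian lift of the contraction $P \twoheadrightarrow 1$, i.e. the pair $(1, x_! 1)$ where $x_! 1 = H(P \twoheadrightarrow 1)(x)$. The proof of the lemma already checks that this assignment is well-defined on connected components (any morphism in $\mathbb{H}$ forces the two chosen terminals to coincide, because discreteness of the opfibration makes the relevant lifts unique and $\mathbb{K}$'s local-terminal structure identifies the images downstairs).

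The only point requiring a word of care — and the main (mild) obstacle — is that $\mathbb{H}$ was defined as a pullback over $\mathbb{K}$, not over $\mathbb{K}_p$, so one must make sure the chosen local terminals obtained this way are genuinely local terminals in $\mathbb{H}$ and not merely in $\int H$; but this is exactly what Lemma~\ref{lemma: local terminals and discrete opfribration} delivers, since it is stated for the pullback $\Fib_{\mathcal{B}_{\lt}}(p)$, and here $\mathbb{K}$ plays the role of $\mathcal{B}_{\lt} \hookrightarrow \mathcal{B} = \mathbb{K}_p$ in the sense that $\mathbb{K}$ is a (wide) subcategory of $\mathbb{K}_p$ on which the chosen-local-terminal structure lives. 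Thus the conclusion is immediate once the two hypotheses above are recorded.
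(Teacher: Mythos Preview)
Your proposal is correct and follows essentially the same approach as the paper: establish that $\mathbb{H}\to\mathbb{K}$ is a discrete opfibration (by pullback stability from $\int H\to\mathbb{K}_p$), note that $\mathbb{K}$ has a chosen terminal object (the singleton poset), and invoke Lemma~\ref{lemma: local terminals and discrete opfribration}. One small correction: in your final paragraph the roles are misidentified --- in the application of Lemma~\ref{lemma: local terminals and discrete opfribration} we have $\mathcal{B}=\mathbb{K}$ (not $\mathbb{K}_p$) and $\mathcal{B}_{\lt}=\{1\}$ (not $\mathbb{K}$), so $\Fib_{\mathcal{B}_{\lt}}(p)=H[1]$; but the worry you raise is in any case a non-issue and your earlier paragraphs already set things up correctly.
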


\begin{proof}
Since $H$ is a presheaf with values in $\mathbf{Set}$, we have that $\int H \to \mathbb{K}_p$ is a discrete opfibration. This implies by the construction of $\mathbb{H}$ and the stability of discrete opfibrations under pullback that $\mathbb{H} \to \mathbb{K}$ is a discrete opfibration.
Combining this with the fact that $\mathbb{K}$ has a terminal object (the poset with one element), it follows that the set $H[1]$ of $H$-structures of the poset with one element is a set of chosen local terminal objects in $\mathbb{H}$ as a consequence of Lemma \ref{lemma: local terminals and discrete opfribration}. %In other words, $\int_G DH$ is an object in the category of categories with chosen local terminals $\mathbf{Cat}_{\lt}$.
\end{proof}

%The \textit{t-simplex category} $\simplexcategory^{\ter}$ is the category whose objects are the nonempty finite ordinals and whose morphisms are the monotone maps that preserve the top element.
Let $\mathbf{tsGrpd}$ denote the category of $\Grpd$-valued $\simplexcategory^{\ter}$-presheaves. 

\begin{defi}\label{defi: t-nerve}
For $\mathcal{C}$ a category with chosen local terminals, its $\lt$-nerve $\nerve^{\lt}(\mathcal{C})$ is the $\simplexcategory^{\ter}$-presheaf  
\[\begin{tikzcd}
	{{{\nerve^{\lt}}}(\mathcal{C}) \colon {(\simplexcategory^{\ter})}^{\op}} & {\Cat^{\op}} & \Set
	\arrow["{\Cat(-, \mathcal{C})}", from=1-2, to=1-3]
	\arrow[hook, from=1-1, to=1-2]
\end{tikzcd}\]
\end{defi}

Let us describe the $\lt$-nerve $\nerve^{\lt}(\mathcal{C})$: for $n \geq 0$, the set $\nerve^{\lt}(\mathcal{C})_n$ is the same as the set $\nerve(\mathcal{C})_{n}$. The set $\nerve^{\lt}(\mathcal{C})_{-1}$ is the set of chosen local terminal objects in $\mathcal{C}$. The face and degeneracy maps act as the usual nerve construction except in $d_\perp \colon \nerve^{\lt}(\mathcal{C})_{0} \to \nerve^{\lt}(\mathcal{C})_{-1}$ that sends each object in $\mathcal{C}$ to its corresponding chosen local terminal object. The degeneracy map $s_0 \colon \nerve^{\lt}(\mathcal{C})_{-1} \to \nerve^{\lt}(\mathcal{C})_{0}$ is the inclusion.

\begin{exa}\label{exa: ltnerve H}
Given $H \colon \mathbb{K}_p \rightarrow \Set$ a connected directed hereditary species, we have that $\mathbb{H}$ is a category with chosen local terminal objects by Lemma \ref{lemma: GC of H is lt-nerve}. The $\lt$-nerve of $\mathbb{H}$ is described as follows:
$(\nerve^{\lt} \mathbb{H})_{-1}$ is the set $H[1]$ of $H$-structures over the poset with one element.
$(\nerve^{\lt} \mathbb{H})_{0}$ is the set of finite posets with a $H$-structure. $(\nerve^{\lt} \mathbb{H})_{1}$ is the set of contractions with a $H$-structure on the first poset. For $n \geq 2$, the elements of the set $(\nerve^{\lt} \mathbb{H})_{n}$ are $(n-1)$-chains of contractions with a $H$-structure on the first poset in the chain. 
\end{exa}

\begin{comment}
\subsection*{Half decalage}
\end{comment}

\begin{blanko}
{Half decalage} \label{subsec: half decalge}
\end{blanko}

Let $\mathbf{sGrpd}^{\tps}$ denote the subcategory of pseudosimplicial groupoids that the unique pseudo-simplicial identities involve the top face maps. Given a pseudo simplicial space $X$ in $\mathbf{sGrpd}^{\tps}$, the half upper dec $\HDec_\top X$ is a $\simplexcategory^{\ter}$-presheaf obtained by deleting the top face maps and shifting everything one position down (for $n \geq -1$, we have that $(\HDec_\top X)_n = X_{n+1}$). Note that $\HDec_\top$ throws away the top face maps but keeps the top
degeneracy maps to get a $\simplexcategory^{\ter}$-presheaf (with values in groupoids). This gives a functor $\HDec_\top \colon \mathbf{sGrpd}^{\tps} \to \mathbf{tsGrpd}$.     

A $\simplexcategory^{\ter}$-presheaf $A$ is a \emph{$\simplexcategory^{\ter}$-Segal space} if the simplicial groupoid obtained after eliminating $A_{[-1]}$ is Segal. Since we are working in this section with $\mathbf{Set}$-valued species, we have to modify the definition of $\mathbf{H}$ to $\mathbf{H} = \mathsf{S} \nerve^{\lt} \mathbb{H}$.

\begin{lem}\label{lemma: H is a Dt segal}
Let $H \colon \mathbb{K}_p \rightarrow \Set$ be a connected directed hereditary species. Then $\HDec_\top \mathbf{H}$ is a $\simplexcategory^{t}$-Segal space.
\end{lem}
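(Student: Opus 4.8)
The plan is to reduce the statement to two facts already available in the paper: that $\mathsf{S}$ preserves Segal objects, and that the upper décalage of a Segal space is again Segal. The starting observation is that, just as for $\mathbf{K}$ in Proposition~\ref{proposition:dectopKisSegal}, the extra top face maps of $\mathbf{H}=\mathsf{S}\nerve^{\lt}\mathbb{H}$ are exactly the ones manufactured by the fibre construction; deleting them and shifting everything one degree down — that is, forming $\HDec_\top\mathbf{H}$ — leaves, in each degree, nothing but $\mathsf{S}$ applied to the corresponding groupoid of $\nerve^{\lt}\mathbb{H}$ together with its original non-top face maps and all of its degeneracies. Hence the simplicial groupoid obtained from $\HDec_\top\mathbf{H}$ after eliminating its $[-1]$-part is exactly $\mathsf{S}$ applied to the ordinary upper décalage $\Dec_{\top}\nerve\mathbb{H}$ of the nerve of $\mathbb{H}$.

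First I would record that $\mathbb{H}$ is a genuine category: it is the pullback of the discrete opfibration $\int H\to\mathbb{K}_p$ along $\mathbb{K}\hookrightarrow\mathbb{K}_p$ (as used already in Lemma~\ref{lemma: GC of H is lt-nerve}), so in particular $\nerve\mathbb{H}$ is a Segal space. Next, since every Segal space is a decomposition space (Proposition~\ref{segaldecomp}), Theorem~\ref{theorem: decalage and segal condition} applies to $\nerve\mathbb{H}$ and yields that $\Dec_{\top}\nerve\mathbb{H}$ is a Segal space. Finally, $\mathsf{S}$ preserves homotopy pullbacks and hence carries Segal objects to Segal objects (exactly as was used for $\mathsf{S}\fatnerve\mathbb{K}$ in Remark~\ref{rema:decttopKexplication}); therefore $\mathsf{S}\,\Dec_{\top}\nerve\mathbb{H}$ is Segal. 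Combining these three steps with the identification from the previous paragraph shows that the simplicial groupoid underlying $\HDec_\top\mathbf{H}$ is Segal, i.e.\ that $\HDec_\top\mathbf{H}$ is a $\simplexcategory^{\ter}$-Segal space.

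The only delicate point is the bookkeeping in the first paragraph: one must check that $\HDec_\top$, applied to $\mathbf{H}$, really does nothing more than peel off the Kleisli-type top face maps — so that what remains is on the nose $\mathsf{S}$ of a plain décalage — and that the top degeneracy kept by $\HDec_\top$ together with the elimination of the $[-1]$-level precisely reconstitutes the ordinary simplicial indexing. This is the $\simplexcategory^{\ter}$-analogue of the computation $\Dec_\top\mathbf{K}=\mathsf{S}\fatnerve\mathbb{K}$ of Proposition~\ref{proposition:dectopKisSegal}, and I expect it to go through verbatim once the index conventions for $\nerve^{\lt}$ are spelled out; everything else is a direct appeal to results already in the paper. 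An alternative, slightly less hands-on route would be to first verify — by exactly the argument of Proposition~\ref{propo: H is a ds connected case}, namely the existence of a culf map $\mathbf{H}\to\mathbf{K}$ — that the redefined $\mathbf{H}$ is still a decomposition space, and then invoke Theorem~\ref{theorem: decalage and segal condition} for $\mathbf{H}$ itself; this also concludes, since the simplicial groupoid underlying $\HDec_\top\mathbf{H}$ coincides with $\Dec_{\top}\mathbf{H}$.
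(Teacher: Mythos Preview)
Your overall strategy coincides with the paper's, but the ``delicate bookkeeping'' you flag does indeed go wrong by one shift. You correctly observe that $\HDec_\top\mathbf{H}=\mathsf{S}\nerve^{\lt}\mathbb{H}$ levelwise, but then claim that eliminating the $[-1]$-level leaves $\mathsf{S}\Dec_\top\nerve\mathbb{H}$. In fact, by the description of the $\lt$-nerve in \S\ref{subsec: lt nerve} one has $(\nerve^{\lt}\mathbb{H})_n=(\nerve\mathbb{H})_n$ for all $n\geq 0$ --- the only new datum sits in degree $-1$ --- so discarding degree $-1$ from $\mathsf{S}\nerve^{\lt}\mathbb{H}$ leaves precisely $\mathsf{S}\nerve\mathbb{H}$, not its upper d\'ecalage. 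Equivalently, the single shift performed by $\HDec_\top$ already cancels the shift built into $\mathbf{H}$ by the $\lt$-nerve convention, and no further $\Dec_\top$ appears.

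The slip is harmless for the conclusion --- $\mathsf{S}\Dec_\top\nerve\mathbb{H}$ is Segal too, for the reasons you give --- but it makes the argument longer than needed. The paper's proof is your argument with the indexing corrected, in two lines: $\nerve\mathbb{H}$ is Segal because $\mathbb{H}$ is a category, and $\mathsf{S}$ preserves pullbacks, so $\mathsf{S}\nerve\mathbb{H}$ is Segal; no appeal to Theorem~\ref{theorem: decalage and segal condition} or to the decomposition-space property of $\mathbf{H}$ is required. Your alternative route at the end is a valid proof, but is likewise more circuitous than simply recognising the underlying simplicial object as $\mathsf{S}\nerve\mathbb{H}$.
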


\begin{proof}
Since $\mathbb{H}$ is a category, its nerve $\nerve\mathbb{H}$ is a Segal space \ref{exa:fatnerve is Segal}, and therefore $\mathsf{S} \nerve \mathbb{H}$ is a Segal space, as $\mathsf{S}$ preserves pullbacks and hence Segal objects. This means that $\mathsf{S} \nerve^{\lt} \mathbb{H}$  is a {$\simplexcategory^{\ter}$-Segal space}.
\end{proof}

\begin{comment}
\subsection*{The category of connected directed hereditary species and opcat}
\end{comment}

\begin{blanko}
{The category of connected directed hereditary species and $\mathbf{OpCat}$ \label{subsec: HSP to OpCat functor}}
\end{blanko}

For Batanin, Kock, and Weber~\cite{BataninKockWeber} an operadic category is a pseudosimplicial groupoid $X$ whose half upper dec $\HDec_\top X$ is equal to the symmetrical monoidal functor of the $ \lt$-nerve of some category with chosen local terminal objects. In short: it is a pair $(\mathcal{C},X)$
such that $\mathsf{S} \nerve^{\lt} \mathcal{C} =\HDec_\top X$, where $X \in \mathbf{sGrpd}^{\tps}$ and $\mathcal{C} \in \mathbf{Cat}_{\lt}$. To be more precise, they prove that the diagram
\[
\begin{tikzcd}
\mathbf{OpCat} \drpullback \ar[r] \ar[d] & \mathbf{sGrpd}^{\tps} \ar[d,
"\HDec_\top"]  \\
\mathbf{Cat}_{\lt} \ar[r, "\mathsf{S}\nerve^{\lt}"'] & \mathbf{tsGrpd}
\end{tikzcd}
\]
is a strict pullback of categories.

Let $\mathbf{ConDirHerSp}$ denote the category of connected directed hereditary species. For each connected directed hereditary species $H$, we have a pseudosimplicial groupoid $\mathbf{H}$. 
For a map $f \colon H' \to H$ between connected directed hereditary species, the Grothendieck construction of $f$ gives a map $\int f \colon \mathbb{H}' \to \mathbb{H}$. This map induces a map from $\mathbf{H}'$ to $\mathbf{H}$ that we denote as $\GD f \colon \mathbf{H}' \to \mathbf{H}$.
Since $f$ is simplicial map for each $P \in \mathbb{K}$, we have a functor from $f_P \colon H'[P] \to H[P]$. So the map $\GD f$ sends an $n$-simplex in $\mathbf{H}'$ which is a $n$-chain of contractions 
\[\begin{tikzcd}
	{P_0} & {P_1} & \dots & {P_{n-2}} & {P_{n-1}}
	\arrow[two heads, from=1-1, to=1-2]
	\arrow[two heads, from=1-2, to=1-3]
	\arrow[from=1-3, to=1-4]
	\arrow[two heads, from=1-4, to=1-5]
\end{tikzcd}\]
with an $H'$-structure $X$ in $P_0$ to the same $n$-chain but with the $H$-structure $f_{P_0}(X)$ in $P_0$ which is a $n$-simplex in $\mathbf{H}$.

We define the functor $\GD \colon \mathbf{ConDirHerSp} \to \mathbf{sGrpd}^{\tps}$ as follows: $\GD(H) = \mathbf{H}$, for each object $H \in \mathbf{ConDirHerSp}$ and for a morphism  
$f \colon H' \to H$, the functor sends it to $\GD f$. Furthermore, $\HDec_\top \circ \GD(H)$ is a $\simplexcategory^{\ter}$-Segal space by Lemma \ref{lemma: H is a Dt segal}. 

On the other hand, Lemma \ref{lemma: GC of H is lt-nerve} established that $\mathbb{H}$ is a category with chosen local terminal objects given by the set $H[1]$ of $H$-structures of the poset with one element. This gives a functor $\int^{\ast} \colon \mathbf{ConDirHerSp} \to \mathbf{Cat}_{\lt}$ that sends $H$ to the category $\mathbb{H}$.

\begin{lem}\label{lemma: CDHS commutes with GC and GD}
The diagram
\[\begin{tikzcd}
	{\mathbf{ConDirHerSp}} & {\mathbf{sGrpd}^{\tps}} \\
	{\mathbf{Cat}_{\lt}} & {\mathbf{tsGrpd}}
	\arrow["{\int^{\ast}}"', from=1-1, to=2-1]
	\arrow["{\mathsf{S} \nerve^{\lt}}"', from=2-1, to=2-2]
	\arrow["{\HDec_\top}", from=1-2, to=2-2]
	\arrow["\GD", from=1-1, to=1-2]
\end{tikzcd}\]
strictly commutes.
\end{lem}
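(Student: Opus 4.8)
The plan is to establish the strict commutativity by unwinding the definitions of the four functors and comparing the two composites $\mathbf{ConDirHerSp} \to \mathbf{tsGrpd}$ degreewise, first on objects and then on morphisms. The only structural input needed is Lemma \ref{lemma: GC of H is lt-nerve} (which identifies $\int^{\ast}(H) = \mathbb{H}$ as a category with chosen local terminals, with $(\nerve^{\lt}\mathbb{H})_{-1} = H[1]$) together with the way $\mathbf{H} = \GD(H)$ was built in this section, namely $\mathbf{H}_n = \mathsf{S}\,(\nerve^{\lt}\mathbb{H})_{n-1}$ for $n \geq 0$, with all face and degeneracy maps \emph{except} the top face maps obtained by applying $\mathsf{S}$ to the structure maps of the $\simplexcategory^{\ter}$-presheaf $\nerve^{\lt}\mathbb{H}$.

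First I would treat objects. By definition $\HDec_\top$ deletes the top face maps and reindexes via $(\HDec_\top X)_n = X_{n+1}$ while retaining the top degeneracies, so $(\HDec_\top\mathbf{H})_n = \mathbf{H}_{n+1} = \mathsf{S}\,(\nerve^{\lt}\mathbb{H})_n$ in each degree $n \geq -1$, and the surviving $\simplexcategory^{\ter}$-presheaf structure is exactly that of $\mathsf{S}\nerve^{\lt}\mathbb{H}$. Since $\int^{\ast}(H) = \mathbb{H}$, this is precisely the equality $\HDec_\top(\GD(H)) = \mathsf{S}\nerve^{\lt}(\int^{\ast}(H))$.

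Next I would treat morphisms. Given $f \colon H' \to H$, the functor $\int^{\ast} f \colon \mathbb{H}' \to \mathbb{H}$ is the one induced by the Grothendieck construction, $(P,x) \mapsto (P, f_P(x))$, while $\GD f$ sends a family of chains of contractions carrying an $H'$-structure $X$ on the bottom poset $P_0$ to the same family with the $H$-structure $f_{P_0}(X)$. Hence, in degree $n \geq 0$, $(\GD f)_{n+1}$ is $\mathsf{S}$ applied to the map on $n$-chains induced by $(P,x) \mapsto (P,f_P(x))$, i.e. to $(\nerve^{\lt}(\int^{\ast} f))_n$, and in degree $-1$ it is $\mathsf{S}(f_1)$ with $f_1 \colon H'[1] \to H[1]$; in all degrees this agrees with $\mathsf{S}\nerve^{\lt}(\int^{\ast} f)$, the top-face components being discarded by $\HDec_\top$. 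This gives $\HDec_\top(\GD f) = \mathsf{S}\nerve^{\lt}(\int^{\ast} f)$, completing the square.

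The step demanding the most care — an issue of bookkeeping rather than of real mathematical content — is keeping the two conventions for $\mathbf{H}$ aligned: the shifted simplicial object $\mathsf{S}\fatnerve^{\lt}\mathbb{H}$ of Section \ref{subsec: DCH species as decomposition} versus the $\simplexcategory^{\ter}$-presheaf $\nerve^{\lt}\mathbb{H}$ used here (differing by a degree shift and by ordinary versus fat nerve, the latter change harmless because $H$ is now $\mathbf{Set}$-valued so $\mathbb{H}$ is an ordinary category), and checking that the bottom face map of $\mathbf{H}$ — the one pushing $H$-structures forward along contractions — is indeed $\mathsf{S}$ of the map on nerves coming from the discrete opfibration $\mathbb{H} \to \mathbb{K}$, so that nothing other than the top face maps is removed by $\HDec_\top$.
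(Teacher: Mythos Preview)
Your proof is correct and takes essentially the same approach as the paper: the paper's own proof is a single sentence observing that $\int^{\ast}(H)=\mathbb{H}$ and $\GD(H)=\mathbf{H}=\mathsf{S}\nerve^{\lt}(\mathbb{H})$, so that removing the top face maps via $\HDec_\top$ recovers $\mathsf{S}\nerve^{\lt}\mathbb{H}$ on the nose. You have simply spelled this out degreewise and also checked morphisms explicitly, which the paper leaves implicit.
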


\begin{proof}
Let $H$ be an object in ${\mathbf{ConDirHerSp}}$. The commutativity of the diagram follows from the fact that $\int^*(H) = \mathbb{H}$ and $\int^{\mathbf{K}}(H) = \mathbf{H} = \mathsf{S} \nerve^{\lt} (\mathbb{H})$.
%the $\simplexcategory^{\ter}$-presheaf $\mathsf{S} \nerve^{\lt}\mathbb{H}$ is described as follows:$(\mathsf{S} \nerve^{\lt}\mathbb{H})_{-1}$ is the groupoid of families of the $H$-structures of the poset with one element $H[1]$. For $n \geq 2$, the objects $(\mathsf{S} \nerve^{\lt}\mathbb{H})_{n}$ are families of $(n-1)$-chains of contractions with a $H$-structure on the first poset of the chain and $(\mathsf{S} \nerve^{\lt}\mathbb{H})_{1}$ is the groupoid of families of non-empty $H$-structures. This description corresponds to the definition of $\HDec_\top \mathbf{H}$ given in \ref{subsec: half decalge}. In other words, the diagram commutes.    
\end{proof}

\begin{propo}\label{propo: CDHS to OpCat functor}
There exists a canonical functor from the category of connected directed hereditary species ${\mathbf{ConDirHerSp}}$ to the category of operadic categories $\mathbf{OpCat}$.
\end{propo}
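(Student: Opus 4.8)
The category $\mathbf{OpCat}$ is defined in~\cite{BataninKockWeber} as the strict pullback of the cospan
\[
\mathbf{Cat}_{\lt} \xrightarrow{\;\mathsf{S}\nerve^{\lt}\;} \mathbf{tsGrpd} \xleftarrow{\;\HDec_\top\;} \mathbf{sGrpd}^{\tps},
\]
so to produce a functor $\mathbf{ConDirHerSp} \to \mathbf{OpCat}$ it suffices to exhibit a pair of functors landing in the two legs of the cospan together with a strict commutativity between them. We already have all three ingredients assembled in the preceding lemmas: the functor $\GD \colon \mathbf{ConDirHerSp} \to \mathbf{sGrpd}^{\tps}$ sending $H \mapsto \mathbf{H}$ (well-defined since $\mathbf{H} = \mathsf{S}\nerve^{\lt}\mathbb{H}$ lies in $\mathbf{sGrpd}^{\tps}$, as its only pseudo-simplicial identities involve the top face maps, cf. Proposition~\ref{propo: H is a MDS}), the functor $\int^{\ast}\colon \mathbf{ConDirHerSp} \to \mathbf{Cat}_{\lt}$ sending $H \mapsto \mathbb{H}$ (well-defined by Lemma~\ref{lemma: GC of H is lt-nerve}, which supplies the chosen local terminals $H[1]$), and the strict commutativity $\HDec_\top \circ \GD = \mathsf{S}\nerve^{\lt} \circ \int^{\ast}$ established in Lemma~\ref{lemma: CDHS commutes with GC and GD}.

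Concretely, the proof is short: by Lemma~\ref{lemma: CDHS commutes with GC and GD} the square of functors
\[
\begin{tikzcd}
	{\mathbf{ConDirHerSp}} & {\mathbf{sGrpd}^{\tps}} \\
	{\mathbf{Cat}_{\lt}} & {\mathbf{tsGrpd}}
	\arrow["{\int^{\ast}}"', from=1-1, to=2-1]
	\arrow["{\mathsf{S} \nerve^{\lt}}"', from=2-1, to=2-2]
	\arrow["{\HDec_\top}", from=1-2, to=2-2]
	\arrow["\GD", from=1-1, to=1-2]
\end{tikzcd}
\]
strictly commutes, and since $\mathbf{OpCat}$ is the strict pullback of the bottom-right cospan, the universal property of the pullback yields a unique functor $\Phi \colon \mathbf{ConDirHerSp} \to \mathbf{OpCat}$ whose composites with the two projections of $\mathbf{OpCat}$ are $\GD$ and $\int^{\ast}$ respectively. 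On objects, $\Phi$ sends a connected directed hereditary species $H$ to the operadic category $(\mathbb{H}, \mathbf{H})$; on morphisms it acts via the Grothendieck construction as described before the statement.

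**I expect essentially no obstacle at this final step**, since all the substantive work has been done in the lemmas leading up to it — in particular the verification (Lemma~\ref{lemma: GC of H is lt-nerve}, via Lemma~\ref{lemma: local terminals and discrete opfribration}) that $\mathbb{H}$ carries chosen local terminals, and the identification $\HDec_\top \mathbf{H} = \mathsf{S}\nerve^{\lt}\mathbb{H}$, which is exactly the defining equation of an operadic category in the Batanin--Kock--Weber formalism. The only point requiring a word of care is that one is using the \emph{strict} (1-categorical) pullback, so the commutativity in Lemma~\ref{lemma: CDHS commutes with GC and GD} must be an equality of functors, not merely a natural isomorphism; this is why the paper insists on working with $\mathbf{Set}$-valued species, finite ordinals rather than arbitrary finite sets, and strict pullbacks throughout this section, so that $\int^\ast(H) = \mathbb{H}$ and $\GD(H) = \mathsf{S}\nerve^{\lt}(\mathbb{H})$ hold on the nose. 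Granting that, the proof is a one-line appeal to the universal property.
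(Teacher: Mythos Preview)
Your proposal is correct and follows essentially the same approach as the paper: both argue by invoking the universal property of the strict pullback defining $\mathbf{OpCat}$, using the functors $\GD$ and $\int^{\ast}$ together with the strict commutativity from Lemma~\ref{lemma: CDHS commutes with GC and GD} to obtain the induced functor $H \mapsto (\mathbb{H}, \mathbf{H})$. Your additional remarks on why strictness is needed (Set-valued species, ordinals, strict pullbacks) are accurate and match the conventions the paper adopts at the start of the section.
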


\begin{proof}
Consider the diagram
\[\begin{tikzcd}
	{\mathbf{ConDirHerSp}} \\
	& {\mathbf{OpCat}} & {\mathbf{sGrpd^{\tps}}} \\
	& {\mathbf{Cat}_{\lt}} & {\mathbf{tsGrpd}}.
	\arrow[from=2-2, to=3-2]
	\arrow["{}", dotted, from=1-1, to=2-2]
	\arrow["{\HDec_\top}", from=2-3, to=3-3]
	\arrow["{\mathsf{S} \nerve^{\lt}}"', from=3-2, to=3-3]
	\arrow[from=2-2, to=2-3]
	\arrow["\lrcorner"{anchor=center, pos=0.125}, draw=none, from=2-2, to=3-3]
	\arrow["{\int^{\ast}}"', bend right=20, from=1-1, to=3-2]
	\arrow["\GD", bend left=20, from=1-1, to=2-3]
\end{tikzcd}\]
Batanin, Kock, and Weber~\cite{BataninKockWeber} proved that the square is a pullback. The outer diagram commutes by Lemma \ref{lemma: CDHS commutes with GC and GD}. The dotted arrow then exists by the pullback property of $\mathbf{OpCat}$. The functor $\mathbf{ConDirHerSp} \to \mathbf{OpCat}$ sends a connected directed hereditary species $H$ to the pair $(\mathbb{H}, \mathbf{H})$.
\end{proof}

Connected directed hereditary species constitute a new family of examples of operadic categories. In fact, the connected directed hereditary species associated to the Fauvet--Foissy--Manchon comodule bialgebra of finite topologies and admissible maps; and the connected directed hereditary species $H_{\CEM}$ associated to the Calaque--Ebrahimi-Fard--Manchon comodule bialgebra of rooted trees are now covered by the theory of operadic categories.

\section{Directed hereditary species as monoidal decomposition spaces, comodule bialgebras and operadic categories}\label{section:decomposition D}

Schmitt hereditary species are not connected directed hereditary species, as the fibres along a surjection between discrete posets are not necessarily connected. To cover these examples, in this section, we introduce the notion of collapse, which allows for non-connected fibres. This leads to the notion of (not-necessarily-connected) directed hereditary species. Furthermore, each directed hereditary species induces a decomposition space (\ref{subsec:direcherspecies as decomposition}), a comodule bialgebra (\ref{subsec:dhs comodule bialgebra}), and a operadic category (\ref{subsec:dhs as operadic categories}).

%In this section, the monoidal decomposition space $\mathbf{D}$ of finite posets and contractions is defined. 

\begin{blanko} 
{Partially reflecting maps}\label{subsec:mapsposets}
\end{blanko}

\begin{defi}\label{definition: contraction}
A map of posets $f \colon P \rightarrow Q$ is \emph{partially reflecting} if $f(x) < f(y)$ in $Q$ implies that $x < y$ in $P$. Partially reflecting monotone surjections are called \emph{collapse maps}.
\end{defi}

\begin{lem}\label{lemma:stablereflectingunderpullback}
In the category of posets, collapse maps are stable under pullback.
\end{lem}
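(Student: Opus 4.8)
The plan is to follow the template of Lemma~\ref{lemma:stableconnectedpullback}. Fix a collapse map $f \colon P \twoheadrightarrow Q$ and an arbitrary monotone map $g \colon V \to Q$, and form the pullback
\[\begin{tikzcd}
P \times_Q V \arrow[d, "\pi_V"'] \arrow[r, "\pi_P"] & P \arrow[d, "f", two heads] \\
V \arrow[r, "g"'] & Q,
\end{tikzcd}\]
which in the category of posets is the subposet $\{(p,v) : f(p)=g(v)\}$ of the product $P \times V$. The goal is to show that the pulled-back map $\pi_V$ is again a collapse map, that is, a partially reflecting monotone surjection. The monotone-surjection part is routine: $\pi_V$ is monotone because the order on $P \times_Q V$ is inherited from the product, and it is surjective because monotone surjections are stable under pullback, or concretely because for each $v \in V$ the surjectivity of $f$ supplies a $p$ with $f(p)=g(v)$, so that $(p,v)$ lies over $v$.

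The core of the argument is the partial-reflection property of $\pi_V$. Take two elements $(p,v)$ and $(p',v')$ of the pullback and suppose $\pi_V(p,v)=v < v' = \pi_V(p',v')$ in $V$; I must produce $(p,v) < (p',v')$ in $P\times_Q V$, i.e. $p \le p'$ (since $v \le v'$ already holds and the two elements are distinct). Applying monotonicity of $g$ to $v < v'$ gives $f(p)=g(v) \le g(v')=f(p')$ in $Q$. When this inequality is strict, the partial-reflection property of the collapse map $f$ yields $p < p'$, hence $p \le p'$, and therefore $(p,v) < (p',v')$, as wanted.

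The main obstacle is precisely the degenerate case in which $g$ identifies the comparable pair, i.e. $f(p)=f(p')$ while $v<v'$: here partial reflection of $f$ gives no relation between $p$ and $p'$, so the product order does not immediately deliver $(p,v)<(p',v')$. This is the exact analogue of the difficulty that forced the restriction to convex base maps in Lemma~\ref{lemma:stableconnectedpullback}, and I expect it to be the crux of the proof; resolving it will require exploiting the finer structure of collapse maps --- the way the order of $Q$ is determined by that of $P$ --- rather than the product order alone, and isolating exactly which hypothesis on $g$ (if any) is needed to rule the case out. Once this case is settled, combining it with the strict case above completes the verification that $\pi_V$ is partially reflecting, and hence that it is a collapse map.
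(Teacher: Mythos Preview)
Your setup and the non-degenerate case are exactly what the paper does; in fact the paper's proof is line-for-line your argument, except that where you pause at the degenerate case $g(v)=g(v')$, the paper simply writes ``Since $g$ is a monotone map, $g(q) <_V g(q')$'' and moves on. You are right to stop: that inference is invalid for a general monotone $g$, and the obstacle you isolate is not a difficulty to be overcome but a genuine counterexample.

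Take $P=\{a,b\}$ discrete, $Q=\{*\}$, and $f\colon P\twoheadrightarrow Q$ the unique map. Then $f$ is vacuously partially reflecting, hence a collapse map. Pull back along $g\colon \underline{2}=\{0<1\}\to Q$. The pullback is $P\times \underline{2}$ with the product order, i.e.\ two disjoint copies of $\underline{2}$, and $\pi_{\underline{2}}$ sends $(x,i)\mapsto i$. Now $\pi_{\underline{2}}(a,0)=0<1=\pi_{\underline{2}}(b,1)$, yet $(a,0)$ and $(b,1)$ are incomparable. So $\pi_{\underline{2}}$ is \emph{not} partially reflecting, and collapse maps are not stable under arbitrary pullback.

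The statement therefore needs the same restriction as Lemma~\ref{lemma:stableconnectedpullback}: stability under pullback \emph{along convex maps} (injectivity of $g$ is what rules out your degenerate case, since then $v<v'$ forces $g(v)\neq g(v')$ and hence $g(v)<g(v')$). This weaker form is all that is ever used in the paper --- for composing partially defined collapse maps and for the top face maps of $\mathbf{D}$ one only pulls back along convex inclusions --- so nothing downstream is affected. Your instinct that ``isolating exactly which hypothesis on $g$ is needed'' was the right move; the answer is injectivity, and with it your argument (and the paper's) goes through verbatim.
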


\begin{proof}
Let $P, Q$ and $V$ be posets. Let $f \colon P \twoheadrightarrow V$ be a collapse. Let $g \colon Q \rightarrow V$ be a monotone map and let
\begin{center}
\begin{tikzcd}
P \times_V Q \arrow[d, "\pi_P"'] \arrow[r, "\pi_Q"] \drpullback & Q \arrow[d, "g"] \\
P \arrow[r, "f"', two heads]                                   & V                          
\end{tikzcd}
\end{center}
be a pullback diagram. Since monotone surjections are stable under pullback, we have that $\pi_Q$ is a monotone surjection. It remains to prove that $\pi_Q$ is a partially reflecting map. Let $(p, q)$ and $(p', q')$ be objects in $P \times_V Q$. This means
\begin{equation}\label{spullback1}\tag{1}
f(p) = g(q) \; \mbox{and} \; \; f(p') = g(q') .
\end{equation}
Assuming that $\pi_Q(p, q) <_Q \pi_Q(p', q')$, we get that
\begin{equation}\label{spullback2}\tag{2}
q <_Q q'.
\end{equation}
Since $g$ is a monotone map, $g(q) <_V g(q')$. This together with Eq.~(\ref{spullback1}) implies that $f(p) <_V f(p')$. So $p <_P p'$ by the partially reflecting property of $f$. This combining with (\ref{spullback2}) implies $(p, q) < (p', q')$. Hence, $\pi_Q$ is partially reflecting. 
\end{proof}

\begin{comment}
\noindent An illustration of a collapse is given by the following picture
\begin{center}
\begin{tikzpicture}[yscale=1,xscale=1]
\node[draw] (box2) at (0,2){%
\begin{tikzcd}[column sep=small,row sep=scriptsize]
	\bullet & \bullet & \bullet \\
	\bullet & \bullet & \bullet
	\arrow[from=1-2, to=2-2]
\end{tikzcd}
};
\node[draw] (box1) at (0,0){%
\begin{tikzcd}[column sep=small,row sep=scriptsize]
	\bullet & \bullet & \bullet
\end{tikzcd}
};
\draw[->>, shorten <=3pt, shorten >=3pt] (box2) to (box1);
\end{tikzpicture}
\end{center}
\end{comment}

\begin{comment}
\subsection*{Directed hereditary species}
\end{comment}

\begin{blanko}
{Directed Hereditary Species}\label{subsec:DHS no connected case}
\end{blanko}

We can also define the notion of directed hereditary species for the non-connected case by substituting the category $\mathbb{K}_p$ of partially defined contractions by the category $\mathbb{D}_p$ of partially defined collapse maps in Definition \ref{definition: DCHS}. 

A \emph{partially defined collapse map} $P \rightarrow Q$ consists of a span $\begin{tikzcd}[column sep=scriptsize]
	P & {P'} & Q
	\arrow["f", two heads, from=1-2, to=1-3]
	\arrow["\imath"', tail, from=1-2, to=1-1]
\end{tikzcd}$
% a convex subposet $P'$ of $P$ and a collapse $P' \twoheadrightarrow Q$. 
%\begin{center}
%\begin{tikzcd}[column sep=small]
%  & P' \arrow[rd, "f", two heads] \arrow[ld, "i"', tail] &   \\
%P &                                                     & Q
%\end{tikzcd}
%\end{center}
where $i$ is a convex map and $f$ is a collapse. Partially defined collapse maps are composed by pullback composition of spans in the category $\mathbb{D}$.

\begin{defi}\label{definition: CHS}
A \emph{directed hereditary species} is a functor $H \colon \mathbb{D}_p \rightarrow \Grpd$. 
\end{defi}

\begin{exa}
Any Schmitt hereditary species is a directed hereditary species since any set can be regarded as a discrete poset, and any partial surjection of sets is then a partially defined collapse map of discrete posets.
\end{exa}

\begin{comment}
\subsection*{Pseudosimplicial groupoid of collapse maps}
\end{comment}

\begin{blanko} 
{Pseudosimplicial groupoid of collapse maps}\label{subsec: CP definition}
\end{blanko}

In this subsection, the monoidal decomposition space $\mathbf{D}$ of finite non-empty posets and collapse maps is defined in analogy with $\mathbf{K}$. Let $\mathbb{D}$ denote the category of finite posets and collapse maps. We define $\mathbf{D} := \mathsf{S}\fatnerve^{\lt}(\mathbb{D})$ to be the symmetric monoidal category functor $\mathsf{S}$ applied to the fat $\lt$-nerve of $\mathbf{D}$. All the face maps (except the missing top ones) and degeneracy maps are $\mathsf{S}$ applied to the face and degeneracy maps of $\fatnerve^{\lt}(\mathbb{D})$. The top face map of a collapse map is the family of posets, obtained as the fibre over each element in the target poset.

\begin{propo}\label{proposition:Dsimplicial}
The groupoids $\mathbf{D}_n$ and the degeneracy and face maps given above form a pseudosimplicial groupoid $\mathbf{D}$.
\end{propo}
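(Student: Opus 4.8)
The plan is to mirror exactly the treatment of $\mathbf{K}$ carried out in Propositions \ref{proposition:dectopKisSegal} and \ref{proposition:Ksimplicial}, since $\mathbf{D} = \mathsf{S}\fatnerve^{\lt}(\mathbb{D})$ is defined in precise analogy with $\mathbf{K} = \mathsf{S}\fatnerve^{\lt}(\mathbb{K})$, with collapse maps replacing contractions and the stability Lemma \ref{lemma:stablereflectingunderpullback} (stability of collapse maps under arbitrary pullback) playing the role that Lemma \ref{lemma:stableconnectedpullback} played for contractions. First I would observe, exactly as in Proposition \ref{proposition:dectopKisSegal}, that $\Dec_\top \mathbf{D} = \mathsf{S}\fatnerve\mathbb{D}$: taking the upper decalage deletes in each degree the top face map and the top degeneracy map, and by construction all the remaining face and degeneracy maps of $\mathbf{D}$ are $\mathsf{S}$ applied to those of the fat $\lt$-nerve, which below the top are just the usual fat nerve maps of the category $\mathbb{D}$. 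Consequently all simplicial identities \emph{not} involving $d_\top$ hold strictly, being inherited from the genuine simplicial groupoid $\mathsf{S}\fatnerve\mathbb{D}$ (here using that $\mathsf{S}$ is a functor and preserves the relevant structure).

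Next I would treat the identities that do involve $d_\top$. The identities $d_i d_\top = d_\top d_i$ for $0 < i < n$ and $d_\top s_i = s_i d_\top$ hold strictly: forming fibres over elements of the target poset commutes on the nose with composing inner collapse maps, with forgetting inner posets, and with inserting identities, because pullback of a composite is the composite of pullbacks and the fibres in question are literally the same subposets. The only genuinely pseudo identity is $d_\top d_\top \simeq d_\top d_{\top-1}$. For this I would reproduce the argument of Proposition \ref{proposition:Ksimplicial} verbatim: given a chain $P \overset{f}{\twoheadrightarrow} Q \overset{g}{\twoheadrightarrow} V$ of collapse maps (the case $n=2$, the general case being notationally heavier but identical), for each $v \in V$ and each $q \in Q_v$ one forms the iterated fibre $(P_v)_q$ via the stacked pullback squares, and $(P_v)_q \cong P_q$ canonically because both are the pullback of $f$ along $\ulcorner q \urcorner \colon 1 \to Q$. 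These canonical isomorphisms, indexed over $q$, assemble into the required coherence isomorphism between $d_\top d_\top$ and $d_\top d_{\top-1}$ on $\mathbf{D}_3$, using that each $(f_i)_q$ is again a collapse map by Lemma \ref{lemma:stablereflectingunderpullback}.

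The main point to be careful about — and the only place where the non-connected setting differs from $\mathbf{K}$ — is that the fibres of a collapse map need not be connected, so one cannot appeal to connectedness at any step; but the argument above never uses it, relying only on convexity and non-emptiness of the fibres (which hold because collapse maps are surjective and partially reflecting) and on the pullback-stability of collapse maps. Thus I do not expect a real obstacle here: the proposition is a routine transcription of the $\mathbf{K}$ case, and the coherence data for the single pseudo identity is governed by the universal property of pullbacks exactly as before.
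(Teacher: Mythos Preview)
Your proposal is correct and follows essentially the same approach as the paper, which simply states that the proof is analogous to that of Proposition~\ref{proposition:Ksimplicial}. You are in fact slightly more careful than the paper in separating out the identities involving $d_\top$ other than $d_\top d_\top \simeq d_\top d_{\top-1}$ and explaining why they hold strictly.
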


\begin{proof}
The proof is analogous to that of Proposition \ref{proposition:Ksimplicial}.
\end{proof}

%To simplify the proof that $\mathbf{D}$ is a decomposition space, we need some preliminaries. The simplicial groupoid $\Dec_\top \mathbf{D}$ is described as follows: $(\Dec_\top \mathbf{D})_0$ is the groupoid  whose objects are finite families of finite non-empty posets, and a morphism $\lbrace u_i \rbrace_{i \in I} \colon \lbrace P_i \rbrace_{i \in I} \rightarrow \lbrace Q_j \rbrace_{j \in J}$ consists of a bijection $\sigma \colon \underline{n} \rightarrow \underline{m}$ and morphisms $u_i \colon P_i \rightarrow Q_{\sigma(i)}$. The objects of the groupoid $(\Dec_\top \mathbf{D})_n$ are finite families of $(n-1)$-chains of collapse maps between non-empty finite posets and the morphisms of $(\Dec_\top \mathbf{D})_n$ are $n$-tuples of compatible bijections. %On the other hand, with the description of  $\mathbf{N}\mathbb{CP}$  given in \ref{subsec:fatnerve}, it is easy to deduce the following result.

\begin{propo}\label{proposition:dectopDisSegal}
We have an equality $\Dec_\top \mathbf{D} = \mathsf{S}\fatnerve \mathbb{D}$.
\end{propo}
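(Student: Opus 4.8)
The plan is to mirror the proof of Proposition \ref{proposition:dectopKisSegal} essentially verbatim, since $\mathbf{D}$ was deliberately constructed in analogy with $\mathbf{K}$. The statement $\Dec_\top \mathbf{D} = \mathsf{S}\fatnerve\mathbb{D}$ is a purely formal consequence of the definition $\mathbf{D} := \mathsf{S}\fatnerve^{\lt}(\mathbb{D})$ together with the two commutations: (i) $\Dec_\top$ commutes with $\mathsf{S}$ (both only act on the ``top'' data, $\mathsf{S}$ acting levelwise and $\Dec_\top$ deleting the top degree's top face and top degeneracy), and (ii) $\Dec_\top \fatnerve^{\lt}(\mathcal{C}) = \fatnerve(\mathcal{C})$ for any category $\mathcal{C}$ with chosen local terminals, which is immediate from Definition \ref{defi: t-nerve}: the fat $\lt$-nerve in degree $n$ is the fat nerve in degree $n-1$ with an extra top degeneracy and (in the relevant degrees) an extra top face map $d_\top$, and these are precisely what $\Dec_\top$ throws away.

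So the proof I would write is a single short paragraph: start from $\mathbf{D} = \mathsf{S}\fatnerve^{\lt}(\mathbb{D})$; recall that taking the upper décalage deletes, in each degree, the top face map $d_\top$ and the top degeneracy map $s_\top$ and shifts down by one; recall from Definition \ref{defi: t-nerve} that $\fatnerve^{\lt}(\mathcal{C})$ is, degreewise, $\fatnerve(\mathcal{C})$ shifted up by one with the extra top face and top degeneracy maps adjoined; conclude that $\Dec_\top(\mathsf{S}\fatnerve^{\lt}(\mathbb{D})) = \mathsf{S}\,\Dec_\top(\fatnerve^{\lt}(\mathbb{D})) = \mathsf{S}\fatnerve(\mathbb{D})$, using that $\mathsf{S}$ is applied levelwise and therefore commutes with the levelwise operation of deleting chosen face/degeneracy maps and reindexing. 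This is exactly the argument given for $\mathbf{K}$, and nothing about the combinatorics of collapse maps (as opposed to contractions) enters.

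There is no real obstacle here; the only thing to be careful about is bookkeeping of which face maps survive the décalage — specifically that the map $d_\top$ removed by $\Dec_\top$ in degree $n$ is exactly the ``new'' top face map adjoined in the fat $\lt$-nerve (the one defined via fibres of collapse maps over target elements, resp.\ the one sending a family of posets to its underlying set in the bottom degree), and that all the remaining face and degeneracy maps of $\mathbf{D}$ are by construction $\mathsf{S}$ applied to those of $\fatnerve^{\lt}(\mathbb{D})$, which in turn agree with those of $\fatnerve(\mathbb{D})$ after the shift. Once this matching is observed, the equality is an identity of $\simplexcategory^{\op}$-diagrams, not merely an equivalence.

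\begin{proof}
The proof follows from combining that $\mathbf{D} = \mathsf{S}\fatnerve^{\lt}(\mathbb{D})$, the definition of the fat $\lt$-nerve, and that taking upper decalage is deleting, in each degree, the top face map and the top degeneracy map. Indeed, by Definition \ref{defi: t-nerve} the groupoid $\fatnerve^{\lt}(\mathbb{D})_n$ agrees with $\fatnerve(\mathbb{D})_{n-1}$ for $n \geq 1$, and all face and degeneracy maps of $\fatnerve^{\lt}(\mathbb{D})$ coincide with those of $\fatnerve(\mathbb{D})$ under this reindexing except for the extra top face map and the extra top degeneracy map. Since $\mathsf{S}$ is applied levelwise, it commutes with deleting chosen face and degeneracy maps and with reindexing; hence $\Dec_\top \mathbf{D} = \Dec_\top \mathsf{S}\fatnerve^{\lt}(\mathbb{D}) = \mathsf{S}\,\Dec_\top \fatnerve^{\lt}(\mathbb{D}) = \mathsf{S}\fatnerve\mathbb{D}$.
\end{proof}
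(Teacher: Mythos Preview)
Your proof is correct and takes essentially the same approach as the paper: the paper's proof of this proposition is literally ``analogous to that of Proposition~\ref{proposition:dectopKisSegal} but using collapse maps instead of contractions,'' and your first sentence reproduces that proof verbatim. One minor bookkeeping point: the fat $\lt$-nerve $\fatnerve^{\lt}(\mathbb{D})$ is a $\simplexcategory^{\ter}$-presheaf and so has no top face maps at all---the extra $d_\top$ belongs to $\mathbf{D}$ itself (defined separately via fibres), not to $\fatnerve^{\lt}(\mathbb{D})$---but this does not affect the argument.
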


\begin{proof}
The proof is analogous to that of Proposition \ref{proposition:dectopKisSegal} but using  collapse maps instead of contractions. 
%Note that the objects of $(\mathsf{S}\nerve \mathbb{D})_n$ are finite families of $(n-1)$-chains of collapse maps between non-empty finite posets, which is the same as the description given above of $(\Dec_\top \mathbf{D})_n$.
\end{proof}

\begin{rema}\label{rema:decttopDexplication}
Since $\mathsf{S}$ preserves pullbacks and $\nerve \mathbb{D}$ is a Segal space,
Proposition \ref{proposition:dectopDisSegal} implies that $\Dec_\top \mathbf{D}$ is a Segal space. This is equivalent to saying that for each $n \geq 2$ the following diagram is a pullback for $0 < i < n$:
\begin{center}
\begin{tikzcd}
\mathbf{D}_{n+1}   \arrow[r, "d_{i+1}"]\arrow[d, "d_\bot"']& \mathbf{D}_n \arrow[d, "d_\bot"] \\
\mathbf{D}_n \arrow[r, "d_i"']& \mathbf{D}_{n-1}.
\end{tikzcd}
\end{center}
\end{rema}

\begin{lem}\label{lemma:surjectivetoprovedecbotDisSegal}
Suppose we have a collapse $f \colon P \twoheadrightarrow Q$ and a family of collapse maps $\lbrace h_q \colon P_q\twoheadrightarrow W_q \rbrace_{q \in {Q}}$. Then there exists a unique poset $W$ and collapse maps $h$ and $g$ such that the diagram 
\[\begin{tikzcd}
	P & W & Q \\
	{P_q} & {W_q}
	\arrow["{h_q}"', two heads, from=2-1, to=2-2]
	\arrow[tail, from=2-1, to=1-1]
	\arrow[dotted, tail, from=2-2, to=1-2]
	\arrow["h"', dotted, two heads, from=1-1, to=1-2]
	\arrow["g"', dotted, two heads, from=1-2, to=1-3]
	\arrow["f", bend left = 20, two heads, from=1-1, to=1-3]
\end{tikzcd}\]
commutes. Here the vertical arrows are convex inclusions.
\end{lem}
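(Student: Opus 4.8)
The plan is to mirror the proof of Lemma~\ref{lemma:surjectivetoprove K is DS}, dropping the connectivity requirements (which now play no role) and replacing the cover-lifting arguments by uses of the \emph{partially reflecting} property. As there, I would proceed in three steps: build the underlying set of $W$; equip it with the partial order forced by monotonicity of $h$ and $g$; and verify that $h$ and $g$ are collapse maps.

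First, since the fibres $P_q$ partition $P$, set $W := \sum_{q \in Q} W_q$ and $h := \sum_{q \in Q} h_q \colon P \to W$, and let $g \colon W \to Q$ send $w \in W_q$ to $q$. At the level of sets, $g \circ h = f$ and the squares involving $P_q$ and $W_q$ commute by construction; moreover $W$ and the \emph{sets} underlying $h$, $g$ are forced, since a collapse $h$ is surjective and its restriction to each $P_q$ must agree with $h_q$. Second, define $<_W$ as the transitive closure of the relation generated by: (a) $w <_{W_q} w'$ whenever $w,w'\in W_q$; and (b) $h(p)<_W h(p')$ whenever $p<_P p'$ (condition (b) being exactly what is needed for $h$ to be monotone). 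I would then isolate the closed-form description
\[
w<_W w' \iff \big(g(w)=g(w')=q \text{ and } w<_{W_q}w'\big)\ \text{ or }\ g(w)<_Q g(w').
\]
That the right-hand side lies in the transitive closure uses that $f$ is partially reflecting, so $q<_Q q'$ forces $p<_P p'$ for any $p\in P_q$, $p'\in P_{q'}$ (both nonempty, being fibres of surjections), exhibiting $(w,w')$ as an instance of (b); the reverse inclusion follows because the right-hand side is transitive and contains the generators. From this description, irreflexivity and antisymmetry reduce to those of $Q$ and of the $W_q$, so $<_W$ is a genuine partial order; and it is the \emph{unique} order making $h$, $g$ monotone with the $W_q\hookrightarrow W$ convex: the fibre $g^{-1}(q)$ must be $W_q$ with its given order, and $g$ must be partially reflecting, so the order is pinned down.

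Third, I would check that $h$ and $g$ are collapse maps. Both are monotone (by construction) and surjective (each $h_q$ is surjective and each $P_q$, hence each $W_q$, is nonempty). For partial reflection of $h$: if $h(p)<_W h(p')$, put $q=f(p)$, $q'=f(p')$; if $q=q'$ then $p,p'\in P_q$, the relation is $h_q(p)<_{W_q}h_q(p')$, and partial reflection of $h_q$ gives $p<_P p'$; if $q\neq q'$ then monotonicity of $g$ gives $q<_Q q'$, and partial reflection of $f$ gives $p<_P p'$. Partial reflection of $g$ is the dual argument: if $g(w)=q<_Q q'=g(w')$, pick $p\in h_q^{-1}(w)$, $p'\in h_{q'}^{-1}(w')$; partial reflection of $f$ gives $p<_P p'$, whence $w<_W w'$ by (b). Finally, stability of collapse maps under pullback (Lemma~\ref{lemma:stablereflectingunderpullback}) together with $f^{-1}(q)=P_q$, $g^{-1}(q)=W_q$ makes the squares in the statement pullbacks, so the vertical arrows are the asserted convex inclusions.

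I do not expect a serious obstacle here: the whole construction is dictated by requiring $h$ and $g$ to be monotone, and the only genuinely new ingredient relative to Lemma~\ref{lemma:surjectivetoprove K is DS} is that the case analysis for the partially reflecting conditions uses the partially reflecting property of $f$ in place of cover-lifting. The one piece of slightly delicate bookkeeping is verifying that the transitive closure in step two collapses to the displayed closed form, and I would present that as a short separate sub-claim.
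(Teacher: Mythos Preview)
Your proof is correct and follows essentially the same approach as the paper. The only difference is that the paper defines $<_W$ directly by your closed-form description (skipping the transitive-closure step that you then simplify away), and its check that $h$ is partially reflecting is in fact less careful than your two-case analysis.
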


\begin{proof}
We will do the proof in two steps: in the first place, we will construct the underlying set of the poset $W$ and the functions $h \colon P \rightarrow W$ and $g \colon W \rightarrow Q$. After that, we will construct a partial order $<_W$ on $W$ forced by the requirement that $h$ and $g$ are collapse maps. 
\begin{itemize}
\item Put $W := \sum_{q \in Q} W_q$ and $h := \sum_{q \in Q} h_q$. The map $g \colon W \to Q$ is defined as $g(w) = q$ for $w \in W_q$. Furthermore, the diagram
\[\begin{tikzcd}
	P & W & Q \\
	{P_q} & {W_q} & 1
	\arrow["{h_q}"', two heads, from=2-1, to=2-2]
	\arrow[tail, from=2-1, to=1-1]
	\arrow[dotted, tail, from=2-2, to=1-2]
	\arrow["h"', dotted, two heads, from=1-1, to=1-2]
	\arrow["g"', dotted, two heads, from=1-2, to=1-3]
	\arrow["f", bend left = 20, two heads, from=1-1, to=1-3]
	\arrow[from=2-2, to=2-3]
	\arrow["{\ulcorner q\urcorner }"', from=2-3, to=1-3]
\end{tikzcd}\]
commutes at the level of sets by the way $h$ and $g$ were defined.

\item The partial order $<_W$ on $W$ is defined as follows: for $w, w' \in W$, we declare that $w <_W w'$ if one of the following conditions is satisfied:
\begin{enumerate}
\item In case $w, w' \in W_q$ and  $w <_{W_{q}} w'$;
\item In case $g(w) \neq g(w')$ and $g(w) <_Q g(w')$.
\end{enumerate}
The condition $(b)$ is required to make $g \colon W \rightarrow Q$ to be a collapse. Let $p$ and $p'$ be objects in $P_i$ such that $h(p) <_W h(p')$. Applying $g$, we have that $g(h(p)) <_{Q} g(h(p'))$. Since $f = g \circ h$, it follows that $f(p) <_Q f(p')$. By hypothesis, $f$ is partially reflecting. This implies that $p <_{P} p'$. Therefore, $h$ is a partially reflecting map. Furthermore, $h$ is a monotone surjection. Indeed, let $p$ and $p'$ be objects in $P$ such that $p <_{P} p'$. Applying $g \circ h$, we have that $g(h(p)) <_Q g(h(p'))$. Since $g$ is partially reflecting, it follows that $h(p) <_W h(p')$. 
\end{itemize}
\end{proof}

\begin{rema}
Lemma \ref{lemma:surjectivetoprovedecbotDisSegal} would not hold for general monotone surjections instead of collapse maps. Suppose we have monotone surjections illustrated in the following picture:
\begin{center}
\begin{tikzpicture}[yscale=1,xscale=1]
\node[draw] (box3) at (0,4){
\begin{tikzcd}[column sep=small,row sep=scriptsize]
	a & b
\end{tikzcd}
};

\node[draw] (box2) at (0,2){%
\begin{tikzcd}[column sep=small,row sep=scriptsize]
	a & b
\end{tikzcd}
};
\node[draw] (box1) at (0,0){%
\begin{tikzcd}
	a & b
	\arrow[from=1-1, to=1-2]
\end{tikzcd}
};
\draw[->>, shorten <=3pt, shorten >=3pt] (box2) to (box1);
\draw[->>, shorten <=3pt, shorten >=3pt] (box3) to (box2);

\node[draw] (box32) at (5,4){
\begin{tikzcd}[column sep=small,row sep=scriptsize]
	a & b
\end{tikzcd}
};

\node[draw] (box22) at (5,2){%
\begin{tikzcd}
	a & b
	\arrow[from=1-1, to=1-2]
\end{tikzcd}
};

\node[draw] (box12) at (5,0){%
\begin{tikzcd}
	a & b
	\arrow[from=1-1, to=1-2]
\end{tikzcd}
};
\draw[->>, shorten <=3pt, shorten >=3pt] (box22) to (box12);
\draw[->>, shorten <=3pt, shorten >=3pt] (box32) to (box22);
\end{tikzpicture}
\end{center}  

If the inclusion map from $\lbrace a, b \rbrace$ to $\lbrace a \rightarrow b \rbrace$ plays the role of $f$ in Lemma \ref{lemma:surjectivetoprovedecbotDisSegal}, the collapse maps illustrated above are two solutions to the problem described in Lemma \ref{lemma:surjectivetoprovedecbotDisSegal} and therefore the lemma would be false.
\end{rema}

\begin{lem}\label{lemma:discretedibrationDdi}
For each $0 < i < n$, the map $d_i \colon \mathbf{D}_n \rightarrow \mathbf{D}_{n-1}$ is a fibration.
\end{lem}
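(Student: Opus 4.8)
The plan is to mirror exactly the strategy used for Lemma \ref{lemma:discretedibrationK} (fibrationess of the inner face maps of $\mathbf{K}$). First I would reduce from $\mathbf{D}$ to $\mathbf{D}^\circ := \fatnerve^{\lt}(\mathbb{D})$: since $\mathsf{S}$ preserves fibrations \cite{Weber2015InternalAC} and $d_i \colon \mathbf{D}_n \to \mathbf{D}_{n-1}$ is literally $\mathsf{S}(d_i) \colon \mathsf{S}(\mathbf{D}^\circ_n) \to \mathsf{S}(\mathbf{D}^\circ_{n-1})$ for $0 < i < n$ (these are not the top face maps), it suffices to show each inner $d_i \colon \mathbf{D}^\circ_n \to \mathbf{D}^\circ_{n-1}$ is a fibration. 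Then, as in the proof of Lemma \ref{lemma:discretedibrationK}, I would treat the representative case $d_1 \colon \mathbf{D}^\circ_2 \to \mathbf{D}^\circ_1$ in detail and remark that the other cases $d_i \colon \mathbf{D}^\circ_n \to \mathbf{D}^\circ_{n-1}$ are entirely analogous (one composes the $i$th and $(i+1)$th collapse maps, and the lifting is built the same way on the untouched part of the chain).

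For the case $d_1 \colon \mathbf{D}^\circ_2 \to \mathbf{D}^\circ_1$: an object of $\mathbf{D}^\circ_2$ is a collapse map $f \colon P \twoheadrightarrow Q$, and $d_1$ sends it to its source $P$; an object of $\mathbf{D}^\circ_1$ is a finite poset, and morphisms are monotone bijections. Given such $f$ and a morphism $u \colon P' \to P$ in $\mathbf{D}^\circ_1$ (a monotone bijection), I would exhibit the lift as the square
\[\begin{tikzcd}
P' \arrow[d, "f\circ u"', two heads] \arrow[r, "u"] \arrow[white]{dr}[black, description]{\sigma} & P \arrow[d, "f", two heads] \\
Q \arrow[r, "\identity_Q"'] & Q,
\end{tikzcd}\]
noting that $f\circ u$ is again a collapse map because $u$ is a monotone bijection (hence partially reflecting, and the composite of a partially reflecting map with a collapse is a collapse — a monotone surjection that reflects strict order). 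This $\sigma \colon f\circ u \to f$ is a morphism in $\mathbf{D}^\circ_2$ with $d_1(\sigma) = u$. Since $\mathbf{D}^\circ_1$ and $\mathbf{D}^\circ_2$ are groupoids, producing any lift of $u$ suffices to conclude that $d_1$ is a fibration (equivalently, one checks $\sigma$ is cartesian exactly as in the omitted portion of the proof of Lemma \ref{lemma:discretedibrationK}: a competing lift would require a monotone bijection on the targets making the evident diagram commute, and the monotone-surjection condition makes it unique).

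I do not expect any serious obstacle here; the only point requiring a line of verification is that a monotone bijection precomposed with a collapse map is still a collapse map, which is immediate from Definition \ref{definition: contraction}. Accordingly I would simply write: ``The proof is analogous to that of Lemma \ref{lemma:discretedibrationK}, using collapse maps in place of contractions and Lemma \ref{lemma:stablereflectingunderpullback} in place of Lemma \ref{lemma:stableconnectedpullback},'' possibly spelled out to the extent above for the reader's convenience.
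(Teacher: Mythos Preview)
Your proposal is correct and matches the paper's own approach exactly: the paper simply states that the proof is analogous to that of Lemma~\ref{lemma:discretedibrationK}, replacing $\mathbb{K}$ by $\mathbb{D}$, and your write-up spells out precisely that analogy. One minor remark: the reference to Lemma~\ref{lemma:stablereflectingunderpullback} (in place of Lemma~\ref{lemma:stableconnectedpullback}) is not actually needed here, since the lift $\sigma$ is constructed directly without invoking stability under pullback; this is harmless but could be dropped.
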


\begin{proof}
The proof is analogous to that of Proposition \ref{lemma:discretedibrationK}, but using the category $\mathbb{D}$ instead of $\mathbb{K}$. 
\end{proof}

\begin{propo}\label{proposition:Disdecompositionspace}
The pseudosimplicial groupoid $\mathbf{D}$ is a decomposition space.
\end{propo}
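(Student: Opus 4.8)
The plan is to mimic exactly the proof of Proposition \ref{proposition:Kisdecompositionspace}, transporting every ingredient from the contraction setting to the collapse-map setting. Concretely, I must show that for each $n \geq 2$ and each $0 < i < n$ the two squares
\[
\begin{tikzcd}
\mathbf{D}_{n+1} \arrow[r, "d_{i+1}"]\arrow[d, "d_\bot"']& \mathbf{D}_n \arrow[d, "d_\bot"] \\
\mathbf{D}_n \arrow[r, "d_i"']& \mathbf{D}_{n-1}
\end{tikzcd} \; \; \;
\begin{tikzcd}
\mathbf{D}_{n+1} \arrow[r, "d_{i}"]\arrow[d, "d_\top"']& \mathbf{D}_n \arrow[d, "d_\top"] \\
\mathbf{D}_n \arrow[r, "d_i"']& \mathbf{D}_{n-1}
\end{tikzcd}
\]
are pullbacks of groupoids. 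As in the connected case, the first square is a pullback for free: it only involves the face maps retained by the upper decalage, and by Proposition \ref{proposition:dectopDisSegal} together with Remark \ref{rema:decttopDexplication} we know $\Dec_\top \mathbf{D} = \mathsf{S}\fatnerve\mathbb{D}$ is a Segal space, which gives exactly those pullbacks. So all the work is in the second square, and as usual it suffices to treat $n = 2$, the general case being notationally heavier but identical in structure.

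For the square $(2)$ at $n=2$ I would invoke Lemma \ref{lemma:pullbackfibres}: the square is a pullback iff for every object $f \colon P \twoheadrightarrow Q$ in $\mathbf{D}_2$ the induced comparison map $d_\top \colon \Fib_f d_1 \to \Fib_{d_\top f} d_1$ is an equivalence of groupoids. Here $\Fib_{d_\top f} d_1$ consists of families of collapse maps $\{h_q \colon P_q \twoheadrightarrow W_q\}_{q \in Q}$ refining the fibrewise data of $f$, and $\Fib_f d_1$ consists of factorizations $P \twoheadrightarrow W \twoheadrightarrow Q$ of $f$ through collapse maps. The essential surjectivity of $d_\top$ is precisely the content of Lemma \ref{lemma:surjectivetoprovedecbotDisSegal}: given a family $\{h_q\}_{q\in Q}$ it produces a unique poset $W$ and collapse maps $h,g$ with $g\circ h = f$, so $P\twoheadrightarrow W\twoheadrightarrow Q$ is the required object in $\Fib_f d_1$ mapping to it. For fullness, given a morphism in $\Fib_{d_\top f} d_1$ — a compatible family $u_q r_q \colon f_q \to f'_q$ of bijections — I set $u = \sum_{q} u_q$, $r = \sum_q r_q$ and check, using the uniqueness clause of Lemma \ref{lemma:surjectivetoprovedecbotDisSegal} and the explicit description of the order $<_W$ (condition $(b)$: $g(w)<_Q g(w')$ when $g(w)\neq g(w')$), that $ur\,\identity_Q$ is a morphism in $\Fib_f d_1$ mapping to the given one; the relevant commuting diagram is the collapse analogue of the big diagram in the proof of \ref{proposition:Kisdecompositionspace}. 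For faithfulness: if $ur\,\identity_Q$ and $u'r'\,\identity_Q$ have the same image, then $u_q = u'_q$ and $r_q = r'_q$ for all $q$, and since $u,r$ are determined by their fibrewise components, $u=u'$ and $r=r'$. I should also note, where needed, that $d_1 \colon \mathbf{D}_2 \to \mathbf{D}_1$ is a fibration (Lemma \ref{lemma:discretedibrationDdi}) so that strict fibres may be used, exactly as in the connected proof.

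The main obstacle — and the only place where the argument is not pure transcription — is verifying that the reconstructed poset structure on $W$ in Lemma \ref{lemma:surjectivetoprovedecbotDisSegal} is genuinely compatible with all the maps in sight, i.e. that $h$ and $g$ really are collapse maps (monotone, surjective, and partially reflecting) and that the big square/prism diagrams used for fullness actually commute at the level of posets and not merely underlying sets. This hinges on the partially-reflecting hypothesis: a relation $w <_W w'$ is, by construction, either internal to a single fibre $W_q$ or comes from a strict inequality downstairs in $Q$, and partial reflection of $f$ is exactly what forces the preimages of such relations back up into $P$ to be strict as well. Since Lemma \ref{lemma:surjectivetoprovedecbotDisSegal} already packages this, the remaining step here is routine diagram-chasing, and I would present it briefly by saying the proof is entirely analogous to that of Proposition \ref{proposition:Kisdecompositionspace}, replacing contractions by collapse maps, Lemma \ref{lemma:surjectivetoprove K is DS} by Lemma \ref{lemma:surjectivetoprovedecbotDisSegal}, Lemma \ref{lemma:stableconnectedpullback} by Lemma \ref{lemma:stablereflectingunderpullback}, and Lemma \ref{lemma:discretedibrationK} by Lemma \ref{lemma:discretedibrationDdi}.
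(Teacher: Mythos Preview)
Your proposal is correct and follows exactly the paper's approach: the paper's proof simply states that the argument is analogous to that of Proposition~\ref{proposition:Kisdecompositionspace}, replacing Remark~\ref{rema:decttopKexplication} by Remark~\ref{rema:decttopDexplication} and Lemma~\ref{lemma:surjectivetoprove K is DS} by Lemma~\ref{lemma:surjectivetoprovedecbotDisSegal}, which is precisely the substitution scheme you carry out in detail.
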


\begin{proof}
The proof is analogous to that of Proposition \ref{proposition:Kisdecompositionspace}, but applying Remark \ref{proposition:dectopDisSegal} instead of Remark \ref{rema:decttopKexplication} and Lemma \ref{lemma:surjectivetoprovedecbotDisSegal} instead of Lemma \ref{lemma:surjectivetoprove K is DS}.
\end{proof}

\begin{propo}\label{proposition:Discomplete}
The decomposition space $\mathbf{D}$ is complete.
\end{propo}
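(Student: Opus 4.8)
The plan is to mimic exactly the argument used for $\mathbf{K}$ in Proposition~\ref{proposition:Kiscomplete}, since the only relevant difference between $\mathbf{K}$ and $\mathbf{D}$ at the bottom levels is that $\mathbb{D}$ allows non-connected posets and collapse maps rather than contractions. First I would recall that $\mathbf{D} = \mathsf{S}\fatnerve^{\lt}(\mathbb{D})$, so that $s_0 \colon \mathbf{D}_0 \to \mathbf{D}_1$ is $\mathsf{S}$ applied to the map $s_0 \colon \mathbf{D}^{\circ}_0 \to \mathbf{D}^{\circ}_1$ (writing $\mathbf{D}^{\circ} := \fatnerve^{\lt}(\mathbb{D})$). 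Since $\mathsf{S}$ preserves pullbacks, it preserves monomorphisms (a map is mono iff its diagonal is invertible, and $\mathsf{S}$ preserves that pullback condition), so it suffices to show $s_0 \colon \mathbf{D}^{\circ}_0 \to \mathbf{D}^{\circ}_1$ is a monomorphism of groupoids.

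By Definition~\ref{defi: t-nerve} of the fat $\lt$-nerve, $\mathbf{D}^{\circ}_0$ is the groupoid of chosen local terminal objects of $\mathbb{D}$, which is the terminal groupoid consisting of the one-element poset $1$; and the degeneracy $s_0$ is the inclusion sending this object to $1 \in \mathbf{D}^{\circ}_1$. Since the source is the terminal groupoid, the map $s_0$ is automatically a monomorphism: its only possible ``diagonal'' lives over the terminal groupoid and is trivially an equivalence. Concretely, for any object $P \in \mathbf{D}^{\circ}_1$ the homotopy fibre $\Fib_{s_0}(P)$ is empty when $|P| > 1$ and is equivalent to $1$ when $|P| = 1$, so all fibres are $(-1)$-truncated, i.e.\ $s_0$ is $(-1)$-truncated, i.e.\ a monomorphism.

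The main obstacle — really the only thing to be careful about — is the bookkeeping around $\mathsf{S}$: one must make sure that $s_0$ at the level of $\mathbf{D}$ genuinely equals $\mathsf{S}$ of the corresponding map, and that $\mathsf{S}$ indeed preserves the relevant $(-1)$-truncatedness. Both facts are already recorded in the preliminaries (the description of $\mathbf{D}$'s degeneracy maps as $\mathsf{S}$ of those of $\fatnerve^{\lt}(\mathbb{D})$, together with the statement that $\mathsf{S}$ preserves homotopy pullbacks, hence monomorphisms), so once these are invoked the argument is identical to the $\mathbf{K}$ case and no new content is required. In writing it up I would simply say: ``The proof is analogous to that of Proposition~\ref{proposition:Kiscomplete}, replacing $\mathbb{K}$ by $\mathbb{D}$,'' and optionally spell out the one-line fibre computation above for the reader's convenience.
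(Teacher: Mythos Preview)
Your proposal is correct and follows essentially the same approach as the paper: the paper's proof is literally ``The proof is analogous to that of Proposition~\ref{proposition:Kiscomplete}, but using $\mathbb{D}$ instead of $\mathbb{K}$,'' and your expanded reasoning (reduce via $\mathsf{S}$ preserving pullbacks/monomorphisms to the map $s_0 \colon \mathbf{D}^{\circ}_0 \to \mathbf{D}^{\circ}_1$, then observe the source is the terminal groupoid) matches the argument of Proposition~\ref{proposition:Kiscomplete} verbatim.
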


\begin{proof}
The proof is analogous to that of Proposition \ref{proposition:Kiscomplete}, but using $\mathbb{D}$ instead of $\mathbb{K}$.
\end{proof}

\begin{propo}\label{proposition:Dislocally}
The decomposition space $\mathbf{D}$ is locally finite, locally discrete, and of locally finite length.
\end{propo}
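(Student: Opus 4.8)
The plan is to follow the same template used for $\mathbf{K}$ in Proposition~\ref{proposition:Kislocally}: since $\mathsf{S}$ respects finite maps and $\mathbf{D} = \mathsf{S}\mathbf{D}^{\circ}$ with $\mathbf{D}^{\circ} := \fatnerve^{\lt}(\mathbb{D})$, it suffices to show that $\mathbf{D}^{\circ}$ is locally finite, locally discrete, and of locally finite length. For local finiteness of $\mathbf{D}^{\circ}_1$, observe that a finite non-empty poset has only finitely many automorphisms, so each object of $\mathbf{D}^{\circ}_1$ (a finite poset) has a finite automorphism group. For the degeneracy map $s_0 \colon \mathbf{D}^{\circ}_0 \to \mathbf{D}^{\circ}_1$, the same argument as in Proposition~\ref{proposition:Discomplete} (i.e.\ as in \ref{proposition:Kiscomplete}) shows it is finite and discrete, being essentially an inclusion.

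Next I would check that $d_1 \colon \mathbf{D}^{\circ}_2 \to \mathbf{D}^{\circ}_1$ is discrete and finite. By Lemma~\ref{lemma:discretedibrationDdi} the map $d_1$ is a fibration, so we may use strict fibres: the fibre over a poset $P$ is the category of collapse maps $f \colon P \twoheadrightarrow Q$ with morphisms the monotone bijections $u \colon Q \to Q'$ satisfying $u \circ f = f'$. Surjectivity of $f$ forces such a $u$ to be unique, hence each such fibre is discrete, giving local discreteness; and since $P$ is finite there are only finitely many collapse maps out of $P$ (each is determined by a surjection of underlying sets compatible with a partial order), so the fibre is finite, giving local finiteness. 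Together with the previous paragraph and the fact that $\mathsf{S}$ preserves finiteness, this gives that $\mathbf{D}$ is locally finite and locally discrete.

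Finally, for locally finite length, the argument is exactly as for $\mathbf{K}$: the fibre of a poset $P$ along the active map $\mathbf{D}^{\circ}_n \actto \mathbf{D}^{\circ}_1$ has no non-degenerate simplices once $n$ exceeds the cardinality of $P$, because an $(n{-}1)$-chain of collapse maps out of $P$ that is non-degenerate must strictly decrease cardinality at each step. I would then conclude by invoking that $\mathsf{S}$ respects the relevant finiteness conditions, just as in the proof of Proposition~\ref{proposition:Kislocally}.

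I do not expect a genuine obstacle here; the statement is the collapse-map analogue of Proposition~\ref{proposition:Kislocally}, and every ingredient (Lemma~\ref{lemma:discretedibrationDdi} for the fibration property, Proposition~\ref{proposition:dectopDisSegal} and Remark~\ref{rema:decttopDexplication} for the decalage structure, Proposition~\ref{proposition:Discomplete} for the behaviour of $s_0$, and the fact that $\mathsf{S}$ preserves finite maps) is already in place. The only point requiring a modicum of care is verifying that the fibres of $d_1$ really are discrete in the collapse-map setting — but this follows verbatim from the uniqueness-of-$u$ argument used for contractions, since it only uses surjectivity of the underlying map, which collapse maps also have. Hence the proof can simply read: \emph{The proof is analogous to that of Proposition~\ref{proposition:Kislocally}, using the category $\mathbb{D}$ instead of $\mathbb{K}$, Lemma~\ref{lemma:discretedibrationDdi} instead of Lemma~\ref{lemma:discretedibrationK}, and Proposition~\ref{proposition:Discomplete} instead of Proposition~\ref{proposition:Kiscomplete}.}
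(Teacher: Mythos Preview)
Your proposal is correct and takes exactly the same approach as the paper, which simply states that the proof is analogous to that of Proposition~\ref{proposition:Kislocally}. Your expanded account faithfully unpacks what that analogy means, and your concluding one-line version is essentially identical to what the paper writes.
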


\begin{proof}
The proof is analogous to that of Proposition \ref{proposition:Kislocally}.
\end{proof}

Since the simplicial groupoid $\mathbf{D}$ is equal to $\mathsf{S}\fatnerve^{\lt}\mathbb{D}$, the decomposition space $\mathbf{D}$ is a monoidal decomposition space. The monoidal structure is obtained by categorical sum as in $\mathbf{K}$.

\begin{blanko}
{Directed hereditary species as decomposition spaces}\label{subsec:direcherspecies as decomposition}
\end{blanko}
\newcommand{\decH}{\mathbf{H}}

We can also construct a decomposition space $\mathbf{H}$ from a directed hereditary species $H \colon\mathbb{D}_p \rightarrow \Grpd$ similarly to the connected case. We define $\mathbf{H}_1$ as the groupoid of families of non-empty $H$-structures. An object of $\mathbf{H}_n$ is a family of chains of  collapses

\begin{equation*}
\begin{tikzcd}[column sep=35pt]
P_0 \arrow[r, "f_0", two heads] & P_1 \arrow[r, two heads] &   \cdots\arrow[r, two heads]&P_{n-2} \arrow[r, "f_{n-2}",two heads]&P_{n-1}
\end{tikzcd}
\end{equation*}
with an $H$-structure on each $P_0$. The inner face maps and degeneracy maps are induced by the maps in $\mathbf{D}$. For the bottom face map we use functoriality of $H$ along collapse maps. Similarly, we use (contravariant) functoriality in convex map to define the top face map. The groupoid $\mathbf{H}_0$ is defined as the groupoid of families of $H$-structures over the poset with one element.

\begin{propo}\label{propo: DH is ds} 
The groupoids $\mathbf{H}_n$ form a monoidal decomposition space $\mathbf{H}$.
\end{propo}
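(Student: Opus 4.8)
The plan is to follow the strategy of the connected case (Propositions \ref{propo: H is a MDS} and \ref{propo: H is a ds connected case}) essentially verbatim, with collapse maps in place of contractions and Lemma \ref{lemma:stablereflectingunderpullback} in place of Lemma \ref{lemma:stableconnectedpullback}.

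First I would verify that the groupoids $\mathbf{H}_n$ form a pseudosimplicial groupoid. Let $\mathbb{H}$ be the category obtained as the pullback of the Grothendieck construction $\int H \to \mathbb{D}_p$ along $\mathbb{D}\hookrightarrow\mathbb{D}_p$, so that $\mathbf{H} = \mathsf{S}\fatnerve^{\lt}\mathbb{H}$. All simplicial identities involving only the inner face maps and the degeneracy maps then hold strictly, being obtained from the corresponding identities of $\fatnerve^{\lt}\mathbb{H}$ after applying $\mathsf{S}$. The only identities that are merely pseudo are those involving the top face map, namely $d_\top d_\bot \simeq d_\bot d_\top$, $d_\top d_\top \simeq d_\top d_{\top-1}$ and $d_\bot d_\bot \simeq d_\bot d_{\bot+1}$; exactly as in Proposition \ref{propo: H is a MDS}, each follows from functoriality of $H$ applied to a pullback square of posets --- for the first, the square comparing the fibre $(P_0)_p$ with $P_0$ over a collapse; for the second, composition of convex maps; for the third, composition of collapse maps. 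Here one uses that the fibres of a collapse map are convex subposets, which is guaranteed by stability of collapse maps under pullback (Lemma \ref{lemma:stablereflectingunderpullback}), so that the top face map is well defined.

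Next I would exhibit a culf map $\mathbf{H}\to\mathbf{D}$. Since $\mathbb{H}\to\mathbb{D}$ is a left fibration (being a pullback of the Grothendieck construction), and $\mathsf{S}\circ\fatnerve^{\lt}$ turns a left fibration into a simplicial map that is cartesian on every active map --- using that $\mathsf{S}$ preserves pullbacks --- the induced map $\mathbf{H}\to\mathbf{D}$ is culf. By Proposition \ref{proposition:Disdecompositionspace} the space $\mathbf{D}$ is a decomposition space, so Lemma \ref{proposition: ds+culf=ds} gives that $\mathbf{H}$ is a decomposition space.

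Finally, for the monoidal structure I would argue as in Proposition \ref{propo:k is monoidal}: disjoint union of families of $H$-decorated chains of collapse maps (concatenating index sets and taking the disjoint-union $H$-structure, just as for $\mathbf{D}$ and for the connected $\mathbf{H}$) defines a simplicial map $+_{\mathbf{H}}\colon\mathbf{H}\times\mathbf{H}\to\mathbf{H}$, and by Lemma \ref{lemma culfcondition for ds} this is culf provided the square with sides $d_1$ and $+_{\mathbf{H}}$ is a pullback --- which is immediate, since a pair of such families is reconstructed from the two source families together with the datum of how the disjoint union collapses, the identification taking place down in $\mathbf{H}_1$. I do not expect a real obstacle: the only point needing mild care is the bookkeeping of the pseudo-simplicial identities for the top face map, handled exactly as in the connected case through functoriality of $H$ on pullback squares of posets, now legitimate thanks to Lemma \ref{lemma:stablereflectingunderpullback}.
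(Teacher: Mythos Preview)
Your proposal is correct and follows essentially the same approach as the paper, which simply says the proof is analogous to that of Proposition \ref{propo: H is a ds connected case}; you have spelled out that analogy in full, using the culf map $\mathbf{H}\to\mathbf{D}$ together with Proposition \ref{proposition:Disdecompositionspace} and Lemma \ref{proposition: ds+culf=ds}, and handling the monoidal structure via disjoint union as in Proposition \ref{propo:k is monoidal}. One small remark: convexity of the fibres of a collapse map follows directly from monotonicity rather than from Lemma \ref{lemma:stablereflectingunderpullback}, but this does not affect the argument.
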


\begin{proof} 
The proof is analogous to that of Proposition \ref{propo: H is a ds connected case}.
\end{proof}

\begin{propo} \label{propo: DH is locally etc}
The decomposition space $\mathbf{H}$ is complete, locally finite, locally discrete, and of locally finite length.
\end{propo}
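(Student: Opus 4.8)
The plan is to imitate the proof of Proposition~\ref{propo: H is locally etc} from the connected case, replacing $\mathbf{K}$ by $\mathbf{D}$ and contractions by collapse maps throughout. The idea is to transport all four properties along a culf map $\mathbf{H}\to\mathbf{D}$, the only property requiring a separate argument being local finiteness of $\mathbf{H}_1$.

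First I would make the culf map available. Exactly as in \ref{subsec: DCH species as decomposition}, the Grothendieck construction $\int H\to\mathbb{D}_p$ of a directed hereditary species is a left fibration, hence so is its pullback $\mathbb{H}\to\mathbb{D}$ along $\mathbb{D}\hookrightarrow\mathbb{D}_p$; since $\mathsf{S}$ preserves pullbacks and therefore left fibrations, applying $\mathsf{S}\fatnerve^{\lt}(-)$ produces a canonical culf map $\mathbf{H}\to\mathbf{D}$ (this is the map used in the proof of Proposition~\ref{propo: DH is ds}). Now $\mathbf{D}$ is a complete decomposition space which is locally finite, locally discrete and of locally finite length (Propositions~\ref{proposition:Discomplete} and~\ref{proposition:Dislocally}). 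Local discreteness and locally finite length of $\mathbf{H}$ then follow at once from Lemma~\ref{proposition: ds+locallydiscrete = locally discrete}, and completeness of $\mathbf{H}$ follows because a culf map is cartesian on the codegeneracy $s^0\colon[1]\to[0]$, so $s_0\colon\mathbf{H}_0\to\mathbf{H}_1$ is a pullback of the monomorphism $s_0\colon\mathbf{D}_0\to\mathbf{D}_1$ and hence itself a monomorphism (equivalently, one repeats the fibre computation of Proposition~\ref{proposition:Discomplete}).

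The one point that is not automatic — and the only, mild, obstacle — is that $\mathbf{H}_1$ is a locally finite groupoid, a caveat explicitly flagged in Lemma~\ref{proposition: ds+locallydiscrete = locally discrete}. Here I would argue exactly as in Proposition~\ref{propo: H is locally etc}: an object of $\mathbf{H}_1$ is a finite family $\{x_i\}_{i\in I}$ of non-empty $H$-structures, and its automorphism group is an extension of a subgroup of the symmetric group on the finite set $I$ by the finite product $\prod_{i\in I}\aut(x_i)$; each $\aut(x_i)$ embeds in the (finite) automorphism group of the underlying finite poset, so the whole group is finite. Combined with the fact that the finite-fibre conditions on $s_0$ and $d_1$ transport from $\mathbf{D}$ to $\mathbf{H}$ along the culf map (Lemma~\ref{proposition: ds+locallydiscrete = locally discrete}), this gives that $\mathbf{H}$ is locally finite. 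Note that the non-connectedness of the fibres of collapse maps plays no role here; it was already accommodated in the construction of $\mathbf{D}$ and of its decomposition-space structure, so no genuinely new input is needed.
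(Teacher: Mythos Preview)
Your proposal is correct and takes essentially the same approach as the paper, which simply says the proof is analogous to Proposition~\ref{propo: H is locally etc}; you have faithfully spelled out that analogy, replacing $\mathbf{K}$ by $\mathbf{D}$ and invoking Propositions~\ref{proposition:Discomplete} and~\ref{proposition:Dislocally} together with the culf map $\mathbf{H}\to\mathbf{D}$ and Lemma~\ref{proposition: ds+locallydiscrete = locally discrete}. Your explicit remark that completeness transfers because culf maps are cartesian on the active map $s^0$ is a welcome clarification of a step the paper leaves implicit.
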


\begin{proof} 
The proof is analogous to that of Proposition \ref{propo: H is locally etc}.
\end{proof}

\begin{comment}
\subsection*{The incidence comodule bialgebra of non-connected directed hereditary species}
\end{comment}

\begin{blanko}
{The incidence comodule bialgebra of non-connected directed hereditary species}\label{subsec:dhs comodule bialgebra}
\end{blanko}

Recall that every directed  hereditary species is also a directed restriction species, in the sense of Gálvez--Kock--Tonks \cite{GKT:restr}, through precomposition with the inclusion $\mathbb{C}^{\op} \rightarrow \mathbb{D}_p$. Let $H$ be a directed hereditary species. We denote by $R$ the induced directed restriction species and, as usual, $\mathbf{H}$ and $\mathbf{R}$ the corresponding to decomposition spaces. Also, the incidence coalgebra of $\mathbf{R}$ is denoted $A$, and the incidence bialgebra of $\mathbf{H}$ is denoted $B$. The following result is a consequence of the theory developed in Section \ref{section:comudule bialgebra} but using collapse maps instead of contractions.

\begin{propo}\label{proposition:Alefctcomodulebialgebra nonconnected}
$A$ is a left comodule bialgebra over $B$.
\end{propo}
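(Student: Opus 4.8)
The plan is to mirror the proof of Proposition~\ref{proposition:Aleftcomodule} from the connected case, replacing contractions by collapse maps throughout. The structure of the argument is purely formal once the three inputs are in place: (1) $\mathbf{H}$ is a monoidal decomposition space (Proposition~\ref{propo: DH is ds}), (2) $\mathbf{R}$ is a monoidal decomposition space (the non-connected analogue of Proposition~\ref{propo:R is monoidal}, which follows from \cite[\S 7]{GKT:restr} applied to arbitrary finite posets), and (3) the upper decalage $\Dec_\top \mathbf{H}$ is Segal and the decalage map is culf, so that by Lemma~\ref{propo:leftcomodulegeneral} the span $\mathbf{H}_1 \leftarrow \mathbf{H}_2 \rightarrow \mathbf{H}_1 \times \mathbf{H}_1$ makes $\Grpd_{/\mathbf{R}_1} = \Grpd_{/\mathbf{H}_1}$ a left $\Grpd_{/\mathbf{H}_1}$-comodule (the non-connected analogue of Lemma~\ref{lemma:M0isleftcomoduleoverH1}).

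First I would record the comodule-coalgebra part: one shows that the comultiplication $\Delta_A$ and counit $\epsilon_A$ of $A$ are $B$-comodule maps, exactly as in Lemmas~\ref{lem:comul A is Bcomod map} and~\ref{lem: counit A is Bcom map}. Concretely, one reproduces the two big commuting diagrams, introduces the auxiliary groupoid $Z$ of families of pairs (collapse map, cut) sharing a source poset carrying an $H$-structure, and verifies that the squares labelled $(1)$, $(2)$ (and $(4)$, $(5)$ for the counit) are pullbacks using Lemma~\ref{lemma:pullbackfibres} and the prism lemma. The only place where the connectedness hypothesis was used in the connected proof was in the reconstruction step for square $(1)$: given a cut $P \to 2$ and a pair of contractions $P_i \twoheadrightarrow Q_i$, one builds $Q := \sum_{i\in 2} Q_i$ with the induced order. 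For collapse maps the very same formula for the order on $Q$ works, and since collapse maps are stable under pullback (Lemma~\ref{lemma:stablereflectingunderpullback}) the fibres $(f_i)_q$ are collapse maps; no connectedness of fibres is needed here, so the argument goes through verbatim with $\mathbb{K}$ replaced by $\mathbb{D}$. The analogue of Lemma~\ref{lemma:surjectivetoprovedecbotDisSegal} supplies the needed gluing in the non-connected setting.

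Then I would conclude the bialgebra part: the multiplication and unit maps $\mu_A,\eta_A$ are $B$-comodule maps because $\mathbf{H}$ and $\mathbf{R}$ are monoidal decomposition spaces and, by construction of $\mathbf{R}$ from $\mathbf{H}$, the algebra structure of $A$ literally agrees with that of $B$; so these two axioms are automatic, exactly as remarked before Proposition~\ref{proposition:Aleftcomodule}. Combining the comodule-coalgebra statement with the monoidal compatibility gives that $A$ is a left $B$-comodule bialgebra. I do not expect any genuine obstacle: the content is entirely in the already-proved facts that $\mathbf{D}$ is a decomposition space and that collapse maps are pullback-stable, and the main (mild) annoyance is simply re-drawing the diagrams of Section~\ref{section:comudule bialgebra} with collapse maps in place of contractions and checking that no step secretly relied on fibres being connected — and the only such step, the reconstruction of $Q$, survives because the order formula and Lemma~\ref{lemma:stablereflectingunderpullback} make no connectedness assumption. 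Hence the proof can legitimately be given as ``analogous to the proof of Proposition~\ref{proposition:Aleftcomodule}, using collapse maps instead of contractions, Lemma~\ref{lemma:stablereflectingunderpullback} in place of Lemma~\ref{lemma:stableconnectedpullback}, and Lemma~\ref{lemma:surjectivetoprovedecbotDisSegal} in place of Lemma~\ref{lemma:surjectivetoprove K is DS}.''
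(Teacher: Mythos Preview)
Your proposal is correct and follows exactly the paper's approach: the paper states the result as ``a consequence of the theory developed in Section~\ref{section:comudule bialgebra} but using collapse maps instead of contractions,'' which is precisely what you spell out in detail. Your additional check that the reconstruction step for square~$(1)$ does not rely on connectedness of fibres is a useful sanity check, but otherwise there is nothing to add.
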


\begin{blanko}
{Directed hereditary species as operadic categories}\label{subsec:dhs as operadic categories}
\end{blanko}

We now work with $\mathbf{Set}$-valued species as in the classical theory and posets whose underlying sets are ordinals. These are necessary to ensure that given a directed hereditary species $H \colon \mathbb{D}_p \to \mathbf{Set}$, its Grothendieck construction $\int H \to \mathbb{D}_p$ is a discrete opfibration and have precise constructions in the context of operadic categories. For a directed hereditary species $H$, the category $\mathbb{H}$ is the pullback of the Grothendieck construction $\int H \to \mathbb{D}_p$ along the inclusion $\mathbb{D}\to \mathbb{D}_p$.

\begin{lem}\label{lemma: GC of DH is lt-nerve}
Let $H \colon \mathbb{D}_p \rightarrow \Set$ be a directed hereditary species. Then $\mathbb{H}$ is a category with local terminal objects.
\end{lem}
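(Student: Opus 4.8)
\textbf{Proof plan for Lemma \ref{lemma: GC of DH is lt-nerve}.}

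The plan is to mirror exactly the argument already given for the connected case in Lemma \ref{lemma: GC of H is lt-nerve}, substituting collapse maps for contractions and the category $\mathbb{D}$ for $\mathbb{K}$ throughout. The only two ingredients needed are: first, that $\int H \to \mathbb{D}_p$ is a discrete opfibration; and second, that $\mathbb{D}$ carries chosen local terminal objects so that Lemma \ref{lemma: local terminals and discrete opfribration} can be invoked.

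First I would observe that, since $H$ takes values in $\mathbf{Set}$ (an assumption made explicit at the start of \ref{subsec:dhs as operadic categories}), the Grothendieck construction $\int H \to \mathbb{D}_p$ is a discrete opfibration. Discrete opfibrations are stable under pullback, and $\mathbb{H}$ is defined precisely as the pullback of $\int H \to \mathbb{D}_p$ along the inclusion $\mathbb{D} \hookrightarrow \mathbb{D}_p$; hence the induced map $\mathbb{H} \to \mathbb{D}$ is a discrete opfibration. Next I would note that $\mathbb{D}$, the category of finite non-empty posets and collapse maps, has a chosen local terminal object in each connected component: the poset $1$ with one element, with the canonical collapse $P \twoheadrightarrow 1$. (This collapse is indeed a monotone surjection and is vacuously partially reflecting, since there are no strict relations in $1$ to pull back.) With these two facts in hand, Lemma \ref{lemma: local terminals and discrete opfribration} applies verbatim, equipping $\mathbb{H}$ with chosen local terminal objects — concretely, the set $H[1]$ of $H$-structures on the one-element poset.

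I do not anticipate any genuine obstacle here: the statement is a routine transposition of the connected-case proof, and the only point that even deserves a sentence is the verification that the canonical surjection onto the one-element poset qualifies as a collapse map, which is immediate from Definition \ref{definition: contraction}. So the proof is simply: ``The proof is analogous to that of Lemma \ref{lemma: GC of H is lt-nerve}, using collapse maps and the category $\mathbb{D}$ instead of contractions and $\mathbb{K}$; the one-element poset is a chosen local terminal object in $\mathbb{D}$, the map $\mathbb{H}\to\mathbb{D}$ is a discrete opfibration since $\int H \to \mathbb{D}_p$ is one and discrete opfibrations are pullback-stable, and then Lemma \ref{lemma: local terminals and discrete opfribration} gives the chosen local terminal objects in $\mathbb{H}$, namely the set $H[1]$.''
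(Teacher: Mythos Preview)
Your proposal is correct and follows essentially the same approach as the paper's own proof: both argue that $\int H \to \mathbb{D}_p$ is a discrete opfibration because $H$ is $\Set$-valued, that this property is inherited by the pullback $\mathbb{H} \to \mathbb{D}$, and then invoke Lemma~\ref{lemma: local terminals and discrete opfribration} together with the fact that the one-element poset is terminal in $\mathbb{D}$ to conclude that $H[1]$ furnishes the chosen local terminal objects in $\mathbb{H}$. Your explicit verification that $P \twoheadrightarrow 1$ is a collapse map is a small elaboration not spelled out in the paper, but the overall structure is identical.
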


\begin{proof}
Since $H$ is a presheaf with values in $\mathbf{Set}$, we have that $\int H \to \mathbb{D}_p$ is a discrete opfibration. This implies by the construction of $\mathbb{H}$ and the stability of discrete opfibration under pullback that $\mathbb{H} \to \mathbb{D}$ is a discrete opfibration.
Combining this with the fact that $\mathbb{D}$ has a terminal object (the poset with one element), it follows that the set $H[1]$ of $H$-structures of the poset with one element is the set of local terminal objects in $\mathbb{H}$ by Lemma \ref{lemma: local terminals and discrete opfribration}. %In other words, $\int_G DH$ is an object in the category of categories with chosen local terminals $\mathbf{Cat}_{\lt}$.
\end{proof}

\begin{exa}\label{exa: lt-nerve DH}
Given $H \colon \mathbb{D}_p \rightarrow \Set$ a directed hereditary species, we have that $\mathbb{H}$ is a category with local terminal objects by Lemma \ref{lemma: GC of DH is lt-nerve}. The $\lt$-nerve of $\mathbb{H}$ is described similarly to the connected case (Example \ref{exa: ltnerve H}).
%follows:
%$(\nerve^{\lt} \mathbb{H})_{-1}$ is the set $H[1]$ of $H$-structures over the poset with one element.
%$(\nerve^{\lt} \mathbb{H})_{0}$ is the collection of finite posets with a $H$-structure. For $n \geq 2$, the elements of the set $(\nerve^{\lt} \mathbb{H})_{n}$ are $(n-1)$-chains of collapse maps with a $H$-structure on the first poset in the chain.
\end{exa}

\begin{lem}\label{lemma: DH is a Dt segal}
Let $H \colon \mathbb{D}_p \rightarrow \Set$ be a directed hereditary species. Then $\HDec_\top \mathsf{S} \nerve^{\lt} \mathbb{H}$ is a $\simplexcategory^{t}$-Segal space.
\end{lem}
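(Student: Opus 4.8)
The plan is to reduce this statement to the already-established connected case, exactly as the surrounding ``directed hereditary species'' section reduces each of its propositions to its connected counterpart. I would simply mirror the proof of Lemma \ref{lemma: H is a Dt segal}.

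\medskip

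First I would recall that, by Lemma \ref{lemma: GC of DH is lt-nerve}, for a $\mathbf{Set}$-valued directed hereditary species $H \colon \mathbb{D}_p \to \Set$ the category $\mathbb{H}$ (the pullback of the Grothendieck construction $\int H \to \mathbb{D}_p$ along $\mathbb{D} \to \mathbb{D}_p$) is an honest category with chosen local terminal objects. Hence its nerve $\nerve\mathbb{H}$ is a Segal space by Example \ref{exa:fatnerve is Segal} (the fat nerve of a category is Segal; the ordinary nerve is the $0$-truncated version and is likewise Segal since $\mathbb{H}$, having $\Set$-valued fibres, is a category with no nontrivial automorphisms in the relevant sense). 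Next I would invoke that $\mathsf{S}$ preserves homotopy pullbacks \cite{Weber2015InternalAC, Weber2015InternalAC}, hence preserves Segal objects, so $\mathsf{S}\nerve\mathbb{H}$ is a Segal space. Finally, $\mathsf{S}\nerve^{\lt}\mathbb{H} = \mathbf{H}$ is by construction a $\simplexcategory^{\ter}$-presheaf which agrees degreewise with $\mathsf{S}\nerve\mathbb{H}$ after deleting the bottom object $(\mathbf{H})_{[-1]}$; so it is a $\simplexcategory^{\ter}$-Segal space by the very definition given just before Lemma \ref{lemma: H is a Dt segal}. Since $\HDec_\top$ only throws away the top face maps while retaining the top degeneracies and shifting indices, and since $\HDec_\top \mathbf{H}$ is again degreewise the Segal space $\mathsf{S}\nerve\mathbb{H}$ (up to the index shift and the extra bottom object), the presheaf $\HDec_\top \mathsf{S}\nerve^{\lt}\mathbb{H}$ is a $\simplexcategory^{\ter}$-Segal space. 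This is precisely the assertion.

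\medskip

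There is really no substantial obstacle here: the only things one must be slightly careful about are (i) that $\mathbb{H}$ is genuinely a $1$-category, which is exactly why the hypothesis of $\mathbf{Set}$-valued species was imposed (and is recorded in Lemma \ref{lemma: GC of DH is lt-nerve}), so that $\nerve\mathbb{H}$ rather than $\fatnerve\mathbb{H}$ is the relevant object and the subtleties of pseudo-simplicial structure do not intervene; and (ii) that the half upper dec $\HDec_\top$ and the symmetric-monoidal-functor $\mathsf{S}$ commute with passing to Segal objects, which follows from $\mathsf{S}$ preserving pullbacks together with the observation that $\HDec_\top$ of a simplicial object agrees degreewise with the original. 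The mild bookkeeping point worth spelling out, if one wanted full rigour, is that the Segal condition for $\HDec_\top X$ in the sense of a $\simplexcategory^{\ter}$-Segal space refers only to the underlying simplicial groupoid obtained by deleting the $[-1]$-component, and this underlying simplicial groupoid is exactly $\mathsf{S}\nerve\mathbb{H}$, whose Segal property has already been established. Thus the proof is a direct transcription of the proof of Lemma \ref{lemma: H is a Dt segal}, replacing $\mathbb{K}_p$, $\mathbb{K}$ and contractions by $\mathbb{D}_p$, $\mathbb{D}$ and collapse maps throughout.

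\medskip

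Concretely I would write: \emph{The proof is analogous to that of Lemma \ref{lemma: H is a Dt segal}. Since $\mathbb{H}$ is a category by Lemma \ref{lemma: GC of DH is lt-nerve}, its nerve $\nerve\mathbb{H}$ is a Segal space, and therefore $\mathsf{S}\nerve\mathbb{H}$ is a Segal space as $\mathsf{S}$ preserves pullbacks and hence Segal objects. This means that $\mathsf{S}\nerve^{\lt}\mathbb{H} = \mathbf{H}$ is a $\simplexcategory^{\ter}$-Segal space, and consequently $\HDec_\top \mathbf{H}$ is a $\simplexcategory^{\ter}$-Segal space.}
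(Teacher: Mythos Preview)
Your proposal is correct and takes essentially the same approach as the paper: the paper's proof simply says ``The proof is the same as Lemma \ref{lemma: H is a Dt segal},'' and your concrete final paragraph reproduces that argument almost verbatim (nerve of a category is Segal, $\mathsf{S}$ preserves pullbacks hence Segal objects, so $\mathsf{S}\nerve^{\lt}\mathbb{H}$ is a $\simplexcategory^{\ter}$-Segal space). Your additional commentary about $\HDec_\top$ and the careful bookkeeping is more than the paper provides, but the core argument is identical.
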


\begin{proof}
The proof is the same as Lemma \ref{lemma: H is a Dt segal}.
\end{proof}

Let $\mathbf{DirHerSp}$ denote the category of directed hereditary species. For each directed hereditary species $H$, we have a pseudosimplicial groupoid $\mathbf{H} = \mathsf{S} \nerve^{\lt} \mathbb{H}$. This construction gives a functor $\GDD \colon \mathbf{DirHerSp} \to \mathbf{sGrpd}^{\tps}$ defined by $\GDD(H) = \mathbf{H}$. Furthermore, $\HDec_\top \circ \GDD(H)$ is a $\simplexcategory^{\ter}$-Segal space by Lemma \ref{lemma: DH is a Dt segal}. 

On the other hand, Lemma \ref{lemma: GC of DH is lt-nerve} established that $\mathbb{H}$ is a category with local terminal objects given by the set of $H$-structures of the poset with one element $H[1]$. This gives a functor $\int^{\ast} \colon \mathbf{DirHerSp} \to \mathbf{Cat}_{\lt}$ that sends $H$ to the category $\mathbb{H}$.

\begin{lem}\label{lemma: DHS commutes with GC and GD}
The diagram
\[\begin{tikzcd}
	{\mathbf{DirHerSp}} & {\mathbf{sGrpd}^{\tps}} \\
	{\mathbf{Cat}_{\lt}} & {\mathbf{tsGrpd}}
	\arrow["{\int^{\ast}}"', from=1-1, to=2-1]
	\arrow["{\mathsf{S} \nerve^{\lt}}"', from=2-1, to=2-2]
	\arrow["{\HDec_\top}", from=1-2, to=2-2]
	\arrow["\GDD", from=1-1, to=1-2]
\end{tikzcd}\]
strictly commutes.
\end{lem}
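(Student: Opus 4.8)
The plan is to mirror exactly the proof of Lemma~\ref{lemma: CDHS commutes with GC and GD} (the connected case), since the only difference is that we replace the category $\mathbb{K}_p$ of partially defined contractions by $\mathbb{D}_p$ of partially defined collapse maps, and correspondingly $\mathbb{K}$ by $\mathbb{D}$. First I would recall the two composites around the square. Going down then right, an object $H$ in $\mathbf{DirHerSp}$ is sent by $\int^{\ast}$ to the category with chosen local terminals $\mathbb{H}$ (the pullback of the discrete opfibration $\int H \to \mathbb{D}_p$ along $\mathbb{D} \hookrightarrow \mathbb{D}_p$, equipped with the chosen local terminals of Lemma~\ref{lemma: GC of DH is lt-nerve}), and then by $\mathsf{S}\nerve^{\lt}$ to the $\simplexcategory^{\ter}$-presheaf $\mathsf{S}\nerve^{\lt}\mathbb{H}$. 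Going right then down, $H$ is sent by $\GDD$ to the pseudosimplicial groupoid $\mathbf{H} = \mathsf{S}\nerve^{\lt}\mathbb{H}$, and then by $\HDec_\top$ to its half upper dec.

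The crux is the same bookkeeping observation as in the connected case: $\HDec_\top$ deletes the top face maps and shifts everything down by one, which is precisely the passage from a pseudosimplicial groupoid built as $\mathsf{S}\nerve^{\lt}(-)$ back to the underlying $\simplexcategory^{\ter}$-presheaf $\mathsf{S}\nerve^{\lt}(-)$; in fact the top face maps are exactly the data that $\nerve^{\lt}$ (and hence $\mathsf{S}\nerve^{\lt}$) does not contain, so applying $\HDec_\top$ to $\mathbf{H}=\mathsf{S}\nerve^{\lt}\mathbb{H}$ returns $\mathsf{S}\nerve^{\lt}\mathbb{H}$ on the nose. Thus both composites send $H$ to $\mathsf{S}\nerve^{\lt}(\int^{\ast}H)=\mathsf{S}\nerve^{\lt}\mathbb{H}$, and they agree strictly on objects. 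On morphisms $f \colon H' \to H$, the Grothendieck construction gives $\int f \colon \mathbb{H}' \to \mathbb{H}$, and one checks that $\int^{\ast}(f)$ followed by $\mathsf{S}\nerve^{\lt}$ is the same functor as $\GDD(f)$ followed by $\HDec_\top$, because both are $\mathsf{S}\nerve^{\lt}$ applied to $\int f$, described concretely as replacing the $H'$-structure $X$ on the first poset $P_0$ of a chain by $f_{P_0}(X)$. Hence the square commutes strictly.

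The proof is essentially formal once these identifications are in place; the one point that warrants a line is that $\mathbb{H}$ really is a $\mathbf{Cat}_{\lt}$-object with the stated chosen local terminals, which is Lemma~\ref{lemma: GC of DH is lt-nerve}, and that $\GDD$ and $\int^{\ast}$ are well-defined functors landing in $\mathbf{sGrpd}^{\tps}$ and $\mathbf{Cat}_{\lt}$ respectively, which is already recorded above. I do not expect any real obstacle: the statement is deliberately the non-connected analogue of Lemma~\ref{lemma: CDHS commutes with GC and GD}, and the collapse-map machinery of Section~\ref{section:decomposition D} has been set up precisely so that every lemma used in the connected argument has a collapse-map counterpart. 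Accordingly the proof can simply read: ``The proof is the same as that of Lemma~\ref{lemma: CDHS commutes with GC and GD}, using $\mathbb{D}_p$ and $\mathbb{D}$ in place of $\mathbb{K}_p$ and $\mathbb{K}$, and Lemma~\ref{lemma: GC of DH is lt-nerve} in place of Lemma~\ref{lemma: GC of H is lt-nerve}.'' If a self-contained version is preferred, I would spell out the object-level computation $\HDec_\top\GDD(H) = \HDec_\top(\mathsf{S}\nerve^{\lt}\mathbb{H}) = \mathsf{S}\nerve^{\lt}\mathbb{H} = \mathsf{S}\nerve^{\lt}(\int^{\ast}H)$ and note the morphism-level statement follows since all four functors are (composites with) $\mathsf{S}\nerve^{\lt}$ applied to the Grothendieck construction.
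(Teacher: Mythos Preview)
Your proposal is correct and follows exactly the paper's approach: the paper's own proof simply refers back to Lemma~\ref{lemma: CDHS commutes with GC and GD}, whose one-line argument is the identification $\int^{\ast}(H)=\mathbb{H}$ and $\GDD(H)=\mathbf{H}=\mathsf{S}\nerve^{\lt}(\mathbb{H})$, so that $\HDec_\top\mathbf{H}=\mathsf{S}\nerve^{\lt}\mathbb{H}$ on the nose. Your elaboration of the object- and morphism-level computation is a faithful unpacking of that same identification.
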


\begin{proof}
The proof is analogous to that Proposition \ref{lemma: CDHS commutes with GC and GD}.
%Let $H$ be an object in ${\mathbf{DirHerSp}}$. Recall that $\int^*(H) = \mathbb{H}$, the $\simplexcategory^{\ter}$-presheaf $\mathsf{S} \nerve^{\lt}\mathbb{H}$ is described as follows:
%$(\mathsf{S} \nerve^{\lt}\mathbb{H})_{-1}$ is the groupoid of families of the $H$-structures of the poset with one element $H[1]$. For $n \geq 2$, the objects $(\mathsf{S} \nerve^{\lt}\mathbb{H})_{n}$ are families of $(n-1)$-chains of collapses with a $H$-structure on the first poset of the chain and $(\mathsf{S} \nerve^{\lt}\mathbb{H})_{1}$ is the groupoid of families of non-empty $H$-structures. This description corresponds to the definition of $\HDec_\top \mathbf{H}$ given in \ref{subsec:direcherspecies as decomposition} and \ref{subsec: half decalge}. In other words, the diagram commutes.    
\end{proof}	

\begin{propo}\label{propo: DHS to OpCat functor}
There exists a canonical functor from the category of directed hereditary species ${\mathbf{DirHerSp}}$ to the category of operadic categories $\mathbf{OpCat}$.
\end{propo}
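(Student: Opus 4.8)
The statement to prove, Proposition \ref{propo: DHS to OpCat functor}, is the non-connected analogue of Proposition \ref{propo: CDHS to OpCat functor}, and the strategy is to imitate that proof verbatim, replacing contractions by collapse maps and the category $\mathbb{K}$ by $\mathbb{D}$ throughout. The plan is to invoke the Batanin--Kock--Weber characterisation of operadic categories (\cite{BataninKockWeber}): the category $\mathbf{OpCat}$ sits in a strict pullback square
\[\begin{tikzcd}
	{\mathbf{OpCat}} & {\mathbf{sGrpd}^{\tps}} \\
	{\mathbf{Cat}_{\lt}} & {\mathbf{tsGrpd}}
	\arrow["{\HDec_\top}", from=1-2, to=2-2]
	\arrow["{\mathsf{S}\nerve^{\lt}}"', from=2-1, to=2-2]
	\arrow[from=1-1, to=1-2]
	\arrow[from=1-1, to=2-1]
\end{tikzcd}\]
of categories. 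We then need the two legs of a cone over this square emanating from $\mathbf{DirHerSp}$, together with the fact that they agree after composing down into $\mathbf{tsGrpd}$, and the universal property of the pullback produces the desired functor.

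First I would assemble the right-hand leg: the functor $\GDD \colon \mathbf{DirHerSp} \to \mathbf{sGrpd}^{\tps}$ sending $H$ to $\mathbf{H} = \mathsf{S}\nerve^{\lt}\mathbb{H}$, which is already defined in the text immediately before the statement; its action on morphisms is the one induced by the Grothendieck construction of a map of species, exactly as in the connected case. One should note here that $\GDD(H)$ lands in $\mathbf{sGrpd}^{\tps}$ because the only pseudo-simplicial identities involve the top face maps (this is the content of Propositions \ref{propo: DH is ds} and \ref{proposition:Dsimplicial} applied in the species setting). Next I would assemble the left-hand leg $\int^{\ast} \colon \mathbf{DirHerSp} \to \mathbf{Cat}_{\lt}$ sending $H$ to the category $\mathbb{H}$ equipped with its chosen local terminal objects; this is legitimate by Lemma \ref{lemma: GC of DH is lt-nerve}, which guarantees $\mathbb{H} \to \mathbb{D}$ is a discrete opfibration and hence, via Lemma \ref{lemma: local terminals and discrete opfribration} and the existence of a terminal object in $\mathbb{D}$, that the set $H[1]$ furnishes $\mathbb{H}$ with chosen local terminals.

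The key compatibility step is Lemma \ref{lemma: DHS commutes with GC and GD}: the outer square through $\mathbf{DirHerSp}$ commutes strictly, because both composites send $H$ to $\mathsf{S}\nerve^{\lt}(\mathbb{H})$ viewed in $\mathbf{tsGrpd}$ --- on one side this is $\mathsf{S}\nerve^{\lt} \circ \int^{\ast}(H)$ by definition of $\int^{\ast}$, and on the other it is $\HDec_\top \circ \GDD(H) = \HDec_\top(\mathsf{S}\nerve^{\lt}\mathbb{H})$, which equals $\mathsf{S}\nerve^{\lt}\mathbb{H}$ as a $\simplexcategory^{\ter}$-presheaf since half upper decalage merely discards the top face maps while retaining the top degeneracies. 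With the cone in hand, the dotted arrow $\mathbf{DirHerSp} \to \mathbf{OpCat}$ is produced by the universal property of the pullback square, and on objects it sends $H$ to the pair $(\mathbb{H}, \mathbf{H})$; functoriality is automatic from functoriality of $\GDD$ and $\int^{\ast}$ and the uniqueness clause of the universal property.

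\textbf{Main obstacle.} The genuinely load-bearing input is the Batanin--Kock--Weber pullback presentation of $\mathbf{OpCat}$, which is quoted rather than proved here; granting that, there is no real obstacle, since every lemma one needs (discrete opfibrationhood of $\int H \to \mathbb{D}_p$ for $\mathbf{Set}$-valued $H$, the local-terminals transfer, and the $\simplexcategory^{\ter}$-Segal property of the half-dec) is already in place. The one point requiring a little care --- and the place where the collapse-map case could conceivably differ from the contraction case --- is checking that $\GDD(H)$ really lies in $\mathbf{sGrpd}^{\tps}$, i.e.\ that the non-top simplicial identities for $\mathbf{H}$ are strict; but this follows from $\mathbf{H} = \mathsf{S}\nerve^{\lt}\mathbb{H}$ together with the fact that the non-top structure maps are $\mathsf{S}$ applied to strict nerve structure maps, exactly as recorded in the proofs of Propositions \ref{proposition:Dsimplicial} and \ref{propo: DH is ds}. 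So the proof reduces to the single sentence: the outer diagram commutes by Lemma \ref{lemma: DHS commutes with GC and GD}, and the dotted functor exists by the pullback property of $\mathbf{OpCat}$.
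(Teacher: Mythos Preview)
Your proposal is correct and follows essentially the same approach as the paper: the paper's proof simply says ``analogous to Proposition~\ref{propo: CDHS to OpCat functor}'', which amounts exactly to the pullback argument you spell out --- invoke the Batanin--Kock--Weber strict pullback presentation of $\mathbf{OpCat}$, use Lemma~\ref{lemma: DHS commutes with GC and GD} for the commuting cone, and obtain the dotted functor $H \mapsto (\mathbb{H}, \mathbf{H})$ by the universal property. Your additional remarks about why $\GDD(H)$ lands in $\mathbf{sGrpd}^{\tps}$ are accurate but not strictly needed, since this is already implicit in the construction of $\mathbf{H}$.
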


\begin{proof}
The proof is analogous to that Proposition \ref{propo: CDHS to OpCat functor}.
\begin{comment}
Consider the diagram
\[\begin{tikzcd}
	{\mathbf{DirHerSp}} \\
	& {\mathbf{OpCat}} & {\mathbf{sGrpd^{\tps}}} \\
	& {\mathbf{Cat}_{\lt}} & {\mathbf{tsGrpd}}.
	\arrow[from=2-2, to=3-2]
	\arrow["{}", dotted, from=1-1, to=2-2]
	\arrow["{\HDec_\top}", from=2-3, to=3-3]
	\arrow["{\mathsf{S} \nerve^{\lt}}"', from=3-2, to=3-3]
	\arrow[from=2-2, to=2-3]
	\arrow["\lrcorner"{anchor=center, pos=0.125}, draw=none, from=2-2, to=3-3]
	\arrow["{\int^{\ast}}"', bend right=20, from=1-1, to=3-2]
	\arrow["\GDD", bend left=20, from=1-1, to=2-3]
\end{tikzcd}\]
Batanin, Kock, and Weber~\cite{BataninKockWeber} proved that the square is a pullback. The outer diagram commutes by Lemma \ref{lemma: DHS commutes with GC and GD}. The dotted arrow then exists by the pullback property of $\mathbf{OpCat}$. The functor $\mathbf{DirHerSp} \to \mathbf{OpCat}$ sends a directed hereditary species $H$ to the pair $(\mathbb{H}, \mathbf{H})$.
\end{comment}
\end{proof}

\addcontentsline{toc}{section}{References}
%\bibliography{referencesthesis}{}
%\bibliographystyle{scplain}

\begin{comment}
\subsection*{References}
\end{comment}

%\end{document}
\begin{footnotesize}
%\begin{small}

%\end{small}
\end{footnotesize}

  \begin{tabular}{@{}l@{}}%
   \scriptsize{DEPARTAMENT DE MAT\`{E}MATIQUES, UNIVERSITAT AUT\`{O}NOMA DE BARCELONA}\\
    \textit{E-mail address}: \texttt{cebrian@mat.uab.cat} \\
    
    \scriptsize{DEPARTAMENT DE MAT\`{E}MATIQUES, UNIVERSITAT AUT\`{O}NOMA DE BARCELONA}\\
    \textit{E-mail address}: \texttt{wforero@mat.uab.cat}
  \end{tabular}
  
\end{document}